\newcommand{\tun}{\begin{picture}(5,0)(-2,-1)
\put(0,0){\circle*{2}}
\end{picture}}
\newcommand{\tdeux}{\begin{picture}(7,7)(0,-1)
\put(3,0){\circle*{2}}
\put(3,0){\line(0,1){5}}
\put(3,5){\circle*{2}}
\end{picture}}
\newcommand{\ttroisun}{\begin{picture}(15,8)(-5,-1)
\put(3,0){\circle*{2}}
\put(-0.65,0){$\vee$}
\put(6,7){\circle*{2}}
\put(0,7){\circle*{2}}
\end{picture}}
\newcommand{\ttroisdeux}{\begin{picture}(5,12)(-2,-1)
\put(0,0){\circle*{2}}
\put(0,0){\line(0,1){5}}
\put(0,5){\circle*{2}}
\put(0,5){\line(0,1){5}}
\put(0,10){\circle*{2}}
\end{picture}}
\newcommand{\tquatreun}{\begin{picture}(15,12)(-5,-1)
\put(3,0){\circle*{2}}
\put(-0.65,0){$\vee$}
\put(6,7){\circle*{2}}
\put(0,7){\circle*{2}}
\put(3,7){\circle*{2}}
\put(3,0){\line(0,1){7}}
\end{picture}}
\newcommand{\tquatredeux}{\begin{picture}(15,18)(-5,-1)
\put(3,0){\circle*{2}}
\put(-0.65,0){$\vee$}
\put(6,7){\circle*{2}}
\put(0,7){\circle*{2}}
\put(0,14){\circle*{2}}
\put(0,7){\line(0,1){7}}
\end{picture}}
\newcommand{\tquatrequatre}{\begin{picture}(15,18)(-5,-1)
\put(3,5){\circle*{2}}
\put(-0.65,5){$\vee$}
\put(6,12){\circle*{2}}
\put(0,12){\circle*{2}}
\put(3,0){\circle*{2}}
\put(3,0){\line(0,1){5}}
\end{picture}}
\newcommand{\tquatrecinq}{\begin{picture}(9,19)(-2,-1)
\put(0,0){\circle*{2}}
\put(0,0){\line(0,1){5}}
\put(0,5){\circle*{2}}
\put(0,5){\line(0,1){5}}
\put(0,10){\circle*{2}}
\put(0,10){\line(0,1){5}}
\put(0,15){\circle*{2}}
\end{picture}}
\newcommand{\tcinqun}{\begin{picture}(20,8)(-5,-1)
\put(3,0){\circle*{2}}
\put(-0.5,0){$\vee$}
\put(6,7){\circle*{2}}
\put(0,7){\circle*{2}}
\put(3,0){\line(2,1){10}}
\put(3,0){\line(-2,1){10}}
\put(-7,5){\circle*{2}}
\put(13,5){\circle*{2}}
\end{picture}}
\newcommand{\tcinqdeux}{\begin{picture}(15,14)(-5,-1)
\put(3,0){\circle*{2}}
\put(-0.65,0){$\vee$}
\put(6,7){\circle*{2}}
\put(0,7){\circle*{2}}
\put(3,7){\circle*{2}}
\put(3,0){\line(0,1){7}}
\put(0,7){\line(0,1){7}}
\put(0,14){\circle*{2}}
\end{picture}}
\newcommand{\tcinqcinq}{\begin{picture}(15,19)(-5,-1)
\put(3,0){\circle*{2}}
\put(-0.65,0){$\vee$}
\put(6,7){\circle*{2}}
\put(0,7){\circle*{2}}
\put(6,14){\circle*{2}}
\put(6,7){\line(0,1){7}}
\put(0,14){\circle*{2}}
\put(0,7){\line(0,1){7}}
\end{picture}}
\newcommand{\tcinqsept}{\begin{picture}(15,8)(-5,-1)
\put(3,0){\circle*{2}}
\put(-0.65,0){$\vee$}
\put(6,7){\circle*{2}}
\put(0,7){\circle*{2}}
\put(2.35,7){$\vee$}
\put(3,14){\circle*{2}}
\put(9,14){\circle*{2}}
\end{picture}}
\newcommand{\tcinqneuf}{\begin{picture}(15,26)(-5,-1)
\put(3,0){\circle*{2}}
\put(-0.65,0){$\vee$}
\put(6,7){\circle*{2}}
\put(0,7){\circle*{2}}
\put(6,14){\circle*{2}}
\put(6,7){\line(0,1){7}}
\put(6,21){\circle*{2}}
\put(6,14){\line(0,1){7}}
\end{picture}}
\newcommand{\tcinqdix}{\begin{picture}(15,19)(-5,-1)
\put(3,5){\circle*{2}}
\put(-0.5,5){$\vee$}
\put(6,12){\circle*{2}}
\put(0,12){\circle*{2}}
\put(3,0){\circle*{2}}
\put(3,0){\line(0,1){12}}
\put(3,12){\circle*{2}}
\end{picture}}
\newcommand{\tcinqonze}{\begin{picture}(15,26)(-5,-1)
\put(3,5){\circle*{2}}
\put(-0.65,5){$\vee$}
\put(6,12){\circle*{2}}
\put(0,12){\circle*{2}}
\put(3,0){\circle*{2}}
\put(3,0){\line(0,1){5}}
\put(0,12){\line(0,1){7}}
\put(0,19){\circle*{2}}
\end{picture}}
\newcommand{\tcinqtreize}{\begin{picture}(5,26)(-2,-1)
\put(0,0){\circle*{2}}
\put(0,0){\line(0,1){7}}
\put(0,7){\circle*{2}}
\put(0,7){\line(0,1){7}}
\put(0,14){\circle*{2}}
\put(-3.65,14){$\vee$}
\put(-3,21){\circle*{2}}
\put(3,21){\circle*{2}}
\end{picture}}
\newcommand{\tcinqquatorze}{\begin{picture}(9,26)(-5,-1)
\put(0,0){\circle*{2}}
\put(0,0){\line(0,1){5}}
\put(0,5){\circle*{2}}
\put(0,5){\line(0,1){5}}
\put(0,10){\circle*{2}}
\put(0,10){\line(0,1){5}}
\put(0,15){\circle*{2}}
\put(0,15){\line(0,1){5}}
\put(0,20){\circle*{2}}
\end{picture}}
\newcommand{\tdun}[1]{\begin{picture}(10,5)(-2,-1)
\put(0,0){\circle*{2}}
\put(3,-2){\tiny #1}
\end{picture}}
\newcommand{\tddeux}[2]{\begin{picture}(12,5)(0,-1)
\put(3,0){\circle*{2}}
\put(3,0){\line(0,1){5}}
\put(3,5){\circle*{2}}
\put(6,-2){\tiny #1}
\put(6,3){\tiny #2}
\end{picture}}
\newcommand{\tdtroisun}[3]{\begin{picture}(20,12)(-5,-1)
\put(3,0){\circle*{2}}
\put(-0.65,0){$\vee$}
\put(6,7){\circle*{2}}
\put(0,7){\circle*{2}}
\put(5,-2){\tiny #1}
\put(9,5){\tiny #2}
\put(-5,5){\tiny #3}
\end{picture}}
\newcommand{\tdtroisdeux}[3]{\begin{picture}(12,12)(-2,-1)
\put(0,0){\circle*{2}}
\put(0,0){\line(0,1){5}}
\put(0,5){\circle*{2}}
\put(0,5){\line(0,1){5}}
\put(0,10){\circle*{2}}
\put(3,-2){\tiny #1}
\put(3,3){\tiny #2}
\put(3,9){\tiny #3}
\end{picture}}
\newcommand{\tdquatreun}[4]{\begin{picture}(20,12)(-5,-1)
\put(3,0){\circle*{2}}
\put(-0.6,0){$\vee$}
\put(6,7){\circle*{2}}
\put(0,7){\circle*{2}}
\put(3,7){\circle*{2}}
\put(3,0){\line(0,1){7}}
\put(5,-2){\tiny #1}
\put(8.5,5){\tiny #2}
\put(1,10){\tiny #3}
\put(-5,5){\tiny #4}
\end{picture}}
\newcommand{\tdquatredeux}[4]{\begin{picture}(20,20)(-5,-1)
\put(3,0){\circle*{2}}
\put(-.65,0){$\vee$}
\put(6,7){\circle*{2}}
\put(0,7){\circle*{2}}
\put(0,14){\circle*{2}}
\put(0,7){\line(0,1){7}}
\put(5,-2){\tiny #1}
\put(9,5){\tiny #2}
\put(-5,5){\tiny #3}
\put(-5,12){\tiny #4}
\end{picture}}
\newcommand{\tdquatretrois}[4]{\begin{picture}(20,20)(-5,-1)
\put(3,0){\circle*{2}}
\put(-.65,0){$\vee$}
\put(6,7){\circle*{2}}
\put(0,7){\circle*{2}}
\put(6,14){\circle*{2}}
\put(6,7){\line(0,1){7}}
\put(5,-2){\tiny #1}
\put(9,5){\tiny #2}
\put(-5,5){\tiny #4}
\put(9,12){\tiny #3}
\end{picture}}
\newcommand{\tdquatrequatre}[4]{\begin{picture}(20,14)(-5,-1)
\put(3,5){\circle*{2}}
\put(-.65,5){$\vee$}
\put(6,12){\circle*{2}}
\put(0,12){\circle*{2}}
\put(3,0){\circle*{2}}
\put(3,0){\line(0,1){5}}
\put(6,-3){\tiny #1}
\put(6,4){\tiny #2}
\put(9,12){\tiny #3}
\put(-5,12){\tiny #4}
\end{picture}}
\newcommand{\tdquatrecinq}[4]{\begin{picture}(12,19)(-2,-1)
\put(0,0){\circle*{2}}
\put(0,0){\line(0,1){5}}
\put(0,5){\circle*{2}}
\put(0,5){\line(0,1){5}}
\put(0,10){\circle*{2}}
\put(0,10){\line(0,1){5}}
\put(0,15){\circle*{2}}
\put(3,-2){\tiny #1}
\put(3,3){\tiny #2}
\put(3,9){\tiny #3}
\put(3,14){\tiny #4}
\end{picture}}
\newcommand{\tquatredeuxa}{\begin{picture}(15,18)(-5,-1)
\put(3,0){\circle*{2}}
\put(-0.2,0.2){$\vee$}
\put(6,7){\circle*{2}}
\put(0,7){\circle*{2}}
\put(0,14){\circle*{2}}
\put(0,7){\line(0,1){7}}
\put(-4,4){\line(1,0){7}}
\put(5,-2){\tiny $d$}
\put(9,5){\tiny $c$}
\put(-5,5){\tiny $b$}
\put(-5,12){\tiny $a$}
\end{picture}}
\newcommand{\tquatredeuxb}{\begin{picture}(15,18)(-5,-1)
\put(3,0){\circle*{2}}
\put(-0.2,0.2){$\vee$}
\put(6,7){\circle*{2}}
\put(0,7){\circle*{2}}
\put(0,14){\circle*{2}}
\put(0,7){\line(0,1){7}}
\put(-4,11){\line(1,0){7}}
\put(5,-2){\tiny $d$}
\put(9,5){\tiny $c$}
\put(-5,5){\tiny $b$}
\put(-5,12){\tiny $a$}
\end{picture}}
\newcommand{\tquatredeuxc}{\begin{picture}(15,18)(-5,-1)
\put(3,0){\circle*{2}}
\put(-0.2,0.2){$\vee$}
\put(6,7){\circle*{2}}
\put(0,7){\circle*{2}}
\put(0,14){\circle*{2}}
\put(0,7){\line(0,1){7}}
\put(3,4){\line(1,0){7}}
\put(5,-2){\tiny $d$}
\put(9,5){\tiny $c$}
\put(-5,5){\tiny $b$}
\put(-5,12){\tiny $a$}
\end{picture}}
\newcommand{\tquatredeuxd}{\begin{picture}(15,18)(-5,-1)
\put(3,0){\circle*{2}}
\put(-0.2,0.2){$\vee$}
\put(6,7){\circle*{2}}
\put(0,7){\circle*{2}}
\put(0,14){\circle*{2}}
\put(0,7){\line(0,1){7}}
\put(-4,4){\line(1,0){7}}
\put(-4,11){\line(1,0){7}}
\put(5,-2){\tiny $d$}
\put(9,5){\tiny $c$}
\put(-5,5){\tiny $b$}
\put(-5,12){\tiny $a$}
\end{picture}}
\newcommand{\tquatredeuxe}{\begin{picture}(15,18)(-5,-1)
\put(3,0){\circle*{2}}
\put(-0.2,0.2){$\vee$}
\put(6,7){\circle*{2}}
\put(0,7){\circle*{2}}
\put(0,14){\circle*{2}}
\put(0,7){\line(0,1){7}}
\put(-4,4){\line(1,0){7}}
\put(3,4){\line(1,0){7}}
\put(5,-2){\tiny $d$}
\put(9,5){\tiny $c$}
\put(-5,5){\tiny $b$}
\put(-5,12){\tiny $a$}
\end{picture}}
\newcommand{\tquatredeuxf}{\begin{picture}(15,18)(-5,-1)
\put(3,0){\circle*{2}}
\put(-0.2,0.2){$\vee$}
\put(6,7){\circle*{2}}
\put(0,7){\circle*{2}}
\put(0,14){\circle*{2}}
\put(0,7){\line(0,1){7}}
\put(-4,11){\line(1,0){7}}
\put(3,4){\line(1,0){7}}
\put(5,-2){\tiny $d$}
\put(9,5){\tiny $c$}
\put(-5,5){\tiny $b$}
\put(-5,12){\tiny $a$}
\end{picture}}
\newcommand{\tquatredeuxg}{\begin{picture}(15,18)(-5,-1)
\put(3,0){\circle*{2}}
\put(-0.2,0.2){$\vee$}
\put(6,7){\circle*{2}}
\put(0,7){\circle*{2}}
\put(0,14){\circle*{2}}
\put(0,7){\line(0,1){7}}
\put(-4,4){\line(1,0){7}}
\put(-4,11){\line(1,0){7}}
\put(3,4){\line(1,0){7}}
\put(5,-2){\tiny $d$}
\put(9,5){\tiny $c$}
\put(-5,5){\tiny $b$}
\put(-5,12){\tiny $a$}
\end{picture}}
\newcommand{\tdpartun}[4]{\begin{picture}(17,19)(-2,-1)
\put(0,0){\circle*{2}}
\put(0,0){\line(0,1){5}}
\put(0,5){\circle*{2}}
\put(-1.5,7){.}
\put(-1.5,9){.}
\put(-1.5,11){.}
\put(0,14){\circle*{2}}
\put(0,14){\line(0,1){5}}
\put(0,19){\circle*{2}}
\put(3,-2){\tiny #1}
\put(3,3){\tiny #2}
\put(3,13){\tiny #3}
\put(3,18){\tiny #4}
\end{picture}}
\newcommand{\tdpartdeux}[5]{\begin{picture}(17,24)(-2,-1)
\put(0,0){\circle*{2}}
\put(0,0){\line(0,1){5}}
\put(0,5){\circle*{2}}
\put(-1.5,7){.}
\put(-1.5,9){.}
\put(-1.5,11){.}
\put(0,14){\circle*{2}}
\put(0,14){\line(0,1){5}}
\put(0,19){\circle*{2}}
\put(0,19){\line(0,1){5}}
\put(0,24){\circle*{2}}
\put(3,-2){\tiny #1}
\put(3,3){\tiny #2}
\put(3,13){\tiny #3}
\put(3,18){\tiny #4}
\put(3,23){\tiny #5}
\end{picture}}
\newcommand{\tdparttrois}[5]{\begin{picture}(22,19)(-5,-1)
\put(0,0){\circle*{2}}
\put(0,0){\line(0,1){5}}
\put(0,5){\circle*{2}}
\put(-1.5,7){.}
\put(-1.5,9){.}
\put(-1.5,11){.}
\put(0,14){\circle*{2}}
\put(-3.45,14){$\vee$}
\put(3,21){\circle*{2}}
\put(-3,21){\circle*{2}}
\put(3,-2){\tiny #1}
\put(3,3){\tiny #2}
\put(3,13){\tiny #3}
\put(3,24){\tiny #4}
\put(-3,24){\tiny #5}
\end{picture}}
\newcommand{\tdpartquatre}[5]{\begin{picture}(27,19)(-5,-1)
\put(0,0){\circle*{2}}
\put(-1.5,2){.}
\put(-1.5,4){.}
\put(-1.5,6){.}
\put(0,9){\circle*{2}}
\put(-3.45,9){$\vee$}
\put(3,16){\circle*{2}}
\put(-3,16){\circle*{2}}
\put(1.5,18){.}
\put(1.5,20){.}
\put(1.5,22){.}
\put(3,25){\circle*{2}}
\put(3,-2){\tiny #1}
\put(3,8){\tiny #2}
\put(6,15){\tiny #3}
\put(6,24){\tiny #4}
\put(-3,19){\tiny #5}
\end{picture}}
\newcommand{\h}{\mathcal{H}}
\newcommand{\g}{\mathfrak{g}}
\newcommand{\lies}{\mathfrak{g}_{(S)}}
\newcommand{\gfdb}{\mathfrak{g}_{FdB}}
\newcommand{\hs}{\mathcal{H}_{(S)}}
\newcommand{\D}{\mathcal{D}}
\newcommand{\T}{\mathcal{T}}
\newcommand{\F}{\mathcal{F}}
\newcommand{\R}{\:\mathcal{R}\:}
\newcommand{\gs}{G_{(S)}}
\newcommand{\modulev}{\mathbb{V}}
\newcommand{\modulew}{\mathbb{W}}
\newcommand{\fleche}[1]{\stackrel{#1}{\longrightarrow}}
\newcommand{\Set}{\mathbf{Set}}
\newcommand{\Vect}{\mathbf{Vect}}
\newcommand{\Lie}{\mathbf{Lie}}
\title{Systems of Dyson-Schwinger equations}
\date{}
\author{Loïc Foissy\footnote{e-mail: loic.foissy@univ-reims.fr; webpage: http://loic.foissy.free.fr/pageperso/accueil.html}
\\
{\small{\it Laboratoire de Mathématiques - FRE3111, Université de Reims}}\\
\small{{\it Moulin de la Housse - BP 1039 - 51687 REIMS Cedex 2, France}}\\
}
\newtheorem{defi}{\indent Definition}
\newtheorem{lemma}[defi]{\indent Lemma}
\newtheorem{cor}[defi]{\indent Corollary}
\newtheorem{theo}[defi]{\indent Theorem}
\newtheorem{prop}[defi]{\indent Proposition}
\newenvironment{proof}{{\bf Proof.}}{\hfill $\Box$}
\begin{document}
\maketitle

ABSTRACT. We consider systems of combinatorial Dyson-Schwinger equations (briefly, SDSE) $X_1=B^+_1(F_1(X_1,\ldots,X_N))$,
$\ldots$,  $X_N=B^+_N(F_N(X_1,\ldots,X_N))$ in the  Connes-Kreimer Hopf algebra $\mathcal{H}_I$ of rooted trees decorated by $I=\{1,\ldots,N\}$, 
where $B^+_i$ is the operator of grafting on a root decorated by $i$, and $F_1,\ldots,F_N$ are non-constant formal series.
The unique solution $X=(X_1,\ldots,X_N)$ of this equation generates a graded subalgebra $\mathcal{H}_{(S)}$ of $\mathcal{H}_I$.
We characterise here all the families of formal series $(F_1,\ldots,F_N)$ such that $\mathcal{H}_{(S)}$ is a Hopf subalgebra.
More precisely, we define three operations on SDSE (change of variables, dilatation and extension)
and give two families of SDSE (cyclic and fundamental systems), and prove that any SDSE $(S)$ such that $\mathcal{H}_{(S)}$ is Hopf
is the concatenation of several fundamental or cyclic systems after the application of a change of variables, a dilatation and iterated extensions.

We also describe the Hopf algebra $\mathcal{H}_{(S)}$ as the dual of the  enveloping algebra of a Lie algebra $\mathfrak{g}_{(S)}$
of one of the following types:
\begin{enumerate}
\item $\mathfrak{g}_{(S)}$ is a Lie algebra of paths associated to a certain oriented graph.
\item Or $\mathfrak{g}_{(S)}$ is an iterated extension of the Faà di Bruno Lie algebra.
\item Or $\mathfrak{g}_{(S)}$ is an iterated extension of an abelian Lie algebra.\\
\end{enumerate}

KEYWORDS: Systems of combinatorial Dyson-Schwinger equations; Hopf algebras of decorated rooted trees; pre-Lie algebras.\\

MATHEMATICS SUBJECT CLASSIFICATION. Primary 16W30. Secondary 81T15, 81T18.

\tableofcontents

\section*{Introduction}

The Connes-Kreimer Hopf algebra of rooted trees is introduced in \cite{Kreimer1} and studied in 
\cite{Broadhurst,Chapoton1,Quevedo,Connes,Figueroa,Foissy1,Hoffman,Panaite}. This graded, commutative, non-cocommutative Hopf algebra
is generated by the set of rooted trees. We shall work here with a decorated version $\h_\D$ of this algebra, where $\D$ is a finite, non-empty set,
replacing rooted trees by rooted trees with vertices decorated by the elements of  $\D$. This algebra has a family of operators $(B^+_d)_{d\in \D}$ 
indexed by $\D$, where $B^+_d$ sends a forest $F$ to the rooted tree obtained by grafting the trees of $F$ on a common root decorated by $d$.
These operators satisfy the following equation: for all $x \in \h_\D$,
$$\Delta \circ B_d^+(x)=B_d^+(x) \otimes 1+(Id\otimes B_d^+)\circ \Delta(x).$$
As explained in \cite{Connes}, this means that $B_d^+$ is a $1$-cocycle for a certain cohomology of coalgebras, dual to the Hochschild cohomology.\\

We are interested here in systems of combinatorial Dyson-Schwinger equations (briefly, SDSE), that is to say, 
if the set of decorations is $\{1,\ldots,N\}$, a system $(S)$ of the form:
$$\left\{ \begin{array}{rcl}
X_1&=&B_1^+(F_1(X_1,\ldots,X_N)),\\
&\vdots&\\
X_N&=&B_N^+(F_N(X_1,\ldots,X_N)),
\end{array}\right.$$
where $F_1,\ldots,F_N\in K[[h_1,\ldots,h_N]] $ are formal series in $N$ indeterminates. These systems (in a Feynman graph version) are used 
in Quantum Field Theory, as it is explained in \cite{Bergbauer,Kreimer2,Kreimer3}. They possess a unique solution,  which is a family of 
$N$ formal series in rooted trees, or equivalently elements of a completion of $\h_\D$. The homogeneous components of these elements generate 
a subalgebra $\hs$ of $\h_\D$. Our problem here is to determine Hopf SDSE, that is to say SDSE $(S)$ such that  $\hs$
is a Hopf subalgebra of $\h_\D$. In the case of a single combinatorial Dyson-Schwinger equation, this question has been answered in \cite{Foissy2}.\\

In order to answer this, we first associate an oriented graph to any SDSE, reflecting the dependence of the different $X_i$'s;
more precisely, the vertices of $\gs$ are the elements of $I$, and there is an edge from $i$ to $j$ if $F_i$ depends on $h_j$.
We shall say that $(S)$ is connected if $\gs$ is connected. Noting that any SDSE is the disjoint union of several connected SDSE, 
we can restrict our study to connected SDSE. We introduce three operations on Hopf SDSE:
\begin{itemize}
\item Change of variables, which replaces $h_i$ by $\lambda_i h_i$ for all $i\in I$, where $\lambda_i \neq 0$ for all $i$.
This operation replaces $\hs$ by an isomorphic Hopf algebra and does not change $\gs$.
\item Dilatation, which replaces each vertex of $\gs$ by several vertices. This operation increases the number of vertices. For example, consider:
$$(S):\left\{ \begin{array}{rcl}
X_1&=&B^+_1(f(X_1,X_2)),\\
X_2&=&B^+_2(g(X_1,X_2)),
\end{array}\right.$$
where $f,g \in K[[h_1,h_2]]$; then the following SDSE is a dilatation of $(S)$:
$$(S'):\left\{ \begin{array}{rcl}
X_1&=&B^+_1(f(X_1+X_2+X_3,X_4+X_5)),\\
X_2&=&B^+_2(f(X_1+X_2+X_3,X_4+X_5)),\\
X_3&=&B^+_3(f(X_1+X_2+X_3,X_4+X_5)),\\
X_4&=&B^+_4(g(X_1+X_2+X_3,X_4+X_5)),\\
X_5&=&B^+_5(g(X_1+X_2+X_3,X_4+X_5)),
\end{array}\right.$$
\item Extension, which adds a vertex $0$ to $\gs$ with an affine formal series. This operation increases the number of vertices by $1$.
For example, consider:
$$(S): \left\{ \begin{array}{rcl}
X_1&=&B^+_1(f(X_1,X_2)),\\
X_2&=&B^+_2(f(X_1,X_2)),
\end{array}\right.$$
where $f \in K[[h_1,h_2]]$ and $a,b \in K$; then the following SDSE is an extension of $(S)$:
$$(S'):\left\{ \begin{array}{rcl}
X_0&=&B^+_0(1+aX_1+bX_2),\\
X_1&=&B^+_1(f(X_1,X_2)),\\
X_2&=&B^+_2(f(X_1,X_2)),
\end{array}\right. $$
\end{itemize}
We then introduce two families of Hopf SDSE:
\begin{itemize}
\item Cycles, which are SDSE such that the associated graph is an oriented graph and all the formal series of the system are affine; see theorem \ref{30}.
For example, the following system is a 4-cycle:
$$\left\{\begin{array}{rcl}
X_1&=&B^+_1(1+X_2),\\
X_2&=&B^+_2(1+X_3),\\
X_3&=&B^+_3(1+X_4),\\
X_4&=&B^+_4(1+X_1).
\end{array}\right.$$
The associated oriented graph is:
$$\xymatrix{1\ar[r]&2\ar[d]\\4\ar[u]&3\ar[l]}$$
\item Fundamental SDSE, described in theorem \ref{32}. Here is an example of a fundamental SDSE:
$$\left\{\begin{array}{rcl}
X_1&=&B^+_1\left(f_{\beta_1}(X_1)f_{\frac{\beta_2}{1+\beta_2}}((1+\beta_2)h_2)(1-h_3)^{-1}(1-h_4)^{-1}\right),\\[4mm]
X_2&=&B^+_2\left(f_{\frac{\beta_1}{1+\beta_1}}(X_1)f_{\beta_2}(h_2)(1-h_3)^{-1}(1-h_4)^{-1}\right),\\[4mm]
X_3&=&B^+_3\left(f_{\frac{\beta_1}{1+\beta_1}}((1+\beta_1)X_1)f_{\frac{\beta_2}{1+\beta_2}}((1+\beta_2)h_2)(1-h_4)^{-1}\right),\\[4mm]
X_4&=&B^+_4\left(f_{\frac{\beta_1}{1+\beta_1}}((1+\beta_1)X_1)f_{\frac{\beta_2}{1+\beta_2}}((1+\beta_2)h_2)(1-h_3)^{-1}\right),\\[4mm]
X_5&=&B^+_5\left(f_{\frac{\beta_1}{1+\beta_1}}((1+\beta_1)X_1)f_{\frac{\beta_2}{1+\beta_2}}((1+\beta_2)h_2)(1-h_3)^{-1}(1-h_4)^{-1}\right),
\end{array}\right.$$
where $\beta_1,\beta_2 \in K-\{-1\}$ and, for all $\beta \in K$, $f_\beta$ is the following formal series:
$$f_\beta(h)=\sum_{k=0}^{\infty} \frac{(1+\beta)\cdots(1+(k-1)\beta)}{k!} h^k.$$
The associated oriented graph is:
$$\xymatrix{1\ar@(ul,ur)\ar@{<->}[rr] \ar@{<->}[d] \ar@{<->}[drr]&&2\ar@(ul,ur)\ar@{<->}[d] \ar@{<->}[lld]\\
3\ar@{<->}[rr]&&4\\
&5\ar@/^3pc/[luu] \ar@/_3pc/[ruu] \ar[lu] \ar[ru]&}$$
\end{itemize}
The main result of this paper is theorem \ref{14}, which says that any connected Hopf SDSE is obtained by a dilatation and a finite
number of iterated extensions of a cycle or a fundamental SDSE.\\

Let us now give a few explanations on the way this result is obtained. An important tool is given by a family indexed by $I^2$ of scalar sequences 
$\left(\lambda_n^{(i,j)}\right)_{n\geq 1}$  associated to any Hopf SDSE. They allow to reconstruct the coefficients of the formal series of $(S)$, 
as explained in proposition \ref{19}. Particular cases of possible sequence $\left(\lambda_n^{(i,j)}\right)_{n\geq 1}$ are affine sequences, 
up to a finite number of terms: this leads to the notion of level of a vertex. It is shown that level decreases along the oriented paths of $\gs$ 
(proposition \ref{23}), and this implies the following alternative if $(S)$ is connected: any vertex is of finite level or no vertex is of finite level.
In particular, any vertex of a fundamental SDSE is of finite level, whereas no vertex of a cycle is of finite level.

We then consider two special families of SDSE:
\begin{itemize}
\item We first assume that the graph associated to $(S)$ does not contain any vertex related to itself. This case includes cycles and their dilatations
(called multicycles), and a special case of fundamental SDSE called quasi-complete SDSE. We show, using graph-theoretical considerations 
and the coefficients $\lambda_n^{(i,j)}$, that under an hypothesis of symmetry, they are the only possibilities.
\item We then assume that any vertex of $(S)$ has an ascendant related to itself. We then prove that $(S)$ is fundamental.
\end{itemize}
This results are then unified in corollary \ref{50}. It says that any Hopf SDSE with a connected graph contains a multicycle or a a fundamental SDSE 
$(S_0)$ and is obtained from $(S_0)$ by adding repeatedly a finite number of vertices. This result is precised for the multicycle case in theorem \ref{51}
and for the fundamental case in theorem \ref{52}. The compilation of these results then proves theorem \ref{14}.\\
 
The end of the paper is devoted to the description of the Hopf algebras $\hs$. By the Cartier-Quillen-Milnor-Moore theorem, they are dual of 
enveloping algebra $\mathcal{U}(\lies)$, and it turns out that $\lies$ is a pre-Lie algebra \cite{Chapoton3}, that is to say it has a bilinear product $\star$
such that for all $f,g,h \in \lies$:
$$(f\star g) \star h-f\star(g\star h)=(g\star f) \star h-g\star(f\star h).$$
This relation implies that the antisymmetrisation of $\star$ is a Lie bracket. In our case, $\lies$ has a basis $(f_i(k))_{i\in I,k\geq 1}$ and 
by proposition \ref{21} its pre-Lie product is given by:
$$f_j(l)\star f_i(k)=\lambda_k^{(i,j)} f_i(k+l).$$
The product $\star$ can be associative, for example in the multicyclic case. Then, up to a change of variables, $f_j(l)\star f_i(k)=f_i(k+l)$ 
if there is an oriented path of length $k$ from $i$ to $j$ in the oriented graph associated to $(S)$, or $0$ otherwise; see proposition \ref{57}.

The fundamental case is separated into two subcases. In the non-abelian case, the Lie algebra $\lies$ is described as an iterated
semi-direct product of the Faà di Bruno Lie algebra by infinite dimensional modules. Similarly, the character group of $\hs$ is an iterated
semi-direct product of the Faà di Bruno group of formal diffeomorphisms by modules of formal series:
$$Ch(\hs)=G_m\rtimes(G_{m-1}\rtimes(\cdots G_2 \rtimes (G_1 \rtimes G_0)\cdots),$$
where $G_0$ is the Faà di Bruno group and $G_1,\ldots,G_{m-1}$ are isomorphic to direct sums of $(tK[[t]],+)$ as groups; see theorem \ref{65}.
The second subcase is similar, replacing the Faà di Bruno Lie algebra by an abelian Lie algebra; see theorem \ref{72}.\\

This text is organised as follows: the first section gives some recalls on the structure of Hopf algebra of $\h_\D$ and on the pre-Lie product on 
$\lies=Prim\left(\hs^*\right)$. In the second section are given the definitions of SDSE and their different operations: change of variables, dilatation
and extension. The main theorem of the text is also stated in this section. The following section introduces the coefficients $\lambda_n^{(i,j)}$
and their properties, especially their link with the pre-Lie product of $\lies$. The level of a vertex is defined in the fourth section,
which also contains lemmas on vertices of level $0$, $1$ or $\geq 2$, before that fundamental and multicyclic SDSE are introduced in the fifth section.
The next section contains preliminary results about graphs with no self-dependent vertices or such that any vertex is the descendant of
a self-dependent vertex, and the main theorem is finally proved in the seventh section. The next three sections deals with the description of the 
Lie algebra $\lies$ and the group $Ch\left(\hs\right)$ when $\lies$ is associative, in the non-abelian, fundamental case and finally in the abelian, 
fundamental case. The last section gives a functorial way to characterise pre-Lie algebra from the operation of dilatations of Hopf SDSE. \\

{\bf Notations.} We denote by $K$ a commutative field of characteristic zero. All vector spaces, algebras, coalgebras, Hopf algebras, etc.
will be taken over $K$.

\section{Preliminaries}

\subsection{Decorated rooted trees}

\begin{defi} \textnormal{\cite{Stanley1,Stanley2}
\begin{enumerate}
\item A {\it rooted tree} $t$ is a finite graph, without loops, with a special vertex called the {\it root} of $t$. 
The {\it weight} of $t$ is the number of its vertices. The set of rooted trees will be denoted by $\T$.
\item Let $\D$ be a non-empty set. A {\it rooted tree decorated by $\D$} is a rooted tree
with an application from the set of its vertices into $\D$. 
The set of rooted trees decorated by $\D$ will be denoted by $\T_\D$.
\item Let $i\in \D$. The set of rooted trees decorated by $\D$ with root decorated by $i$ will be denoted by $\T_\D^{(i)}$.
\end{enumerate}} \end{defi}

{\bf Examples.} \begin{enumerate}
\item Rooted trees with weight smaller than $5$:
$$\tun;\tdeux;\ttroisun,\ttroisdeux;\tquatreun, \tquatredeux,\tquatrequatre,\tquatrecinq;
\tcinqun, \tcinqdeux,\tcinqcinq,\tcinqsept,\tcinqneuf,\tcinqdix,\tcinqonze,\tcinqtreize,\tcinqquatorze.$$
\item Rooted trees decorated by $\D$ with weight smaller than $4$:
$$\tdun{$a$};\: a\in \D,\hspace{1cm} \tddeux{$a$}{$b$}\: (a,b)\in \D^2;\hspace{1cm}
\tdtroisun{$a$}{$c$}{$b$}=\tdtroisun{$a$}{$b$}{$c$},\: \tdtroisdeux{$a$}{$b$}{$c$},\:(a,b,c)\in \D^3;$$
$$ \tdquatreun{$a$}{$d$}{$c$}{$b$}=\tdquatreun{$a$}{$c$}{$d$}{$b$}=\tdquatreun{$a$}{$d$}{$b$}{$c$}=\tdquatreun{$a$}{$b$}{$d$}{$c$}
=\tdquatreun{$a$}{$c$}{$b$}{$d$}=\tdquatreun{$a$}{$b$}{$c$}{$d$},\: \tdquatredeux{$a$}{$d$}{$b$}{$c$}=\tdquatretrois{$a$}{$b$}{$c$}{$d$},\:
 \tdquatrequatre{$a$}{$b$}{$d$}{$c$}= \tdquatrequatre{$a$}{$b$}{$c$}{$d$},\: \tdquatrecinq{$a$}{$b$}{$c$}{$d$},\:(a,b,c,d)\in \D^4.$$
\end{enumerate}

\begin{defi} \textnormal{\begin{enumerate}
\item We denote by $\h_\D$ the polynomial algebra generated by $\T_\D$.
\item Let $t_1,\ldots,t_n$ be elements of $\T_\D$ and let $d\in \D$. We denote by $B^+_d(t_1\ldots t_n)$ the rooted tree obtained by grafting 
$t_1,\ldots,t_n$ on a common root decorated by $d$. This map $B^+_d$ is extended in an operator from $\h_\D$ to $\h_\D$.
\end{enumerate}}\end{defi}

For example, $B^+_d(\tddeux{$a$}{$b$}\tdun{$c$})=\tdquatredeux{$d$}{$c$}{$a$}{$b$}$.

\subsection{Hopf algebras of decorated rooted trees}

In order to make $\h_\D$ a bialgebra, we now introduce the notion of cut of a tree $t\in \T_\D$. A {\it non-total cut} $c$ 
of a tree $t$ is a choice of edges of $t$. Deleting the chosen edges, the cut makes $t$ into a forest denoted by $W^c(t)$. The cut $c$ is {\it admissible} 
if any oriented path in the tree meets at most one cut edge. For such a cut, the tree of $W^c(t)$ which contains the root of $t$ is denoted by $R^c(t)$ 
and the product of the other trees of $W^c(t)$ is denoted by $P^c(t)$. We also add the total cut, which is by convention an admissible cut 
such that $R^c(t)=1$ and $P^c(t)=W^c(t)=t$. The set of admissible cuts of $t$ is denoted by $Adm_*(t)$. 
Note that the empty cut of $t$ is admissible; we put $Adm(t)=Adm_*(t)-\{\mbox{empty cut, total cut}\}$.

{\bf example}. Let $a,b,c,d \in \D$ and let us consider the rooted tree $t=\tdquatredeux{$d$}{$c$}{$b$}{$a$}$. As it as $3$ edges, it has $2^3$ non-total cuts.
$$\begin{array}{|c|c|c|c|c|c|c|c|c|c|}
\hline \mbox{cut }c&\tdquatredeux{$d$}{$c$}{$b$}{$a$}&\tquatredeuxa&\tquatredeuxb&\tquatredeuxc
&\tquatredeuxd&\tquatredeuxe&\tquatredeuxf&\tquatredeuxg&\mbox{total}\\
\hline \mbox{Admissible?}&\mbox{yes}&\mbox{yes}&\mbox{yes}&\mbox{yes}&\mbox{no}&\mbox{yes}&\mbox{yes}&\mbox{no}&\mbox{yes}\\
\hline W^c(t)&\tdquatredeux{$d$}{$c$}{$b$}{$a$}&\tddeux{$b$}{$a$}\tddeux{$d$}{$c$}&\tdun{$a$}\tdtroisun{$d$}{$c$}{$b$}
&\tdtroisdeux{$d$}{$b$}{$a$}\tdun{$c$}&\tdun{$a$}\tdun{$b$}\tddeux{$d$}{$c$}&\tddeux{$b$}{$a$}\tdun{$c$}\tdun{$d$}
&\tdun{$a$}\tddeux{$d$}{$b$}\tdun{$c$}&\tdun{$a$}\tdun{$b$}\tdun{$c$}\tdun{$d$}&\tdquatredeux{$d$}{$c$}{$b$}{$a$}\\
\hline R^c(t)&\tdquatredeux{$d$}{$c$}{$b$}{$a$}&\tddeux{$d$}{$c$}&\tdtroisun{$d$}{$c$}{$b$}&\tdtroisdeux{$d$}{$b$}{$a$}
&\times&\tdun{$d$}&\tddeux{$d$}{$b$}&\times&1\\
\hline P^c(t)&1&\tddeux{$b$}{$a$}&\tdun{$a$}&\tdun{$c$}&\times&\tddeux{$b$}{$a$}\tdun{$c$}&\tdun{$a$}\tdun{$c$}
&\times&\tdquatredeux{$d$}{$c$}{$b$}{$a$}\\
\hline \end{array}$$

The coproduct of $\h_\D$ is defined as the unique algebra morphism from $\h_\D$ to $\h_\D \otimes \h_\D$ such that for all rooted tree $t \in \T_\D$:
$$\Delta(t)=\sum_{c\in Adm_*(t)} P^c(t)\otimes R^c(t)=t\otimes 1+1\otimes t+\sum_{c \in Adm(t)} P^c(t) \otimes R^c(t).$$
As $\h_\D$ is the free associative commutative unitary algebra generated by $\T_\D$, this makes sense.
This coproduct makes $\h_\D$ a Hopf algebra. Although it won't play any role in this text, we recall that the antipode $S$ is the unique algebra automorphism
of $\h_\D$ such that for all $t\in \T_\D$:
$$S(t)=-\sum_{\mbox{\scriptsize $c$ cut of $t$}} (-1)^{n_c} W_c(t),$$
where $n_c$ is the number of cut edges of $c$. \\

{\bf Example}.
$$\Delta(\tdquatredeux{$d$}{$c$}{$b$}{$a$})=\tdquatredeux{$d$}{$c$}{$b$}{$a$} \otimes 1+1\otimes \tdquatredeux{$d$}{$c$}{$b$}{$a$}
+\tddeux{$b$}{$a$} \otimes \tddeux{$d$}{$c$}+\tdun{$a$}\otimes \tdtroisun{$d$}{$c$}{$b$}+
\tdun{$c$} \otimes \tdtroisdeux{$d$}{$b$}{$a$} +\tddeux{$b$}{$a$}\tdun{$c$}\otimes \tdun{$d$}+\tdun{$a$}\tdun{$c$}\otimes \tddeux{$d$}{$b$}.$$

A study of admissible cuts shows the following result:

\begin{prop}
For all $d\in \D$, for all $x \in \h_\D$:
$$\Delta \circ B^+_d(x)=B_d^+(x) \otimes 1+(Id \otimes B_d^+)\circ \Delta(x).$$
\end{prop}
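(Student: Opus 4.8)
The plan is to prove the cocycle identity $\Delta \circ B^+_d = (B^+_d \otimes 1) + (Id \otimes B^+_d)\circ \Delta$ by analyzing how the admissible cuts of $B^+_d(F)$ relate to those of the forest $F$. Since both sides are algebra morphisms applied to an algebra morphism (or rather, since $\Delta$ is an algebra morphism and both sides agree on the generators $\T_\D$ by the defining formula, it suffices to check the identity on a single rooted tree — but actually $B^+_d$ is only linear, not multiplicative, so I must check the identity on a general element $x\in\h_\D$; by linearity it is enough to take $x = t_1\cdots t_n$ a monomial in $\T_\D$, i.e. a forest $F$, and set $t = B^+_d(F)$).

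First I would set up the bijection on cuts. Let $t = B^+_d(F)$ where $F = t_1\cdots t_n$, with root $r$ decorated by $d$ and the $t_i$ grafted onto $r$. An edge of $t$ is either one of the $n$ new edges joining $r$ to a root of some $t_i$, or an interior edge of some $t_i$. Given a cut $c\in Adm_*(t)$, there are two cases: either $c$ is the total cut of $t$, contributing $P^c(t)\otimes R^c(t) = t\otimes 1 = B^+_d(F)\otimes 1$; or $c$ is non-total, and then since any oriented path from $r$ into $t_i$ first traverses the new edge $r\to \mathrm{root}(t_i)$, admissibility forces: for each $i$, either the new edge $r\to\mathrm{root}(t_i)$ is cut (and then no edge inside $t_i$ is cut, so all of $t_i$ falls into $P^c$), or it is not cut and $c$ restricts to an admissible cut $c_i\in Adm_*(t_i)$ of $t_i$. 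Thus a non-total admissible cut of $t$ is exactly the data of a subset $J\subseteq\{1,\dots,n\}$ (the $i$ with the new edge cut) together with admissible cuts $c_i\in Adm_*(t_i)$ for $i\notin J$. For such a cut, $R^c(t) = B^+_d\left(\prod_{i\notin J} R^{c_i}(t_i)\right)$ (the root $r$ survives, carrying the surviving pieces of the uncut $t_i$) and $P^c(t) = \prod_{i\in J} t_i \cdot \prod_{i\notin J} P^{c_i}(t_i)$.

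Then I would sum. Summing $P^c(t)\otimes R^c(t)$ over all non-total $c$, i.e. over all $(J,(c_i)_{i\notin J})$, the expression factors: it becomes
$$\sum_{J}\ \prod_{i\in J} t_i \otimes 1 \ \cdot\ \prod_{i\notin J}\left(\sum_{c_i\in Adm_*(t_i)} P^{c_i}(t_i)\otimes R^{c_i}(t_i)\right),$$
where the factor $B^+_d(\cdot)$ is applied to the right-hand tensorands. Recognizing $\sum_{c_i\in Adm_*(t_i)} P^{c_i}(t_i)\otimes R^{c_i}(t_i) = \Delta(t_i)$ and writing $\Delta(t_i) = t_i\otimes 1 + \Delta'(t_i)$, the sum over $J$ reassembles (by the distributivity $\prod_i (a_i + b_i) = \sum_J \prod_{i\in J} a_i \prod_{i\notin J} b_i$) into $(Id\otimes B^+_d)\left(\prod_{i=1}^n \Delta(t_i)\right) = (Id\otimes B^+_d)(\Delta(F))$, since $\Delta$ is multiplicative. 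Adding back the total-cut term $B^+_d(F)\otimes 1$ gives exactly the right-hand side, and extending by linearity in $F$ finishes the proof.

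The main obstacle — really the only subtle point — is verifying the claimed structure of admissible non-total cuts of $B^+_d(F)$: namely that admissibility decouples completely across the branches $t_i$ and imposes no constraint linking cuts in different branches, while within a branch "new edge cut" and "interior edge cut" are mutually exclusive. This is where the hypothesis that a cut is admissible iff every oriented path meets at most one cut edge does the work, since every path leaving the root enters exactly one branch through its unique new edge. Once this combinatorial bookkeeping is pinned down, the algebraic reassembly is a routine application of the multiplicativity of $\Delta$ and of $B^+_d$'s linearity.
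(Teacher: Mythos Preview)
Your argument is the standard one and is exactly what the paper has in mind when it writes ``A study of admissible cuts shows the following result'' without further details; the decomposition of a non-total admissible cut of $B^+_d(t_1\cdots t_n)$ into the data $(J,(c_i)_{i\notin J})$ and the reassembly via multiplicativity of $\Delta$ is the right mechanism. One small slip: in your displayed sum the inner $c_i$ should range only over \emph{non-total} admissible cuts of $t_i$ (equivalently, the factor should be $\Delta(t_i)-t_i\otimes 1$, your $\Delta'(t_i)$), since ``new edge to $t_i$ is cut'' already accounts for the total-cut contribution $t_i\otimes 1$; with $c_i\in Adm_*(t_i)$ as written you would double-count, and indeed your very next sentence invoking $\prod_i(a_i+b_i)=\sum_J\prod_{i\in J}a_i\prod_{i\notin J}b_i$ with $a_i=t_i\otimes 1$, $b_i=\Delta'(t_i)$ shows you intend the corrected version.
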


{\bf Remarks}. \begin{enumerate}
\item In other words, $B_d^+$ is a $1$-cocycle for a certain cohomology of coalgebras, see \cite{Connes}.
\item If $t\in \T_\D^{(i)}$, then $\Delta(t)-t\otimes 1\in \h_\D\otimes \T_\D^{(i)}$.
\end{enumerate}

\subsection{Gradation of $\h_\D$ and completion}

We grade $\h_\D$ by declaring the forests with $n$ vertices homogeneous of degree $n$.
We denote by $\h_\D(n)$ the homogeneous component of $\h_\D$ of degree $n$. Then $\h_\D$ is a graded bialgebra, that is to say:
\begin{itemize}
\item For all $i,j \in \mathbb{N}$, $\h_\D(i)\h(j)\subseteq \h_\D(i+j)$.
\item For all $k\in \mathbb{N}$, $\displaystyle \Delta(\h_\D(k))\subseteq \sum_{i+j=k}\h_\D(i) \otimes \h_\D(j)$.
\end{itemize}

We define, for all $x \in \h_\D$:
$$val(x)=\displaystyle \max\left\{n\in \mathbb{N}\:|\: x\in \bigoplus_{k\geq n} \h_\D(k)\right\}.$$
We then put, for all $x,y\in \h_\D$, $d(x,y)=2^{-val(x-y)}$, with the convention $2^{-\infty}=0$. Then $d$ is a distance on $\h_\D$. 
The metric space $(\h_\D,d)$ is not complete; its completion will be denoted by $\widehat{\h_\D}$. As a vector space:
$$\widehat{\h_\D}=\prod_{n\in\mathbb{N}} \h_\D(n).$$
The elements of $\widehat{\h_\D}$ will be denoted by $\sum x_n$, where $x_n \in \h_\D(n)$ for all $n\in \mathbb{N}$.
The product $m:\h_\D\otimes \h_\D \longrightarrow \h_\D$ is homogeneous of degree $0$, so is continuous: it can be extended from 
$\widehat{\h_\D}\otimes \widehat{\h_\D}$ to $\widehat{\h_\D}$, which is then an associative, commutative algebra. 
Similarly, the coproduct of $\h_\D$ can be extended as a map:
$$\Delta:\widehat{\h_\D} \longrightarrow \h_\D \widehat{\otimes}\h_\D=\prod_{i,j\in \mathbb{N}} \h_\D(i)\otimes \h_\D(j).$$

Let $f(h)=\sum p_n h^n \in K[[h]]$ be any formal series, and let $X=\sum x_n \in \widehat{\h_\D}$, such that $x_0=0$.
The series of $\widehat{\h_\D}$ of terms $p_n X^n$ is Cauchy, so converges. Its limit will be denoted by $f(X)$. In other words, $f(X)=\sum y_n$, with:
$$\left\{\begin{array}{rcl}
y_0&=&p_0,\\
y_n&=&\displaystyle \sum_{k=1}^n \sum_{a_1+\cdots+a_k=n} p_k x_{a_1}\cdots x_{a_k} \mbox{ if }n\geq 1.
\end{array}\right.$$

\subsection{Pre-Lie structure on the dual of $\h_\D$}

By the Cartier-Quillen-Milnor-Moore theorem \cite{Milnor}, the graded dual $\h_\D^*$ of $\h_\D$ is an enveloping algebra. Its Lie algebra $Prim(\h_\D^*)$ 
has a basis $(f_t)_{t\in \T_\D}$ indexed by $\T_D$:
$$ f_t : \left\{ \begin{array}{rcl}
\h_\D&\longrightarrow & K\\
t_1\ldots t_n &\longrightarrow & 
\left\{ \begin{array}{l}
0 \mbox{ if }n \neq 1,\\
\delta_{t,t_1}\mbox{ if }n=1.
\end{array}\right.\end{array}\right.$$

Recall that a pre-Lie algebra (or equivalently a Vinberg algebra or a left-symmetric algebra) is a couple $(A,\star)$, where $\star$ is a bilinear product on $A$
such that for all $x,y,z \in A$:
$$(x \star y) \star z-x \star (y \star z)=(y \star x) \star z-y \star (x \star z).$$
Pre-Lie algebras are Lie algebras, with bracket given by $[x,y]=x \star y-y\star x$. \\

The Lie bracket of $Prim(\h_\D^*)$ is induced by a pre-Lie product $\star $ given in the following way: if $f,g \in Prim(\h_\D^*)$, $f \star g$ is 
the unique element of $Prim(\h_\D^*)$ such that for all $t \in \T_\D$,
$$(f\star g)(t)=(f \otimes g) \circ (\pi \otimes \pi)\circ \Delta(t),$$
where $\pi$ is the projection on $Vect(\T^{\D})$ which vanishes on the forests which are not trees.
In other words, if $t,t' \in \T_\D$:
$$f_t \star f_{t'}=\sum_{t'' \in \T_\D} n(t,t';t'') f_{t''},$$
where $n(t,t';t')$ is the number of admissible cuts $c$ of $t''$ such that $P^c(t'')=t$ and $R^c(t'')=t'$.
It is proved that $(prim(\h_\D^*), \star)$ is the free pre-Lie algebra generated by the $\tdun{$d$}$'s, $d \in \D$: see \cite{Chapoton1,Chapoton3}.
Note that $\h_\D^*$ is isomorphic to the Grossman-Larson Hopf algebra of rooted trees \cite{Grossman1,Grossman2,Grossman3}.

\section{Definitions and properties of SDSE}

\subsection{Unique solution of an SDSE}

\begin{defi}\textnormal{Let $I$ be a finite, non-empty set, and let $F_i \in K[[h_j,j\in I]]$ be a non-constant formal series for all $i\in I$.
The {\it system of Dyson-Schwinger combinatorial equations} (briefly, the SDSE) associated to $(F_i)_{i\in I}$ is:
$$\forall i\in I,\: X_i=B^+_i(f_i(X_j,j\in I)),$$
where $X_i \in \widehat{\h_I}$ for all $i\in I$.
}\end{defi}

In order to ease the notation, we shall often assume that $I=\{1,\ldots,N\}$ in the proofs, without loss of generality. \\

{\bf Notations.} We assume here that $I=\{1,\ldots,N\}$. 
\begin{enumerate}
\item Let $(S)$ be an SDSE. We shall denote, for all $i\in I$:
$$F_i=\sum_{p_1,\cdots,p_N}a_{(p_1,\cdots,p_N)}^{(i)} h_1^{p_1}\cdots h_N^{p_N}.$$
\item Let $1\leq j \leq N$. We put $\varepsilon_j=(0,\cdots,0,1,0,\cdots,0)$ where the $1$ is in position $j$. We shall denote, for all $i\in I$,
$a^{(i)}_j=a^{(i)}_{\varepsilon_j}$; for all $j,k\in I$, $a^{(i)}_{j,k}=a^{(i)}_{\varepsilon_j+\varepsilon_k}$, and so on.
\end{enumerate}

{\bf Remark.} We assume that there is no constant $F_i$. Indeed, if $F_i \in K$, then $X_i$ is a multiple of $\tdun{$i$}$.
We shall always avoid this degenerated case in all this text.

\begin{prop}
Let $(S)$ be an SDSE. Then it admits a unique solution $(X_i)_{i\in I} \in \left(\widehat{\h_I}\right)^I$.
\end{prop}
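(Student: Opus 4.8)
The plan is to construct the solution by successive approximation using the grading of $\widehat{\h_I}$, exploiting that each $B^+_i$ strictly raises degree. First I would set up the iteration: define $X_i^{[0]}=0$ for all $i$, and inductively $X_i^{[n+1]}=B^+_i\bigl(F_i(X_1^{[n]},\ldots,X_N^{[n]})\bigr)$. Since the $F_i$ are non-constant, $F_i(0,\ldots,0)$ is a nonzero scalar (the constant term $a^{(i)}_{(0,\ldots,0)}$), so $X_i^{[1]}=a^{(i)}_{(0,\ldots,0)}\,\tdun{$i$}$ already has a nonzero degree-$1$ part; and because $B^+_i$ sends $\h_I(k)$ into $\h_I(k+1)$, the substitution $F_i(X_1^{[n]},\ldots,X_N^{[n]})$ makes sense in $\widehat{\h_I}$ (the argument has zero constant term at every stage after the first iterate is fed back, except one must be slightly careful: $X_i^{[n]}$ itself has zero degree-$0$ part for all $n\geq 0$, which is what is needed for $F_i(X_\bullet^{[n]})$ to converge, per the construction of $f(X)$ recalled in the excerpt).

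The key step is to show the sequence $(X_i^{[n]})_n$ stabilises in each degree, i.e. that $X_i^{[n]}$ and $X_i^{[n+1]}$ agree up to degree $n$. I would prove by induction on $n$ that $\mathrm{val}\bigl(X_i^{[n+1]}-X_i^{[n]}\bigr)>n$ for all $i$, equivalently that the homogeneous components of $X_i^{[n]}$ in degrees $\leq n$ are independent of the choice of large iterate. The base case is clear since $X_i^{[1]}-X_i^{[0]}$ has valuation $1$. For the inductive step, if $X_j^{[n]}$ and $X_j^{[n-1]}$ agree in all degrees $\leq n-1$ for every $j$, then $F_i(X_\bullet^{[n]})$ and $F_i(X_\bullet^{[n-1]})$ agree in all degrees $\leq n-1$ — because any monomial $X_{j_1}\cdots X_{j_k}$ with $k\geq 1$ has each factor of valuation $\geq 1$, so its components in degree $\leq n-1$ only depend on the components of the factors in degree $\leq n-1$ — and then applying $B^+_i$, which raises degree by $1$, shows $X_i^{[n+1]}$ and $X_i^{[n]}$ agree in all degrees $\leq n$. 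Hence $(X_i^{[n]})_n$ is Cauchy in $(\widehat{\h_I},d)$, which is complete, so it converges to some $X_i\in\widehat{\h_I}$ with $x_0^{(i)}=0$; passing to the limit (using continuity of $B^+_i$ and of the formal-series substitution, both being built from homogeneous-of-bounded-degree operations) yields $X_i=B^+_i(F_i(X_\bullet))$, so a solution exists.

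For uniqueness, suppose $(X_i)$ and $(Y_i)$ are two solutions. I would show by induction on $n$ that $X_i$ and $Y_i$ have the same homogeneous components in all degrees $\leq n$, for every $i$. In degree $1$: the degree-$1$ part of $X_i=B^+_i(F_i(X_\bullet))$ is exactly the degree-$0$ part of $F_i(X_\bullet)$ grafted on a root, namely $a^{(i)}_{(0,\ldots,0)}\tdun{$i$}$, which is the same for $Y_i$. For the inductive step, the degree-$(n+1)$ component of $X_i=B^+_i(F_i(X_\bullet))$ is determined by the degree-$n$ component of $F_i(X_\bullet)$, which in turn depends only on the components of the various $X_j$ in degrees $\leq n$ (again since each factor in a nonconstant monomial has valuation $\geq 1$); by the induction hypothesis these coincide with those of $Y_j$, so the degree-$(n+1)$ components of $X_i$ and $Y_i$ agree. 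This completes the induction, giving $X_i=Y_i$ for all $i$.

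I expect the main obstacle to be purely bookkeeping: making precise the claim that the degree-$\leq m$ part of $F_i(X_1,\ldots,X_N)$ depends only on the degree-$\leq m$ parts of $X_1,\ldots,X_N$ — this relies on $F_i$ being non-constant so that every monomial actually appearing (beyond the scalar term) is a product of positively-graded elements, hence any individual monomial contributes to degree $\geq$ (its number of factors) and, within a fixed target degree $m$, only finitely many monomials contribute and each only through bounded-degree parts of its arguments. Once this stabilisation lemma is stated cleanly, both existence (Cauchy sequence in a complete metric space) and uniqueness (degree-by-degree induction) are routine. No appeal to the Hopf or pre-Lie structure is needed; only the gradation and completeness of $\widehat{\h_I}$, and the degree-raising property of $B^+_i$.
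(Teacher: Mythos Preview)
Your proof is correct, but it follows a different route from the paper's. The paper argues directly on the tree expansion: writing $X_i=\sum_{t\in \T_I^{(i)}} a_t\,t$, it observes that the equation $X_i=B^+_i(F_i(X_\bullet))$ forces, for each tree $t=B^+_i(t_{1,1}^{p_{1,1}}\cdots t_{N,q_N}^{p_{N,q_N}})$, an explicit recursion expressing $a_t$ in terms of the $a^{(i)}_{(p_1,\ldots,p_N)}$ and the coefficients $a_{t_{k,l}}$ of the (strictly smaller) subtrees. This simultaneously gives existence (define $a_t$ by the formula) and uniqueness (any solution must satisfy it), and it produces the closed formula~(\ref{E1}) that the paper reuses repeatedly afterwards (e.g.\ in the proofs of Propositions~\ref{16} and~\ref{19}). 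Your Picard-style iteration in the completion is equally valid and perhaps more conceptual --- it isolates exactly the two structural facts needed (each $X_j$ has zero constant term, and $B^+_i$ raises degree by one) without unpacking the combinatorics of trees --- but it does not produce~(\ref{E1}) as a byproduct.

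One small slip: ``$F_i$ non-constant'' does \emph{not} imply $F_i(0,\ldots,0)\neq 0$; a non-constant series may well have zero constant term. Fortunately your argument never actually uses this: what you need, and correctly identify a few lines later, is only that each $X_i^{[n]}$ has zero degree-$0$ part (guaranteed by $B^+_i$), so that the substitution $F_i(X_\bullet^{[n]})$ is defined. The non-constancy hypothesis plays no role in existence or uniqueness; it is there only to exclude the degenerate situation $X_i\in K\tdun{$i$}$, as the paper remarks just before the proposition.
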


\begin{proof} We assume here that $I=\{1,\ldots,N\}$. If $(X_1,\cdots,X_N)$ is a solution of $S$, then $X_i$ is a linear (infinite) span of rooted trees 
with a root decorated by $i$. We denote:
$$X_i=\sum_{t\in \T_I^{(i)}} a_t t.$$
These coefficients are uniquely determined by the following formulas: if
$$t=B_i^+\left(t_{1,1}^{p_{1,1}}\cdots t_{1,q_1}^{p_{1,q_1}}\cdots t_{N,1}^{p_{N,1}}\cdots t_{N,q_N}^{p_{N,q_N}}\right),$$
where the $t_{i,j}$'s are different trees, such that the root of $t_{i,j}$ is decorated by $i$ for all $i\in I$,  $1\leq j \leq q_i$, then:
\begin{equation}\label{E1}
a_t=\left(\prod_{i=1}^N \frac{(p_{i,1}+\cdots+p_{i,q_i})!}{p_{i,1}!\cdots p_{i,q_i}!}\right)
a^{(i)}_{(p_{1,1}+\cdots+p_{1,q_1},\cdots,p_{N,1}+\cdots+p_{N,q_N})} a_{t_{1,1}}^{p_{1,1}}\cdots a_{t_{N,q_N}}^{p_{N,q_N}}.
\end{equation}
So $(S)$ has a unique solution. \end{proof}

\begin{defi}\textnormal{
Let $(S)$ be an SDSE and let $X=(X_i)_{i\in I}$ be its unique solution. The subalgebra of $\h_I$ generated by the homogeneous components 
$X_i(k)$'s of the $X_i$'s will be denoted by $\hs$. If $\hs$ is Hopf, the system $(S)$ will be said to be Hopf.
}\end{defi}

\subsection{Graph associated to an SDSE}

We associate a oriented graph to each SDSE in the following way:

\begin{defi}\textnormal{
Let $(S)$ be an SDSE. 
\begin{enumerate}
\item We construct an oriented graph $\gs$ associated to $(S)$ in the following way:
\begin{itemize}
\item The vertices of $\gs$ are the elements of $I$.
\item There is an edge from $i$ to $j$ if, and only if, $\displaystyle \frac{\partial F_i}{\partial h_j} \neq 0$.
\end{itemize}
\item If $\displaystyle \frac{\partial F_i}{\partial h_i} \neq 0$, the vertex $i$ will be said to be {\it self-dependent}.
In other words, if $i$ is self-dependent, there is a loop from $i$ to itself in $\gs$.
\item If $\gs$ is connected, we shall say that $(S)$ is {\it connected}.
\end{enumerate}}\end{defi}

{\bf Remark.} If $(S)$ is not connected, then $(S)$ is the union of SDSE $(S_1)$, $\cdots$, $(S_k)$ with disjoint sets of indeterminates , 
so $\hs\approx \h_{(S_1)}\otimes \cdots \otimes \h_{(S_k)}$. As a corollary, $(S)$ is Hopf if, and only if, for all $j$, $(S_j)$ is Hopf.\\

Let $(S)$ be an SDSE and let $\gs$ be the associated graph. Let $i$ and $j$ be two vertices of $\gs$. We shall say that $j$ is a direct
descendant of $i$ (or $i$ is a direct ascendant of $j$) if there is an oriented edge from $i$ to $j$; we shall say that $j$ is a descendant
of $i$ (or $i$ is an ascendant of $j$) if there is an oriented path from $i$ to $j$. We shall write "$i \longrightarrow j$" for "$j$ is a direct descendant of $i$".

\subsection{Operations on Hopf SDSE}

\begin{prop}[change of variables]
Let $(S)$ be the SDSE associated to $(F_i(h_j, \:j\in I))_{i\in I}$. Let $\lambda_i$ and $\mu_i$ be non-zero scalars for all $i\in I$. 
The system $(S)$ is Hopf if, and only if, the SDSE system  $(S')$ associated to $(\mu_i F_i(\lambda_j h_j,\: j\in J))_{i\in I}$ is Hopf.
\end{prop}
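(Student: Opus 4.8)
The plan is to produce an explicit graded Hopf algebra automorphism of $\h_I$ that carries $\hs$ onto $\mathcal{H}_{(S')}$, so that "being a Hopf subalgebra" transfers from one to the other. For a family $\nu=(\nu_i)_{i\in I}$ of nonzero scalars, let $\theta_\nu$ be the unique algebra endomorphism of $\h_I$ sending each $t\in\T_I$ to $\big(\prod_{v}\nu_{\mathrm{dec}(v)}\big)\,t$, the product being over the vertices $v$ of $t$ and $\mathrm{dec}(v)\in I$ denoting the decoration of $v$. Then $\theta_\nu$ is bijective with inverse $\theta_{\nu^{-1}}$, is homogeneous of degree $0$, hence extends continuously to $\widehat{\h_I}$. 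Two facts need to be checked. First, $\theta_\nu$ is a coalgebra morphism: for any admissible cut $c$ of $t$, the vertex set of $t$ is the disjoint union of those of $P^c(t)$ and $R^c(t)$, so applying $\theta_\nu\otimes\theta_\nu$ to $\Delta(t)=\sum_{c\in Adm_*(t)}P^c(t)\otimes R^c(t)$ reproduces the global scalar $\prod_v\nu_{\mathrm{dec}(v)}$ in every term, i.e. $(\theta_\nu\otimes\theta_\nu)\circ\Delta=\Delta\circ\theta_\nu$. Second, $\theta_\nu\circ B^+_i=\nu_i\,B^+_i\circ\theta_\nu$ on $\h_I$, hence on $\widehat{\h_I}$ by continuity, since grafting on a root decorated by $i$ adds exactly one vertex, decorated by $i$. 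Thus $\theta_\nu$ is a graded Hopf algebra automorphism of $\h_I$.

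Next I relate the two solutions. Let $X=(X_i)_{i\in I}$ be the solution of $(S)$, put $\nu_i:=\lambda_i\mu_i$, and write $\theta:=\theta_\nu$. Applying the continuous algebra morphism $\theta$ (which fixes $1$) to $X_i=B^+_i(F_i(X_j,\,j\in I))$ and using $\theta\circ B^+_i=\nu_i B^+_i\circ\theta$, one gets $\theta(X_i)=\nu_i\,B^+_i\big(F_i(\theta(X_j),\,j\in I)\big)$. Hence $Z_i:=\lambda_i^{-1}\theta(X_i)$ satisfies $Z_i=B^+_i\big(\mu_i F_i(\lambda_j Z_j,\,j\in I)\big)$, which is precisely the system $(S')$; by uniqueness of the solution, $(Z_i)_{i\in I}$ is the solution $Y=(Y_i)_{i\in I}$ of $(S')$, that is $Y_i=\lambda_i^{-1}\theta(X_i)$ for all $i$. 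Since $\theta$ is homogeneous of degree $0$, this yields $\theta(X_i(n))=\lambda_i\,Y_i(n)$ for all $i\in I$ and $n\in\mathbb{N}$.

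Finally, $\hs$ is by definition the subalgebra of $\h_I$ generated by the $X_i(n)$, so its image under the algebra automorphism $\theta$ is the subalgebra generated by the $\theta(X_i(n))=\lambda_i Y_i(n)$, namely $\mathcal{H}_{(S')}$. Thus $\theta$ restricts to an algebra isomorphism $\hs\to\mathcal{H}_{(S')}$ and intertwines the coproduct of $\h_I$; applying $\theta\otimes\theta$ (an automorphism of $\h_I\otimes\h_I$) and its inverse shows that $\Delta(\hs)\subseteq\hs\otimes\hs$ holds if and only if $\Delta(\mathcal{H}_{(S')})\subseteq\mathcal{H}_{(S')}\otimes\mathcal{H}_{(S')}$ holds. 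As both subalgebras are graded connected, stability under $\Delta$ already makes them Hopf subalgebras. Hence $(S)$ is Hopf if and only if $(S')$ is, which also exhibits the promised isomorphism $\hs\simeq\mathcal{H}_{(S')}$.

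The argument is essentially bookkeeping; the only structural input is that the rescaling $\theta_\nu$ is simultaneously a Hopf algebra automorphism of $\h_I$ and compatible (up to the factor $\nu_i$) with the cocycles $B^+_i$. I expect the one point genuinely requiring care to be the choice $\nu_i=\lambda_i\mu_i$ together with the normalization $Y_i=\lambda_i^{-1}\theta(X_i)$: the extra factor $\nu_i$ coming out of $B^+_i$ must exactly absorb the scaling $\mu_i$ in front of $F_i$ once the rescaling $\lambda_j$ of the arguments $h_j$ has been accounted for, and getting these exponents/signs straight is where a slip would occur.
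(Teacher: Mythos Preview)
Your proof is correct and follows essentially the same approach as the paper: the paper defines the automorphism $\phi$ sending a forest $F$ to $\prod_i(\mu_i\lambda_i)^{n_i(F)}\,F$, notes $\phi\circ B^+_i=\mu_i\lambda_i\,B^+_i\circ\phi$, and sets $Y_i=\lambda_i^{-1}\phi(X_i)$ to identify the solution of $(S')$, exactly as you do with $\theta_\nu$ for $\nu_i=\lambda_i\mu_i$. You supply a bit more detail (the explicit verification that $\theta_\nu$ is a coalgebra morphism via the partition of vertices under admissible cuts, and the remark that graded connectedness ensures the antipode), but the argument is the same.
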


\begin{proof} We assume that $I=\{1,\ldots,N\}$. We consider the following morphism:
$$\phi : \left\{ \begin{array}{rcl}
\h_I&\longrightarrow & \h_I\\
F\in \F&\longrightarrow &(\mu_1\lambda_1)^{n_1(F)} \cdots (\mu_N\lambda_N)^{n_N(F)} F,
\end{array}\right.$$
where $n_i(F)$ is the number of vertices of $F$ decorated by $i$. Then $\phi$ is a Hopf algebra automorphism and for all $i$, 
$\phi \circ B^+_i=\mu_i\lambda_i B^+_i \circ \phi$. Moreover, if we put $Y_i= \frac{1}{\lambda_i}\phi(X_i)$ for all $i$:
\begin{eqnarray*}
Y_i&=&\frac{1}{\lambda_i} \phi \circ B^+_i(F_i(X_1,\cdots,X_N))\\
&=&\frac{1}{\lambda_i} \mu_i \lambda_i B_i^+(F_i(\phi(X_1),\cdots, \phi(X_N)))\\
&=&\mu_i B_i^+(F_i(\lambda_1 Y_1,\cdots,\lambda_N Y_N)).
\end{eqnarray*}
So $(Y_1,\cdots, Y_N)$ is the solution of the system $(S')$. Moreover, $\phi$ sends $\hs$ onto $\h_{(S')}$.
As $\phi$ is a Hopf algebra automorphism, $\h_{(S)}$ is a Hopf subalgebra of $\h_I$ if, and only if, $\h_{(S')}$ is. \end{proof}\\

{\bf Remark.} A change of variables does not change the graph associated to $(S)$.

\begin{prop}[restriction]
Let $(S)$ be the SDSE associated to $(F_i(h_j,\:j\in I))_{i\in I}$ and let $I' \subseteq I$, non-empty. 
Let $(S')$ be the SDSE associated to $\left(F_i(h_j,j\in I)_{\mid h_j=0, \: \forall j\notin I'}\right)_{i\in I'}$. If $(S)$ is Hopf, then $(S')$ also is.
\end{prop}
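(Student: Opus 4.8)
The plan is to exhibit $(S')$ as the image of $(S)$ under a Hopf algebra morphism that erases the decorations outside $I'$, and then to transport the Hopf property along this morphism. Concretely, I would introduce the linear map $\pi\colon \h_I\longrightarrow \h_{I'}$ sending a forest $F$ to itself if every vertex of $F$ is decorated by an element of $I'$, and to $0$ otherwise. The first task is to check that $\pi$ is a surjective morphism of graded Hopf algebras. It is visibly an algebra morphism (a product of forests has a decoration outside $I'$ precisely when one of its factors does), and, since $\Delta\circ\pi$ and $(\pi\otimes\pi)\circ\Delta$ are both algebra morphisms from $\h_I$ to $\h_{I'}\otimes\h_{I'}$, it suffices to compare them on a tree $t\in\T_I$: if $t\in\T_{I'}$, every admissible cut of $t$ produces forests in $\h_{I'}$ and both maps give $\Delta(t)$, while if $t$ has a vertex decorated outside $I'$, then for each admissible cut $c$ that vertex belongs either to $R^c(t)$ or to one of the trees of $P^c(t)$, whence $(\pi\otimes\pi)(P^c(t)\otimes R^c(t))=0$ and both maps give $0$. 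Compatibility with the counit is immediate, and in the same way one gets $\pi\circ B^+_i=B^+_i\circ\pi$ for every $i\in I'$, since $B^+_i(F)$ has all its decorations in $I'$ if and only if $F$ does.

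Since $\pi$ is homogeneous of degree $0$ it extends to a continuous morphism $\widehat{\h_I}\to\widehat{\h_{I'}}$, still denoted $\pi$, which commutes with the operators $B^+_i$, $i\in I'$, and with substitution into formal series. I would then apply $\pi$ to the unique solution $X=(X_i)_{i\in I}$ of $(S)$. For $j\notin I'$ the series $X_j$ is a span of trees whose root is decorated by $j$, so $\pi(X_j)=0$. Writing $F_i'$ for the series $F_i(h_j,\,j\in I)_{\mid h_j=0,\ \forall j\notin I'}$ of $(S')$ and $Y_i=\pi(X_i)$ for $i\in I'$, every monomial of $F_i$ involving some $h_j$ with $j\notin I'$ is then killed by $\pi$, hence
\[
Y_i=B^+_i\bigl(\pi(F_i(X_j,\,j\in I))\bigr)=B^+_i\bigl(F_i'(Y_j,\,j\in I')\bigr).
\]
Thus $(Y_i)_{i\in I'}$ is a solution of $(S')$, and by uniqueness it is \emph{the} solution; since $\pi$ preserves the grading, $\h_{(S')}$ is the subalgebra of $\h_{I'}$ generated by the homogeneous components $Y_i(k)=\pi(X_i(k))$, $i\in I'$, $k\geq 1$.

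Finally, since $\pi(X_i(k))=0$ for $i\notin I'$ and $\pi$ is an algebra morphism, the generators of $\pi(\hs)$ are exactly the $Y_i(k)$, so $\pi(\hs)=\h_{(S')}$. Assuming $(S)$ is Hopf, i.e. $\Delta(\hs)\subseteq\hs\otimes\hs$, for any $x=\pi(y)$ with $y\in\hs$ we get $\Delta(x)=(\pi\otimes\pi)\circ\Delta(y)\in\pi(\hs)\otimes\pi(\hs)=\h_{(S')}\otimes\h_{(S')}$, so $\h_{(S')}$ is a Hopf subalgebra of $\h_{I'}$ and $(S')$ is Hopf. I do not expect a genuine obstacle here: the whole statement is essentially a naturality property of the correspondence $(S)\mapsto\hs$ under deletion of decorations, and the only two points requiring any care are that $\pi$ respects the coproduct and that it intertwines substitution into the $F_i$, both of which reduce to the combinatorial descriptions of admissible cuts and of $f(X)$ recalled in the preliminaries.
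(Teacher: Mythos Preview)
Your proof is correct and follows exactly the same route as the paper: both introduce the Hopf algebra epimorphism $\h_I\to\h_{I'}$ that kills forests with a decoration outside $I'$, observe that it maps $\hs$ onto $\h_{(S')}$, and conclude. The paper's version is considerably terser (it simply asserts that $\phi$ is a Hopf epimorphism and that $\phi(\hs)=\h_{(S')}$), whereas you spell out the verification of the coalgebra property, the compatibility with $B^+_i$, and the passage to completions; but the underlying argument is identical.
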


\begin{proof} We consider the epimorphism $\phi$ of Hopf algebras
from $\h_I$ to $\h_{I'}$, obtained by sending the forests with at least a vertex decorated by an element which is not in $I'$ to zero. 
Then $\phi$ sends $\hs$ to $\h_{(S')}$. As $\phi$ is a morphism of Hopf algebras, if $\h_{(S)}$ is a Hopf subalgebra of $\h_I$,
$\h_{(S')}$ is a Hopf subalgebra of $\h_{I'}$. \end{proof} \\

{\bf Remark.} The restriction to a subset of vertices $I'$ changes $\gs$ into the graph obtained by deleting all the vertices $j\notin I'$ and 
all the edges related to these vertices.

\begin{prop}[dilatation]
Let $(S)$ be the system associated to $(F_i)_{i\in I}$ and $(S')$ be a system associated to a family $(F'_j)_{j\in J}$, such that there exists a partition
$J=\displaystyle \bigcup_{i\in I} J_i$, with the following property: for all $i\in I$, for all $x \in I_i$,
$$F'_x=F_i\left(\sum_{y\in I_j} h_y, \: j\in I \right).$$
Then $(S)$ is Hopf, if, and only if, $(S')$ is Hopf. We shall say that $(S')$ is a dilatation of $(S)$.
\end{prop}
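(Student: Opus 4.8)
The plan is to exhibit an explicit relation between the solutions of $(S)$ and $(S')$, and then to promote this to a statement about the Hopf subalgebras. First I would unpack the decoration structure: the index set of $(S')$ is $J=\bigcup_{i\in I}J_i$, and there is a natural projection $\rho:J\to I$ sending $x\in J_i$ to $i$. This $\rho$ induces an algebra morphism $\rho_*:\h_J\to\h_I$ (relabel each decoration $x$ by $\rho(x)$) which is compatible with the coproduct and with the grafting operators in the sense that $\rho_*\circ B^+_x=B^+_{\rho(x)}\circ\rho_*$; hence it is a morphism of Hopf algebras, and it is easily checked to be surjective. Conversely, for each choice of a section (not needed here) one can push trees from $\h_I$ into $\h_J$, but the key direction will be $\rho_*$.

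Next I would relate the solutions. Let $(X_i)_{i\in I}$ be the solution of $(S)$ and $(X'_x)_{x\in J}$ the solution of $(S')$. The defining property $F'_x=F_i\!\left(\sum_{y\in J_j}h_y,\ j\in I\right)$ for $x\in J_i$ says exactly that, if we set $Y_i:=\sum_{x\in J_i}X'_x\in\widehat{\h_J}$, then for $x\in J_i$ we have $X'_x=B^+_x\!\left(F_i(Y_1,\dots,Y_N)\right)$; summing over $x\in J_i$ and applying $\rho_*$ gives, using $\rho_*\circ B^+_x=B^+_i\circ\rho_*$ and $\rho_*(Y_j)=\sum_{x\in J_j}\rho_*(X'_x)$, that $\bigl(\rho_*(Y_i)\bigr)_{i\in I}$ solves $(S)$; by uniqueness of the solution (the previous proposition) $\rho_*(Y_i)=X_i$, i.e. $\rho_*\!\left(\sum_{x\in J_i}X'_x\right)=X_i$ for all $i$. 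In particular $\rho_*$ maps the homogeneous components generating $\h_{(S')}$ onto those generating $\h_{(S)}$, so $\rho_*(\h_{(S')})=\h_{(S)}$.

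It remains to deduce the Hopf equivalence. If $(S')$ is Hopf, then $\h_{(S')}$ is a Hopf subalgebra of $\h_J$; applying the Hopf algebra morphism $\rho_*$, its image $\h_{(S)}$ is a Hopf subalgebra of $\h_I$, so $(S)$ is Hopf. For the converse, suppose $(S)$ is Hopf. I would argue that $\h_{(S')}$ is stable under the coproduct directly from the cocycle identity: since $X'_x=B^+_x(F_i(Y_1,\dots,Y_N))$, one has $\Delta(X'_x)=X'_x\otimes 1+(\mathrm{Id}\otimes B^+_x)\Delta(F_i(Y_\bullet))$, and $\Delta(F_i(Y_\bullet))$ is a polynomial (via the $a^{(i)}$'s) in the $Y_j$'s, whose homogeneous components lie in $\h_{(S')}\otimes\h_{(S')}$ precisely when the analogous components for $(S)$ lie in $\h_{(S)}\otimes\h_{(S)}$ — because the combinatorial identities expressing $\Delta(X_i(k))$ in terms of lower $X_j(l)$'s and the $a^{(i)}$'s are the \emph{same} identities, with each symbol $X_j$ replaced by the sum $Y_j=\sum_{y\in J_j}X'_y$ and each tree replaced by its decoration-refined versions. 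The cleanest way to make this precise is to note that for a fixed degree $n$ the span of the $X_i(k)$, $k\le n$, and the span of the $X'_x(k)$, $k\le n$, $x\in J$, are linked: $\rho_*$ restricted to the latter is onto the former, and one shows it is also (degreewise) injective on the subspace of "symmetrised" combinations that actually occur, using formula (\ref{E1}) which shows the coefficient of a tree in $X'_x$ depends only on the $\rho$-image data together with the multiplicities $p_{\bullet,\bullet}$. I expect \textbf{this last injectivity/transfer step to be the main obstacle}: one must verify that no cancellation is lost when refining decorations, i.e. that $\h_{(S')}$ is not strictly larger (in the relevant graded pieces) than the $\rho_*$-preimage of $\h_{(S)}$ intersected with the symmetrised span. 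Once that bookkeeping is done, coassociativity-compatibility transfers in both directions and the equivalence follows.
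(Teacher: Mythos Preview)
Your direction $(S')$ Hopf $\Rightarrow$ $(S)$ Hopf is fine; using the relabelling morphism $\rho_*$ works, and is essentially equivalent to the paper's one-line argument by restriction (pick one vertex in each $J_i$).

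The gap is in the converse, and you have correctly located it: the ``injectivity/transfer'' step you describe is not made precise, and there is no obvious reason it should go through. Knowing that $\rho_*(\h_{(S')})=\h_{(S)}$ and that $\h_{(S)}$ is Hopf tells you nothing about $\Delta$ on $\h_{(S')}$, because $\rho_*$ collapses information. The attempted workaround via formula~(\ref{E1}) does not give the control you want: the coefficients $a_t$ for $t\in\T_J$ do depend on more than the $\rho$-image of $t$ (they depend on how vertices in the same $J_i$ are distributed among the branches), so there is no clean ``symmetrised subspace'' on which $\rho_*$ is injective and which contains everything you need.

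The paper avoids this entirely by building a Hopf algebra morphism in the \emph{opposite} direction. Define $\phi:\h_I\to\h_J$ by $\phi\circ B^+_i=\sum_{x\in J_i}B^+_x\circ\phi$ (on forests this replaces each vertex decorated $i$ by a sum over all decorations in $J_i$). This is a Hopf morphism, and one checks $\phi(X_i)=\sum_{x\in J_i}X'_x$. Since $(S)$ is Hopf, write $\Delta(X_i)=\sum_n P_n^{(i)}(X_1,\ldots,X_N)\otimes X_i(n)$; applying $\phi\otimes\phi$ gives
\[
\sum_{x\in J_i}\Delta(X'_x)=\sum_n P_n^{(i)}\Bigl(\textstyle\sum_{y\in J_1}X'_y,\ldots,\sum_{y\in J_N}X'_y\Bigr)\otimes\Bigl(\textstyle\sum_{x\in J_i}X'_x(n)\Bigr).
\]
Now separate both sides according to the decoration of the root on the right tensorand: for each fixed $x\in J_i$ this isolates $\Delta(X'_x)$ on the left and $-\otimes X'_x(n)$ on the right, showing $\Delta(X'_x)\in\widehat{\h_{(S')}}\otimes\widehat{\h_{(S')}}$. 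This is the missing idea; once you have $\phi$, no bookkeeping about cancellations is needed.
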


\begin{proof} We assume here that $I=\{1,\ldots,N\}$.

$\Longrightarrow$. Let us assume that $(S)$ is Hopf. For all $i\in I$, we can then write:
$$\Delta(X_i)=\sum_{n\geq 0} P_n^{(i)}(X_1,\cdots,X_N) \otimes X_i(n),$$
with the convention $X_i(0)=1$.
Let $\phi:\h_I \longrightarrow \h_{I'}$ be the morphism of Hopf algebras such that, for all $1\leq i \leq N$:
$$\phi \circ B^+_i= \sum_{j\in I_i} B^+_j \circ \phi.$$
Then, immediately, for all $1\leq i \leq N$:
$$\phi(X_i)=\sum_{j\in I_i} X_j'.$$
As a consequence:
$$\sum_{j\in I_i} \Delta(X'_j)=\sum_{j\in I_i} \sum_{n\geq 0} P_n^{(i)} \left(\sum_{k\in I_1} X'_k,\cdots, \sum_{k\in I_N} X'_k\right) \otimes X'_j(n).$$
Conserving the terms of the form $F \otimes t$, where $t$ is a tree with root decorated by $j$, for all $j \in I_i$:
$$\Delta(X'_j)=\sum_{n\geq 0} P_n^{(i)} \left(\sum_{k\in I_1} X'_k,\cdots, \sum_{k\in I_N} X'_k\right) \otimes X'_j(n).$$
So $(S')$ is Hopf.\\

$\Longleftarrow$. By restriction, choosing an element in each $I_i$, if $(S')$ is Hopf, then $(S)$ is Hopf. \end{proof}\\

{\bf Remark.} If $(S')$ is a dilatation of $(S)$, then the set of vertices $J$ of the graph $G_{(S')}$ associated to $(S')$  admits a partition 
indexed by the vertices of $\gs$, and there is an edge from $x \in J_i$ to $y \in J_j$ in $G_{(S')}$ if, and only if, there is an edge from $i$ to $j$ in $\gs$. \\

{\bf Example.} Let $f,g \in K[[h_1,h_2]]$. Let us consider the following SDSE:
\begin{eqnarray*}
&(S):& \left\{ \begin{array}{rcl}
X_1&=&B^+_1(f(X_1,X_2)),\\
X_2&=&B^+_2(g(X_1,X_2)),
\end{array}\right.\\ \\
&(S'):&\left\{ \begin{array}{rcl}
X_1&=&B^+_1(f(X_1+X_2+X_3,X_4+X_5)),\\
X_2&=&B^+_2(f(X_1+X_2+X_3,X_4+X_5)),\\
X_3&=&B^+_3(f(X_1+X_2+X_3,X_4+X_5)),\\
X_4&=&B^+_4(g(X_1+X_2+X_3,X_4+X_5)),\\
X_5&=&B^+_5(g(X_1+X_2+X_3,X_4+X_5)).
\end{array}\right. \end{eqnarray*}
Then $(S')$ is a dilatation of $(S)$.

\begin{prop}[extension] \label{11}
Let $(S)$ be the SDSE associated to $(F_i)_{i\in I}$. Let $0\notin I$ and let $(S')$ be associated to $(F_i)_{i\in I\cup\{0\}}$, with:
$$F_0=1+\sum_{i\in I} a^{(0)}_i h_i.$$
Then $(S')$ is Hopf if, and only if, the two following conditions hold:
\begin{enumerate}
\item $(S)$ is Hopf.
\item For all $i,j \in I^{(0)}=\left\{j\in I\:/\:a^{(0)}_j\neq 0\right\}$, $F_i=F_j$.
\end{enumerate}
If these two conditions hold, we shall say that $(S')$ is an extension of $(S)$.
 \end{prop}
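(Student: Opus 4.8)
The plan is to work throughout with the criterion for a system to be Hopf that is already used in the proof of the dilatation proposition: $(S)$ is Hopf if and only if, for every $i$, one can write $\Delta(X_i)=\sum_{n\geq 0}P^{(i)}_n\otimes X_i(n)$ with $X_i(0)=1$, $P^{(i)}_0=X_i$, and, for $n\geq 1$, $P^{(i)}_n$ a polynomial in the $X_j$ ($j\in I$), hence an element of $\widehat{\hs}$. Before splitting into the two implications I would record two consequences of this criterion, valid whenever $(S)$ is Hopf. First, plugging it into $X_i=B^+_i(F_i(X))$ via the $1$-cocycle identity for $B^+_i$ and using that $B^+_i$ is injective and raises degree by one, one gets $\Delta(F_i(X))=\sum_{m\geq 0}P^{(i)}_{m+1}\otimes (F_i(X))(m)$; applying $Id\otimes\varepsilon$ and the normalisation $F_i(0)=1$ this collapses to the identity $F_i(X)=P^{(i)}_1$. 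Second, the same $1$-cocycle identity applied to $X_0=B^+_0\big(1+\sum_{i\in I}a^{(0)}_iX_i\big)$, together with $X_0(n+1)=B^+_0\big(\sum_{i\in I}a^{(0)}_iX_i(n)\big)$ and $a^{(0)}_i=0$ for $i\notin I^{(0)}$, gives
$$\Delta(X_0)=X_0\otimes 1+\Big(1+\sum_{i\in I}a^{(0)}_iX_i\Big)\otimes X_0(1)+\sum_{i\in I^{(0)}}\sum_{n\geq 1}a^{(0)}_i\,P^{(i)}_n\otimes B^+_0(X_i(n)).$$
I would also note that in $(S')$ the components $X_i$ ($i\in I$) are unchanged, since the $F_i$ there do not involve $h_0$; hence $\hs\subseteq\h_{(S')}$.

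For the sufficiency I would assume conditions 1 and 2. Condition 1 handles $\Delta(X_i)$ for $i\in I$ (these lie in $\hs\subseteq\h_{(S')}$), so only $X_0$ needs attention. Condition 2 says all $F_i$ with $i\in I^{(0)}$ equal a common series $G$; then $F_i(X)=G(X)$ for those $i$, and comparing homogeneous components in $\Delta(F_i(X))=\sum_m P^{(i)}_{m+1}\otimes(G(X))(m)$ shows that $P^{(i)}_n=P^{(j)}_n$ for $i,j\in I^{(0)}$ whenever $X_i(n)=B^+_i((G(X))(n-1))$ is nonzero — and when it is zero, the term $B^+_0(X_i(n))$ vanishes anyway. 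Writing $P_n$ for this common value, the double sum in the formula for $\Delta(X_0)$ collapses term by term to $\sum_{n\geq 1}P_n\otimes B^+_0\big(\sum_{i\in I^{(0)}}a^{(0)}_iX_i(n)\big)=\sum_{n\geq 1}P_n\otimes X_0(n+1)$, displaying $\Delta(X_0)$ in the required form with all left-hand factors in $\widehat{\h_{(S')}}$. Hence $\h_{(S')}$ is stable under $\Delta$, so $(S')$ is Hopf.

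For the necessity I would start by getting condition 1 for free: restricting $(S')$ to the vertex set $I$ (setting $h_0=0$, which does not alter the $F_i$, $i\in I$) returns exactly $(S)$, so $(S)$ is Hopf by the restriction proposition. The formula for $\Delta(X_0)$ is therefore available; on the other hand, since $\h_{(S')}$ is Hopf one also has $\Delta(X_0)=\sum_{m\geq 0}Q_m\otimes X_0(m)$ with $Q_m\in\widehat{\h_{(S')}}$ and $Q_0=X_0$. Comparing the two expressions, cancelling the common operator $B^+_0$ on the right-hand tensorand (it is injective), and isolating the homogeneous component of degree $n$ in the right factor, I would obtain $\sum_{i\in I^{(0)}}a^{(0)}_i\big(Q_{n+1}-P^{(i)}_n\big)\otimes X_i(n)=0$ for every $n\geq 1$. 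Since the nonzero trees among $X_i(n)$, $i\in I^{(0)}$, are linearly independent (they have distinct root decorations), this forces $P^{(i)}_n=Q_{n+1}$ for all $i\in I^{(0)}$ with $X_i(n)\neq 0$; taking $n=1$, where $X_i(1)$ is the nonzero one-vertex tree decorated by $i$, shows that $P^{(i)}_1$ is the same for all $i\in I^{(0)}$. Combined with the identity $F_i(X)=P^{(i)}_1$ this gives $F_i(X)=F_j(X)$ for $i,j\in I^{(0)}$, and since $F\mapsto F(X)$, $K[[h_j,j\in I]]\to\widehat{\h_I}$, is injective (two distinct series differ in their lowest-degree terms) we conclude $F_i=F_j$, i.e.\ condition 2.

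The cocycle bookkeeping is routine; the step I expect to be the crux is the necessity of condition 2, namely passing from "$\Delta(X_0)\in\widehat{\h_{(S')}}\widehat{\otimes}\widehat{\h_{(S')}}$" to "$P^{(i)}_n$ is independent of $i\in I^{(0)}$" and then back to "$F_i=F_j$". This rests on the two structural facts that the components $X_i(n)$ of distinct vertices are trees with distinct roots, hence linearly independent, and on the identity $F_i(X)=P^{(i)}_1$; it is also the only place where the normalisation $F_i(0)=1$ is genuinely needed, since without it one would only recover that the series $F_i$, $i\in I^{(0)}$, are pairwise proportional.
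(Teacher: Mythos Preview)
Your proposal is correct and follows essentially the same approach as the paper: both use the cocycle identity for $B_0^+$ to expand $\Delta(X_0)$, get condition~1 from restriction, and obtain condition~2 by exploiting the linear independence of trees with distinct root decorations in the right tensor factor. The only cosmetic difference is that the paper goes directly to the degree-$2$ component (reading off $\sum_i a^{(0)}_iF_i(X)\otimes\tddeux{$0$}{$i$}$) while you pass through the identity $F_i(X)=P^{(i)}_1$ before specialising to $n=1$; these are the same computation in different notation.
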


\begin{proof} We assume here that $I=\{1,\ldots,N\}$.

$\Longrightarrow$. Let us assume that $(S')$ is Hopf. By restriction, $(S)$ is Hopf. Moreover:
$$X_0=B^+_0\left(1+\sum_{i=1}^N a^{(0)}_i X_i\right)=\tdun{$0$}+\sum_{i=1}^Na^{(0)}_i B^+_0 \circ B^+_i(f_i(X_1,\cdots,X_N)).$$
As $\h_{(S')}$ is a graded Hopf subalgebra, the projection on $\h_{\{0,\cdots,N\}} \otimes \h_{\{0,\cdots,N\}}(2)$ gives:
$$\sum_{i=1}^N a^{(0)}_i F_i(X_1,\cdots,X_N)\otimes \tddeux{$0$}{$i$}\in \h_{(S')} \widehat{\otimes} \h _{(S')}.$$
So this is of the form:
$$P \otimes X_0(2)=P\otimes \left( \sum_{i=1}^N a^{(0)}_i \tddeux{$0$}{$i$} \right),$$
for a certain $P\in \widehat{\h_{(S')}}$. As the $\tddeux{$0$}{$i$}$'s, $i\in I$, are linearly independent, we obtain that for all $i,j$,
$a^{(0)}_i F_i(X_1,\cdots,X_N)=a^{(0)}_i P$ for all $i$, and this implies the second item.\\

$\Longleftarrow$. As $(S)$ is Hopf, we can put for all $1\leq i \leq N$:
$$\Delta(X_i)=X_i\otimes 1+\sum_{k=1}^{+\infty}P_k^{(i)} \otimes X_i(k),$$
where $P_n^{(i)}$ is an element of the completion of $\hs$. By the second hypothesis, if $i,j \in I$, as $F_i=F_j$, $P_n^{(i)}=P_n^{(j)}$.
We then denote by $P_n$ the common value of $P_n^{(i)}$ for all $i\in I$. So:
\begin{eqnarray*}
\Delta(X_0)&=&\tdun{$0$} \otimes 1+1\otimes \tdun{$0$}+\sum_{i=1}^N a_i^{(0)} \Delta \circ B^+_0(X_i)\\
&=&X_0\otimes 1+1\otimes X_0+\sum_{i=1}^N a^{(0)}_i (1+X_i) \otimes \tdun{$0$}
+\sum_{i=1}^N\sum_{j=1}^{\infty} a_i^{(0)} P_j^{(i)} \otimes B^+_0(X_i(j))\\
&=&X_0\otimes 1+1\otimes X_0+\sum_{i=1}^N a^{(0)}_i (1+X_i) \otimes \tdun{$0$}
+\sum_{i=1}^N\sum_{j=1}^{\infty} a_i^{(0)} P_j \otimes B^+_0(X_i(j))\\
&=&X_0\otimes 1+1\otimes X_0+\sum_{i=1}^N a^{(0)}_i (1+X_i) \otimes \tdun{$0$}
+\sum_{i=1}^NP_j \otimes B^+_0\left(\sum_{j=1}^{\infty} a_i^{(0)} X_i(j)\right)\\
&=&X_0\otimes 1+1\otimes X_0+\sum_{i=1}^N a^{(0)}_i (1+X_i) \otimes \tdun{$0$}+\sum_{i=1}^NP_j \otimes X_0(j+1).
\end{eqnarray*}
This belongs to the completion of $\h_{(S')} \otimes \h_{(S')}$, so $(S')$ is Hopf. \end{proof}\\

{\bf Remarks.} \begin{enumerate}
\item If $(S)$ is an extension of $(S')$, then $\gs$ is obtained from $G_{(S')}$ by adding a non-self-dependent vertex with no ascendant.
\item If $I^{(0)}$ is reduced to a single element, then condition 2 is empty.
\end{enumerate}

\begin{defi}
\textnormal{Let $(S)$ a Hopf SDSE and let $i \in I$. We shall say that $i$ is an {\it extension vertex}
if, denoting by $J$ the set of descendants of $i$, the restriction of $(S)$ to $J\cup\{i\}$ is an extension of the restriction of $(S)$ to $J$.}
\end{defi}

\subsection{Constant terms of the formal series}

\begin{lemma}
Let $(S)$ be an Hopf SDSE. If $F_i(0,\cdots,0)=0$, then $X_i=0$.
\end{lemma}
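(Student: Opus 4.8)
The plan is to argue by contradiction. Suppose $X_i\neq 0$; I will show that the Hopf hypothesis then forces $\tdun{$i$}\in\hs$, which is impossible as soon as $F_i(0,\dots,0)=0$.

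First I would record two elementary observations. Put $G_i=F_i(X_1,\dots,X_N)\in\widehat{\h_I}$, so that $X_i=B^+_i(G_i)$ and hence $X_i(n)=B^+_i(G_i(n-1))$ for every $n\geq 1$, with $B^+_i$ injective. Since each $X_j$ has zero constant term, $(G_i)_0=F_i(0,\dots,0)$, so $X_i(1)=B^+_i((G_i)_0)=F_i(0,\dots,0)\,\tdun{$i$}$, which vanishes under our hypothesis; thus, if $X_i\neq 0$, there is a least $n\geq 2$ with $X_i(n)\neq 0$, and then $G_i(n-1)\neq 0$. On the other hand, $\hs$ is generated by the homogeneous components $X_j(k)$, so its degree-$1$ part is $\hs(1)=Vect(X_j(1):j\in I)=Vect(\tdun{$j$}:F_j(0,\dots,0)\neq 0)$, the trees $\tdun{$j$}$ being linearly independent; in particular $\tdun{$i$}\notin\hs(1)$.

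The core of the argument is to read off one bigraded component of $\Delta(X_i(n))$. Using that $B^+_i$ is a $1$-cocycle and that $(G_i)_0=0$, write $\Delta(G_i)=G_i\otimes 1+1\otimes G_i+\widetilde{\Delta}(G_i)$, both legs of $\widetilde{\Delta}(G_i)$ having strictly positive degree. Then
$$\Delta(X_i)=X_i\otimes 1+(Id\otimes B^+_i)\Delta(G_i)=X_i\otimes 1+1\otimes X_i+G_i\otimes\tdun{$i$}+(Id\otimes B^+_i)\widetilde{\Delta}(G_i).$$
I would now isolate the part in $\h_I(n-1)\otimes\h_I(1)$ (here $n-1\geq 1$): the terms $X_i\otimes 1$ and $1\otimes X_i$ have a leg of degree $0$, and $(Id\otimes B^+_i)\widetilde{\Delta}(G_i)$ has its right leg in degree $\geq 2$ since $B^+_i$ raises degree by one; so the $(n-1,1)$-component of $\Delta(X_i(n))$ equals exactly $G_i(n-1)\otimes\tdun{$i$}$.

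To conclude, since $(S)$ is Hopf, $\hs$ is a graded Hopf subalgebra, whence $\Delta(X_i(n))\in\bigoplus_{a+b=n}\hs(a)\otimes\hs(b)$; comparing bidegrees gives $G_i(n-1)\otimes\tdun{$i$}\in\hs(n-1)\otimes\hs(1)$. Choosing a linear form $\phi$ on the finite-dimensional space $\h_I(n-1)$ with $\phi(G_i(n-1))=1$ (possible as $G_i(n-1)\neq 0$) and applying $\phi\otimes Id$ yields $\tdun{$i$}\in\hs(1)$, contradicting the second observation above. Hence $X_i=0$. The only step requiring care is the degree bookkeeping in $\Delta(X_i)$; everything else is formal.
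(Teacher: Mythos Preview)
Your proof is correct and follows essentially the same approach as the paper's: both arguments extract the term $F_i(X_1,\dots,X_N)\otimes\tdun{$i$}$ from $\Delta(X_i)$ via the cocycle property of $B^+_i$, and then use that $\tdun{$i$}\notin\hs$ (since $X_i(1)=0$) to force $F_i(X_1,\dots,X_N)=0$. Your version is simply more explicit about the degree bookkeeping and the linear-algebra step showing $\tdun{$i$}\in\hs(1)$ from $G_i(n-1)\otimes\tdun{$i$}\in\hs(n-1)\otimes\hs(1)$, whereas the paper compresses this into a single sentence.
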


\begin{proof}
If $F_i(0,\cdots,0)=0$, then the homogeneous component of degree $1$ of $X_i$ is zero, so $\tdun{$i$} \notin \hs$. 
Considering the terms of the form $F \otimes \tdun{$i$}$ in $\Delta(X_i)$, we obtain:
$$F_i(X_j,\:j\in I) \otimes \tdun{$i$} \in \hs \otimes \hs.$$
As $\tdun{$i$} \notin \hs$, necessarily $F_i(X_j,\:j\in I)=0$, so $X_i=0$.
\end{proof}\\

As a consequence, if $F_i(0,\cdots,0)=0$, then $\hs=\h_{(S')}$, where $(S')$ is the restriction of $(S)$ to $I-\{i\}$. Using a change of variables, 
we shall always suppose in the sequel that for all  $i$, $F_i(0,\cdots,0)=1$.

\subsection{Main theorem}

{\bf Notations.} For all $\beta \in K$, we put:
$$f_\beta(h)=\sum_{k=0}^{+\infty} \frac{(1+\beta)\cdots (1+\beta(k-1))}{k!}h^k=
 \left\{ \begin{array}{l}
(1-\beta h)^{-\frac{1}{\beta}} \mbox{ if } \beta\neq 0,\\
e^h \mbox{ if }\beta=0.
\end{array}\right.$$

The main aim of this text is to prove the following result:

\begin{theo} \label{14}
Let $(S)$ be a connected SDSE. It is Hopf if and only if one of the following assertion holds:
\begin{enumerate}
\item (Extended multicyclic SDSE). The set $I$ admits a partition $I=I_{\overline{1}}\cup\cdots \cup I_{\overline{N}}$ indexed by the elements
of $\mathbb{Z}/N\mathbb{Z}$, $N\geq 2$, with the following conditions:
\begin{itemize}
\item For all $i\in I_{\overline{k}}$:
$$F_i=1+\sum_{j\in I_{\overline{k+1}}} a^{(i)}_j h_j.$$
\item If $i$ and $i'$ have a common direct ascendant in $\gs$, then $F_i=F_{i'}$ (so $i$ and $i'$ have the same direct descendants).
\end{itemize}
\item (Extended fundamental SDSE). There exists a partition:
$$I=\left(\bigcup_{i \in I_0} J_i\right) \cup \left(\bigcup_{i \in J_0} J_i\right) \cup K_0 \cup I_1 \cup J_1 \cup I_2,$$
with the following conditions:
\begin{itemize}
\item $K_0$, $I_1$, $J_1$, $I_2$ can be empty.
\item The set of indices $I_0 \cup J_0$ is not empty.
\item For all $i \in I_0 \cup J_0$, $J_i$ is not empty.
\end{itemize} 
Up to a change of variables:
\begin{enumerate}
\item For all $i \in I_0$, there exists $\beta_i \in K$, such that for all $x \in J_i$:
$$F_x=f_{\beta_i}\left(\sum_{y\in J_i} h_y\right) 
\prod_{j\in I_0-\{i\}} f_{\frac{\beta_j}{1+\beta_j}}\left((1+\beta_j) \sum_{y\in J_j} h_y\right) \prod_{j\in J_0} f_1\left(\sum_{y\in J_j} h_y\right).$$
\item For all $i\in J_0$, for all $x \in J_i$:
$$F_x=\prod_{j\in I_0} f_{\frac{\beta_j}{1+\beta_j}}\left((1+\beta_j) \sum_{y\in J_j} h_y\right) \prod_{j\in J_0-\{i\}}  f_1\left(\sum_{y\in J_j} h_y\right).$$
\item For all $i\in K_0$:
$$F_i=\prod_{j\in I_0}f_{\frac{\beta_j}{1+\beta_j}}\left((1+\beta_j) \sum_{y\in J_j} h_y\right)\prod_{j\in J_0}f_1\left(\sum_{y\in J_j} h_y\right).$$
\item For all $i\in I_1$, there exist $\nu_i\in K$ and a family of scalars $\left(a_j^{(i)}\right)_{j\in I_0\cup J_0 \cup K_0}$, with 
$(\nu_i \neq 1)$ or $(\exists j\in I_0, \: a^{(i)}_j \neq 1+\beta_j)$ or $(\exists j\in J_0, \: a^{(i)}_j \neq 1)$ or $(\exists j\in K_0, \: a^{(i)}_j \neq 0)$.
Then, if $\nu_i \neq 0$:
$$F_i=\frac{1}{\nu_i} \prod_{j\in I_0} f_{\frac{\beta_j}{\nu_i a^{(i)}_j}}\left(\nu_i a^{(i)}_j\sum_{y\in J_j} h_y\right)
\prod_{j\in J_0} f_{\frac{1}{\nu_i a^{(i)}_j}}\left(\nu_i a^{(i)}_j\sum_{y\in J_j} h_y\right) \prod_{j\in K_0} f_0\left(\nu_i a^{(i)}_jh_j\right)+1-\frac{1}{\nu_i}.$$
If $\nu_i=0$:
$$F_i=-\sum_{j\in I_0} \frac{a^{(i)}_j}{\beta_j} \ln\left(1-\sum_{y\in J_j} h_y\right)-\sum_{j\in J_0} a^{(i)}_j \ln\left(1-\sum_{y\in J_j} h_y\right)
+\sum_{j\in K_0} a^{(i)}_j h_j+1.$$
\item For all $i \in J_1$, there exists $\nu_i\in K-\{0\}$ and a family of scalars $\left(a_j^{(i)}\right)_{j\in I_0\cup J_0 \cup K_0\cup I_1}$, 
with the three following conditions:
\begin{itemize}
\item $I_1^{(i)}=\{j\in I_1\:/\:a^{(i)}_j \neq 0\}$ is not empty.
\item For all $j\in I_1^{(i)}$, $\nu_j=1$.
\item For all $j,k \in I_1^{(i)}$, $F_j=F_k$. In particular, we put $b^{(i)}_t=a^{(j)}_t$ for any $j\in I_1^{(i)}$, for all $t\in I_0 \cup J_0 \cup K_0$.
\end{itemize}
Then:
\begin{eqnarray*}
F_i&=&\frac{1}{\nu_i} \prod_{j\in I_0} f_{\frac{\beta_j}{b^{(i)}_j-1-\beta_j}}\left(\left(b^{(i)}_j-1-\beta_j\right)\sum_{y\in J_j} h_y\right)
\prod_{j\in J_0} f_{\frac{\beta_j}{b^{(i)}_j-1}}\left(\left(b^{(i)}_j-1\right)\sum_{y\in J_j} h_y\right)\\
&&\prod_{j\in K_0} f_0\left(b^{(i)}_jh_j\right)+\sum_{j\in I_1^{(i)}} a^{(i)}_j h_1+1-\frac{1}{\nu_i}.
\end{eqnarray*}
\item $I_2=\{x_1,\ldots,x_m\}$ and for all $1\leq k \leq m$, there exist a set:
$$I^{(x_k)}\subseteq \left(\bigcup_{i \in I_0} J_i\right) \cup \left(\bigcup_{i \in J_0} J_i\right) \cup K_0 \cup I_1 \cup J_1 \cup \{x_1,\ldots,x_{k-1}\}$$
and a family of non-zero scalars $\left(a^{(x_k)}_j\right)_{j\in I^{(x_k)}}$ such that for all $i,j \in I^{(x_k)}$, $F_i=F_j$. Then:
$$F_{x_k}=1+\sum_{j\in I^{(x_k)}} a^{(x_k)}_j h_j.$$
\end{enumerate} \end{enumerate} \end{theo}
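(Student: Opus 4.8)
The plan is to prove Theorem~\ref{14} by combining the structural results that are announced in the introduction, namely the reduction to connected systems, the analysis of the coefficients $\lambda_n^{(i,j)}$, the notion of level of a vertex, and the two dichotomy results (no self-dependent vertex versus every vertex descended from a self-dependent one). Concretely, I would first recall that one direction is essentially free: the extended multicyclic and extended fundamental SDSE are built from cycles (Theorem~\ref{30}) and fundamental systems (Theorem~\ref{32}) by applying a dilatation and a sequence of extensions, and since dilatation preserves the Hopf property (dilatation proposition) and extension preserves it under the stated compatibility conditions (Proposition~\ref{11}), every system of the two listed types is Hopf. So the whole content is the converse.

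For the converse, I would proceed as follows. Let $(S)$ be connected and Hopf. First, using the constant-term lemma and a change of variables, normalise so that $F_i(0,\dots,0)=1$ for all $i$. Next, introduce the scalar sequences $\lambda_n^{(i,j)}$ (from the section on these coefficients), which by Proposition~\ref{19} determine all the coefficients of the $F_i$, and by Proposition~\ref{21} encode the pre-Lie product $f_j(l)\star f_i(k)=\lambda_k^{(i,j)}f_i(k+l)$ on $\lies$. Then invoke the level theory: by Proposition~\ref{23} the level decreases along oriented paths, hence in the connected case either every vertex has finite level or none does. In the ``no finite level'' branch I would show one lands (after dilatation) in the multicyclic family, and in the ``all finite level'' branch in the fundamental family; but the actual dichotomy used in the proof is the graph-theoretic one. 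So I would split into the two cases that are set up in the preliminary graph section.

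Case 1: the graph $\gs$ has no self-dependent vertex. Here I would use the graph-theoretical lemmas together with the constraints the $\lambda_n^{(i,j)}$ impose under the symmetry hypothesis (``if $i,i'$ have a common direct ascendant then $F_i=F_{i'}$''), following Theorem~\ref{51}, to conclude that $(S)$ contains a multicycle $(S_0)$ and is obtained from $(S_0)$ by repeatedly adjoining extension vertices; this gives exactly item~1 of the theorem. Case 2: every vertex of $\gs$ has an ascendant that is self-dependent. Here, following the analysis that culminates in Theorem~\ref{52}, I would identify a fundamental sub-SDSE $(S_0)$, pin down the exponents $\beta_i$ and the shape of the formal series via the $f_\beta$ functions (using the multiplicativity forced by the Hopf condition and the explicit recursions for the $a^{(i)}_{(p_1,\dots,p_N)}$ in terms of the $\lambda$'s), and then show that the remaining vertices are organised into the blocks $I_1$, $J_1$, $I_2$ as iterated extensions, yielding item~2. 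The general connected Hopf $(S)$ not falling cleanly into one of these two cases is handled by Corollary~\ref{50}, which asserts that any connected Hopf SDSE already contains a multicycle or a fundamental $(S_0)$ and is obtained from it by iterated vertex additions; the two cases above are precisely where that corollary is proved, so compiling Theorem~\ref{51} and Theorem~\ref{52} gives the full statement.

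The main obstacle, and the bulk of the real work, will be Case~2: extracting the precise functional form of the $F_x$ in sub-items (a)--(g), in particular proving that the series attached to $I_0$-blocks must be products of $f_{\beta_j}$'s with exactly the indicated rescalings $(1+\beta_j)$, that the $J_0$-blocks force the $f_1$ factors, and that the ``comparison'' vertices in $I_1$ and $J_1$ satisfy the stated arithmetic relations among the $a^{(i)}_j$ and the $\nu_i$. This requires careful bookkeeping of the pre-Lie/associativity constraints on $\lies$ and the reconstruction formula of Proposition~\ref{19}, and is where the explicit hypergeometric identities for $f_\beta$ (the relation $f_\beta(h)=(1-\beta h)^{-1/\beta}$ and the behaviour of these series under substitution and multiplication) do the heavy lifting. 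By contrast, Case~1 is comparatively soft, resting on combinatorics of the oriented graph plus the linearity of affine systems, and the ``if'' direction is immediate from the stability of the Hopf property under dilatation and extension.
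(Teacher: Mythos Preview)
Your overall architecture matches the paper's: the ``if'' direction follows from Theorems~\ref{30} and~\ref{32} together with the stability of the Hopf property under dilatation and extension (Proposition~\ref{11}); the ``only if'' direction is obtained by establishing Corollary~\ref{50} and then refining via Theorem~\ref{51} (multicyclic branch) and Theorem~\ref{52} (fundamental branch). That compilation is exactly how the paper proves Theorem~\ref{14}.

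However, the case split you actually propose contains a genuine error. You take Case~1 to be ``$\gs$ has no self-dependent vertex'' and Case~2 to be ``every vertex has a self-dependent ascendant'', and you claim Case~1 forces the multicyclic family. This is false: the quasi-complete SDSE of Theorem~\ref{34} (the special case $I=J_0$ of a fundamental system) have no self-dependent vertex at all, yet they belong to the \emph{fundamental} family, not the multicyclic one. The paper states this explicitly in the introduction and in section~6: the ``no self-dependent vertex'' situation includes both multicycles \emph{and} quasi-complete systems, and Theorem~40 shows that a connected symmetric Hopf SDSE is either $2$-multicyclic or quasi-complete. Moreover your two cases are not exhaustive, and Case~2 is too narrow: it captures only level-$0$ vertices, whereas the fundamental family contains vertices in $I_1,J_1$ of level~$1$ and vertices in $I_2$ of arbitrarily high level, none of which need have a self-dependent ascendant.

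The dichotomy you should keep is the one you mention and then abandon: finite level versus no finite level. Corollary~\ref{50} is not proved by your self-dependence split; its proof peels off vertices with no ascendant by induction on $|I|$, applies Corollary~\ref{49} to the resulting core (where every vertex has a direct ascendant, hence is either a descendant of a self-dependent vertex, or in a multicycle, or in a symmetric subgraph), and uses Lemma~\ref{46} on connecting vertices to exclude a disconnected $G_0$. Once Corollary~\ref{50} is in hand, Theorem~\ref{52} handles the finite-level branch and Theorem~\ref{51} the infinite-level branch; the detailed shape of the $F_x$ in items (a)--(f) is extracted inside the proof of Theorem~\ref{52} (and its inductive three-case analysis on the level of the added vertex), not from a separate analysis of Case~2 as you describe it.
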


Here is the graph of a system of an extended multicyclic SDSE, with $N=5$. The different subset of the partition are indicated by the different colours.
the multicycle corresponds to the five boxes. An arrow between two boxes means that all vertices of the boxes are related by an arrow.
$$\includegraphics[height=7cm]{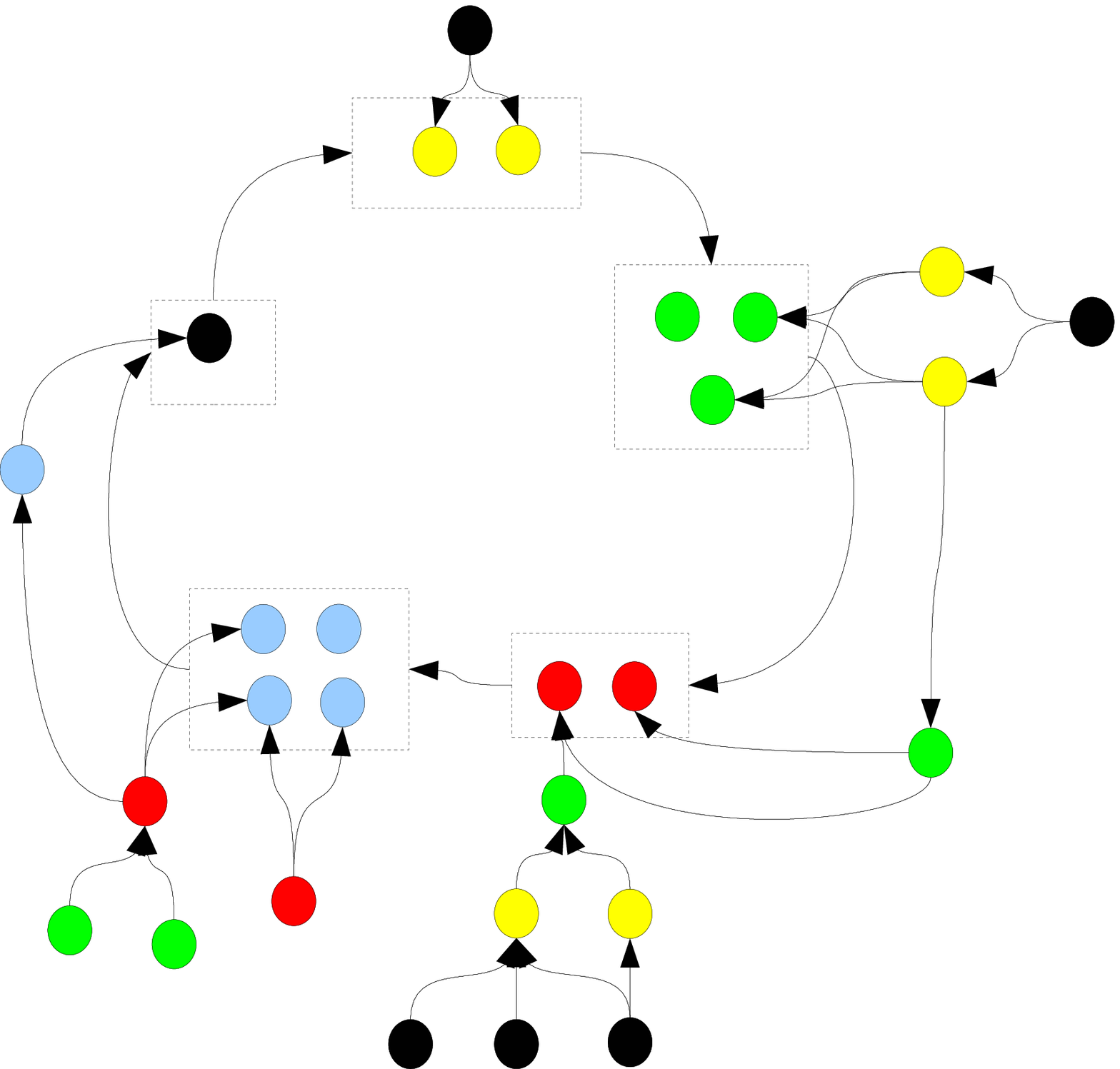}$$

Here is the graph of an extended fundamental SDSE. The vertices in $J_i$, with $i \in I_0$, are green. There are two elements in $I_0$, 
one with $\beta_i=-1$ (light green vertices) and one with $\beta_i \neq -1$ (dark green vertex). There are two elements in $J_0$, corresponding to 
light blue and dark blue vertices. The unique element of $K_0$ is red; the unique element of $I_1$ is yellow; the unique element of $J_1$ is orange;
the dark vertices are the elements of $I_2$. An arrow between two boxes means that all vertices of the boxes are related by an arrow.
$$\includegraphics[height=7.5cm]{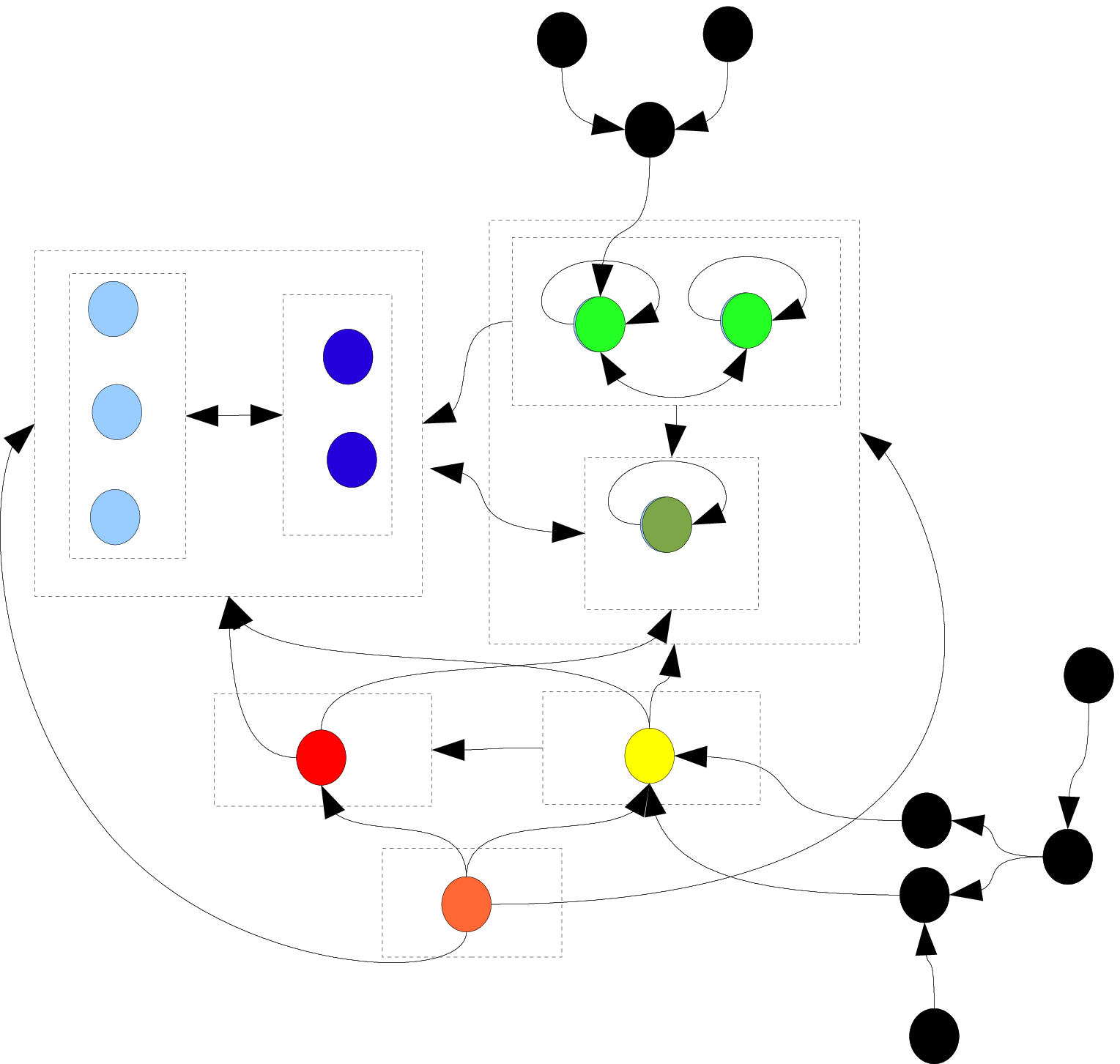}$$
For example, the SDSE associated to the following formal series has such a graph:
\begin{eqnarray*}
F_1&=&f_\beta(h_1)f_1(h_4+h_5)f_1(h_6+h_7+h_8)\\
F_2=F_3&=&(1+h_2+h_3)f_{\frac{\beta}{1+\beta}}((1+\beta)h_1)f_1(h_4+h_5)f_1(h_6+h_7+h_8)\\
F_4=F_5&=&f_{\frac{\beta}{1+\beta}}((1+\beta)h_1)f_1(h_6+h_7+h_8)\\
F_6=F_7=F_8&=&f_{\frac{\beta}{1+\beta}}((1+\beta)h_1)f_1(h_4+h_5)\\
F_9&=&f_{\frac{\beta}{1+\beta}}((1+\beta)h_1)f_1(h_4+h_5)f_1(h_6+h_7+h_8)\\
F_{10}&=&\frac{1}{\nu}f_{\frac{\beta}{\nu a^{(10)}_1}}\left(\nu a^{(10)}_1 h_1\right)f_{\frac{-1}{\nu a^{(10)}_2}}\left(\nu a^{(10)}_2 (h_2+h_3)\right)
f_{\frac{1}{\nu a^{(10)}_4}}\left(\nu a^{(10)}_4(h_4+h_5)\right)\\
&&f_{\frac{1}{\nu a^{(10)}_6}}\left(\nu a^{(10)}_6(h_6+h_7+h_8)\right)f_0\left(\nu a^{(10)}_9h_9\right)+1-\frac{1}{\nu},\\
F_{11}&=&\frac{1}{\nu'}f_{\frac{\beta}{a^{(10)}_1-1-\beta}}\left(\left(a^{(10)}_1-1-\beta \right) h_1\right)f_{\frac{-1}{a^{(10)}_2}}\left(a^{(10)}_2 (h_2+h_3)\right)\\
&&f_{\frac{1}{a^{(10)}_4-1}}\left(\left(a^{(10)}_4-1\right)(h_4+h_5)\right)f_{\frac{1}{a^{(10)}_6-1}}\left(\left(a^{(10)}_6-1\right)(h_6+h_7+h_8)\right)\\
&&f_0\left(a^{(10)}_9h_9\right)+a^{(11)}_{10} h_{10}+1-\frac{1}{\nu'},\\
F_{12}=F_{13}&=&1+a^{(12)}_{10} h_{10},\\
F_{14}&=&1+a^{(14)}_{13} h_{13},\\
F_{15}&=&1+a^{(15)}_{12} h_{12}+a^{(15)}_{13}h_{13},\\
F_{16}&=&1+a^{(16)}_{15} h_{15},\\
F_{17}&=&1+a^{(17)}_{2} h_{2},\\
F_{18}&=&1+a^{(18)}_{17} h_{17},\\
F_{19}&=&1+a^{(19)}_{17} h_{17},
\end{eqnarray*}
where $\beta \neq -1$, $\nu,\nu'\neq 0$, and the coefficients $a^{(i)}_j$ are non-zero.

\section{Characterisation and properties of Hopf SDSE}

\subsection{Subalgebras of $\h_\D$ generated by spans of trees}

Let us fix a non-empty set $\D$.

\begin{lemma}\label{15}
Let $V$ be a subspace of $Vect(\T_\D)$ and let us consider the subalgebra $A$ of $\h_\D$ generated by $V$.
Recall that for all $d\in \D$, $f_{\tdun{$d$}}$ is the following linear map:
$$ f_{\tdun{$d$}}: \left\{ \begin{array}{rcl}
\h_\D&\longrightarrow & K\\
t_1\cdots t_n&\longrightarrow &\delta_{t_1\cdots t_n,\tdun{$d$}}.
\end{array}\right.$$
Then $A$ is a Hopf subalgebra if, and only if, the two following assertions are both satisfied:
\begin{enumerate}
\item For all $d\in \D$, $(f_{\tdun{$d$}} \otimes Id) \circ \Delta(V)\subseteq V+K$.
\item For all $d\in \D$, $(Id \otimes f_{\tdun{$d$}}) \circ \Delta(V) \subseteq A$.
\end{enumerate}
\end{lemma}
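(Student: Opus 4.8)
The plan is to reduce the assertion to the single statement $\Delta(A)\subseteq A\otimes A$ and then prove the two implications separately, the non‑trivial one being $(\Leftarrow)$. Since $A\ni 1$ and the coradical of $A$ is contained in that of the connected coalgebra $\h_\D$, the coalgebra $A$ is connected; hence once $\Delta(A)\subseteq A\otimes A$ is known, $A$ is automatically a Hopf subalgebra (its counit is the restriction of $\varepsilon$, and its antipode is built recursively along the coradical filtration of $A$ and agrees with the restriction of that of $\h_\D$). As $\Delta$ is an algebra morphism and $A$ is generated as an algebra by $V$, the condition $\Delta(A)\subseteq A\otimes A$ is equivalent to $\Delta(V)\subseteq A\otimes A$. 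Three elementary facts will be used: (a) $A=\bigoplus_{k\ge 0}V^{k}$, and a product of $k$ elements of $Vect(\T_\D)$ is a combination of forests with exactly $k$ trees, so the ``one‑component'' part of $A$ is exactly $V$; hence $A\cap Vect(\T_\D)=V$ and $A\cap(K\oplus Vect(\T_\D))=K\oplus V$. (b) For $v$ a combination of trees, every term of $\tilde\Delta(v):=\Delta(v)-v\otimes 1-1\otimes v$ has a single tree in its right tensor slot (remark after the cocycle proposition); writing $\tilde\Delta(v)=\sum_{t\in\T_\D}R_t(v)\otimes t$, condition $2$ reads ``$R_{\tdun{$d$}}(v)\in A$ for all $d$'' and condition $1$ reads ``$\sum_{t}f_{\tdun{$d$}}(R_t(v))\,t\in V$ for all $d$''. (c) For subspaces one has $(A\otimes\h_\D)\cap(\h_\D\otimes A)=A\otimes A$.

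For $(\Rightarrow)$: assume $\Delta(V)\subseteq A\otimes A$. Applying $Id\otimes f_{\tdun{$d$}}$ gives $(Id\otimes f_{\tdun{$d$}})\circ\Delta(V)\subseteq A\cdot f_{\tdun{$d$}}(A)\subseteq A$, which is condition $2$. Applying $f_{\tdun{$d$}}\otimes Id$ gives a subset of $A$; but the explicit formula for $\Delta$ on a tree shows that $(f_{\tdun{$d$}}\otimes Id)\circ\Delta$ sends a tree to a scalar plus a combination of trees, so $(f_{\tdun{$d$}}\otimes Id)\circ\Delta(V)\subseteq(K\oplus Vect(\T_\D))\cap A=K\oplus V$ by (a), which is condition $1$.

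For $(\Leftarrow)$, assume conditions $1$ and $2$. It is convenient first to note that they force $V$, hence $A$, to be a graded subspace, after which one proves $\Delta(v)\in A\otimes A$ for $v\in V$ by induction on the largest weight $n$ of a tree occurring in $v$ (the cases $n\le 1$ being trivial). The main tools are the cocycle identity, which (using $B^+_d(1)=\tdun{$d$}$) yields $\tilde\Delta(B^+_d(G))=G\otimes\tdun{$d$}+(Id\otimes B^+_d)\circ\tilde\Delta(G)$, and the fact that $f_{\tdun{$d$}}$ is primitive in $\h_\D^*$, so that $(Id\otimes f_{\tdun{$d$}})\circ\Delta$ and $(f_{\tdun{$d$}}\otimes Id)\circ\Delta$ are derivations of $\h_\D$. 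Setting $P_d(v):=(Id\otimes f_{\tdun{$d$}})\circ\tilde\Delta(v)=R_{\tdun{$d$}}(v)$, condition $2$ gives $P_d(v)\in A$, one has $v=\sum_d B^+_d(P_d(v))$, and therefore
$$\tilde\Delta(v)=\sum_{d}\Big(P_d(v)\otimes\tdun{$d$}+(Id\otimes B^+_d)\circ\tilde\Delta(P_d(v))\Big).$$
Now $P_d(v)\in A$ has weight $\le n-1$; decomposing it along the component grading into a scalar, a one‑component part lying in $V$ of weight $\le n-1$, and a part which is a combination of products of at least two homogeneous elements of $V$ of weight $\le n-2$, the induction hypothesis gives $\Delta(P_d(v))\in A\otimes A$. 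Feeding this into the displayed formula, the left tensor factors of $\tilde\Delta(v)$ lie in $A$ (they are the $P_d(v)$'s and the left factors of the $\Delta(P_d(v))$'s), so $\Delta(v)\in A\otimes\h_\D$.

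The step I expect to be the main obstacle is showing that the right tensor factors of $\tilde\Delta(v)$ — which by the displayed formula are the $\tdun{$d$}$ and the trees $B^+_d(r)$ for $r$ a right factor of $\tilde\Delta(P_d(v))$ — span a subspace of $V$. This is delicate precisely because $B^+_d$ does not preserve $A$ and the individual trees occurring in $\tilde\Delta(v)$ need not belong to $V$: one genuinely needs the coefficients to conspire so that the tree‑combinations that actually appear lie in $V$, and the only mechanism forcing this is condition $1$. Iterating it (via the derivations $(f_{\tdun{$e$}}\otimes Id)\circ\Delta$ and coassociativity) shows that $V$ is stable under all iterated leaf‑removals; combined with the inductive/cocycle structure, this forces the relevant right‑hand combinations of $\tilde\Delta(P_d(v))$ to be images under $B^+_d$ of pieces coming from lower‑weight elements of $V$, hence to lie in $V$. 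Granting this, $\Delta(v)\in\h_\D\otimes V\subseteq\h_\D\otimes A$, and together with the previous paragraph and fact (c), $\Delta(v)\in(A\otimes\h_\D)\cap(\h_\D\otimes A)=A\otimes A$, which closes the induction and the proof.
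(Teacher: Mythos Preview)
Your forward implication is fine and matches the paper's. The backward implication, however, has two genuine gaps.

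First, you assert that conditions 1 and 2 force $V$ (hence $A$) to be graded, and your weight induction relies on this: to apply the inductive hypothesis to $P_d(v)\in A$ you need to express it as a polynomial in elements of $V$ each of weight $<n$, which is not automatic unless $V$ is graded. You give no argument for this gradedness, and the paper's proof does not need it.

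Second --- and you flag this yourself --- the containment $\Delta(v)\in\h_\D\otimes A$ is only ``granted''. Your cocycle formula expresses the right tensor factors of $\tilde\Delta(v)$ as $B^+_d$ applied to right factors of $\tilde\Delta(P_d(v))$; but $B^+_d$ does not send $A$ into $A$, so knowing $\tilde\Delta(P_d(v))\in A\otimes A$ by induction is useless here. Your proposed fix, that iterated leaf-removal ``combined with the inductive/cocycle structure'' forces those $B^+_d$-images to lie in $V$, does not work: individual right factors need not lie in $V$ at all. What \emph{does} work is to drop the cocycle apparatus entirely and exploit your leaf-removal observation directly. By coassociativity, the set $\{f\in\h_\D^*: (f\otimes Id)\circ\Delta(V)\subseteq V+K\}$ is a subalgebra of $\h_\D^*$ containing every $f_{\tdun{$d$}}$; since the $f_{\tdun{$d$}}$'s generate $Prim(\h_\D^*)$ as a pre-Lie (hence Lie) algebra, they generate $\h_\D^*$ as an associative algebra, so this set is all of $\h_\D^*$, giving $\Delta(V)\subseteq\h_\D\otimes(V+K)$ immediately. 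This is exactly the paper's route (its Steps~1--2), and it requires neither gradedness nor weight induction. The other containment $\Delta(V)\subseteq A\otimes\h_\D$ is then obtained by an analogous bootstrap of condition~2 (the paper's Steps~3--4), which is slightly more delicate because condition~2 lands in $A$ rather than $V+K$ and so needs the first containment as input.
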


\begin{proof}
$\Longrightarrow$. If $A$ is Hopf, then $\Delta(V) \subseteq A\otimes A$.
As $V \subseteq Vect(\T_\D)$, $\Delta(V) \subseteq \h \otimes (Vect(\T_\D)+K)$. So:
$$\Delta(V)\subseteq (A\otimes A)\cap ( \h \otimes (Vect(\T_\D)+K))=A\otimes (V \oplus K).$$
This implies both assertions.\\

$\Longleftarrow$. We use here Sweedler's notations: $\Delta(a)=a' \otimes a''$ and $(\Delta \otimes Id)\circ \Delta(a)=a'\otimes a'' \otimes a'''$
for all $a \in A$.

{\it First step.} Let us consider the following subspace of $Prim(\h_\D^*)$:
$$B=\{f\in Prim(\h_\D^*)\:/\: (f\otimes Id) \circ \Delta(V) \subseteq V+K\}.$$
By hypothesis 1, $f_{\tdun{$d$}} \in B$ for all $d\in \D$. We recall here that $\star$ is the pre-Lie product of $Prim(\h_\D^*)$.
Let $f$ and $g\in B$. For all $v\in V$:
$$(f\star g \otimes Id) \circ \Delta(v)=f\circ \pi(v') g\circ \pi (v'') v'''.$$
As $f \in B$, $f\circ \pi(v') v'' \in V+K$. As $g \in B$, $f\circ \pi(v') g\circ \pi(v'') v''' \in V+K$. So $f\star g\in B$, and $B$ is a sub-pre-Lie algebra 
of $Prim(\h_\D^*)$. As $Prim(\h_\D^*)$ is generated as a pre-Lie algebra by the $f_{\tdun{$d$}}$'s, $B=Prim(\h_\D^*)$.\\

{\it Second step.} Let us consider the following subspace of $\h_\D^*$:
$$B'=\{f \in \h_\D^*\:/\: (f\otimes Id) \circ \Delta(A) \subseteq A\}.$$
Let $f \in Prim(\h_\D^*)$. By the first step, for all $v_1,\cdots,v_n \in V$:
$$(f\otimes Id)\circ \Delta(v_1\cdots v_n)=f(v'_1\cdots v'_n) v''_1\cdots v''_n=\sum_{i=1}^n v_1\cdots f(v'_i)v''_i \cdots v_n\in A,$$
so $Prim(\h_\D^*) \subseteq B'$. Let $f,g \in B'$. For all $a \in A$:
$$(fg \otimes Id)\circ \Delta(a)=f(a')g(a'')a'''.$$
As $f \in B'$, $f(a')a''\in A$. As $g\in B'$, $f(a')g(a'')a''' \in A$. So $B'$ is a subalgebra of $\h_\D^*$.
As it contains $Prim(\h_\D^*)$, it is equal to $\h_\D^*$. So:
$$\Delta(A)\subseteq \h_\D \otimes A+ \bigcap_{f\in \h_\D^*} Ker(f) \otimes \h_\D=\h_\D \otimes A.$$

{\it Third step.} Let us consider the following subspace of $Prim(\h_\D^*)$:
$$C=\{f\in Prim(\h_\D^*)\:/\: (Id \otimes f) \circ \Delta(V) \subseteq A\}.$$
By the second hypothesis, $f_{\tdun{$d$}} \in B$ for all $d\in \D$. Let us take $f$ and $g \in C$. For all $v \in V$:
$$(Id \otimes (f\star g))\circ \Delta(v)=v' f\circ \pi(v'') g\circ \pi(v''').$$
As $g \in C$, $v' g\circ \pi(v'') \in A$. Let us denote:
$$v'\circ \pi(v'')=\sum v_1\cdots v_n,$$
where $v_1,\ldots,v_n$ are elements of $V$. Then:
$$v' f\circ \pi(v'')g\circ \pi(v''')=\sum v_1'\cdots v_n' f\circ \pi(v''_1\cdots v''_n) g\circ \pi(v''').$$
By the second step, as $V \subseteq Vect(\T_\D)$:
$$\Delta(V) \subseteq (\h_\D \otimes A)\cap \left(\h_\D \otimes (Vect(\T_\D)+K)\right)=\h_\D\otimes (V+K).$$
So:
$$\sum v'_1\cdots v'_n \otimes \pi(v''_1\cdots v''_n)=\sum \sum_{i=1}^n v_1\cdots v'_i \cdots v_n \otimes \pi(v''_i).$$
Finally:
$$(Id \otimes (f\star g))\circ \Delta(v)=\sum \sum_{i=1}^n v_1\cdots v'_i \cdots v_n \otimes f\circ \pi(v''_i).$$
As $f \in B'$, this belongs to $A$. So $f\star g \in B'$. As at the end of the first step, we conclude that $B'=Prim(\h_\D^*)$.\\

{\it Last step.} As in the second step, we conclude that for all $f \in \h_\D^*$,
$(Id \otimes f) \circ \Delta(A)\subseteq A$. So $\Delta(A)\subseteq A\otimes \h_\D$, and
$\Delta(A) \subseteq (\h_\D \otimes A)\cap (A\otimes \h_\D)=A\otimes A$. So $A$ is a Hopf subalgebra.
\end{proof}

\subsection{Definition of the structure coefficients}

\begin{prop} \label{16}
Let $(S)$ be an SDSE. It is Hopf if, and only if, for all $i,j \in I$, for all $n\geq 1$, there exists a scalar $\lambda_n^{(i,j)}$ such that
for all $t'\in \T_i(n)$:
$$\sum_{t\in \T_i(n+1)} n_j(t,t') a_t=\lambda_n^{(i,j)} a_{t'},$$
where $n_j(t,t')$  is the number of leaves $l$ of $t$ decorated by $j$ such that the cut of $l$ gives $t'$.
\end{prop}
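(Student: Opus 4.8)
The plan is to apply Lemma~\ref{15} to the subspace $V=Vect\left(X_i(k)\mid i\in I,\ k\geq 1\right)$ of $Vect(\T_I)$, whose generated subalgebra is exactly $A=\hs$. Since each $X_i$ is an infinite linear span of trees with root decorated by $i$, every $X_i(k)$ lies in $Vect(\T_I)$, so Lemma~\ref{15} applies and $(S)$ is Hopf if and only if its two conditions hold for this $V$. The whole proof then reduces to computing the two ``partial coproducts'' $(f_{\tdun{$d$}}\otimes Id)\circ\Delta$ and $(Id\otimes f_{\tdun{$d$}})\circ\Delta$ on a tree $t\in\T_I^{(i)}$ and translating the two conditions accordingly.

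For $(f_{\tdun{$d$}}\otimes Id)\circ\Delta(t)=\sum_{c\in Adm_*(t)}f_{\tdun{$d$}}(P^c(t))R^c(t)$, a term survives only when $P^c(t)=\tdun{$d$}$; since cutting $k$ edges of a tree produces exactly $k+1$ connected components and $\tdun{$d$}$ is a single tree, this forces $c$ to cut exactly one edge, namely the edge joining some leaf $l$ decorated by $d$ to its parent, and then $R^c(t)$ is $t$ with the leaf $l$ removed. Grouping leaves by the resulting tree gives $(f_{\tdun{$d$}}\otimes Id)\circ\Delta(t)=\sum_{t'}n_d(t,t')\,t'$. Dually, a term of $(Id\otimes f_{\tdun{$d$}})\circ\Delta(t)$ survives only when $R^c(t)=\tdun{$d$}$; as $R^c(t)$ contains the root of $t$, this forces $d=i$, and admissibility then forces $c$ to cut precisely the edges issued from the root, so $P^c(t)=(B^+_i)^{-1}(t)$. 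Summing over $X_i$ and using $X_i=B^+_i(F_i(X))$ yields $(Id\otimes f_{\tdun{$d$}})\circ\Delta(X_i)=\delta_{d,i}\,F_i(X)$.

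I would then dispose of condition~2 of Lemma~\ref{15} once and for all: each homogeneous component of $F_i(X)$ is a finite sum of products of the generators $X_j(k)$ (plus the scalar $1$ in degree $0$), hence lies in $A$, so by the identity above $(Id\otimes f_{\tdun{$d$}})\circ\Delta(V)\subseteq A$ holds for every SDSE. Therefore $(S)$ is Hopf if and only if condition~1 holds. Using the first computation and degree counting, for all $d,i\in I$ and all $n\geq 1$,
$$(f_{\tdun{$d$}}\otimes Id)\circ\Delta(X_i(n+1))=\sum_{t'\in\T_i(n)}\left(\sum_{t\in\T_i(n+1)}n_d(t,t')\,a_t\right)t',$$
which is an element of $Vect(\T_i(n))$.

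Finally I would intersect with $V+K$. The trees underlying distinct generators $X_j(k)$ form pairwise disjoint sets, so $V=\bigoplus_{j,k}KX_j(k)$, and for $n\geq 1$ one gets $(V+K)\cap Vect(\T_i(n))=KX_i(n)$: the scalar part of $V+K$ contributes nothing in positive degree, and among the generators only $X_i(n)$ is supported on trees of weight $n$ with root decorated by $i$. Hence condition~1 is equivalent to: for all $d,i\in I$ and $n\geq 1$, the vector $\left(\sum_{t\in\T_i(n+1)}n_d(t,t')\,a_t\right)_{t'\in\T_i(n)}$ is proportional to $(a_{t'})_{t'\in\T_i(n)}$, i.e. there exists $\lambda_n^{(i,d)}\in K$ with $\sum_{t\in\T_i(n+1)}n_d(t,t')\,a_t=\lambda_n^{(i,d)}\,a_{t'}$ for every $t'\in\T_i(n)$ --- which, after renaming $d$ as $j$, is exactly the asserted criterion (and it stays correct in the degenerate case $X_i(n)=0$, where both formulations force every such sum to vanish). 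I expect the only genuinely delicate points to be the combinatorial identification of the surviving cuts in the two partial coproducts and the observation that the ``$+K$'' in Lemma~\ref{15} is inert in positive degree; everything else is routine bookkeeping.
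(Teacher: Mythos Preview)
Your proof is correct and follows essentially the same approach as the paper: both apply Lemma~\ref{15} with $V=Vect(X_i(k)\mid i\in I,k\geq 1)$, compute $(Id\otimes f_{\tdun{$d$}})\circ\Delta(X_i)=\delta_{d,i}F_i(X)\in\hs$ to dispose of condition~2 unconditionally, and translate condition~1 into the proportionality $\sum_t n_j(t,t')a_t=\lambda_n^{(i,j)}a_{t'}$ via the leaf-cut identification. The only cosmetic difference is that the paper splits the argument into $\Longrightarrow$ and $\Longleftarrow$, while you treat both directions at once; one tiny point you leave implicit is that condition~1 on the degree-one generators $X_i(1)$ is automatic (the result is the scalar $\delta_{d,i}$, coming from the total cut), which is why it suffices to check it on $X_i(n+1)$ for $n\geq 1$.
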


\begin{proof} $\Longrightarrow$. Let us assume that $(S)$ is Hopf.
Then $\hs$ is a Hopf subalgebra of $\h_I$. Let us use lemma \ref{15}, with $V=Vect(X_i(n),\:i\in I,\: n\geq 1)$.
So $(f_{\tdun{$j$}} \otimes Id) \circ \Delta(X_i(n+1))$ belongs to $\hs$, and is a linear span of trees of degree $n$ with a root decorated by $i$, 
so is a multiple of $X_i(n)$. We then denote:
$$(f_{\tdun{$j$}} \otimes Id) \circ \Delta(X_i(n+1))=\lambda_n^{(i,j)} X_i(n)=\sum_{t'\in \T(n)} \lambda_n^{(i,j)} a_{t'}t'.$$
By definition of the coproduct $\Delta$:
$$(f_{\tdun{$j$}} \otimes Id) \circ \Delta(X_i(n+1))=\sum_{t\in \T(n+1),\:t' \in \T(n)} n_j(t,t')a_t t'.$$
The result is proved by identifying the coefficients in the basis $\T(n)$ of these two expressions of $(f_{\tdun{$j$}} \otimes Id) \circ \Delta(X_i(n+1))$.\\

$\Longleftarrow$. Let us prove that both conditions of lemma \ref{15} are satisfied, with the same $V$ as before.
By hypothesis, for all $i,j \in I$, for all $n \geq 2$, $(f_{\tdun{$j$}} \otimes Id) \circ \Delta(X_i(n)) =\lambda_{n-1}^{(i,j)}X_i(n-1) \in V$.
Moreover, $(f_{\tdun{$j$}} \otimes Id) \circ \Delta(X_i(1))=\delta_{i,j} \in K$, so the first condition is satisfied. For the second one:
$$(Id \otimes f_{\tdun{$j$}}) \circ \Delta(X_i)=(Id \otimes f_{\tdun{$j$}}) \circ \Delta(B_i^+(F_i(X_j,\: j\in I)))=F_i(X_j,\:j\in I)\in \hs.$$
So $\hs$ is a Hopf subalgebra of $\h_I$. \end{proof}

\subsection{Properties of the coefficients $\lambda_n^{(i,j)}$}

The coefficients $\lambda_n^{(i,j)}$'s are entirely determined by the $a^{(i)}_j$'s and $a^{(i)}_{j,k}$'s, and determine the other coefficients
of the $F_i$'s, as shown by the following result:

\begin{lemma}\label{17}
Let us assume that $(S)$ is Hopf, with $I=\{1,\ldots,N\}$. Let us fix $i\in I$.
\begin{enumerate}
\item For all sequence $i=i_1\longrightarrow \cdots \longrightarrow i_n$ of vertices of $\gs$:
$$\lambda_n^{(i,j)}=a^{(i_n)}_j+\sum_{p=1}^{n-1}(1+\delta_{j,i_{p+1}}) \frac{a^{(i_p)}_{j,i_{p+1}}}{a^{(i_p)}_{i_{p+1}}}.$$
In particular, $\lambda_1^{(i,j)}=a^{(i)}_j$.
\item For all $p_1,\cdots,p_N \in \mathbb{N}$:
$$a^{(i)}_{(p_1,\cdots,p_{j+1},\cdots,p_N)}=\frac{1}{p_j+1} \left( \lambda^{(i,j)}_{p_1+\cdots+p_N+1}-\sum_{l\in I} p_l a^{(l)}_j \right)
a^{(i)}_{(p_1,\cdots,p_N)}.$$
\end{enumerate} \end{lemma}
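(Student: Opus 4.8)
The plan is to obtain both formulae from Proposition~\ref{16}, by applying the identity $(f_{\tdun{$j$}}\otimes Id)\circ\Delta(X_i(n+1))=\lambda_n^{(i,j)}X_i(n)$, equivalently $\sum_{t\in\T_i(n+1)}n_j(t,t')a_t=\lambda_n^{(i,j)}a_{t'}$, to well-chosen test trees $t'$, and reading off the coefficients with formula~(\ref{E1}). For the first assertion I would take for $t'$ the ladder $\ell$ attached to the given path $i=i_1\longrightarrow\cdots\longrightarrow i_n$: the tree of weight $n$ whose underlying graph is a path and whose vertices, from the root to the unique leaf, are decorated by $i_1,\ldots,i_n$. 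Since each edge $i_p\to i_{p+1}$ lies in $\gs$, a recursive use of~(\ref{E1}) gives $a_\ell=\prod_{p=1}^{n-1}a^{(i_p)}_{i_{p+1}}\neq0$, so one may divide by $a_\ell$. For the second assertion I would take $t'=B_i^+\!\left(\prod_{k\in I}(\tdun{$k$})^{p_k}\right)$, of weight $|p|+1$ with $|p|=p_1+\cdots+p_N$; here~(\ref{E1}) immediately gives $a_{t'}=a^{(i)}_{(p_1,\ldots,p_N)}$, all the multinomial factors being $1$ and all $a_{\tdun{$k$}}$ being $1$.

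For assertion~1, the key remark is that a tree $t$ with $n_j(t,\ell)\neq0$ is exactly $\ell$ with one extra leaf decorated $j$ grafted onto one of its $n$ vertices, and these $n$ trees are pairwise distinct since $\ell$ has trivial automorphism group. Grafting onto the leaf $i_n$ gives the longer ladder, with $n_j=1$ and $a_t=a^{(i_n)}_j\,a_\ell$. Grafting onto the vertex $i_p$, $1\le p\le n-1$, and unwinding~(\ref{E1}), the total contribution $n_j(t,\ell)\,a_t$ turns out to equal $(1+\delta_{j,i_{p+1}})\,\dfrac{a^{(i_p)}_{i_{p+1},j}}{a^{(i_p)}_{i_{p+1}}}\,a_\ell$; the factor $1+\delta_{j,i_{p+1}}$ arises in two different ways, either from the multinomial coefficient in~(\ref{E1}) (when $p<n-1$ and $j=i_{p+1}$: the grafted $\tdun{$j$}$ and the sub-ladder below $i_p$ are two distinct subtrees with the same root decoration) or from $n_j(t,\ell)=2$ (when $p=n-1$ and $j=i_n$: the new tree has two $j$-leaves, each cut restoring $\ell$). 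Summing over all $n$ trees and dividing by $a_\ell\neq0$ gives the formula; the case $n=1$ is the empty-sum instance and is also immediate from $X_i(1)=\tdun{$i$}$ and $X_i(2)=\sum_k a^{(i)}_k B^+_i(\tdun{$k$})$.

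For assertion~2, a tree $t$ with $n_j(t,t')\neq0$ is $t'$ with one extra leaf decorated $j$: either grafted onto the root, giving $B_i^+\!\left(\prod_k(\tdun{$k$})^{p_k+\delta_{k,j}}\right)$ with $a_t=a^{(i)}_{(p_1,\ldots,p_j+1,\ldots,p_N)}$ and $n_j(t,t')=p_j+1$; or grafted onto one of the $p_l$ leaves decorated $l$ (for some $l$ with $p_l\geq1$), giving a tree $t_l$ with $n_j(t_l,t')=1$ and, by~(\ref{E1}), $a_{t_l}=p_l\,a^{(l)}_j\,a^{(i)}_{(p_1,\ldots,p_N)}$ — the multidegree of $F_i$ used at the root being still $(p_1,\ldots,p_N)$, and the factor $p_l$ being exactly the multinomial coefficient produced by turning one of the $p_l$ identical leaves $\tdun{$l$}$ into $\tddeux{$l$}{$j$}$. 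Proposition~\ref{16} then reads
$$\lambda_{|p|+1}^{(i,j)}\,a^{(i)}_{(p_1,\ldots,p_N)}=(p_j+1)\,a^{(i)}_{(p_1,\ldots,p_j+1,\ldots,p_N)}+\left(\sum_{l\in I}p_l\,a^{(l)}_j\right)a^{(i)}_{(p_1,\ldots,p_N)},$$
and solving for $a^{(i)}_{(p_1,\ldots,p_j+1,\ldots,p_N)}$ yields the claim (when $a^{(i)}_{(p_1,\ldots,p_N)}=0$ the identity still holds, both sides being $0$); note that $p=0$ recovers $\lambda_1^{(i,j)}=a^{(i)}_j$, consistently with assertion~1.

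The only genuinely non-formal part is the bookkeeping in applying~(\ref{E1}): one must check that the list of trees $t$ with $n_j(t,t')\neq0$ is complete and without repetitions, and handle correctly the symmetry (multinomial) factors, which in assertion~1 interact with the multiplicities $n_j(t,\ell)$ precisely in the degenerate cases where the grafted leaf duplicates a subtree already present. I expect this to be the only place where care is really needed; everything else is a direct computation.
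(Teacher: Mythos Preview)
Your proof is correct and follows exactly the same approach as the paper: apply Proposition~\ref{16} with $t'$ the ladder along the given path for part~1, and with $t'=B_i^+\!\left(\tdun{$1$}^{p_1}\cdots\tdun{$N$}^{p_N}\right)$ for part~2, then read off the coefficients via~(\ref{E1}). Your discussion of how the factor $1+\delta_{j,i_{p+1}}$ arises (multinomial coefficient when $p<n-1$, multiplicity $n_j=2$ when $p=n-1$) is in fact more explicit than the paper's own computation, which simply writes down the resulting identities and simplifies.
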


\begin{proof} 1. Let us consider a sequence $i_1,\cdots,i_n$ of elements of $I$, such that $i_1=i$ and for all $1\leq p\leq n-1$, 
$a^{(i_p)}_{i_{p+1}}\neq 0$. By definition of $\lambda_n^{(i,j)}$:
\begin{eqnarray*}
\lambda_n^{(i,j)} a_{\tdpartun{$i_1$}{$i_2$}{$i_{n-1}$}{$i_n$}}&=&a_{\tdpartdeux{$i_1$}{$i_2$}{$i_{n-1}$}{$i_n$}{$j$}}+(1+\delta_{j,i_n})
a_{\tdparttrois{$i_1$}{$i_2$}{$i_{n-1}$}{$i_n$}{$j$}}+\sum_{p=1}^{n-2}a_{\tdpartquatre{$i_1$}{$i_p$}{$i_{p+1}$}{$i_n$}{$j$}},\\
\lambda_n^{(i,j)}a^{(i_1)}_{i_2}\cdots a^{(i_{n-1})}_{i_n}&=&a^{(i_1)}_{i_2}\cdots a^{(i_{n-1})}_{i_n}a^{(i_n)}_j
+(1+\delta_{j,i_n})a^{(i_1)}_{i_2}\cdots a^{(i_{n-1})}_{i_n,j}\\
&&+\sum_{p=1}^{n-2}(1+\delta_{j,i_{p+1}})a^{(i_1)}_{i_2}\cdots a^{(i_p)}_{j,i_{p+1}}a^{(i_{p+1})}_{i_{p+2}}\cdots a^{(i_{n-1})}_{i_n},\\
\lambda_n^{(i,j)}&=&a^{(i_n)}_j+\sum_{p=1}^{n-1}(1+\delta_{j,i_{p+1}})
\frac{a^{(i_p)}_{j,i_{p+1}}}{a^{(i_p)}_{i_{p+1}}}.
\end{eqnarray*}
This proves the first point of the lemma. \\

2. Let us now fix $p_1,\cdots,p_N \in \mathbb{N}$. By definition, for $t'=B^+_i(\tdun{$1$}^{p_1}\cdots \tdun{$N$}^{p_N})$:
\begin{eqnarray*}
\lambda^{(i,j)}_{p_1+\cdots+p_N+1}a_{B^+_i(\tdun{$1$}^{p_1}\cdots \tdun{$N$}^{p_N})}
&=&(p_j+1)a_{B^+_i(\tdun{$1$}^{p_1}\cdots \tdun{$j$}^{p_j+1}\cdots \tdun{$N$}^{p_N})}\\
&&+\sum_{l=1}^N a_{B^+_i(\tdun{$1$}^{p_1}\cdots \tdun{$l$}^{p_l-1}\cdots \tdun{$N$}^{p_N} \tddeux{$l$}{$j$})},\\
\lambda^{(i,j)}_{p_1+\cdots+p_N+1}a^{(i)}_{(p_1,\cdots,p_N)}&=&(p_j+1)a^{(i)}_{(p_1,\cdots,p_j+1,\cdots,p_N)}
+\sum_{l=1}^N p_l a^{(i)}_{(p_1,\cdots,p_N)} a^{(l)}_j.
\end{eqnarray*}
This proves the second point of the lemma.  \end{proof}\\

{\bf Remarks.} \begin{enumerate}
\item As a consequence of the second point, if $(S)$ is Hopf and if $a_{(p_1,\cdots,p_N)}^{(i)}=0$, then $a^{(i)}_{(l_1,\cdots,l_N)}=0$ 
if $l_1\geq p_1, \cdots,  l_N\geq p_N$. In particular, as there is no constant $F_i$, for all $i$, there exists a $j$ such that $a^{(i)}_j \neq 0$.
\item So the sequences considered in the first point of lemma \ref{17} always exist.
\item Moreover, for all vertices $i,j$ of $\gs$, $i\rightarrow j$ if and only if $a^{(i)}_j\neq 0$.
\item Finally, for all $i\in I$, for all $p\geq 1$, $X_i(p)\neq 0$.
\end{enumerate}

\begin{prop} \label{18}
Let $(S)$ be a Hopf SDSE.
\begin{enumerate}
\item Let $i,j$ be vertices of $\gs$, such that $j$ is not a descendant of $i$. Then for all $n \geq 1$:
$$\lambda_n^{(i,j)}=0.$$
\item Let $(S)$ be a Hopf SDSE with set of vertices $I$ and let $(S')$ be a Hopf SDSE with set of vertices $J$. Then $(S')$ is a dilatation of $(S)$ 
if, and only if, $J$ admits a partition indexed by the elements of $I$ and for all $i,j\in I$, for all $x\in J_i$, $y\in J_j$, for all $n \geq 1$:
$$\lambda_n^{(i,j)}=\lambda_n^{(x,y)}.$$
\item Let $i\in I$ such that:
$$F_i=1+\sum_{j\in I} a^{(i)}_j h_j.$$
Then for all direct descendant $i'$ of $i$, for all $j$, for all $n \geq 1$:
$$\lambda_{n+1}^{(i,j)}=\lambda_n^{(i',j)}.$$
As a consequence, if $i',i'''$ are two direct descendants of $i$, $F_{i'}=F_{i''}$.
\end{enumerate} \end{prop}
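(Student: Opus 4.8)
The plan is to reduce everything to Lemma \ref{17}: its first part expresses $\lambda_n^{(i,j)}$ through any chosen oriented path $i=i_1\longrightarrow\cdots\longrightarrow i_n$ in $\gs$, and its second part, together with $a^{(l)}_j=\lambda_1^{(l,j)}$ and the normalisation $F_l(0,\ldots,0)=1$, reconstructs all coefficients of every $F_l$ from the family $\big(\lambda_n^{(l,j)}\big)$ by induction on the total degree. For part 1, fix such a path (these exist by the remarks after Lemma \ref{17}) and use $\lambda_n^{(i,j)}=a^{(i_n)}_j+\sum_{p=1}^{n-1}(1+\delta_{j,i_{p+1}})a^{(i_p)}_{j,i_{p+1}}/a^{(i_p)}_{i_{p+1}}$. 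Each $i_p$ equals $i$ or is a descendant of $i$, so $a^{(i_p)}_j\neq 0$ would force $i_p\longrightarrow j$ and hence make $j$ a descendant of $i$, contrary to hypothesis; thus $a^{(i_p)}_j=0$ for all $p$, and then $a^{(i_p)}_{j,i_{p+1}}=0$ by the monotonicity remark after Lemma \ref{17}. So every term of the sum vanishes, giving $\lambda_n^{(i,j)}=0$ (with $\lambda_1^{(i,j)}=a^{(i)}_j=0$ when $n=1$).

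For part 2, necessity: if $(S')$ is a dilatation of $(S)$ along a partition $J=\bigcup_i J_i$, then by the remark after the dilatation proposition $x\longrightarrow y$ in $G_{(S')}$ (for $x\in J_i$, $y\in J_j$) iff $i\longrightarrow j$ in $\gs$; if $j$ is not a descendant of $i$, then $y$ is not a descendant of $x$ and both $\lambda$'s vanish by part 1. Otherwise fix a path $x=x_1\longrightarrow\cdots\longrightarrow x_n$ in $G_{(S')}$ with $x_p\in J_{i_p}$, yielding a path $i=i_1\longrightarrow\cdots\longrightarrow i_n$ in $\gs$, and compare the two instances of Lemma \ref{17}.1. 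Since $F'_{x_p}=F_{i_p}$ with each $h_j$ replaced by $\sum_{y\in J_j}h_y$, one reads off $a^{(x_p)}_{x_{p+1}}=a^{(i_p)}_{i_{p+1}}$, $a^{(x_n)}_y=a^{(i_n)}_j$, and $(1+\delta_{y,x_{p+1}})\,a^{(x_p)}_{y,x_{p+1}}=(1+\delta_{j,i_{p+1}})\,a^{(i_p)}_{j,i_{p+1}}$ — the combinatorial factor $2$ appearing when $y\neq x_{p+1}$ lie in the same block being exactly absorbed by the change in the Kronecker weight. Summing term by term gives $\lambda_n^{(x,y)}=\lambda_n^{(i,j)}$. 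For sufficiency, given the partition and the equalities, form the dilatation $\widetilde S$ of $(S)$ along that partition (Hopf by the dilatation proposition); by necessity its structure coefficients coincide with those of $(S')$, and since both systems share the vertex set $J$, the same first-order data, and the normalisation, the reconstruction via Lemma \ref{17}.2 forces $F'_x=\widetilde F_x$, i.e. $(S')$ is that dilatation.

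For part 3, take a path $i'=i'_1\longrightarrow\cdots\longrightarrow i'_n$, prepend the edge $i\longrightarrow i'$, and apply Lemma \ref{17}.1 to the resulting length-$(n+1)$ path: the single new term (from $p=1$) is a multiple of $a^{(i)}_{j,i'}$, which is zero because $F_i$ is affine, while the remaining terms are exactly those computing $\lambda_n^{(i',j)}$ along $i'_1\longrightarrow\cdots\longrightarrow i'_n$; hence $\lambda_{n+1}^{(i,j)}=\lambda_n^{(i',j)}$. If $i'$ and $i''$ are two direct descendants of $i$, then $\lambda_n^{(i',j)}=\lambda_{n+1}^{(i,j)}=\lambda_n^{(i'',j)}$ for all $j,n$, so the reconstruction via Lemma \ref{17}.2 yields $F_{i'}=F_{i''}$. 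The only genuinely delicate point in the whole argument is the coefficient bookkeeping in the necessity half of part 2 — verifying that squaring a sum of block variables produces a factor $2$ on off-diagonal monomials that is always cancelled by the $1+\delta$ weight in Lemma \ref{17}.1; everything else is a direct substitution into, or inversion of, Lemma \ref{17}.
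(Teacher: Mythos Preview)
Your proof is correct and follows essentially the same route as the paper's: all three parts are reduced to Lemma~\ref{17}, part~1 in exactly the way you describe, part~3 by prepending the edge $i\to i'$ and killing the extra summand via $a^{(i)}_{j,i'}=0$, and part~2 by comparing the two instances of Lemma~\ref{17}.1 and then invoking Lemma~\ref{17}.2 for the reconstruction. The paper's write-up of part~2 (necessity) is extremely terse --- essentially one line --- and you have filled in the coefficient bookkeeping (in particular the $(1+\delta)$ versus factor-of-$2$ cancellation when $y\neq x_{p+1}$ lie in the same block) that the paper leaves implicit; this is exactly the verification needed to make that line into a proof.
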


\begin{proof} 1. Let us consider a sequence $i=i_1,\cdots,i_n$ of elements of $I$ such that $a^{(i_k)}_{i_{k+1}}\neq 0$ for all $1\leq k \leq n-1$.
Then $j$ is not a direct descendant of $i_1,\cdots,i_n$, so $a^{(i_n)}_j=0$ and $a^{(i_k)}_{j,i_{k+1}}=0$ for all $k$.
By lemma \ref{17}, $\lambda_n^{(i,j)}=0$.\\

2. $\Longrightarrow$. From lemma \ref{17}-1, choosing an element $x_i$ in $J_i$ for all $i\in I$.

$\Longleftarrow$. Let us consider the dilatation $(S'')$ of $(S)$ corresponding to the partition of $J$. Then the coefficients $\lambda_n^{(i,j)}$
of $(S')$ and $(S'')$ are equal, so by lemma \ref{17}-2, $(S')=(S'')$. \\

3. Let us consider a sequence $i,i'=i_1,\cdots,i_n$ of elements of $I$ such that $a^{(i_k)}_{i_{k+1}}\neq 0$ for all $1\leq k \leq n-1$.
By hypothesis on $i$, $a^{(i)}_{j,i'}=0$. By lemma \ref{17}-1:
$$\lambda_{n+1}^{(i,j)}=a^{(i_n)}_j+0+\sum_{k=1}^{n-1}(1+\delta_{j,i_{k+1}}) \frac{a^{(i_k)}_{j,i_{k+1}}}{a^{(i_k)}_{i_{k+1}}}=\lambda_n^{(i',j)}.$$
So, if $i'$ and $i''$ are two direct descendants of $i$, for all $k\in I$, for all $n\geq 1$, $\lambda_n^{(i',k)}=\lambda_n^{(i'',k)}$.
By lemma \ref{17}-2, $F_{i'}=F_{i''}$. \end{proof}

\begin{prop} \label{19}
Let $(S)$ be an SDSE, with $I=\{1,\ldots,N\}$. It is Hopf if, and only if, the two following conditions are satisfied:
\begin{enumerate}
\item There exist scalars $\lambda_n^{(i,j)}$ satisfying, for all $1\leq i,j \leq N$,
for all $(p_1,\cdots,p_N)\in \mathbb{N}^N$:
$$a^{(i)}_{(p_1,\cdots,p_{j+1},\cdots,p_N)}=\frac{1}{p_j+1} \left( \lambda^{(i,j)}_{p_1+\cdots+p_N+1}-\sum_{l\in I} p_l a^{(l)}_j \right)
a^{(i)}_{(p_1,\cdots,p_N)}.$$
\item For all $p\geq 1$, for all $i,j,d_1,\cdots,d_p \in I$, such that $a^{(i)}_{(p_1,\cdots,p_N)}\neq 0$
where $p_i$ is the number of $d_p$'s equal to $i$, for all $n_1,\cdots,n_p \geq 1$:
$$\lambda^{(i,j)}_{n_1+\cdots+n_p+1}-a^{(i)}_j=\lambda_{p+1}^{(i,j)}-a^{(i)}_j+\sum_{l\in I}\left(\lambda_{n_l}^{(d_l,j)}-a_j^{(d_l)}\right).$$
\end{enumerate}
\end{prop}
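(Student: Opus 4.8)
The plan is to characterise "Hopf" via Proposition \ref{16}, which says $(S)$ is Hopf iff for all $i,j$ and all $n\ge 1$ there exist scalars $\lambda_n^{(i,j)}$ with $\sum_{t\in\T_i(n+1)} n_j(t,t')a_t=\lambda_n^{(i,j)}a_{t'}$ for every $t'\in\T_i(n)$. So the whole job is to translate this single cocycle-type identity on tree coefficients into the two stated conditions on the $a^{(i)}_{(p_1,\dots,p_N)}$ and on the $\lambda$'s. Condition 1 is nothing but Lemma \ref{17}-2, which already holds \emph{assuming} $(S)$ is Hopf; so for the $(\Longrightarrow)$ direction condition 1 is immediate, and for the $(\Longleftarrow)$ direction I will use it as the \emph{definition} of the $a^{(i)}$'s in terms of the $\lambda$'s and then verify Proposition \ref{16}. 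Thus the real content is condition 2, which is the compatibility/consistency constraint that makes the recursion in condition 1 well-defined and coherent with the structure of general trees (not just corollas).

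First I would set up the $(\Longrightarrow)$ direction. Assume $(S)$ Hopf. Condition 1 is Lemma \ref{17}-2. For condition 2, fix $i,j$, a vertex-multiset $d_1,\dots,d_p$ with the stated non-vanishing of the corolla coefficient, and integers $n_1,\dots,n_p\ge1$. I would apply the defining relation of $\lambda^{(i,j)}$ to a carefully chosen tree $t'$ of the form $B^+_i(t_1\cdots t_p)$, where $t_l$ is a tree with root decorated $d_l$ and total weight $n_l$, chosen so that its coefficient $a_{t_l}$ is nonzero (such a tree exists by the remarks after Lemma \ref{17}, since $X_{d_l}(n_l)\ne0$). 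Using formula \eqref{E1} for $a_{t'}$, $a_{B^+_i(\cdots)}$ with one extra leaf decorated $j$, and the defining relation of the various $\lambda$'s applied both to $t'$ and to each subtree $t_l$ (to "push" a $j$-leaf into $t_l$ or onto the root), I would extract a telescoping identity. The bookkeeping: a $j$-leaf adjoined to $t'$ either attaches directly to the root $i$ (governed by $\lambda^{(i,j)}_{p+1}$ and $a^{(i)}_j$ for the corolla part, with the $\sum_l(\lambda^{(d_l,j)}_{n_l}-a^{(d_l)}_j)$ terms coming from the subtrees) or attaches inside some $t_l$; comparing with the single coefficient $\lambda^{(i,j)}_{n_1+\cdots+n_p+1}$ that governs $t'$ as a whole yields exactly the displayed equation. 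I would be careful that the $a^{(i)}_j$ subtractions are precisely what is needed so that the "new root leaf" contributions from the recursion in condition 1 do not get double-counted — this is why the statement is phrased with $\lambda^{(\cdot,j)}-a^{(\cdot)}_j$ everywhere.

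For the $(\Longleftarrow)$ direction I would define all coefficients $a^{(i)}_{(p_1,\dots,p_N)}$ from the given $\lambda$'s by the recursion in condition 1, starting from $a^{(i)}_{(0,\dots,0)}=1$ (one checks the recursion is consistent — i.e. independent of the order in which one raises the $p_j$'s one at a time — exactly because of condition 2 specialised to $p=1$, or more precisely because condition 2 guarantees the "mixed partials" of the recursion commute). This defines formal series $F_i$, hence an SDSE. Then I would verify the hypothesis of Proposition \ref{16}: compute $\sum_{t\in\T_i(n+1)} n_j(t,t')a_t$ for a tree $t'=B^+_i(t_1^{m_1}\cdots)\in\T_i(n)$ using \eqref{E1}, splitting the sum according to where the extra $j$-leaf sits (on the root, or inside one of the subtrees, where I induct on the weight). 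Condition 2 is exactly what is needed to collapse this sum into $\lambda_n^{(i,j)}a_{t'}$ with the \emph{same} $\lambda$ regardless of the shape of $t'$; condition 1 handles the base/root contribution. Then Proposition \ref{16} gives that $(S)$ is Hopf.

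The main obstacle is the $(\Longleftarrow)$ verification: turning the recursion of condition 1 into a closed description of $a_{t'}$ for arbitrary trees $t'$ (via \eqref{E1}) and then showing, by induction on the weight of $t'$, that the "leaf-adding" sum factors as $\lambda_n^{(i,j)}a_{t'}$. The combinatorial heart is the decomposition "a $j$-leaf of a tree of weight $n+1$ is either grafted on the root or lies in a maximal subtree", together with the multinomial factors in \eqref{E1}; matching the resulting recombination against condition 2 (which is itself a telescoping over the subtrees $d_1,\dots,d_p$ with weights $n_1,\dots,n_p$) is the delicate step, and in particular one must check that the non-vanishing hypothesis "$a^{(i)}_{(p_1,\dots,p_N)}\ne0$" in condition 2 covers exactly the trees $t'$ that actually occur with nonzero coefficient, so that no spurious constraint is imposed and none is missing.
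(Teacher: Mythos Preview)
Your plan is essentially the paper's own proof: a preliminary computation of $\sum_t n_j(t,t')a_t$ for $t'=B_i^+(\prod s^{r_s})$ via \eqref{E1}, split according to whether the extra $j$-leaf attaches to the root or inside a subtree; then for $(\Longrightarrow)$ one chooses $t'=B_i^+(s_1\cdots s_p)$ with $s_l$ of root-decoration $d_l$ and weight $n_l$ (and $a_{s_l}\ne0$) to extract condition~2, while for $(\Longleftarrow)$ one runs induction on $weight(t')$ to verify Proposition~\ref{16}.

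One small correction in your $(\Longleftarrow)$ direction: the SDSE $(S)$ is \emph{given}, so the coefficients $a^{(i)}_{(p_1,\dots,p_N)}$ are already fixed by the $F_i$'s; condition~1 is a hypothesis on these given coefficients, not a recipe for constructing them from the $\lambda$'s. In particular there is no consistency check to perform (the $a$'s exist, hence the recursion is automatically consistent wherever it applies). The rest of your inductive verification is exactly what the paper does.
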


\begin{proof} {\it Preliminary step}. Let us assume the first point and let $t'\in \T_\D^{(i)}$. We use the following notations:
$$t'=B_i^+\left(\prod_{s\in \T_\D} s^{r_s}\right).$$
We also denote, for all $j\in I$:
$$p_j=\sum_{s\in \T_\D^{(j)}}r_s.$$
Then, by (\ref{E1}):
$$a_{t'}=\frac{\displaystyle \prod_{j=1}^N p_j!}{\displaystyle \prod_{s\in \T_\D}r_s!}
a_{(p_1,\cdots,p_N)}^{(i)} \prod_{s\in \T_\D} a_s^{r_s}.$$
Hence:
\begin{eqnarray*}
\sum_{t\in \T_\D^{(i)}} n_j(t,t')a_t&=&n_j\left(B_i^+\left(\tdun{$j$} \prod_{s\in \T_\D} s^{r_s}\right),t'\right)
a_{B_i^+\left(\tdun{$j$} \prod s^{r_s}\right)}\\
&&+\sum_{\substack{s_1,s_2\in \T_\D\\r_{s_2}\geq 1}}(r_{s_1}+1) n_j(s_1,s_2)a_{B^+_i\left( \frac{s_1}{s_2} \prod s^{r_s} \right)}\\
&=&(r_{\tdun{$j$}+1}) \frac{\displaystyle(p_j+1) \prod_{j=1}^N p_j!}{\displaystyle (r_{\tdun{$j$}+1})\prod_{s\in \T_\D}r_s!}
a_{(p_1,\cdots,p_{j+1},\cdots, p_N)}^{(i)} a_{\tdun{$j$}}\prod_{s\in \T_\D} a_s^{r_s}\\
&&+\sum_{s_1,s_2\in \T_\D}(r_{s_1}+1)n_j(s_1,s_2)\frac{r_{s_2}}{r_{s_1}+1}a_{t'} \frac{a_{s_1}}{a_{s_2}}\\
&=&(p_j+1)\frac{a_{(p_1,\cdots,p_{j+1},\cdots, p_N)}^{(i)}}{a_{(p_1,\cdots,p_N)}^{(i)}}a_{t'}+\sum_{s_1,s_2\in \T_\D}
n_j(s_1,s_2)r_{s_2}a_{t'} \frac{a_{s_1}}{a_{s_2}}\\
&=&\left(\lambda_{p_1+\cdots+p_N+1}^{(i,j)}-\sum_{l=1}^N p_j a_j^{(l)} 
+\sum_{\substack{s_1,s_2\in \T_\D\\ r_{s_2}>0}}n_j(s_1,s_2) r_{s_2}\frac{a_{s_1}}{a_{s_2}}\right)a_{t'}.
\end{eqnarray*}

$\Longrightarrow$. Let us assume that $(S)$ is Hopf. We already prove the existence of the scalars $\lambda_n^{(i,j)}$. 
We obtain from the preceding computation:
$$\lambda_{weight(t')}^{(i,j)} a_{t'}
=\left(\lambda_{p_1+\cdots+p_N+1}^{(i,j)}-\sum_{l=1}^N p_j a_j^{(l)} 
+\sum_{s_2\in \T_\D}r_{s_2}\lambda_{weight(s_2)}^{(d(s_2),j)}\right)a_{t'},$$
where $d(s_2)$ is the decoration of the root of $s_2$.
Let us choose $p$, $i,j,d_1,\cdots,d_p$, $n_1,\cdots,n_p$ as in the hypotheses of the proposition.
Let us choose for all $1\leq j\leq p$ a tree $s_j$ with root decorated by $d_j$, of weight $n_j$, such that $a_{s_j} \neq 0$:
this always exists (for example take a convenient  ladder). Let us take $t'=B^+_i(s_1\cdots s_p)$. Then $a_{t'} \neq 0$ because
$a^{(i)}_{(p_1,\cdots,p_N)}\neq 0$, so:
$$\lambda_{n_1+\cdots+n_p+1}^{(i,j)}=\lambda_{p+1}^{(i,j)}+\sum_{l=1}^p \left(\lambda^{(d_l,j)}_{n_l}-a^{(d_l)}_j\right).$$

$\Longleftarrow$. Let us show the condition of proposition \ref{16} by induction on the weight $n$ of $t'$.
For $n=1$, then $t'=\tdun{$i$}$. 
Then, by hypothesis on the $a^{(i)}_{(p_1,\cdots,p_N)}$, $a^{(i)}_j=\lambda_1^{(i,j)}$. So:
$$\sum_{t\in \T_i(n+1)} n_j(t,t') a_t=\tddeux{$i$}{$j$}=a^{(i)}_j=\lambda_1^{(i,j)} a_{\tdun{$i$}}.$$
Let us assume the result for all tree of weight $<n$. The preceding computation then gives:
$$\sum_{t\in \T_\D^{(i)}} n_j(t,t')a_t=\left(\lambda_{p_1+\cdots+p_N+1}^{(i,j)}-\sum_{l=1}^N p_j a_j^{(l)} 
+\sum_{\substack{s_1,s_2\in \T_\D\\ r_{s_2}>0}}n_j(s_1,s_2) r_{s_2}\frac{a_{s_1}}{a_{s_2}}\right)a_{t'}.$$
The induction hypothesis and the condition on the coefficients $\lambda_n^{(i,j)}$ then give that this is equal to $\lambda_{weight(t')+1}^{(i,j)} a_{t'}$.
So $\hs$ is a Hopf subalgebra of $\h_I$. \end{proof}

\subsection{Prelie structure on $\hs^*$}

Let us consider a Hopf SDSE $(S)$. Then $\hs^*$ is the enveloping algebra of the Lie algebra $\lies=Prim\left(\hs^*\right)$, which inherits from $Prim(\h_\D^*)$
a pre-Lie product given in the following way: for all $f,g \in \gs$, for all $x\in \hs$, $f\star g$ is the unique element of $\lies$ 
such that for all $x\in vect(X_i(n)\:/\:i\in I,n\geq 1)$,
$$(f\star g)(x)=(f\otimes g)\circ (\pi \otimes \pi) \circ \Delta(x).$$
Let $(f_i(p))_{i\in I, p\geq 1}$ be the basis of $\lies$, dual of the basis $(X_i(p))_{i\in I, p\geq 1}$.
By homogeneity of $\Delta$, and as $\Delta(X_i(n))$ is a linear span of elements $-\otimes X_i(p)$, $0\leq p \leq n$, we obtain
the existence of coefficients $a^{(i,j)}_{k,l}$ such that, for all $i,j\in I$, $k,l\geq 1$:
$$f_j(l) \star f_i(k)=a^{(i,j)}_{k,l} f_i(k+l).$$
By duality, $a^{(i,j)}_{k,l}$ is the coefficient of $X_j(l) \otimes X_i(k)$ in $\Delta(X_i(k+l))$, so is uniquely determined in the following way:
for all $t' \in \T_\D^{(j)}(l)$, $t'' \in \T_\D^{(i)}(k)$,
$$\sum_{t\in \T_\D^{(i)}(k+l)} n(t',t'';t)a_t=a_{k,l}^{(i,j)} a_{t'}a_{t''}.$$

\begin{lemma}
For all $t' \in \T_\D^{(j)}(l)$, $t'' \in \T_\D^{(i)}(k)$:
$$\sum_{t\in \T_\D^{(i)}(k+l)} n(t',t'';t)a_t=\lambda^{(i,j)}_k a_{t'}a_{t''}.$$
\end{lemma}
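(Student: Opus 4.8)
My plan is to show that the scalar $a^{(i,j)}_{k,l}$ does not depend on $l$, and that its common value is $\lambda^{(i,j)}_k$. This is enough: by the displayed formula just above the lemma, $\sum_{t\in\T_\D^{(i)}(k+l)}n(t',t'';t)a_t=a^{(i,j)}_{k,l}\,a_{t'}a_{t''}$, and the case $l=1$ is already contained in Proposition \ref{16}, since $f_{\tdun{$j$}}=f_j(1)$ and $(f_{\tdun{$j$}}\otimes Id)\circ\Delta(X_i(k+1))=\lambda^{(i,j)}_k X_i(k)$ says exactly that the coefficient of $X_j(1)\otimes X_i(k)$ in $\Delta(X_i(k+1))$ is $\lambda^{(i,j)}_k$; thus $a^{(i,j)}_{k,1}=\lambda^{(i,j)}_k$.

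Fix $j\in I$ and $l\geq 1$, and consider the map $D=D_{j,l}:=(f_j(l)\otimes Id)\circ\Delta$ on $\widehat{\hs}$. Since $f_j(l)$ is primitive in $\hs^*$ (hence vanishes on products of positive-degree elements), $D$ is a derivation of the algebra $\widehat{\hs}$, and being homogeneous of degree $-l$ it is continuous; consequently Leibniz's rule passes to the convergent series defining $F_i(X)$, giving $D(F_i(X))=\sum_{m\in I}(\partial_{h_m}F_i)(X)\,D(X_m)$. Applying $D$ to $X_i=B^+_i(F_i(X))$ and using the $1$-cocycle identity $\Delta\circ B^+_i=B^+_i\otimes 1+(Id\otimes B^+_i)\circ\Delta$ together with this chain rule, I obtain $D(X_i)=f_j(l)(X_i)\cdot 1+B^+_i\big(D(F_i(X))\big)=\delta_{i,j}\cdot 1+B^+_i\big(\sum_{m\in I}(\partial_{h_m}F_i)(X)\,D(X_m)\big)$. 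On the other hand, writing $\Delta(X_i)=X_i\otimes 1+\sum_{p\geq 1}L^{(i)}_p\otimes X_i(p)$ with $L^{(i)}_p\in\widehat{\hs}$ (possible because $(S)$ is Hopf) and using that $f_j(l)$ is supported in degree $l$ and kills products, I also get $D(X_i)=\delta_{i,j}\cdot 1+\sum_{p\geq 1}a^{(i,j)}_{p,l}\,X_i(p)$.

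Thus the family $(D_{j,l}(X_i))_{i\in I}\in(\widehat{\h_I})^I$ is a solution of the fixed-point system
\[ Z_i=\delta_{i,j}\cdot 1+B^+_i\Big(\sum_{m\in I}(\partial_{h_m}F_i)(X)\,Z_m\Big),\qquad i\in I, \]
whose right-hand side does not involve $l$. This system has at most one solution: if $W=(W_i)$ is the difference of two solutions, then $W_i=B^+_i\big(\sum_m(\partial_{h_m}F_i)(X)W_m\big)$, and since $B^+_i$ raises valuation by exactly one while each $(\partial_{h_m}F_i)(X)$ has valuation $\geq 0$, the hypothesis $\mathrm{val}(W_m)\geq v$ for all $m$ forces $\mathrm{val}(W_i)\geq v+1$ for all $i$; starting from $\mathrm{val}(W_i)\geq 1$ (the image of $B^+_i$ consists of trees) we conclude $W_i=0$. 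Hence $D_{j,l}(X_i)$, and therefore each $a^{(i,j)}_{p,l}$ by the second expression above, is independent of $l$; taking $l=1$ yields $a^{(i,j)}_{p,l}=a^{(i,j)}_{p,1}=\lambda^{(i,j)}_p$ for all $p$. Combined with the first paragraph this proves the lemma.

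The substance is the two-way computation of $D_{j,l}(X_i)$ and the uniqueness of the fixed point; the only points demanding care are purely formal: that $D_{j,l}$ really is a continuous derivation (so the chain rule is legitimate on the series $F_i(X)$), that the cocycle identity and $B^+_i$ extend to the completion, and the bookkeeping of the constant terms $\delta_{i,j}$. I do not expect any genuine obstacle.
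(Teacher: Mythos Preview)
Your proof is correct and takes a genuinely different route from the paper's. The paper proceeds by a combinatorial induction on $k$: writing $t''=B^+_i\bigl(\prod_s s^{r_s}\bigr)$, it splits the sum $\sum_t n(t',t'';t)a_t$ according to whether $t'$ is grafted at the root of $t''$ or inside one of its subtrees $s$, applies the induction hypothesis to the subtrees, and then invokes the relation of Proposition~\ref{19}-2 to collapse the resulting expression to $\lambda^{(i,j)}_k a_{t'}a_{t''}$.

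Your argument is more structural: instead of manipulating trees, you observe that $D_{j,l}=(f_j(l)\otimes Id)\circ\Delta$ is a continuous derivation, feed it through the Dyson--Schwinger equation using the cocycle identity and the chain rule, and conclude that $(D_{j,l}(X_i))_i$ solves an $l$-independent fixed-point system which has a unique solution by a valuation argument. This yields $a^{(i,j)}_{k,l}=a^{(i,j)}_{k,1}=\lambda^{(i,j)}_k$ without any tree combinatorics and, notably, without ever invoking Proposition~\ref{19}-2. The paper's approach has the virtue of being self-contained at the level of coefficients and of displaying explicitly how the Hopf condition (via Proposition~\ref{19}-2) enters; your approach isolates the conceptual reason---the uniqueness of the solution to the linearized SDSE---and would adapt more readily to variants of the construction. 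The only points needing care in your write-up are exactly the formal ones you flag (continuity of $D_{j,l}$, extension of the cocycle identity to $\widehat{\h_I}$, the identification $f_{\tdun{$j$}}=f_j(1)$ via $X_j(1)=\tdun{$j$}$), and none of them presents a difficulty.
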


\begin{proof} By induction on $k$. If $k=1$, then $t''=\tdun{$i$}$, so:
$$\sum_{t\in \T_\D^{(i)}(k+l)} n(t',t'';t)a_t=a_{B^+_i(t'')}=a^{(i)}_j a_{t'}=\lambda_1^{(i,j)}a_{t'}a_{t''},$$
as $a_{t''}=1$. Let us assume the result at all rank $\leq k-1$. We put $\displaystyle t''=B_i^+(\prod_{s\in \T_\D}s^{r_s})$. 
We put $\displaystyle p_j=\sum_{s\in \T_\D^{(j)}} r_s$ for all $j\in I$. Then:
\begin{eqnarray*}
\sum_{t\in \T_\D^{(i)}(k+l)} n(t',t'';t)a_t&=&n\left(t',t'',B_i^+\left(\tdun{$j$} \prod_{s\in \T_\D} s^{r_s}\right)\right)a_{B_i^+\left(t'\prod s^{r_s}\right)}\\
&&+\sum_{\substack{s_1,s_2\in \T_\D\\r_{s_2}\geq 1}}(r_{s_1}+1) n(t',s_2;s_1)a_{B^+_i\left( \frac{s_1}{s_2} \prod s^{r_s} \right)}\\
&=&(r_{t'+1}) \frac{\displaystyle(p_j+1) \prod_{j=1}^N p_j!}{\displaystyle (r_{t'+1})\prod_{s\in \T_\D}r_s!}
a_{(p_1,\cdots,p_{j+1},\cdots, p_N)}^{(i)} a_{t'}\prod_{s\in \T_\D} a_s^{r_s}\\
&&+\sum_{s_1,s_2\in \T_\D}(r_{s_1}+1)n_j(s_1,s_2)\frac{r_{s_2}}{r_{s_1}+1}a_{t''} \frac{a_{s_1}}{a_{s_2}}\\
&=&(p_j+1)\frac{a_{(p_1,\cdots,p_{j+1},\cdots, p_N)}^{(i)}}{a_{(p_1,\cdots,p_N)}^{(i)}}a_{t'}a_{t''}+\sum_{s_1,s_2\in \T_\D}
n_j(s_1,s_2)r_{s_2} \frac{a_{s_1}}{a_{s_2}}a_{t''}\\
&=&\left(\lambda_{p_1+\cdots+p_N+1}^{(i,j)}-\sum_{l=1}^N p_j a_j^{(l)} 
+\sum_{\substack{s_1,s_2\in \T_\D\\ r_{s_2}>0}}n_j(s_1,s_2) r_{s_2}\frac{a_{s_1}}{a_{s_2}}\right)a_{t'}a_{t''}\\
&=&\left(\lambda_{p_1+\cdots+p_N+1}^{(i,j)}-\sum_{l=1}^N p_j a_j^{(l)} 
+\sum_{s_2\in T_\D}r_{s_2}\lambda_{|s_2|}^{(r(s_2),j)}\right)a_{t'}a_{t''},
\end{eqnarray*}
using the induction hypothesis on $s_2$, denoting by $r(s_2)$ the decoration of the root of $s_2$.
By proposition \ref{19}-2, if $a_{t'} \neq 0$, then $a^{(i)}_{(p_1,\cdots,p_n)}\neq 0$, so:
\begin{eqnarray*}
\lambda_{1+\sum r_s|s|}^{(i,j)}&=&\lambda_{1+\sum r_s}^{(i,j)}+\sum_s r_s \left(\lambda_{|s|}^{(r(s),j)}-a^{(r(s))}_j\right)\\
\lambda_{|t''|}^{(i,j)}&=&\lambda_{p_1+\cdots+p_N+1}^{(i,j)}+\sum_s r_s \lambda_{|s|}^{(r(s),j)}-\sum_l p_l a^{(l)}_j.
\end{eqnarray*}
So the induction hypothesis is proved at rank $n$. \end{proof}\\

Combining this lemma with the preceding observations:

\begin{prop} \label{21}
Let $(S)$ be a Hopf SDSE. The pre-Lie algebra $\lies=Prim\left(\hs^*\right)$ has a basis $(f_i(k))_{i\in I, k\geq 1}$, and the pre-Lie product of two elements 
of this basis is given by:
$$f_j(l) \star f_i(k)=\lambda_k^{(i,j)} f_i(k+l).$$
\end{prop}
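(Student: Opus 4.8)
The plan is to identify the structure constants $a^{(i,j)}_{k,l}$ of the pre-Lie product, introduced just before the previous lemma, with the scalars $\lambda_k^{(i,j)}$, and then to read off the formula. First I would recall the two ingredients that are already available. On the one hand, as explained in the paragraph preceding the last lemma, $\hs^*$ is by Cartier-Quillen-Milnor-Moore the enveloping algebra of $\lies=Prim(\hs^*)$, this Lie algebra carries the pre-Lie product inherited from $Prim(\h_\D^*)$, and by homogeneity of $\Delta$ there are scalars $a^{(i,j)}_{k,l}$ with $f_j(l)\star f_i(k)=a^{(i,j)}_{k,l}f_i(k+l)$; dually, $a^{(i,j)}_{k,l}$ is the coefficient of $X_j(l)\otimes X_i(k)$ in $\Delta(X_i(k+l))$, characterised by
$$\sum_{t\in\T_\D^{(i)}(k+l)}n(t',t'';t)\,a_t=a^{(i,j)}_{k,l}\,a_{t'}a_{t''}$$
for all $t'\in\T_\D^{(j)}(l)$, $t''\in\T_\D^{(i)}(k)$. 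On the other hand, the preceding lemma evaluates exactly the same sum as
$$\sum_{t\in\T_\D^{(i)}(k+l)}n(t',t'';t)\,a_t=\lambda_k^{(i,j)}\,a_{t'}a_{t''}.$$

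Comparing these two identities, I get $a^{(i,j)}_{k,l}\,a_{t'}a_{t''}=\lambda_k^{(i,j)}\,a_{t'}a_{t''}$ for every admissible pair $(t',t'')$, so it remains only to exhibit one such pair with $a_{t'}a_{t''}\neq 0$. This is where the remark following Lemma \ref{17} comes in: $X_j(l)\neq 0$ and $X_i(k)\neq 0$, hence there exist $t'\in\T_\D^{(j)}(l)$ and $t''\in\T_\D^{(i)}(k)$ with $a_{t'}\neq 0$ and $a_{t''}\neq 0$. Dividing by $a_{t'}a_{t''}$ then gives $a^{(i,j)}_{k,l}=\lambda_k^{(i,j)}$, which is the announced formula. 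The same non-vanishing, together with the facts that trees whose roots carry distinct decorations lie in complementary subspaces of $\h_I$ and that a nonzero linear combination of trees never lies in the square of the augmentation ideal of $\hs$, also justifies that $(X_i(p))_{i\in I,p\geq 1}$ is linearly independent and spans the indecomposable quotient of $\hs$, so that its dual family $(f_i(p))_{i\in I,p\geq 1}$ is genuinely a basis of $\lies$.

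There is essentially no obstacle left at this stage: all the real work --- an induction on $k$ using the compatibility relations of Proposition \ref{19} relating the coefficients $a^{(i)}_{(p_1,\ldots,p_N)}$ and $\lambda_n^{(i,j)}$ --- has already been carried out in the preceding lemma, and Proposition \ref{21} is simply the translation of that lemma through the duality description of the pre-Lie product of $\lies$. The only point requiring a word of care is the division step, i.e.\ the existence of a tree of prescribed root decoration and prescribed weight with nonzero coefficient, which is exactly the content of $X_i(p)\neq 0$.
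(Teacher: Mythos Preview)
Your proof is correct and follows essentially the same route as the paper: the paper's entire argument is the single line ``Combining this lemma with the preceding observations'', and you have spelled out precisely that combination, including the division step via the non-vanishing $X_i(p)\neq 0$ from the remark after Lemma~\ref{17}. Your added justification that the $(f_i(p))$ form a basis is a welcome detail that the paper leaves implicit.
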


\section{Level of a vertex}

The second item of proposition \ref{19}-2 is immediately satisfied if there exist scalars $b_j$ and $a^{(i)}_j$ such that 
$\lambda_n^{(i,j)}=b_j(n-1)+a^{(i)}_j$ for all $n \geq 1$ and all $i,j \in I$. This motivates the definition of the level of a vertex.

\subsection{Definition of the level}

\begin{defi}\textnormal{
Let $(S)$ be a Hopf SDSE, and let $i$ be a vertex of $\gs$.  It will be said to be of {\it level $\leq M$} if for all vertex $j$, 
there exist scalar $b^{(i)}_j$, $\tilde{a}^{(i)}_j$, such that for all $n>M$:
$$\lambda_n^{(i,j)}=b^{(i)}_j(n-1)+\tilde{a}^{(i)}_j.$$
The vertex $i$ will be said to be of {\it level $M$} if it is of level $\leq M$ and not of level $\leq M-1$.
}\end{defi}

{\bf Remark.} In order to prove that $i$ is of  level $\leq M$, it is enough to consider the $j$'s which are descendants of $i$. Indeed, if $j$ is not a descendant
of $i$, by proposition \ref{18}-1, $\lambda_n^{(i,j)}=0$ for all $n \geq 1$.

\begin{prop} \label{23}
Let $(S)$ be a Hopf SDSE, $i$ a vertex of $\gs$ and $j$ a direct descendant of $\gs$.
\begin{enumerate}
\item $i$ has level $0$ or $1$ if, and only if, $j$ as level $0$.
\item Let $M \geq 2$. Then $i$ has level $M$ if, and only if, $j$ has level $M-1$.
\end{enumerate}
Moreover, if this holds, then for all $k\in I$, $b^{(i)}_k=b^{(j)}_k$.
\end{prop}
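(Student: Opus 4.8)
The plan is to reduce the whole statement to a single telescoping identity comparing the sequences $\left(\lambda_n^{(i,k)}\right)_{n\geq 1}$ and $\left(\lambda_n^{(j,k)}\right)_{n\geq 1}$, obtained from Lemma~\ref{17}-1. Since $j$ is a direct descendant of $i$ we have $a^{(i)}_j\neq 0$, and for each $n\geq 2$ there is a sequence $i=i_1\longrightarrow j=i_2\longrightarrow\cdots\longrightarrow i_n$ in $\gs$ — such sequences exist because no $F_i$ is constant (remarks following Lemma~\ref{17}). Applying the formula of Lemma~\ref{17}-1 to this sequence and to its tail $j=i_2\longrightarrow\cdots\longrightarrow i_n$, all summands indexed by $p\geq 2$ coincide, so that
$$\lambda_n^{(i,k)}=\lambda_{n-1}^{(j,k)}+c_k,\qquad c_k:=\left(1+\delta_{k,j}\right)\frac{a^{(i)}_{k,j}}{a^{(i)}_j},$$
for all $n\geq 2$ and all $k\in I$, with $c_k$ independent of $n$. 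This identity is the real content of the proposition; everything after it is bookkeeping.

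Next I would record how affinity transfers across this identity. If $\lambda_n^{(j,k)}=b_k(n-1)+\tilde a_k$ for all $n>M'$, then $\lambda_n^{(i,k)}=b_k(n-1)+\bigl(\tilde a_k+c_k-b_k\bigr)$ for all $n>M'+1$; conversely, if $\lambda_n^{(i,k)}=b_k(n-1)+\tilde a_k$ for all $n>M$, then $\lambda_n^{(j,k)}=b_k(n-1)+\bigl(b_k+\tilde a_k-c_k\bigr)$ for all $n>\max(M-1,0)$ (the condition $n\geq 1$ being automatic). In particular: $j$ of level $\leq M'$ forces $i$ of level $\leq M'+1$, and $i$ of level $\leq M$ forces $j$ of level $\leq\max(M-1,0)$. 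Moreover the slope $b_k$ is preserved in both directions, and since a slope is determined by infinitely many values this gives $b^{(i)}_k=b^{(j)}_k$ for every $k$, which is the last assertion.

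From these one-sided inequalities the two equivalences come out. For item~1: if $j$ has level $0$ then $i$ has level $\leq 1$, i.e.\ level $0$ or $1$; conversely if $i$ has level $0$ or $1$ (level $\leq 1$) then $j$ has level $\leq 0$, i.e.\ level $0$. For item~2 with $M\geq 2$: if $i$ has level $M$ then $j$ has level $\leq M-1$, and $j$ cannot have level $\leq M-2$ because that would force $i$ to have level $\leq M-1$, contradicting that $i$ has level exactly $M$; the converse is symmetric — $j$ of level exactly $M-1$ (with $M-1\geq 1$) forces $i$ of level $\leq M$, while $i$ of level $\leq M-1$ would force $j$ of level $\leq M-2$, again a contradiction.

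The main obstacle is purely one of bookkeeping: keeping the index ranges exact in the affinity transfer (so that one gets "level exactly $M$", not merely "$\leq$"), and reading "level $\leq M-1$" correctly as "level $0$" when $M\in\{0,1\}$ — there is no level below $0$ since the sequences start at $n=1$. Once the telescoping identity is in hand, no further nontrivial computation is needed.
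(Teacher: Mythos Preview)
Your proof is correct and follows essentially the same approach as the paper: both establish the telescoping identity $\lambda_n^{(i,k)}=\lambda_{n-1}^{(j,k)}+c_k$ with $c_k$ independent of $n$, and then read off the level implications and slope preservation. The only minor difference is that the paper obtains this identity by invoking Proposition~\ref{19}-2 with $p=1$, $d_1=j$ (giving $c_k=\lambda_2^{(i,k)}-a^{(j)}_k$), whereas you derive it directly from Lemma~\ref{17}-1 by comparing the path $i\to j\to\cdots$ with its tail; since Lemma~\ref{17}-1 gives $\lambda_2^{(i,k)}=a^{(j)}_k+(1+\delta_{k,j})a^{(i)}_{k,j}/a^{(i)}_j$, the two constants agree and the arguments are the same.
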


\begin{proof} Let $i\in \gs$ and $j$ be a direct descendant of $i$. As $(S)$ is Hopf, let us use the second point of proposition \ref{19}, 
with $k=1$ and $d_1=j$. Then for all $l$, for all $n \geq 1$, as $a^{(i)}_j \neq 0$:
$$\lambda_{n+1}^{(i,l)}=\lambda_2^{(i,l)}+\lambda_n^{(j,l)}-a^{(j)}_l.$$
So for all $M\geq 1$, $i$ is of level $\leq M$ if, and only if, $j$ is of level $\leq M-1$. Moreover, if this holds, then $b_k^{(i)}=b_k^{(j)}$ for all $k$.

The first point is a reformulation of the preceding result for $M=1$. Let us assume that $M\geq 2$. If $i$ is of level $M$, then $j$ is of level $\leq M-1$. 
If $j$ is of level $\leq M-2$, then $i$ is of level $\leq M-1$: contradiction. So $j$ is of level $M-1$. The converse is proved in the same way. \end{proof}

\begin{cor}
Let $(S)$ be a connected Hopf SDSE. Then if one of the vertices of $\gs$ is of finite level, then all vertices of $\gs$ are of finite level.
Moreover, the coefficients $b^{(i)}_j$ depend only of $j$. They will now be denoted by $b_j$.
\end{cor}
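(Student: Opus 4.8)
The plan is to deduce everything from proposition \ref{23} by a simple propagation argument along the edges of $\gs$, together with the connectedness hypothesis. First I would record a preliminary remark: once a vertex $i$ has finite level $M$, the scalars $b^{(i)}_j$ (and $\tilde{a}^{(i)}_j$) are uniquely determined for each $j$, since two affine functions of $n$ coinciding for all $n>M$ coincide everywhere; so the notation $b^{(i)}_j$ makes sense as soon as $i$ has finite level, which is what is needed for the last sentence of the statement to have content.

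Next I would establish the key local fact: if $i\longrightarrow j$ is an edge of $\gs$, then $i$ has finite level if and only if $j$ does. This is just an unwinding of proposition \ref{23}: if $i$ has level $M$, then $j$ has level $M-1$ when $M\geq 2$ and level $0$ when $M\in\{0,1\}$; conversely, if $j$ has level $M'$, then $i$ has level $M'+1$ when $M'\geq 1$ and level $0$ or $1$ when $M'=0$. So along any edge of $\gs$ — no matter its orientation — ``being of finite level'' holds at one endpoint iff it holds at the other, and moreover, by the final assertion of proposition \ref{23}, the families $\left(b^{(i)}_k\right)_{k\in I}$ and $\left(b^{(j)}_k\right)_{k\in I}$ coincide.

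Then I would conclude by connectedness. Assume some vertex $i_0$ has finite level and let $i$ be an arbitrary vertex. Since $(S)$ is connected, there is a path $i_0=v_0,v_1,\ldots,v_m=i$ in $\gs$ (forgetting orientations), so that each consecutive pair $v_t,v_{t+1}$ is joined by an edge of $\gs$ in one of the two directions. Applying the local fact edge by edge, an immediate induction on $t$ shows that every $v_t$, hence $i$, has finite level. Therefore all vertices of $\gs$ have finite level, so all the $b^{(i)}_j$ are defined; applying the ``$b^{(v_t)}_k=b^{(v_{t+1})}_k$ along an edge'' part along such a path between any two vertices $i,i'$ gives $b^{(i)}_j=b^{(i')}_j$ for every $j\in I$, so $b^{(i)}_j$ depends only on $j$, and we may denote it $b_j$.

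I do not expect a real obstacle here, since proposition \ref{23} already carries out the substantive work at the level of a single edge. The only points that need a little care are (a) that proposition \ref{23} is invoked in both directions of implication and for edges of either orientation along the connecting path, and (b) the uniqueness of $b^{(i)}_j$, which is what legitimises the final ``depends only on $j$'' assertion; both are routine.
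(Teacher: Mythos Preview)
Your proposal is correct and is exactly the intended argument: the paper states this corollary without proof, since it is an immediate consequence of proposition \ref{23} via the edge-by-edge propagation you describe, using connectedness of $\gs$. Your remarks on the well-definedness of $b^{(i)}_j$ and on applying proposition \ref{23} in both directions along each edge of the undirected path are the appropriate points of care, and they are handled correctly.
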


Proposition \ref{18}-1 immediately implies the following result:

\begin{lemma} \label{25}
Let $(S)$ be a connected Hopf SDSE and let $j$ be a vertex of $\gs$ of finite level. 
If there exists a vertex $i$ in $\gs$ which is not a descendant of $j$, then $b_j=0$.
\end{lemma}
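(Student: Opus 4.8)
The plan is to read the conclusion off Proposition~\ref{18}-1 together with the corollary that immediately precedes the statement. First I would fix notation: since $(S)$ is connected and the vertex $j$ has finite level, the corollary guarantees that \emph{every} vertex of $\gs$ has finite level and that the slope coefficients occurring in the definition of the level depend only on their lower index; I write $b_k$ for this common value, consistently with the corollary. With this in hand, the target $b_j=0$ is equivalent to exhibiting a single vertex $v$ of $\gs$ for which the sequence $\left(\lambda_n^{(v,j)}\right)_{n\geq 1}$ is eventually affine in $n$ with zero slope.

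Next I would produce such a vertex from the hypothesis. Proposition~\ref{18}-1 says exactly that $\lambda_n^{(v,j)}=0$ for all $n\geq 1$ whenever $j$ is not a descendant of $v$, and the hypothesis supplies a vertex of this kind. The one point that genuinely needs care is which of the two slots of $\lambda_n^{(\cdot,\cdot)}$ the descendant condition constrains: Proposition~\ref{18}-1 is not symmetric in its two vertices, so one must apply it to the pair having this vertex in the \emph{first} slot and $j$ in the second, obtaining $\lambda_n^{(i,j)}=0$ for all $n\geq 1$.

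Finally I would conclude. The vertex $i$ has finite level, say of level $\leq M$; by the very definition of the level there is a scalar $\tilde a^{(i)}_j$ with $\lambda_n^{(i,j)}=b_j(n-1)+\tilde a^{(i)}_j$ for all $n>M$. Comparing this with $\lambda_n^{(i,j)}=0$ at, say, $n=M+1$ and $n=M+2$ and subtracting forces $b_j=0$ (and incidentally $\tilde a^{(i)}_j=0$ as well).

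I do not expect a real obstacle here: once the setup via the corollary is in place the argument is a two-line consequence of Proposition~\ref{18}-1, and the only step deserving attention is the index bookkeeping flagged in the second paragraph, namely aligning the orientation of the descendant relation with the correct slot of the structure coefficients $\lambda_n^{(\cdot,\cdot)}$.
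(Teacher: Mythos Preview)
Your overall plan is exactly the paper's: use the preceding corollary so that every vertex has finite level with common slope $b_j$, and then read off $b_j=0$ from a single sequence $\lambda_n^{(\cdot,j)}$ that vanishes identically by Proposition~\ref{18}-1. But the very bookkeeping you flag is where the argument slips. Proposition~\ref{18}-1 yields $\lambda_n^{(i,j)}=0$ when \emph{$j$ is not a descendant of $i$} (no oriented path from $i$ to $j$). The hypothesis of the lemma, as written, supplies a vertex $i$ that is \emph{not a descendant of $j$} (no oriented path from $j$ to $i$). These are opposite orientations, so placing $i$ in the first slot and $j$ in the second does not entitle you to $\lambda_n^{(i,j)}=0$ from the stated hypothesis; your sentence ``the hypothesis supplies a vertex of this kind'' is precisely the unjustified step.

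This is not a cosmetic quibble: the lemma \emph{as printed} is false. Take $I=\{1,2\}$ with $F_1=1+h_2$ and $F_2=(1-h_2)^{-1}$; this is Hopf, being an extension (Proposition~\ref{11}) of the single equation $X_2=B_2^+\!\left((1-X_2)^{-1}\right)$. Here $1$ is not a descendant of $2$, yet $b_2=2\neq 0$. So the intended hypothesis is evidently that \emph{$j$ is not a descendant of $i$} (equivalently, $i$ is not an ascendant of $j$); with that reading your argument is correct and coincides with the paper's one-line deduction from Proposition~\ref{18}-1. The only later use of the lemma (in the proof of Proposition~\ref{29}) applies it with the vertex not a descendant of itself, where the two orientations coincide, so nothing downstream is affected.
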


\subsection{Vertices of level 0}

Let $(S)$ be a Hopf SDSE with $I=\{1,\ldots,N\}$, and let us assume that $i$ is a vertex of level $0$. In this case, the coefficients 
$a^{(i)}_{(p_1,\cdots,p_N)}$ satisfy an induction of the following form:
\begin{eqnarray*}
&&\left\{ \begin{array}{rcl}
a^{(i)}_{(0,\cdots,0)}&=&1,\\
a^{(i)}_{(p_1,\cdots,p_j+1,\cdots,p_N)}&=&\displaystyle \frac{1}{p_j+1}\left(\lambda_j+\sum_{l=1}^N \mu_j^{(l)} p_l \right) a^{(i)}_{(p_1,\cdots,p_N)}.
\end{array}\right. \end{eqnarray*}
In order to ease the notation, we shall write $a_{(p_1,\cdots,p_N)}$ instead of $a^{(i)}_{(p_1,\cdots,p_N)}$ and $F$ instead of $F_i$ in this section. 

\begin{lemma}\label{26}
Under the preceding hypothesis:
\begin{enumerate}
\item Let us denote $J=\{j\in I\:/\: \lambda_j=0\}$. There exists a partition $I=I_1\cup \cdots \cup I_M\cup J$, and scalars $\beta_1,\cdots,\beta_M$,
such that for all $i,j \in I\setminus J=I_1\cup \cdots \cup I_M$:
$$\mu_i^{(j)}=\left\{ \begin{array}{l}
0\mbox{ if $i,j$ do not belong to the same $I_l$,}\\
\lambda_i \beta_l \mbox{ if $i,j \in I_l$}.
\end{array}\right.$$
\item Moreover $\displaystyle F(h_1,\cdots,h_N)=\prod_{p=1}^M f_{\beta_p}\left( \sum_{l\in I_p} \lambda_l h_l \right)$.
\end{enumerate} \end{lemma}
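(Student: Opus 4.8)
We have a vertex $i$ of level $0$, so there is a formal series $F = F_i$ whose coefficients satisfy the recursion
$$a_{(0,\dots,0)}=1,\qquad a_{(p_1,\dots,p_j+1,\dots,p_N)}=\frac{1}{p_j+1}\Bigl(\lambda_j+\sum_{l=1}^N\mu_j^{(l)}p_l\Bigr)a_{(p_1,\dots,p_N)},$$
where, by comparison with proposition \ref{19}-1 (and the definition of level $0$, $\lambda_n^{(i,j)}=b_j(n-1)+\tilde a_j^{(i)}$), one has $\lambda_j=\tilde a_j^{(i)}=a_j^{(i)}$ and $\mu_j^{(l)}=b_l-a_j^{(l)}\cdot$something — in any case the $\mu_j^{(l)}$ and $\lambda_j$ are the data given to us, and we must extract the factorisation. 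The plan is: first exploit the \emph{consistency} of the recursion (the coefficient $a_{(p_1,\dots,p_j+1,\dots,p_k+1,\dots,p_N)}$ must be the same whether one increments $p_j$ first or $p_k$ first), then solve the resulting functional/combinatorial identity on the $\mu_j^{(l)}$ to get the partition, and finally recognise the closed form of $F$ as a product of $f_\beta$'s.

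\textbf{Step 1: the cross-derivative consistency relations.} Incrementing $p_j$ then $p_k$ (with $j\ne k$) gives
$$a_{(\dots,p_j+1,\dots,p_k+1,\dots)}=\frac{\lambda_k+\mu_k^{(j)}(p_j+1)+\sum_{l\ne j}\mu_k^{(l)}p_l}{(p_j+1)(p_k+1)}\Bigl(\lambda_j+\sum_l\mu_j^{(l)}p_l\Bigr)a_{(p_1,\dots,p_N)},$$
and incrementing in the other order gives the same thing with $j,k$ swapped. Equating the two and cancelling $a_{(p_1,\dots,p_N)}/((p_j+1)(p_k+1))$ (valid when $a_{(p_1,\dots,p_N)}\ne0$, and one checks this locus is large enough to force the polynomial identity), one obtains, for all $j\ne k$ and all $(p_l)$,
$$\Bigl(\lambda_k+\mu_k^{(j)}+\textstyle\sum_l\mu_k^{(l)}p_l\Bigr)\Bigl(\lambda_j+\textstyle\sum_l\mu_j^{(l)}p_l\Bigr)=\Bigl(\lambda_j+\mu_j^{(k)}+\textstyle\sum_l\mu_j^{(l)}p_l\Bigr)\Bigl(\lambda_k+\textstyle\sum_l\mu_k^{(l)}p_l\Bigr).$$
Comparing coefficients of the monomials $p_l p_m$, $p_l$, and the constant term in this polynomial identity in the $p_l$'s will pin down the structure: the quadratic part forces $\mu_k^{(l)}\mu_j^{(m)}$-type symmetry, and the linear/constant parts relate $\mu_k^{(j)}$ to $\lambda_j,\lambda_k$.

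\textbf{Step 2: extracting the partition and the $\beta_l$'s.} From the coefficient comparison I expect to get: whenever $\lambda_j\ne0$ and $\lambda_k\ne0$, either $\mu_j^{(k)}=\mu_k^{(j)}=0$ (the indices lie in different blocks) or $\mu_j^{(k)}/\lambda_k=\mu_k^{(j)}/\lambda_j$ equals a common value, and moreover $\mu_j^{(l)}=0$ as soon as $j,l$ are in different blocks; within a block the ratio $\mu_i^{(j)}/\lambda_i$ is a constant $\beta_l$ independent of $i,j\in I_l$ — this is exactly the transitivity one needs to \emph{define} the blocks $I_l$ as equivalence classes. Put $J=\{j:\lambda_j=0\}$ and check that for $j\in J$ the recursion forces $a_{(\dots)}$ to vanish as soon as the $j$-th index is positive, so $F$ does not depend on $h_j$ — consistent with the claim (the $J$ variables simply drop out). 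The bookkeeping of which of the several cross-relations survive and imply the transitivity of "same block" is where I expect the main work to lie; it is the genuine obstacle, because one must rule out degenerate partial overlaps and handle the $\lambda=0$ indices carefully.

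\textbf{Step 3: identifying $F$.} Once the partition $I=I_1\cup\cdots\cup I_M\cup J$ and scalars $\beta_p$ are in place, the recursion on the $a_{(p_1,\dots,p_N)}$ \emph{decouples} across blocks: writing $h_{I_p}=\sum_{l\in I_p}\lambda_l h_l$, an induction on $|(p_1,\dots,p_N)|$ shows $a_{(p_1,\dots,p_N)}=\prod_{p=1}^M c_{P_p}$ where $P_p=\sum_{l\in I_p}p_l$ and the $c_k$ satisfy $c_0=1$, $c_{k+1}=\frac{1+\beta_p k}{k+1}c_k$ — which is precisely the coefficient recursion of $f_{\beta_p}$, since $f_\beta(h)=\sum_k\frac{(1+\beta)\cdots(1+(k-1)\beta)}{k!}h^k$ satisfies $(k+1)[h^{k+1}]f_\beta=(1+\beta k)[h^k]f_\beta$. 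One must also reconcile the multinomial factors: expanding $\prod_p f_{\beta_p}(h_{I_p})$ and collecting the coefficient of $h_1^{p_1}\cdots h_N^{p_N}$ produces exactly the factor $\prod_l\lambda_l^{p_l}\cdot\binom{P_p}{\dots}$ matching what the recursion gives. Hence $F=\prod_{p=1}^M f_{\beta_p}\bigl(\sum_{l\in I_p}\lambda_l h_l\bigr)$, which is item 2, and item 1 is what was established in Step 2.
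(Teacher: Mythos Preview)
Your approach is exactly the paper's: cross-derivative consistency, extract the block structure on $I\setminus J$, then identify $F$ by induction. Steps~1 and~3 are fine. The only issue is that Step~2 is a sketch rather than a proof --- you correctly flag it as ``the genuine obstacle'' but do not carry it out, and the precise relations you write down are slightly off.

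Here is the missing argument (short once the right two relations are isolated). Evaluating your consistency identity at $(p_1,\dots,p_N)=(0,\dots,0)$ gives
\[
\mu_i^{(j)}\lambda_j=\mu_j^{(i)}\lambda_i;
\]
evaluating at $\varepsilon_k$ with $\lambda_k\neq0$ and using the first relation gives
\[
\mu_i^{(j)}\mu_j^{(k)}=\mu_j^{(i)}\mu_i^{(k)}.
\]
If $\lambda_k=0$ an easy induction shows $a_{(p_1,\dots,p_N)}=0$ whenever $p_k>0$, so one may restrict to $I\setminus J$. There set $\nu_i^{(j)}=\mu_i^{(j)}/\lambda_i$; the two relations become $\nu_i^{(j)}=\nu_j^{(i)}$ and $\nu_i^{(j)}\bigl(\nu_i^{(k)}-\nu_j^{(k)}\bigr)=0$. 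Now declare $i\sim j$ iff $i=j$ or $\nu_i^{(j)}\neq0$: symmetry is the first relation, and transitivity follows from the second (if $\nu_i^{(j)}\neq0$ and $\nu_j^{(k)}\neq0$ then $\nu_i^{(k)}=\nu_j^{(k)}\neq0$). The equivalence classes are the $I_l$. Within a class all $\nu_i^{(j)}$ coincide, since $\nu_i^{(j)}\neq0$ forces $\nu_i^{(k)}=\nu_j^{(k)}$ for every $k$, hence in particular $\nu_i^{(i)}=\nu_j^{(i)}=\nu_i^{(j)}=\nu_j^{(j)}$; call this common value $\beta_l$. That is the entire ``bookkeeping'' you anticipated. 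One small slip in your Step~2: the symmetry from the constant term reads $\mu_j^{(k)}/\lambda_j=\mu_k^{(j)}/\lambda_k$, not $\mu_j^{(k)}/\lambda_k=\mu_k^{(j)}/\lambda_j$ --- divide each $\mu$ by the $\lambda$ with the \emph{same} subscript.
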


\begin{proof} Let us fix $i\neq j$. Then:
\begin{eqnarray*}
&&a_{(p_1,\cdots,p_i+1,\cdots,p_j+1,\cdots,p_N)}\\
&=&\frac{1}{p_i+1}\left( \lambda_i+\mu^{(j)}_i+\sum_{l=1}^N \mu_i^{(l)}p_l\right) a_{(p_1,\cdots,p_j+1,\cdots,p_N)}\\
&=&\frac{1}{(p_i+1)(p_j+1)}\left( \lambda_i+\mu^{(j)}_i+\sum_{l=1}^N \mu_i^{(l)}p_l\right)
\left( \lambda_j+\sum_{l=1}^N \mu_j^{(l)}p_l\right)a_{(p_1,\cdots,p_N)},\\
&=&\frac{1}{(p_i+1)(p_j+1)}\left( \lambda_j+\mu^{(i)}_j+\sum_{l=1}^N \mu_j^{(l)}p_l\right)
\left( \lambda_i+\sum_{l=1}^N \mu_i^{(l)}p_l\right)a_{(p_1,\cdots,p_N)}.
\end{eqnarray*}
For $(p_1,\cdots,p_N)=(0,\cdots,0)$, as $a_{(0,\cdots,0)}=1$:
\begin{equation} \label{E2}
\mu_i^{(j)}\lambda_j=\mu_j^{(i)} \lambda_i.
 \end{equation}
For $(p_1,\cdots,p_N)=\varepsilon_k$, we obtain:
$$\left(\lambda_i+\mu^{(j)}_i+\mu^{(k)}_i\right)\left(\lambda_j+\mu^{(k)}_j\right)\lambda_k
=\left(\lambda_j+\mu^{(i)}_j+\mu^{(k)}_j\right)\left(\lambda_i+\mu^{(k)}_i\right)\lambda_k.$$
So, if $\lambda_k \neq 0$:
\begin{equation}\label{E3}
\mu^{(j)}_i\mu^{(k)}_j=\mu^{(i)}_j\mu^{(k)}_i.
\end{equation}
If $\lambda_k=0$, it is not difficult to prove inductively that $a_{(p_1,\cdots,p_N)}=0$ if $p_k>0$,
so $F$ is an element of $K[[h_1,\cdots,h_{k-1},h_{k+1},\cdots,h_N]]$. Hence, up to a restriction to $I\setminus J$, we can suppose that 
all the $\lambda_k$'s are non-zero. We then put $\nu_i^{(j)}=\frac{\mu_i^{(j)}}{\lambda_i}$ for all $i,j$. Then (\ref{E2}) and (\ref{E3}) become: for all $i,j,k$,
\begin{eqnarray}
\label{E4} \nu_i^{(j)}&=&\nu_j^{(i)},\\
\label{E5} \nu_i^{(j)}\left(\nu_i^{(k)}-\nu_j^{(k)}\right)&=&0.
\end{eqnarray}

Let $1\leq i,j \leq N$. We shall say that $i\R j$ if $i=j$ or if $\nu_i^{(j)}\neq 0$. Let us show that $\R$ is an equivalence. By (\ref{E4}), it is clearly symmetric.
Let us assume that $i \R j$ and $j \R k$. If $i=j$ or $j=k$ or $i=k$, then $i \R k$. If $i,j,k$ are distinct, then $\nu_i^{(j)} \neq 0$ and $\nu_j^{(k)}\neq 0$.
By (\ref{E5}), $\nu_i^{(k)}=\nu_j^{(k)} \neq 0$, so $i \R k$. We denote by $I_1,\cdots,I_M$ the equivalence classes of $\R$.

Let us assume that $i\R j$, $i\neq j$. Then $\nu_i^{(j)} \neq 0$, so for all $k$, $\nu_j^{(k)}=\nu_i^{(k)}$.
In particular, $\nu_j^{(i)}=\nu_i^{(i)}=\nu_i^{(j)}=\nu_j^{(j)}$. So, finally, there exists a family of scalars $(\beta_i)_{1\leq i\leq M}$, such that:
\begin{itemize}
\item If $i,j \in I_l$, then $\nu_i^{(j)}=\beta_l$, and $\mu_i^{(j)}=\lambda_i \beta_l$.
\item If $i$ and $j$ are not in the same $I_l$, then $\nu_i^{(j)}=\mu_i^{(j)}=0$.
\end{itemize}
An easy induction then proves:
$$a_{(p_1,\cdots,p_N)}=\frac{\lambda_1^{p_1}\cdots \lambda_N^{p_N}}{p_1!\cdots p_N!}
\prod_{p=1}^M (1+\beta_p)\cdots \left(1+\beta_p\left(\sum_{l\in I_p} p_l-1\right)\right).$$
This implies the assertion on $F$.  \end{proof}

\subsection{Vertices of level 1}

Let us now assume that $i$ is of level $1$. Then, up to a restriction to $i$ and its direct descendants, 
the coefficients $a^{(i)}_{(p_1,\cdots,p_N)}=a_{(p_1,\cdots,p_N)}$ satisfy an induction of the form:
$$ \left\{ \begin{array}{rcl}
a^{(i)}_{(0,\cdots,0)}&=&1,\\
a^{(i)}_{\varepsilon_j}&=&a^{(i)}_j,\\
a^{(i)}_{(p_1,\cdots,p_j+1,\cdots,p_N)}&=&\displaystyle \frac{1}{p_j+1}\left(\lambda_j+\sum_{l=1}^N \mu_j^{(l)} p_l \right) a^{(i)}_{(p_1,\cdots,p_N)}
\mbox{ if }(p_1,\cdots,p_N) \neq (0,\cdots,0).
\end{array}\right.$$
In order to ease the notation, we shall write $a_{(p_1,\cdots,p_N)}$ instead of $a^{(i)}_{(p_1,\cdots,p_N)}$ and $F$ instead of $F_i$ in this section. \\

\begin{lemma}\label{27}
Under the preceding hypothesis, one of the following assertions holds:
\begin{enumerate}
\item There exists a partition $I=I_1\cup \cdots \cup I_M \cup J$, scalars $\beta_1,\cdots,\beta_M$,
a non-zero scalar $\nu$ such that:
$$ F(h_1,\cdots,h_N)=\frac{1}{\nu} \prod_{p=1}^M f_{\beta_p}\left( \sum_{l\in I_p} \nu a_l h_l\right)+\sum_{l\in J} a_lh_l+1-\frac{1}{\nu}.$$
\item There exists a partition $\{1,\cdots,N\}=I_1\cup \cdots \cup I_M \cup J$, scalars $\nu_p$ for $1\leq p \in M$, such that:
$$F(h_1,\cdots,h_N)=1-\sum_{p=1}^M\frac{1}{\nu_p}\ln\left(1-\nu_p\sum_{l\in I_p} a_lh_l\right)+\sum_{l\in J}a_lh_l.$$
\end{enumerate} \end{lemma}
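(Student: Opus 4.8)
The setting is a level-1 vertex $i$: the coefficients $a_{(p_1,\dots,p_N)}$ satisfy the stated recursion, which differs from the level-0 recursion only at the initial step (from $(0,\dots,0)$ we prescribe $a_{\varepsilon_j}=a_j$ freely, but for all later steps the same multiplicative rule $a_{(\dots,p_j+1,\dots)}=\frac{1}{p_j+1}(\lambda_j+\sum_l\mu_j^{(l)}p_l)a_{(p_1,\dots,p_N)}$ holds). The plan is therefore to mimic the proof of Lemma \ref{26} but with the "$\lambda_j$" of that lemma replaced, on the sub-lattice $\{(p_1,\dots,p_N):p_1+\cdots+p_N\geq 1\}$, by the modified data. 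Concretely, I would first reduce to the case where all the relevant $\lambda_j$ are non-zero (the $j$'s with $\lambda_j=0$ but $a_j\neq0$ will contribute the logarithmic/linear tails, and those with $a_j=0$ drop out), and then run the two consistency computations — comparing $a_{(\dots,p_i+1,\dots,p_j+1,\dots)}$ computed in the two orders — exactly as in Lemma \ref{26}, being careful that these comparisons are only valid once we are strictly inside the region $p_1+\cdots+p_N\geq 1$, so that the base point of the cross-derivative argument is $\varepsilon_k$ rather than $(0,\dots,0)$.

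First I would fix $i\neq j$ and write $a_{(p_1,\dots,p_i+1,\dots,p_j+1,\dots)}$ two ways; evaluating at $(p_1,\dots,p_N)=\varepsilon_k$ (which already lies in the admissible region) reproduces a relation of the shape of (\ref{E3}), namely $\mu_i^{(j)}\mu_j^{(k)}=\mu_j^{(i)}\mu_i^{(k)}$ when $\lambda_k\neq 0$, together with $\mu_i^{(j)}\lambda_j=\mu_j^{(i)}\lambda_i$ from evaluating one step higher (at $2\varepsilon_k$ or $\varepsilon_k+\varepsilon_m$). As in Lemma \ref{26}, setting $\nu_i^{(j)}=\mu_i^{(j)}/\lambda_i$ gives the symmetry $\nu_i^{(j)}=\nu_j^{(i)}$ and the "transitivity" relation $\nu_i^{(j)}(\nu_i^{(k)}-\nu_j^{(k)})=0$, so the relation $i\,\mathcal{R}\,j\iff (i=j \text{ or } \nu_i^{(j)}\neq 0)$ is an equivalence; its classes among the indices with $\lambda_\bullet\neq0$ are the $I_1,\dots,I_M$, with a scalar $\beta_p$ attached to $I_p$ so that $\mu_i^{(j)}=\lambda_i\beta_p$ for $i,j\in I_p$ and $\mu_i^{(j)}=0$ across classes. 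The index set $J$ collects the vertices with $\lambda_j=0$; among those one distinguishes whether $a_j=0$ (irrelevant, $F$ does not depend on $h_j$) or $a_j\neq0$ (a genuinely free linear term). Here is where the level-1 versus level-0 split shows up: one has to track the new degree of freedom introduced by the free parameters $a_j=a_{\varepsilon_j}$, and it is precisely the presence or absence of a "global" scalar $\nu$ normalising these that produces the dichotomy in the statement.

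Next I would integrate the recursion explicitly. On each class $I_p$ with the $\beta_p$-data, an induction in the total degree (as at the end of Lemma \ref{26}) gives, for multi-indices supported away from the origin,
$$a_{(p_1,\dots,p_N)}=C\cdot\prod_{p=1}^{M}\frac{1}{(\text{stuff})}\prod_{l\in I_p}\frac{(\nu a_l)^{p_l}}{p_l!}\cdots,$$
and matching the degree-one coefficients $a_{\varepsilon_l}=a_l$ forces the normalising constant; the $\ln$ alternative arises exactly when some $\beta_p$ would have to be $0$ together with the constraint on $a_l$, giving $-\frac1{\nu_p}\ln(1-\nu_p\sum_{l\in I_p}a_lh_l)$ instead of a power of $f_{\beta_p}$. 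Writing $F=\sum a_{(p_1,\dots,p_N)}h^{(p_1,\dots,p_N)}$ and resumming class-by-class then yields either
$$F=\frac{1}{\nu}\prod_{p=1}^{M}f_{\beta_p}\Bigl(\sum_{l\in I_p}\nu a_l h_l\Bigr)+\sum_{l\in J}a_lh_l+1-\frac1\nu$$
(case 1, when the parameters $a_l$ on the $I_p$'s can be simultaneously normalised by a single non-zero $\nu$, i.e. the first-order data is compatible with a single $f_\beta$-type series) or
$$F=1-\sum_{p=1}^{M}\frac{1}{\nu_p}\ln\Bigl(1-\nu_p\sum_{l\in I_p}a_lh_l\Bigr)+\sum_{l\in J}a_lh_l$$
(case 2). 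The additive constant $1$ comes from $a_{(0,\dots,0)}=1$ and the convention $F_i(0,\dots,0)=1$.

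The main obstacle I anticipate is bookkeeping at the "seam" $p_1+\cdots+p_N=1$: the cross-derivative identities (\ref{E2})--(\ref{E3}) analogues are only available once both indices being incremented keep the base point inside the admissible region, so one cannot simply read off $\mu_i^{(j)}\lambda_j=\mu_j^{(i)}\lambda_i$ by setting all $p$'s to zero as in Lemma \ref{26}; instead one evaluates at $\varepsilon_k$ and at $2\varepsilon_k$ (or $\varepsilon_k+\varepsilon_m$) and must check these still pin down the same structure constants, and in particular that no new freedom leaks in beyond the single scalar $\nu$ (resp.\ the scalars $\nu_p$). The second delicate point is to see that exactly these two families exhaust the possibilities — i.e.\ to rule out a "mixed" solution where some classes are of $f_\beta$ type and others of $\ln$ type with incompatible normalisations — which again reduces to the rigidity coming from the single free normalisation available at order one. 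Once the seam is handled correctly, the integration and resummation are routine, parallel to Lemma \ref{26}.
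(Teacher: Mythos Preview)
Your plan is broadly reasonable but differs from the paper's argument and has two concrete gaps. The paper does \emph{not} redo the Lemma~\ref{26} analysis with seam bookkeeping. Instead, after isolating the set $J$ (see below) and reducing to $J=\emptyset$, it fixes an index $i$ and defines the shifted sequence $b_{(p_1,\dots,p_N)}=(p_i+1)\,a_{(p_1,\dots,p_i+1,\dots,p_N)}$. Since the base point $(p_1,\dots,p_i+1,\dots,p_N)$ always has positive total degree, the level-1 recursion applies for \emph{every} $(p_1,\dots,p_N)$, and one checks that $b$ satisfies a genuine level-0 recursion with $\lambda_j$ replaced by $\lambda_j+\mu_j^{(i)}$. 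Lemma~\ref{26} then applies to $b$ verbatim, producing the partition $I_1\cup\cdots\cup I_M$ and the $\beta_p$'s with no seam issues at all. This bypasses exactly the obstacle you flag in your last paragraph.

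Two points in your sketch need correction. First, your set $J$ is wrong: in the level-1 setting the indices contributing only a linear term are $J=\{j:\forall k,\ \lambda_j+\mu_j^{(k)}=0\}$, not $\{j:\lambda_j=0\}$, because it is $\lambda_j+\mu_j^{(k)}$ that governs the step from $\varepsilon_k$ to $\varepsilon_j+\varepsilon_k$. Second, the origin of the scalar $\nu$ is not what you describe. The key relation is obtained by computing $a_{j,k}$ in two ways, giving $(\lambda_j+\mu_j^{(k)})a_k=(\lambda_k+\mu_k^{(j)})a_j$ (the paper's~(\ref{E6})). Once the $\mu$-structure is known from the shifted Lemma~\ref{26}, choosing $j,k$ in different blocks makes $\mu_j^{(k)}=\mu_k^{(j)}=0$, so (\ref{E6}) forces colinearity of $(\lambda_j)_j$ and $(a_j)_j$; the constant of proportionality is $\nu$, and the dichotomy in the statement is exactly $\nu\neq 0$ versus $\nu=0$ (with a separate but analogous split when $M=1$). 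Your claim that evaluating at $\varepsilon_k$ ``reproduces (\ref{E3})'' and at $2\varepsilon_k$ ``reproduces (\ref{E2})'' is not right: what one actually gets at $\varepsilon_k$ is the coupled identity $\mu_j^{(i)}(\lambda_i+\mu_i^{(k)})=\mu_i^{(j)}(\lambda_j+\mu_j^{(k)})$, i.e.\ the level-0 identity for the \emph{shifted} $\lambda$'s, which is precisely why the paper's trick works.
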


\begin{proof} Let us compute $a_{j,k}$ in two different ways:
$$\left(\lambda_j+\mu_j^{(k)}\right)a_k=\left(\lambda_k+\mu_k^{(j)}\right)a_j.$$
In other words:
\begin{equation}  \label{E6}
\left| \begin{array}{cc}
\lambda_j+\mu_j^{(k)}&a_j\\
\lambda_k+\mu_k^{(j)}&a_k
\end{array}
\right|=0.
\end{equation}
Let us take $J=\{j\:/\:\forall k,\: \lambda_j+\mu_j^{(k)}=0\}$. Let us consider an element $j\in J$. Then an easy induction proves that
for all $(p_1,\cdots,p_N)$ such that $p_1+\cdots+p_N \geq 2$ and $p_j \geq 1$, $a_{(p_1,\cdots,p_N)}=0$. As a consequence:
$$F(h_1,\cdots,h_N)=F(h_1,\cdots,h_{j-1},0,h_{j+1},\cdots,h_N)+a_jh_j.$$
So:
$$F=\tilde{F}(h_i,\:i\notin J)+\sum_{j\in J} a_j h_j.$$

We now assume that, up to a restriction, $J=\emptyset$. Let us choose an $i$ and let us put 
$b_{(p_1,\cdots,p_N)}=(p_i+1)a_{(p_1,\cdots,p_i+1,\cdots,p_N)}$.
Then, for all $j\in I$, for all $(p_1,\cdots,p_N)$:
$$b_{(p_1,\cdots,p_j+1,\cdots,p_N)}=\displaystyle \frac{1}{p_j+1}\left(\lambda_j+\mu_j^{(i)}+\sum_{l=1}^N \mu_j^{(l)} p_l \right) b_{(p_1,\cdots,p_N)}.$$
We deduce from lemma \ref{26} that there exist a partition $I=I_1\cup \cdots \cup I_M$ and scalars $\beta_1,\ldots,\beta_M$, such that:
$$\mu_j^{(l)}=\left\{ \begin{array}{l}
0\mbox{ if $j,l$ are not in the same $I_k$},\\
\left(\lambda_j+\mu^{(i)}_j\right)\beta_k \mbox{ if $j,l \in I_k$}.
\end{array}\right.$$
So $\mu_j^{(i)}$ does not depend on $i$ such that $\mu_j^{(i)}\neq 0$. So there exist scalars $\mu_j$ such that:
$$\mu_j^{(l)}=\left\{ \begin{array}{l}
0\mbox{ if $j,l$ are not in the same $I_k$},\\
\left(\lambda_j+\mu_j\right)\beta_k \mbox{ if $j,l \in I_k$}.
\end{array}\right.$$
\begin{enumerate}
\item Let us assume that $M \geq 2$. Let us choose $j \in I_1$. Then for all $k\in I_2\cup \cdots \cup I_M$, (\ref{E6}) gives:
$$\left| \begin{array}{cc}
\lambda_j&a_j\\
\lambda_k&a_k
\end{array} \right|=0.$$
We denote $I_2\cup \cdots \cup I_k=\{i_1,\cdots,i_M\}$. We proved that the vectors $(\lambda_j,\lambda_{i_1},\cdots,\lambda_{i_M})$ 
and $(a_j,a_{i_1},\cdots,a_{i_M})$ are colinear. Choosing then a $j\in I_2$, we obtain that there exists a scalar $\nu$, such that 
$(\lambda_i)_{i\in I}=\nu (a_i)_{i\in I}$. Two cases are possible.
\begin{enumerate}
\item If $\nu \neq 0$,  putting $a'_{(p_1,\cdots,p_N)}=\nu a_{(p_1,\cdots,p_N)}$ if $(p_1,\cdots,p_N)\neq(0,\cdots,0)$ and $a'_{(0,\cdots,0)}$, then 
the family $\left(a'_{(p_1,\cdots,p_N)}\right)$ satisfies the hypothesis of lemma \ref{26}. As a consequence, $F(h_1,\cdots,h_N)$ satisfies the first case.
\item If $\nu=0$, then we put, for all $j$, $\mu_j=\nu'_j a_j$. By (\ref{E6}), for $j$ and $k$ in the same $I_l$,
$\nu'_j=\nu'_k$ if $j$ and $k$ are in the same $I_l$: this common value is now denoted $\nu_l$. It is then not difficult to prove that:
$$F(h_1,\cdots,h_N)=1-\sum_{p=1}^M \frac{1}{\nu_p}\ln\left(1-\nu_p\sum_{l\in I_p} a_lh_l\right).$$
This is a second case.
\end{enumerate}
\item Let us assume that $M=1$.  Then $(\lambda_j+\mu_j)\beta_1=\mu^{(i)}_j$ for all $i,j \in I$.
\begin{enumerate}
\item Let us suppose that $\beta_1\neq 1$. Then, for all $j,k \in I$  $\mu_j=\frac{\beta_1}{1-\beta_1}\lambda_j$. So, for all $j$,
$\lambda_j+\mu_j=\frac{1}{1-\beta_1}\lambda_j$. So (\ref{E6}) implies that $(\lambda_j)_{j\in I}$ and $(a_j)_{j\in I}$ are colinear. As in 1.(a), this is a first case.
\item Let us assume that $\beta_1=1$. So $\lambda_j=0$ for all $j$. As in 1.(b), this is a second case.
\end{enumerate} \end{enumerate} \end{proof}

\subsection{Vertices of level $\geq 2$}

\begin{lemma} \label{28}
Let $(S)$ be a Hopf SDSE and let $i$ be a vertex of $\gs$. We suppose that there exists a vertex $j$, such that:
\begin{itemize}
\item $j$ is a descendant of $i$.
\item All oriented path from $i$ to $j$ are of length $\geq 3$.
\end{itemize} 
Then $\displaystyle F_i=1+\sum_{i \longrightarrow l} a^{(i)}_l h_l$.
\end{lemma}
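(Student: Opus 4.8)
The plan is to prove the statement by contradiction, assuming $F_i$ is \emph{not} affine. By the remarks following Lemma~\ref{17} it suffices to show $a^{(i)}_{(p_1,\dots,p_N)}=0$ whenever $p_1+\dots+p_N\geq 2$, and since a vanishing coefficient of $F_i$ forces all componentwise-larger ones to vanish, non-affineness just means $\deg F_i\geq 2$. First I would record two immediate consequences of the hypothesis: as every oriented path $i\to j$ has length $\geq 3$, $j$ is not a direct descendant of $i$, so $a^{(i)}_j=0$; and no direct descendant $l$ of $i$ has an edge to $j$ (that would give a path of length $2$), so $a^{(l)}_j=0$ for all such $l$. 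Together with $a^{(i)}_j=0$ this already gives $a^{(i)}_{\varepsilon_j+(p)}=0$ for \emph{every} multi-index $(p)$, since $\varepsilon_j+(p)\geq\varepsilon_j$ componentwise.

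The core step is to deduce that, under the assumption $\deg F_i\geq 2$, one has $\lambda_n^{(i,j)}=0$ for all $n\geq 1$. If $a^{(i)}_{(p)}\neq 0$, then the $j$-th coordinate of $(p)$ is $0$ and every index occurring in $(p)$ is a direct descendant of $i$; hence evaluating the recursion of Proposition~\ref{19}-1 by incrementing the $j$-th coordinate kills the correction term $\sum_l p_l a^{(l)}_j$, giving $a^{(i)}_{\varepsilon_j+(p)}=\lambda_{p_1+\dots+p_N+1}^{(i,j)}\,a^{(i)}_{(p)}$, and comparing with $a^{(i)}_{\varepsilon_j+(p)}=0$ forces $\lambda_{p_1+\dots+p_N+1}^{(i,j)}=0$. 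Since $F_i$ is non-constant there is a nonzero coefficient in degree $1$, whence $\lambda_2^{(i,j)}=0$; and $F_i$ has nonzero coefficients in every degree up to $\deg F_i$. If $\deg F_i=\infty$ this yields $\lambda_n^{(i,j)}=0$ for all $n$. If $\deg F_i=D<\infty$ we only get $\lambda_n^{(i,j)}=0$ for $n\leq D+1$; to close the gap I would pick a nonzero coefficient $a^{(i)}_{(p)}$ with $p_1+\dots+p_N=D$, write $(p)=\varepsilon_{d_1}+\dots+\varepsilon_{d_D}$ with the $d_r$ direct descendants of $i$, and combine Proposition~\ref{19}-2 for this $(p)$ with its case $p=1$ (which, using $\lambda_2^{(i,j)}=0$ and $a^{(l)}_j=0$, reads $\lambda_{m+1}^{(i,j)}=\lambda_m^{(l,j)}$ for any direct descendant $l$). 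This makes $g_n:=\lambda_{n+1}^{(i,j)}$ additive over $D$-fold sums, $g_{n_1+\dots+n_D}=g_{n_1}+\dots+g_{n_D}$; specialising all but one of the $n_r$ to $1$ and using $g_1=\lambda_2^{(i,j)}=0$ makes $g$ periodic of period $D-1$, hence (being $0$ on $\{1,\dots,D\}$) identically $0$. In all cases $\lambda_n^{(i,j)}=0$ for all $n\geq 1$.

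Finally I would propagate this along $\gs$. By induction on the distance $k$ from $i$ I would show that every vertex $v$ with $d(i,v)\leq k$ satisfies $a^{(v)}_j=0$ and $\lambda_n^{(v,j)}=0$ for all $n$; the case $k=0$ is what was just established. For the step, a vertex $v$ at distance $k+1$ has a shortest-path predecessor $w$ at distance $k$; applying the induction hypothesis to $w$ and repeating the degree-$0$ computation for $F_w$ (evaluating $a^{(w)}_{\varepsilon_j+\varepsilon_v}$ two ways via Lemma~\ref{17} / Proposition~\ref{19}-1) gives $\lambda_2^{(w,j)}=a^{(v)}_j$, hence $a^{(v)}_j=0$, and then the case $p=1$ of Proposition~\ref{19}-2 gives $\lambda_n^{(v,j)}=\lambda_{n+1}^{(w,j)}=0$. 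But $j$ is a descendant of $i$, so some vertex $w_0$ at distance $d(i,j)-1$ has an edge $w_0\to j$, i.e. $a^{(w_0)}_j\neq 0$ — contradicting the claim. Hence $F_i$ is affine. I expect the delicate point to be the finite-degree case of the core step, i.e. upgrading $\lambda_n^{(i,j)}=0$ from $n\leq\deg F_i+1$ to \emph{all} $n$ via the periodicity argument; the graph propagation, though it carries the final contradiction, is a routine iteration of the degree-$0$ computation.
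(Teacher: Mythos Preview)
Your proof is correct, but it takes a genuinely different route from the paper. The paper's argument is short and combinatorial: letting $L\geq 3$ be the minimal length of a path from $i$ to $j$, it observes that the only trees in $X_i(L{+}1)$ carrying a $j$-leaf are ladders, so Proposition~\ref{16} gives both $\lambda_L^{(i,j)}\neq 0$ (from a ladder) and $a_{t'}=0$ for every non-ladder $t'$ of weight $L$; hence $X_i(L)$ is a span of ladders, and passing to the coproduct forces $X_i(3)$ to be a span of ladders, i.e.\ all $a^{(i)}_{l,m}$ vanish. Your approach instead works entirely with the structure coefficients: you show (under the contradiction hypothesis $\deg F_i\geq 2$) that $\lambda_n^{(i,j)}=0$ for all $n$ via Proposition~\ref{19}-1 together with the periodicity trick from Proposition~\ref{19}-2, and then push this vanishing along $\gs$ until it collides with an edge into $j$. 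The paper's proof is more direct and needs only Proposition~\ref{16}; yours leans on the full Proposition~\ref{19} but in return gives a reusable mechanism (the relation $\lambda_{m+1}^{(w,j)}=\lambda_m^{(v,j)}$ for $w\to v$ and the $D$-fold additivity of $g_n=\lambda_{n+1}^{(i,j)}$) that could be handy elsewhere. Your self-identified ``delicate point'' --- upgrading $\lambda_n^{(i,j)}=0$ from $n\leq D{+}1$ to all $n$ --- is indeed the only nontrivial step, and your periodicity argument handles it cleanly.
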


\begin{proof} We assume here that $I=\{1,\ldots,N\}$. Let $L$ be the minimal length of the oriented paths from $i$ to $j$. By hypothesis, $L\geq 3$. 
Then the homogeneous component of degree $L+1$ of $X_i$ contains trees with a leave decorated by $j$, and all these trees are ladders 
(that is to say trees with no ramification). By proposition \ref{16}, if $t'\in \T_\D^{(i)}(L)$:
$$\lambda_L^{(i,j)}a_{t'}=\sum_{t\in \T_\D^{(i)}(L+1)} n_j(t,t')a_t.$$
For a good-chosen ladder $t'$, the second member is non-zero, so $\lambda_L^{(i,j)}$ is non-zero. If $t'$ is not a ladder, the second member is $0$, 
so $a_{t'}=0$. As a conclusion, $X_i(L)$ is a linear span of ladders. Considering its coproduct, for all $p\leq L$, $X_i(p)$ is a linear span of ladders. 
In particular, $X_i(3)$ is a linear span of ladders. But:
$$X_i(3)=\sum_{l,m} a^{(i)}_la^{(l)}_m \tdtroisdeux{$i$}{$l$}{$m$}+\sum_{l\leq m} a_{l,m}^{(i)}\tdtroisun{$i$}{$m$}{$l$},$$
so $a_{l,m}^{(i)}=0$ for all $l,m$. Hence, $F_i$ contains only terms of degree $\leq 1$. \end{proof}\\

{\bf Remark.} This lemma can be applied with $i=j$, if $i$ is not a self-dependent vertex.

\begin{prop} \label{29}
Let $(S)$ be a Hopf SDSE and let $i$ be a vertex of $\gs$ of level $\geq 2$. Then $i$ is an extension vertex.
\end{prop}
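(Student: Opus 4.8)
*Let $(S)$ be a Hopf SDSE and let $i$ be a vertex of $\gs$ of level $\geq 2$. Then $i$ is an extension vertex.*

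\medskip

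The plan is to show that $F_i$ is affine, i.e. $F_i=1+\sum_{i\longrightarrow l}a^{(i)}_l h_l$, and then check that the descendants $l$ appearing in $F_i$ and their further descendants all ``see the same $F$'', which is exactly the condition for the restriction of $(S)$ to $J\cup\{i\}$ (with $J$ the set of descendants of $i$) to be an extension of the restriction to $J$. First I would use Proposition \ref{23}: since $i$ has level $M\geq 2$, every direct descendant $j$ of $i$ has level $M-1\geq 1$. In particular no direct descendant of $i$ is itself of level $0$, and iterating, an oriented path of length $2$ starting at $i$ lands on a vertex of level $M-2\geq 0$. The key observation is that if $i$ had a direct descendant $l$ with $i\longrightarrow l$ and $l$ self-dependent or with a short return path, the level would have to drop by only the path length, contradicting $M\geq 2$; more precisely I want to invoke Lemma \ref{28}. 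To apply Lemma \ref{28} I need a descendant $j$ of $i$ such that every oriented path from $i$ to $j$ has length $\geq 3$.

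\medskip

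So the first real step is: \emph{find such a $j$}. Take any direct descendant $j_1$ of $i$; it has level $M-1\geq 1$, hence is not of level $0$, hence (by Lemma \ref{26}, or rather its contrapositive via Proposition \ref{23}) cannot have all its $\lambda^{(j_1,\cdot)}$ eventually affine with the correct constant — concretely, $j_1$ has a direct descendant $j_2$ of level $M-2$. Continuing, there is a direct descendant $j_3$ of $j_2$ of level $M-3\geq 0$ if $M\geq 3$, and in any case a vertex $j$ at distance exactly $2$ from $i$ of level $M-2$. Now I claim every path from $i$ to this $j$ has length $\geq 3$, OR $j$ itself has level $\geq 2$ allowing me to recurse — honestly the cleanest route is: since level strictly decreases along edges (Proposition \ref{23}, parts 1--2 together force that from a level-$M$ vertex you cannot reach a level-$M$ vertex by a path of length $1$ or $2$ when $M\geq 2$), any vertex reachable from $i$ by a path of length $\leq 2$ has level $\geq M-2$, whereas level-$0$ descendants exist and are at distance $\geq M\geq 2$... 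The honest statement I will use: pick $j$ a descendant of $i$ of minimal level among descendants; if there were a path of length $\leq 2$ from $i$ to $j$, then by Proposition \ref{23} applied along that path, $i$ would have level $\leq (\mathrm{level}\ j)+2$, but also we can push further from $j$ to a strictly lower level, contradiction with minimality combined with $M\geq 2$. This gives a $j$ with all $i$-to-$j$ paths of length $\geq 3$, so Lemma \ref{28} yields $F_i=1+\sum_{i\longrightarrow l}a^{(i)}_l h_l$.

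\medskip

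The second step uses Proposition \ref{18}-3: since $F_i$ is affine of the above form, for any two direct descendants $i',i''$ of $i$ we get $F_{i'}=F_{i''}$, and more generally $\lambda^{(i,j)}_{n+1}=\lambda^{(i',j)}_n$ for every direct descendant $i'$. This says exactly that the restriction of $(S)$ to $\{i\}\cup J$ adds to the restriction to $J$ a single vertex $i$ with affine series whose only constraint is $F_{i'}=F_{i''}$ for $i',i''\in I^{(0)}$ — which is precisely condition 2 of Proposition \ref{11} (extension). Hence, writing $J$ for the set of descendants of $i$, the restriction of $(S)$ to $J$ is Hopf (by restriction, Proposition on restriction), $F_i$ is affine, and the equality of the $F_{i'}$ over direct descendants holds; therefore the restriction of $(S)$ to $J\cup\{i\}$ is an extension of the restriction to $J$, i.e. $i$ is an extension vertex by definition.

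\medskip

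The main obstacle is the bookkeeping in the first step: correctly extracting from Proposition \ref{23} the fact that from a level-$\geq 2$ vertex no vertex of equal-or-higher level is reachable within two edges, and hence producing a genuine $j$ all of whose incoming paths from $i$ have length $\geq 3$ so that Lemma \ref{28} applies. Everything after that (Proposition \ref{18}-3, the definition of extension vertex, Proposition \ref{11}) is routine.
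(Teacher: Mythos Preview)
Your approach via Lemma~\ref{28} is essentially the paper's argument for $M\geq 3$, and that part is fine: a descendant of level $0$ exists, and since along any path from $i$ the level drops as $M,M-1,M-2,\ldots$ until it hits $0$, a level-$0$ vertex cannot be reached in fewer than $M\geq 3$ steps, so Lemma~\ref{28} applies and $F_i$ is affine. Your second step (Proposition~\ref{18}-3 forces $F_{i'}=F_{i''}$ for direct descendants, hence $i$ is an extension vertex) is also correct.

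The gap is the case $M=2$. Your ``honest statement'' argument does not close: the minimal level among descendants is $L=0$, and your bound gives only $M\leq L+2=2$, which is not a contradiction when $M=2$. Concretely, when $M=2$ every direct descendant $j$ of $i$ has level $1$ and every direct descendant $k$ of $j$ has level $0$; so \emph{every} level-$0$ descendant that happens to be a grandchild of $i$ is reached by a path of length $2$, and nothing in your argument produces a level-$0$ descendant all of whose paths from $i$ have length $\geq 3$. Such a vertex need not exist: the set of level-$0$ descendants could coincide with the set of grandchildren of $i$. Thus Lemma~\ref{28} cannot be invoked uniformly for $M=2$.

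The paper handles $M=2$ by a genuinely different, computational argument: assume $i$ has level $2$ and is \emph{not} an extension vertex (equivalently some $a^{(i)}_{j,k}\neq 0$), first reduce to the case $a^{(i)}_{j,j}\neq 0$ for some direct descendant $j$, then compute enough of the $\lambda_n^{(j,\cdot)}$ using specific trees of small weight to show they are affine in $n$ from $n=1$ on, forcing $j$ to have level $0$ and hence $i$ to have level $\leq 1$, a contradiction. You will need to supply an argument of this kind (or find another replacement) for $M=2$; the level bookkeeping alone is not enough.
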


\begin{proof} We denote by $M$ the level of $i$. By proposition \ref{23}, all the descendants of $i$ are of level $\leq M-1$, so $i$ is not a descendant of itself.

Let $M$ be the level of $i$ and let us assume that $M\geq 3$. Let $j$ be a direct descendant of $i$, $k$ be a direct descendant of $j$, 
$l$ be a direct descendant of $k$. Then $j$ has level $M-1$, $k$ has level $M-2$, $l$ has level $M-3$. So in the graph of the restriction to $\{i,j,k,l\}$ is:
$$\xymatrix{i\ar[r]&j\ar[r]&k\ar[r]&l}\mbox{ or }\xymatrix{i\ar[r]&j\ar[r]&k\ar[r]&l\ar@(ur,dr)}$$
The result is then deduced from lemma \ref{28}. \\

Let us now assume that $i$ is of level $2$ and is not an extension vertex. Let $j$ be a direct descendant of $i$ and $k$ be a direct descendant of $j$. 
By proposition \ref{23}, $j$ is of level $1$ and $k$ is of level $0$, so $k$ is not a direct descendant of $i$. The graph of the restriction of $(S)$ to $\{i,j,k\}$ is:
$$\xymatrix{i\ar[r]&j\ar[r]&k}\mbox{ or }\xymatrix{i\ar[r]&j\ar[r]&k\ar@(ur,dr)}$$

{\it First step.} Let us first prove that there exists a direct descendant  $j$ of $i$ such that $a^{(i)}_{j,j} \neq 0$.
Let us assume that this is not true. As $i$ is not an extension vertex, there exist $j,j' \in I$ such that $a^{(i)}_{j,j'} \neq 0$, $j \neq j'$.
Let $k$ be a direct descendant of $j$. Considering the different levels, the graph associated to the restriction to $\{i,j,j',k\}$ is:
$$\xymatrix{&i\ar[rd] \ar[ld]&\\j\ar[rd]&&j'\ar[ld]\\&k&}\mbox{ or } \xymatrix{&i\ar[rd] \ar[ld]&\\j\ar[rd]&&j'\ar[ld]\\&k\ar@(ur,dr)&}\mbox{ or }
\xymatrix{&i\ar[rd] \ar[ld]&\\j\ar[rd]&&j'\\&k&}\mbox{ or } \xymatrix{&i\ar[rd] \ar[ld]&\\j\ar[rd]&&j'\\&k\ar@(ur,dr)&}$$
Up to a change of variables, we put:
$$F_i(0,\cdots,0,h_j,0,\cdots,0,h_{j'},0,\cdots,0)=1+h_j+h_{j'}+b h_jh_{j'}+\mathcal{O}(h^3).$$
Then by proposition \ref{16}, $\lambda_2^{(i,j)} a_{\tddeux{$i$}{$j$}}=2a_{\tdtroisun{$i$}{$j$}{$j$}}+a_{\tdtroisdeux{$i$}{$j$}{$j$}}=0$, 
so $\lambda_2^{(i,j)}=0$. On the other hand, $\lambda_2^{(i,j)} a_{\tddeux{$i$}{$j'$}}=a_{\tdtroisun{$i$}{$j'$}{$j$}}+a_{\tdtroisdeux{$i$}{$j'$}{$j$}}=b$,
so $0=b$: this contradicts $a^{(i)}_{j,j'} \neq 0$.\\

{\it Second step}. Let us consider a vertex $j$ such that $a^{(i)}_{j,j}\neq 0$. Up to a change of variables, we can assume that $a^{(i)}_j=1$ and that 
for all direct descendant $k$ of $j$, $a^{(j)}_k=1$. By lemma \ref{25}, $b_i=b_j=0$. So, as $i$ is of level $2$, there exist scalars $a,b$, such that:
$$\lambda_n^{(i,j)}=\left\{ \begin{array}{l}
1\mbox{ if }n=1,\\
a\mbox{ if }n=2,\\
b\mbox{ if }n\geq 3.
\end{array}\right.$$
Then proposition \ref{19}-1 implies:
$$F_i(0,\cdots,0,h_j,0,\cdots,0)=1+h_j+\frac{a}{2!}h_j^2+\frac{ab}{6}h_j^3+\mathcal{O}(h_j^4).$$
By hypothesis, $a\neq 0$. Moreover, by proposition \ref{16}, $b=\lambda_3^{(i,j)} a_{\tdtroisdeux{$i$}{$j$}{$k$}}=a_{\tdquatretrois{$i$}{$j$}{$k$}{$j$}}=a$. So:
$$F_i(0,\cdots,0,h_j,0,\cdots,0)=1+h_j+\frac{a}{2!}h_j^2+\frac{a^2}{6}h_j^3+\mathcal{O}(h_j^4).$$
As $j$ has level $1$, we put:
$$\lambda_n^{(j,k)}=\left\{\begin{array}{c}
a^{(j)}_k=1 \mbox{ if }n=1,\\
c(n-1)+d\mbox{ if }n\geq 2,
\end{array}\right.$$
where $c(=b_k)$ and $d$ are scalars.  From proposition \ref{19}-1:
$$F_j(0,\cdots,0,h_k,0,\cdots,0)=1+h_k+\frac{c+d}{2!}h_k^2+\frac{(c+d)(2c+d)}{6}h_k^3+\mathcal{O}(h_k^4).$$
Moreover, $\lambda_3^{(i,k)}a_{\tdtroisun{$i$}{$j$}{$j$}}=a_{\tdquatretrois{$i$}{$j$}{$k$}{$j$}}$, so $\lambda_3^{(i,k)}\frac{a}{2}=a$ and $\lambda_3^{(i,k)}=2$. 
Then $\lambda_3^{(i,k)}a_{\tdtroisdeux{$i$}{$j$}{$k$}}=2a_{\tdquatrequatre{$i$}{$j$}{$k$}{$k$}}$, so $c+d=2$.
Similarly, using $\tdquatreun{$i$}{$j$}{$j$}{$j$}$, we obtain $\lambda_4^{(i,k)}=3$. Using $\tdquatrequatre{$i$}{$j$}{$k$}{$k$}$, we obtain:
$$3\frac{c+d}{2}=3\frac{(c+d)(2c+d)}{6}.$$
As $c+d=2$, $2c+d=3$, so $c=d=1$ and $\lambda_n^{(j,k)}=n$ for all $n\geq 2$. As $\lambda_1^{(j,k)}=1$, $\lambda_n^{(j,k)}=n$ for all $n \geq 1$. 

Let now $l\in I$ which is not a direct descendant of $j$ and let $k$ be a direct descendant of $j$. For all $n \geq 1$:
$$\lambda_n^{(j,l)}=\lambda_n^{(j,l)} a_{B^+_j(\tdun{$k$}^{n-1})}=a_{B^+_j(\tdun{$k$}^{n-1} \tddeux{$k$}{$l$})}=(n-1)a^{(k)}_l.$$
We proved that for any vertex $l$ of $\gs$, for all $n \geq 1$:
$$\lambda_n^{(j,l)}= \left\{ \begin{array}{l}
n\mbox { if $l$ is a direct descendant of $j$},\\
a^{(k)}_l(n-1) \mbox{ if $l$ is not a direct descendant of $j$},
\end{array}\right.$$
where $k$ is any direct descendant of $j$. This proves that $j$ has level $0$, so $i$ has level $1$: contradiction. So $i$ is an extension vertex. \end{proof}

\section{Examples of Hopf SDSE}

\subsection{cycles and multicycles}

{\bf Notation.} We denote by $l(i_1,\cdots,i_n)$ the ladder with decorations, from the root to the leave, $i_1,\cdots,i_n$.
In other words: 
$$l(i_1,\cdots,i_p)=B^+_{i_1} \circ \cdots\circ B^+_{i_n}(1)=\tdpartun{$i_1$}{$i_2$}{$i_{n-1}$}{$i_n$}.$$

\begin{theo} \label{30}
Let $N \geq 2$. The SDSE associated to the following formal series is Hopf:
$$\left\{ \begin{array}{rcl}
F_1&=&1+h_2,\\
&\vdots&\\
F_{N-1}&=&1+h_N,\\
F_N&=&1+h_1.
\end{array}\right.$$
\end{theo}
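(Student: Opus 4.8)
The plan is to solve the system explicitly: because each $F_i$ is affine, the solution involves only ladders, which makes everything transparent.

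\emph{Step 1 (the solution).} Writing $\sigma$ for the cyclic permutation $1\mapsto 2\mapsto\cdots\mapsto N\mapsto 1$, we have $F_i=1+h_{\sigma(i)}$, so the SDSE reads $X_i=\tdun{$i$}+B^+_i(X_{\sigma(i)})$. Comparing homogeneous components and using that $B^+_i$ is homogeneous of degree $1$ gives $X_i(1)=\tdun{$i$}$ and $X_i(n)=B^+_i(X_{\sigma(i)}(n-1))$ for $n\geq 2$; by induction on $n$,
$$X_i(n)=l\bigl(i,\sigma(i),\sigma^2(i),\dots,\sigma^{n-1}(i)\bigr),$$
a single ladder with coefficient $1$. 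Thus, for $t'\in\T_i(n)$, $a_{t'}\neq 0$ exactly when $t'$ is this ladder, in which case $a_{t'}=1$.

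\emph{Step 2 (coproduct of a ladder).} A ladder $l(i_1,\dots,i_n)$ has a single maximal oriented path passing through all its $n-1$ edges, so an admissible non-total cut is either empty or cuts exactly one edge; adding the total cut, there are $n+1$ admissible cuts. Reading off $P^c$ and $R^c$ for each yields
$$\Delta\bigl(l(i_1,\dots,i_n)\bigr)=\sum_{k=0}^{n} l(i_{k+1},\dots,i_n)\otimes l(i_1,\dots,i_k),$$
with the convention $l(\,)=1$. Applying this to the ladders of Step 1, and using $l(i_1,\dots,i_k)=X_i(k)$ and $l(\sigma^k(i),\dots,\sigma^{n-1}(i))=X_{\sigma^k(i)}(n-k)$ (with $X_j(0):=1$),
$$\Delta(X_i(n))=\sum_{k=0}^{n} X_{\sigma^k(i)}(n-k)\otimes X_i(k)\ \in\ \hs\otimes\hs .$$
Since $\hs$ is the subalgebra of $\h_I$ generated by the $X_i(n)$ and $\Delta$ is an algebra morphism, $\Delta(\hs)\subseteq\hs\otimes\hs$; being a graded subbialgebra of the connected graded Hopf algebra $\h_I$, $\hs$ is automatically a Hopf subalgebra, so $(S)$ is Hopf.

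\emph{Remarks.} Equivalently, one may invoke proposition \ref{16}: by Step 1 the unique leaf of $X_i(n+1)=l(i,\dots,\sigma^n(i))$ is decorated by $\sigma^n(i)$ and cutting it returns $X_i(n)$, hence $\sum_{t\in\T_i(n+1)}n_j(t,t')a_t=\delta_{j,\sigma^n(i)}\,a_{t'}$ for every $t'\in\T_i(n)$, so the scalars $\lambda_n^{(i,j)}=\delta_{j,\sigma^n(i)}$ witness that $(S)$ is Hopf. There is no real obstacle here; the only points needing a little care are the identification of the admissible cuts of a ladder (single-edge cuts plus the empty and total cuts) and the bookkeeping of the indices modulo $N$ through $\sigma$.
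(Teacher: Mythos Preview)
Your proof is correct and follows essentially the same approach as the paper: solve the system explicitly as ladders $X_i(n)=l(i,\sigma(i),\dots,\sigma^{n-1}(i))$ and then read off the coproduct to see that $\Delta(X_i(n))\in\hs\otimes\hs$. The paper uses $\mathbb{Z}/N\mathbb{Z}$ notation instead of your cyclic permutation $\sigma$, and states the coproduct for the full series $X_i$ rather than componentwise, but the argument is the same; your version is in fact slightly more detailed.
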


\begin{proof} We identify $\{1,\cdots,N\}$ and $\mathbb{Z}/N\mathbb{Z}$, via the bijection $i\longrightarrow \overline{i}$.
Then, for all $n\geq 1$ and for all $1\leq i \leq N$, $X_{\overline{i}}(n)=l(\overline{i},\cdots \overline{i+n-1})$. As a consequence:
$$\Delta(X_{\overline{i}})=X_{\overline{i}} \otimes 1+1\otimes X_{\overline{i}}+\sum_{p=1}^{+\infty} X_{\overline{i+p}} \otimes X_{\overline{i}}(p).$$
So $\hs$ is Hopf. \end{proof}\\

Note that the graph $\gs$ associated to such a system is an oriented cycle of length $N$, with only non-self-dependent vertices.

\begin{defi}\textnormal{
Let $(S)$ be a Hopf SDSE. It will be said to be {\it multicyclic} if, up to change of variable, it is a dilatation of a system described in theorem \ref{30}.
}\end{defi}

The graph of a multicyclic SDSE will be called a multicycle. In other term, a  $N$-multicycle ($N \geq 2$) is such that the set $I$ of its vertices admits 
a partition $I=I_{\overline{1}}\cup \cdots \cup I_{\overline{N}}$ indexed by the elements of $\mathbb{Z}/N\mathbb{Z}$, such that the direct descendants
of a vertex $i$ in $I_{\overline{j}}$ are the elements of $I_{\overline{j+1}}$ for all $j\in \mathbb{Z}/N\mathbb{Z}$. Moreover, up to a change of variables, 
for all $i \in \gs$:
$$F_i=1+\sum_{i \longrightarrow l} h_l.$$

Here is an example of a $5$-multicycle:
$$\includegraphics[height=4cm]{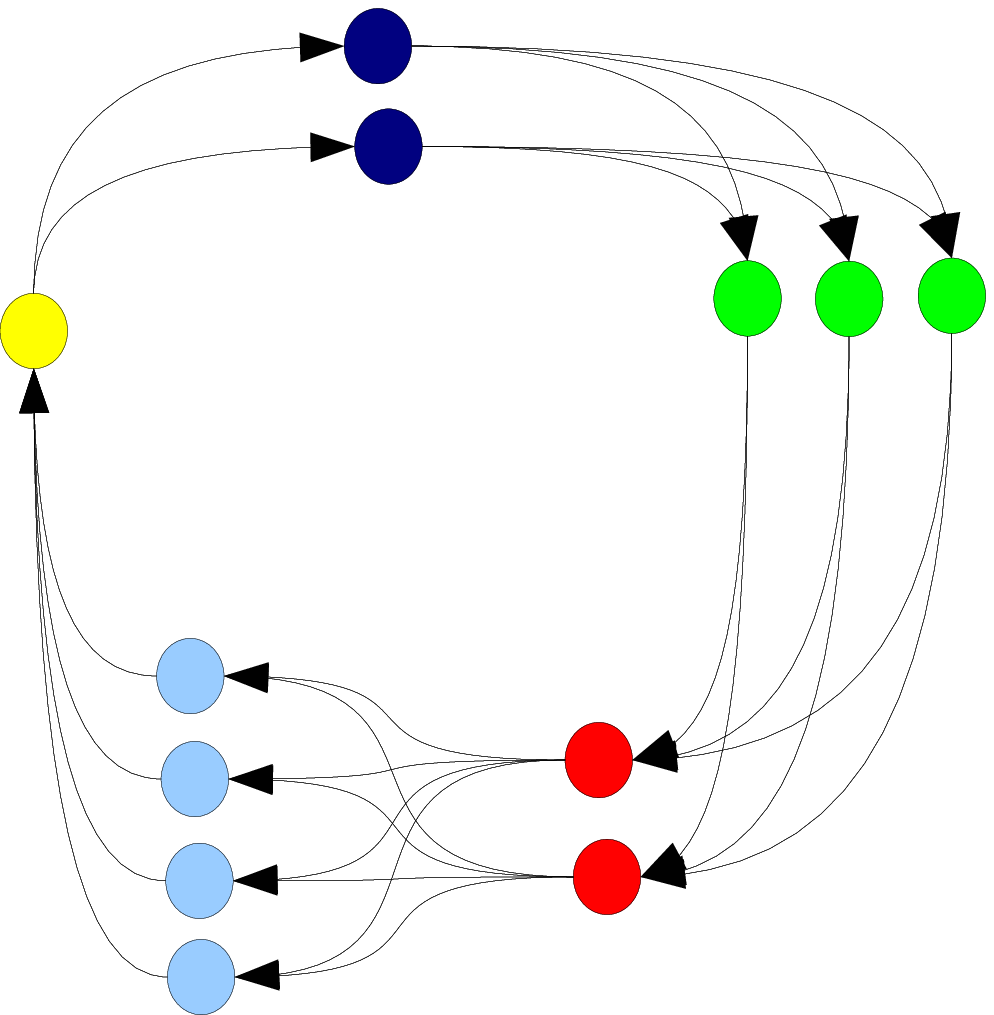}$$

Note that if $N=2$, $\gs$ is a complete bipartite graph, that is to say that the set of vertices of $\gs$ admits a partition into two parts,
and for all vertices $i$ and $j$, there is an edge from $i$ to $j$ if, and only if, $i$ and $j$ are not in the same part of the partition.

\subsection{Fundamental SDSE}

\begin{theo} \label{32}
Let $I$ be a set with a partition $I=I_0 \cup J_0 \cup K_0 \cup I_1 \cup J_1$, such that:
\begin{itemize}
\item $I_0$, $J_0$, $K_0$, $I_1$, $J_1$ can be empty.
\item $I_0 \cup J_0$ is not empty.
\end{itemize}
The SDSE defined in the following way is Hopf:
\begin{enumerate}
\item For all $i \in I_0$, there exists $\beta_i \in K$, such that:
$$F_i=f_{\beta_i}(h_i) \prod_{j\in I_0-\{i\}} f_{\frac{\beta_j}{1+\beta_j}}((1+\beta_j) h_j) \prod_{j\in J_0} f_1(h_j).$$
\item For all $i\in J_0$:
$$F_i=\prod_{j\in I_0} f_{\frac{\beta_j}{1+\beta_j}}((1+\beta_j) h_j) \prod_{j\in J_0-\{i\}} f_1(h_j).$$
\item For all $i\in K_0$:
$$F_i=\prod_{j\in I_0} f_{\frac{\beta_j}{1+\beta_j}}((1+\beta_j) h_j) \prod_{j\in J_0} f_1(h_j).$$
\item For all $i\in I_1$, there exist $\nu_i\in K$, a family of scalars $(a_j^{(i)})_{j\in I_0\cup J_0 \cup K_0}$, such that 
$(\nu_i \neq 1)$ or $(\exists j\in I_0, \: a^{(i)}_j \neq 1+\beta_j)$ or $(\exists j\in J_0, \: a^{(i)}_j \neq 1)$ or $(\exists j\in K_0, \: a^{(i)}_j \neq 0)$. 
Then, if $\nu_i \neq 0$:
$$F_i=\frac{1}{\nu_i} \prod_{j\in I_0} f_{\frac{\beta_j}{\nu_i a^{(i)}_j}}\left(\nu_i a^{(i)}_jh_j\right)
\prod_{j\in J_0} f_{\frac{1}{\nu_i a^{(i)}_j}}\left(\nu_i a^{(i)}_jh_j\right) \prod_{j\in K_0} f_0\left(\nu_i a^{(i)}_jh_j\right)+1-\frac{1}{\nu_i}.$$
If $\nu_i=0$:
$$F_i=-\sum_{j\in I_0} \frac{a^{(i)}_j}{\beta_j} \ln(1-h_j)-\sum_{j\in J_0} a^{(i)}_j \ln(1-h_j)+\sum_{j\in K_0} a^{(i)}_j h_j+1.$$
\item For all $i \in J_1$, there exists $\nu_i\in K-\{0\}$, a family of scalars $(a_j^{(i)})_{j\in I_0\cup J_0 \cup K_0\cup I_1}$, 
with the following conditions:
\begin{itemize}
\item $I_1^{(i)}=\{j\in I_1\:/\:a^{(i)}_j \neq 0\}$ is not empty.
\item For all $j\in I_1^{(i)}$, $\nu_j=1$.
\item For all $j,k \in I_1^{(i)}$, $F_j=F_k$. In particular, we put $b^{(i)}_t=a^{(j)}_t$ for any $j\in I_1^{(i)}$, for all $t\in I_0 \cup J_0 \cup K_0$.
\end{itemize}
Then:
\begin{eqnarray*}
F_i&=&\frac{1}{\nu_i} \prod_{j\in I_0} f_{\frac{\beta_j}{b^{(i)}_j-1-\beta_j}}\left(\left(b^{(i)}_j-1-\beta_j\right)h_j\right)
\prod_{j\in J_0} f_{\frac{1}{b^{(i)}_j-1}}\left(\left(b^{(i)}_j-1\right)h_j\right)\prod_{j\in K_0} f_0\left(b^{(i)}_jh_j\right)\\
&&+\sum_{j\in I_1^{(i)}} a^{(i)}_j h_1+1-\frac{1}{\nu_i}.
\end{eqnarray*}\end{enumerate}\end{theo}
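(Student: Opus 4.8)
The plan is to verify the two conditions of Proposition~\ref{19} for each of the five types of series $F_i$ in the statement. The basic toolkit consists of the functional identities $f_\beta(h)^\gamma=f_{\beta/\gamma}(\gamma h)$ for $\gamma\neq 0$, $f_\beta(0)=1$, $f_\beta(h)=1+h+\mathcal{O}(h^2)$, and the explicit expansion: the coefficient of $h^p$ in $f_\gamma(ah)$ equals $\frac{a^p}{p!}\prod_{m=0}^{p-1}(1+m\gamma)$. The identity $f_{\beta/(1+\beta)}((1+\beta)h)=f_\beta(h)^{1+\beta}$, together with its analogues hidden inside the series for $i\in I_1\cup J_1$, lets one rewrite every $F_i$ as a product $\frac1{\nu_i}\prod_j f_{\gamma^{(i)}_j}(c^{(i)}_j h_j)$ of one--variable series corrected by an affine term, a form in which the scalar $1/\nu_i$ and the affine correction influence neither the constant term nor the coefficients of total degree $\geq 2$, and where the triples $(\nu_i,\gamma^{(i)}_j,c^{(i)}_j)$ are read directly off the statement ($\nu_i=1$ with no correction for $i\in I_0\cup J_0\cup K_0$).

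Next I would compute the structure coefficients $\lambda^{(i,j)}_n$. Since these products factorise over the variables, so do the multi--coefficients $a^{(i)}_{(p_1,\dots,p_N)}$, and inverting Lemma~\ref{17}(2) expresses $\lambda^{(i,j)}_{p_1+\cdots+p_N+1}$ as $c^{(i)}_j(1+p_j\gamma^{(i)}_j)+\sum_l p_l a^{(l)}_j$ whenever $a^{(i)}_{(p_1,\dots,p_N)}\neq 0$. Condition~1 of Proposition~\ref{19} is exactly the demand that this be independent of the distribution of $(p_1,\dots,p_N)$ for fixed total degree, and this is what the exponents and rescalings of the theorem are engineered for: writing $b_j=1+\beta_j$ for $j\in I_0$, $b_j=1$ for $j\in J_0$, $b_j=0$ for $j\in K_0\cup I_1\cup J_1$, one checks on the list of series that $a^{(l)}_j=b_j$ for every descendant $l\neq j$ of $i$, while the coefficient of $p_j$, namely $c^{(i)}_j\gamma^{(i)}_j+a^{(j)}_j$, also equals $b_j$ since by construction $c^{(i)}_j\gamma^{(i)}_j=b_j-a^{(j)}_j$ and $a^{(j)}_j\in\{0,1\}$. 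The upshot should be that $\lambda^{(i,j)}_n$ is affine in $n$ with slope $b_j$ for all $n\geq 2$ (indeed for all $n\geq 1$ when $i\in I_0\cup J_0\cup K_0$). For $i\in J_1$ one extra check is needed: the degree--one monomials coming from the product part and those coming from the linear correction $\sum_{l\in I_1^{(i)}}a^{(i)}_l h_l$ must give the same value of $\lambda^{(i,j)}_2$, which is precisely the constraint fixing the rescaling $c^{(i)}_j=b^{(i)}_j-b_j$ in the statement, the cross terms $h_l h_j$ ($l\in I_1^{(i)}$) vanishing because the product part of $F_i$ involves no variable of $I_1^{(i)}$.

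Condition~2 of Proposition~\ref{19} should then be a formal consequence. The vertices $d_1,\dots,d_p$ occurring there are direct descendants of $i$, hence of strictly smaller level by Proposition~\ref{23}, so for each $l$ one already has $\lambda^{(d_l,j)}_{n_l}=a^{(d_l)}_j+b_j(n_l-1)$, including the boundary case $n_l=1$, where both sides are $a^{(d_l)}_j$; since $n_1+\cdots+n_p+1$ and $p+1$ are both $\geq 2$, the affine formula for $\lambda^{(i,j)}$ applies to both, and substituting makes each side of the required identity equal to $b_j(n_1+\cdots+n_p)$. I expect the genuine obstacle to be entirely the bookkeeping of the second paragraph --- confirming case by case, over the five vertex types and over all pairs $(i,j)$, that $a^{(l)}_j=b_j$ for every descendant $l\neq j$, that the $p_j$--coefficient collapses as claimed, and, for $I_1$ and $J_1$, that the low--degree computations of $\lambda^{(i,j)}_2$ agree --- rather than any conceptual point.
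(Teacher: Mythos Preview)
Your plan is essentially the paper's own proof: define the $\lambda^{(i,j)}_n$ by the affine formula $\lambda^{(i,j)}_n=\tilde a^{(i)}_j+b_j(n-1)$ for $n\ge 2$ (with $\lambda^{(i,j)}_1=a^{(i)}_j$), then check the two conditions of Proposition~\ref{19}. The paper organises the bookkeeping by writing out three tables for $a^{(j)}_i$, $\tilde a^{(j)}_i$, $b_j$ and verifying Condition~1 case by case (it spells out only the case $i\in J_1$, $j\in I_0$, declaring the others analogous); Condition~2 is called ``immediate'' for exactly the reason you give, namely affinity of $\lambda^{(i,j)}_n$ in $n$.

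One logical wrinkle to fix: when you justify Condition~2 you invoke Proposition~\ref{23} to say that the direct descendants $d_l$ have strictly smaller level. That proposition, and the notion of level itself, are stated for \emph{Hopf} SDSE, which is precisely what you are trying to prove, so the citation is circular. The content you actually need is harmless and already contained in your computation: for every $d_l$ that can occur as a direct descendant of $i$ in this system (i.e.\ $d_l\in I_0\cup J_0\cup K_0$, or $d_l\in I_1^{(i)}$ with $\nu_{d_l}=1$ when $i\in J_1$), one has $\tilde a^{(d_l)}_j=a^{(d_l)}_j$ for all $j$, so $\lambda^{(d_l,j)}_n-a^{(d_l)}_j=b_j(n-1)$ for \emph{all} $n\ge 1$. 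Verify this directly from your tables rather than through the level machinery. (A cosmetic slip: the two sides of Condition~2 both equal $\tilde a^{(i)}_j-a^{(i)}_j+b_j(n_1+\cdots+n_p)$, not $b_j(n_1+\cdots+n_p)$; the extra term cancels.)
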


\begin{proof} In order to simplify the notation, we assume that $I=\{1,\ldots,N\}$. We shall use proposition \ref{19} with, for all $i,j \in I$:
$$\lambda_n^{(i,j)}=\left\{ \begin{array}{l}
a^{(i)}_j \mbox{ if } n=1,\\
\tilde{a}^{(i)}_j+b_j(n-1) \mbox{ if }n \geq 2,
\end{array}\right.$$
the coefficients being given in the following arrays:
\begin{enumerate}
\item $a^{(j)}_i$:
$$\begin{array}{|c|c|c|c|c|c|}
\hline i\setminus j&\in I_0&\in J_0&\in K_0&\in I_1&\in J_1\\
\hline \in I_0&(1+\beta_i)-\delta_{i,j} \beta_i&1+\beta_i&1+\beta_i&a^{(j)}_i&\frac{b_i^{(j)} -1-\beta_i}{\nu_j}\\
\hline \in J_0&1&1-\delta_{i,j}&1&a^{(j)}_i&\frac{b_i^{(j)}-1}{\nu_j}\\
\hline \in K_0&0&0&0&a^{(j)}_i&\frac{b_i^{(j)}}{\nu_j}\\
\hline \in I_1&0&0&0&0&a^{(j)}_i\\
\hline \in J_1&0&0&0&0&0\\
\hline \end{array}$$
\item $\tilde{a}^{(j)}_i$:
$$\begin{array}{|c|c|c|c|c|c|}
\hline i\setminus j&\in I_0&\in J_0&\in K_0&\in I_1&\in J_1\\
\hline \in  I_0&(1+\beta_i)-\delta_{i,j} \beta_i&1+\beta_i&1+\beta_i&\nu_j a^{(j)}_i&b_i^{(j)} -1-\beta_i\\
\hline \in J_0&1&1-\delta_{i,j}&1&\nu_j a^{(j)}_i&b_i^{(j)}-1\\
\hline \in K_0&0&0&0&\nu_j a^{(j)}_i&b_i^{(j)}\\
\hline \in I_1&0&0&0&0&0\\
\hline \in J_1&0&0&0&0&0\\
\hline \end{array}$$
\item $b_j$:
$$\begin{array}{|c|c|c|c|c|c|}
\hline j&\in I_0&\in J_0&\in K_0&\in I_1&\in J_1\\
\hline b_j&1+\beta_j&1&0&0&0\\
\hline \end{array}$$
\end{enumerate}
The second item of proposition \ref{19} is immediate. Let us prove for example the first item for $i \in J_1$ and $j \in I_0$. 
Let us fix $(p_1,\ldots,p_N) \in \mathbb{N}^N-\{(0,\ldots,0)\}$. 
\begin{eqnarray*}
&&\lambda_{p_1+\ldots+p_N+1}^{(i,j)}-\sum_l a^{(l)}_j p_l\\
&=&b^{(i)}_j-1-\beta_j-(1+\beta_j) \sum_{l=1}^N p_l-\sum_{l\in I_0\cup J_0 \cup K_0} (1+\beta_j)p_l +\beta_j p_j
-\sum_{l\in I_1\cup J_1} a^{(l)}_j p_l\\
&=&b^{(i)}_j-1-\beta_j+\beta_j p_j+\sum_{l\in I_1\cup J_1}\left(1+\beta_j-a^{(l)}_j\right)p_l.
\end{eqnarray*}
If there exists $l\in (I_1\cup J_1)-I_1^{(i)}$, such that $p_l \neq 0$, then $a^{(i)}_{(p_1,\ldots,p_j+1,\ldots,p_N)}=a^{(i)}_{(p_1,\ldots,p_N)}=0$
and then the result is immediate. We now suppose that $p_l=0$ for all $l\in (I_1\cup J_1)-I_1^{(i)}$. Then:
\begin{eqnarray*}
\lambda_{p_1+\ldots+p_N+1}^{(i,j)}-\sum_l a^{(l)}_j p_l
&=&b^{(i)}_j-1-\beta_j+\beta_j p_j+\sum_{l\in I_1^{(i)}}\left(1+\beta_j-a^{(l)}_j\right)p_l\\
&=&b^{(i)}_j-1-\beta_j+\beta_j p_j+\left(1+\beta_j-b^{(i)}_j\right)\sum_{l\in I_1^{(i)}}p_l.
\end{eqnarray*}
\begin{enumerate}
\item If $\displaystyle \sum_{l\in I_1^{(i)}}p_l=0$, then:
$$a^{(i)}_{(p_1,\ldots,p_j+1,\ldots,p_N)}=\left(b^{(i)}_j-1-\beta_j p_j\right)\frac{a^{(i)}_{(p_1,\ldots,p_N)}}{p_j+1}.$$
The first item of proposition \ref{19} is immediate.
\item If $\displaystyle \sum_{l\in I_1^{(i)}}p_l=1$, then $a^{(i)}_{(p_1,\ldots,p_j+1,\ldots,p_N)}=0$
and $\lambda_{p_1+\ldots+p_N+1}^{(i,j)}-\sum_l a^{(l)}_j p_l=0$. So the first item of proposition \ref{19} holds.
\item If $\displaystyle \sum_{l\in I_1^{(i)}}p_l\geq 2$, then $a^{(i)}_{(p_1,\ldots,p_j+1,\ldots,p_N)}=a^{(i)}_{(p_1,\ldots,p_N)}=0$,
so the result is immediate.
\end{enumerate}
The other cases are proved in the same way, so this SDSE is Hopf. \end{proof} \\

{\bf Remarks.} \begin{enumerate}
\item For all $\lambda \neq 0$:
$$f_{\frac{\beta}{\lambda}}(\lambda h)=\sum_{k=0}^{\infty} \frac{\lambda (\lambda+\beta)\cdots(\lambda+(k-1)\beta)}{k!}h^k.$$
The second side of this formula is equal to $1$ if $\lambda=0$. So, formulas defining the SDSE of theorem \ref{32} are always defined.
\item The vertices of $I_0\cup J_0 \cup K_0$ are of level $0$. A vertex $i$ of $I_1$ is of level $0$ if $\nu_i=1$; otherwise, it is of level $1$.
The vertices of $J_1$ are of level $1$. 
\end{enumerate}

\begin{defi} \label{33} \textnormal{\begin{enumerate}
\item A Hopf SDSE will be said to be {\it fundamental} if, up to a change of variables, it is the dilatation of a system of theorem \ref{32}.
\item A fundamental Hopf SDSE $(S)$ will be said to be {\it abelian} if for any vertex $i\in I$, $b_i=0$.
\end{enumerate}}\end{defi}

{\bf Remark.} In other words, $(S)$ is abelian if $J_0=\emptyset$ and if for any $i \in I_0$, $\beta_i=-1$. Then, for all $i\in K_0$, $F_i=1$.
As there is no constant $F_i$, we obtain $K_0=\emptyset$.\\

A particular case is obtained when $I=J_0$. Then we obtain the following systems:

\begin{theo} \label{34}
Let $I$ be a finite subset which is not a singleton. The SDSE associated to the following formal series is Hopf:
$$F_i=\prod_{j\neq i} (1-h_j)^{-1},\mbox{ for all }i\in I.$$
\end{theo}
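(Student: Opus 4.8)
The plan is to obtain this statement as the degenerate case $I=J_0$ of Theorem \ref{32}. Concretely, I would take the partition $I=I_0\cup J_0\cup K_0\cup I_1\cup J_1$ appearing in that theorem with $J_0=I$ and $I_0=K_0=I_1=J_1=\emptyset$. The hypotheses of Theorem \ref{32} are then trivially satisfied: the sets $I_0$, $K_0$, $I_1$, $J_1$ are allowed to be empty, and $I_0\cup J_0=I$ is non-empty since $I$ is non-empty. Because $I_0=\emptyset$, there are no scalars $\beta_i$ to choose, so item (2) of Theorem \ref{32} applies with no free parameters.

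Next I would unwind the formula of item (2) in this case. Since $I_0=\emptyset$, the product $\prod_{j\in I_0}f_{\beta_j/(1+\beta_j)}(\cdots)$ is empty, hence equals $1$; so for every $i\in J_0=I$ the theorem gives $F_i=\prod_{j\in J_0-\{i\}}f_1(h_j)=\prod_{j\in I,\ j\neq i}f_1(h_j)$. It then remains only to identify $f_1$: from the definition $f_\beta(h)=(1-\beta h)^{-\frac{1}{\beta}}$ for $\beta\neq 0$, one has $f_1(h)=(1-h)^{-1}$, so $F_i=\prod_{j\neq i}(1-h_j)^{-1}$, which is exactly the system of the statement. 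Theorem \ref{32} then yields that the associated algebra $\hs$ is a Hopf subalgebra of $\h_I$, as claimed.

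The only point needing a word of care — and this is where the hypothesis that $I$ is not a singleton is used — is the standing requirement, recalled just after the definition of an SDSE, that no $F_i$ be constant. If $|I|=1$, the single $F_i$ is an empty product, hence the constant $1$, and the system is the excluded degenerate case. As soon as $|I|\geq 2$, each $F_i=\prod_{j\neq i}(1-h_j)^{-1}$ genuinely depends on at least one indeterminate, so it is non-constant and the system is a genuine SDSE. There is no real obstacle: the whole content is already contained in Theorem \ref{32}. Alternatively, a self-contained argument could be given by feeding the sequences $\lambda_n^{(i,j)}=n$ whenever $i\longrightarrow j$ (and $\lambda_n^{(i,j)}=0$ otherwise) into Proposition \ref{19} and verifying its two compatibility conditions by hand, but this would merely reproduce the relevant sub-case of the proof of Theorem \ref{32}.
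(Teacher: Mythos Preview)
Your proposal is correct and is exactly the paper's own approach: Theorem~\ref{34} is presented in the paper as the particular case of Theorem~\ref{32} obtained by taking $I=J_0$ (with $I_0=K_0=I_1=J_1=\emptyset$), and you have simply made this specialization explicit. Your remark on why $|I|\geq 2$ is required and your mention of the alternative direct verification via Proposition~\ref{19} are useful additions that the paper leaves implicit.
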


The graph associated to such an SDSE is a complete graph with only non-self-dependent vertices, that is to say that there is an edge from $i$ to $j$ 
in $\gs$if, and only if, $i \neq j$. In particular, if $N=2$, $\gs$ is $1\longleftrightarrow 2$, as for the SDSE of theorem \ref{30} with $N=2$.

\begin{defi}\textnormal{
Let $(S)$ be a Hopf SDSE. It will be said to be {\it quasi-complete} if, up to change of variable, it is a dilatation of one of the systems 
described in theorem \ref{34}.
}\end{defi}

The graphs associated to quasi-complete SDSE shall be called {\it quasi-complete}. A quasi-complete graph $G$ has only non-self-dependent vertices;
there exists a partition $I=I_1\cup \cdots \cup I_M$ of the set $I$ of vertices of $\gs$ such that, for all $x,y\in I$, there is an edge from $x$ to $y$
if, and only if, $x$ and $i$ are not in the same $I_i$. In particular, quasi complete graphs with $M=2$ are complete bipartite graphs. 
Moreover, if $(S)$ is quasi-complete, up to a change of variables, for all $x \in I_i$:
$$F_x=\prod_{j\neq i}\left(1-\sum_{y\in I_j} h_y \right)^{-1}.$$
Here is an example of a $2$-quasi-complete graph and a $3$-quasi-complete graph:

$$\includegraphics[height=2cm]{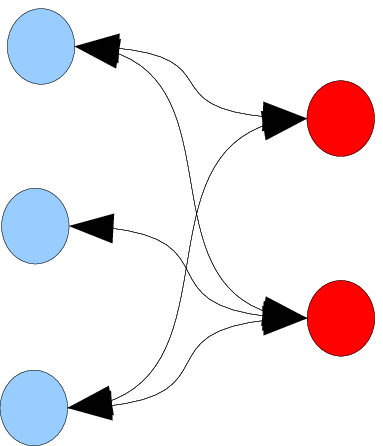} \hspace{1cm} \includegraphics[height=2.5cm]{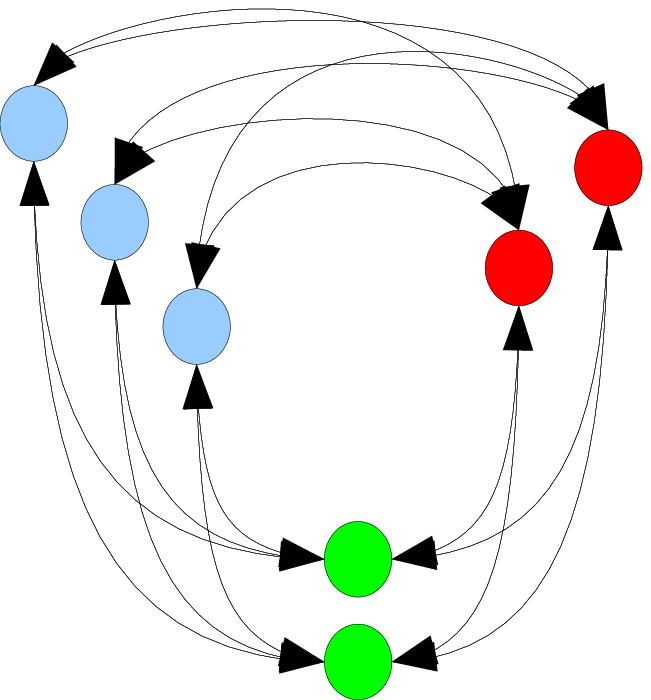}$$

Another particular case is the following: assume that $I=I_0$ and that $\beta_x=-1$ for all $x \in I_0$. Then, for all $x \in I$, $F_x=1+h_x$. 
Note that $\gs$ is not connected if $|I|\geq 2$, and this is the only case where $\gs$ is not connected. The dilatation of such an SDSE will be called 
{\it a non-connected fundamental SDSE}. For such an SDSE, the set of indices $I$ admits a partition $I=I_1\cup \cdots \cup I_M$ ($M\geq 2$)
and up to a change of variables, for all $1\leq i\leq M$, for all $x \in I_i$:
$$F_x=1+\sum_{y\in I_i} h_y.$$

{\bf Remark.} Note that a dilatation replacing $x \in K_0 \cup I_1 \cup J_1$ by a set $J_x$ in a system of theorem \ref{32} also gives a system 
of theorem \ref{32}. The same remark applies when the dilatation replaces $x \in I_0$, with $\beta_x=0$, by a set $J_x$. So we shall always assume 
that the dilatation giving a fundamental SDSE from an SDSE of theorem \ref{32} satisfies $J_x=\{x\}$ for any $x\in K_0 \cup I_1 \cup J_1$ 
and for any $x \in I_0$ such that $\beta_x=0$.

\section{Two families of Hopf SDSE}

We here first give characterisations of multicyclic and quasi-complete SDSE. We then consider Hopf SDSE such that any vertex is a descendant of 
a self-dependent vertex. We prove that such an SDSE is fundamental. The results of this section will be used to prove the main theorem \ref{14}.

\subsection{A lemma on non-self-dependent vertices}

\begin{lemma} \label{36} 
Let $(S)$ be a Hopf SDSE and let $i \in I$ such that $a_i^{(i)}=0$. Let $j$, $k$ and $l \in I$ such that $a_j^{(i)}\neq 0$, $a_k^{(j)}\neq 0$ 
and $a_l^{(i)}\neq 0$. Then $a^{(i)}_k \neq 0$ or $a^{(l)}_k \neq 0$.
\end{lemma}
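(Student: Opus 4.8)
The plan is to use the structure coefficients $\lambda_n^{(i,j)}$ and their relation to the Taylor coefficients of $F_i$ via Proposition~\ref{19} (or more directly Lemma~\ref{17}), working with short ladders and small trees of weight at most $4$. The hypothesis is that $i$ is not self-dependent ($a_i^{(i)}=0$), that $j,l$ are direct descendants of $i$ (with $l$ possibly equal to $j$), and that $k$ is a direct descendant of $j$; the goal is $a^{(i)}_k\neq 0$ or $a^{(l)}_k\neq 0$. Argue by contradiction: assume $a^{(i)}_k=0$ and $a^{(l)}_k=0$, i.e.\ $k$ is neither a direct descendant of $i$ nor of $l$.

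**Extracting the key identity.** First I would compute $\lambda_2^{(i,k)}$ along the path $i\longrightarrow j\longrightarrow k$ using Lemma~\ref{17}-1: since $a_i^{(i)}=0$ forces $a^{(i)}_{k,i}$ to be irrelevant and $k$ is not a direct descendant of $i$ (so $a^{(i)}_k=0$), the formula gives
$$\lambda_2^{(i,k)}=a^{(j)}_k+(1+\delta_{k,j})\frac{a^{(i)}_{k,j}}{a^{(i)}_j}=\frac{a^{(i)}_{k,j}}{a^{(i)}_j}\cdot(1+\delta_{k,j}),$$
using $a^{(j)}_k\neq 0$ only to guarantee the path exists — wait, more carefully, $\lambda_2^{(i,k)}=a^{(j)}_k+(1+\delta_{k,j})a^{(i)}_{k,j}/a^{(i)}_j$. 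Now I also evaluate $\lambda_2^{(i,k)}$ by testing the defining relation of Proposition~\ref{16} on the tree $t'=\tddeux{$i$}{$l$}$, which has weight $2$ with root $i$: the only trees of weight $3$ over it with an extra leaf decorated $k$ are $\tdtroisun{$i$}{$l$}{$k$}$ (a cherry) and $\tdtroisdeux{$i$}{$l$}{$k$}$ (a ladder). Their coefficients are controlled by $a^{(i)}_{k,l}$ and $a^{(l)}_k$ respectively (up to the multinomial/$\delta$ factors), and by assumption $a^{(l)}_k=0$; also $a^{(i)}_k=0$ already tells us $\tdtroisdeux{$i$}{$l$}{$k$}$ has coefficient $0$, so $\lambda_2^{(i,k)}a_{\tddeux{$i$}{$l$}}$ equals a nonzero multiple of $a^{(i)}_{k,l}$ only. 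Comparing the two evaluations (on $\tddeux{$i$}{$j$}$ and on $\tddeux{$i$}{$l$}$) pins down $\lambda_2^{(i,k)}$ in terms of $a^{(i)}_{k,j}$ and in terms of $a^{(i)}_{k,l}$, and the key point is that $a^{(i)}_{k,i}=0$ because $a^{(i)}_i=0$ (by Remark after Lemma~\ref{17}: $a^{(i)}_i=0\Rightarrow a^{(i)}_{(\ldots)}=0$ whenever the $i$-exponent is $\geq 1$), which will eventually force $a^{(i)}_{k,j}=0$, hence $\lambda_2^{(i,k)}=0$, hence $a^{(j)}_k=0$ — contradicting $a^{(j)}_k\neq 0$.

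**Filling the gap and the main obstacle.** The genuinely delicate step is showing $a^{(i)}_{k,j}=0$: the vanishing of the first-order data $a^{(i)}_k=a^{(i)}_i=0$ does not immediately kill the mixed second-order coefficient $a^{(i)}_{k,j}$, so I would bring in Proposition~\ref{16} once more on a weight-$3$ tree containing $i$ at the root and a ladder segment involving $j$, using that $j\longrightarrow k$ to produce a relation of the form $\lambda_2^{(i,k)}a_{\tddeux{$i$}{$j$}}=(\text{multinomial})\,a^{(i)}_{k,j}a^{(i)}_j+(\text{something})\,a^{(j)}_k a^{(i)}_j$, and cross-reference with the path computation of $\lambda_2^{(i,k)}$ from Lemma~\ref{17}. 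The consistency between "$\lambda_2^{(i,k)}$ seen from the $j$-branch" and "$\lambda_2^{(i,k)}$ seen from the $l$-branch" is exactly the content of Proposition~\ref{19}-2 with $k=1$, $d_1=j$ versus $d_1=l$, which yields $\lambda_2^{(i,k)}=\lambda_2^{(i,k)}$ trivially but with incompatible expressions unless $a^{(i)}_{k,j}=0$. I expect the bookkeeping of the symmetry factors $\frac{(p_{j,1}+\cdots)!}{p_{j,1}!\cdots}$ in~(\ref{E1}) and the $(1+\delta)$ terms to be the only real nuisance; once $a^{(i)}_{k,j}=0$ is established, $\lambda_2^{(i,k)}=a^{(j)}_k\neq 0$ from the path formula but $\lambda_2^{(i,k)}=0$ from testing on $\tddeux{$i$}{$l$}$ with $a^{(l)}_k=0$, which is the desired contradiction. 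So the main obstacle is organizing the weight-$\leq 3$ tree computations cleanly enough that the mixed coefficient $a^{(i)}_{k,j}$ is forced to vanish.
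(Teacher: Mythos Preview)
Your overall approach---compute $\lambda_2^{(i,k)}$ by testing Proposition~\ref{16} on the two weight-$2$ trees $\tddeux{$i$}{$j$}$ and $\tddeux{$i$}{$l$}$ and compare---is exactly the paper's approach. The problem is that you have invented a phantom obstacle.

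You correctly invoke the remark after Lemma~\ref{17} to get $a^{(i)}_i=0\Rightarrow a^{(i)}_{k,i}=0$. But that remark says more: if $a^{(i)}_{(p_1,\ldots,p_N)}=0$ then $a^{(i)}_{(l_1,\ldots,l_N)}=0$ whenever $l_m\geq p_m$ for all $m$. Applying it to $a^{(i)}_k=a^{(i)}_{\varepsilon_k}=0$ (your standing assumption) gives \emph{immediately} $a^{(i)}_{k,j}=0$ and $a^{(i)}_{k,l}=0$. There is nothing delicate here; the ``mixed second-order coefficient'' $a^{(i)}_{k,j}$ has $k$-exponent $\geq 1$, so it dies the moment $a^{(i)}_k=0$. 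The elaborate workaround you propose (a further pass through Proposition~\ref{16} on weight-$3$ trees, and Proposition~\ref{19}-2 with $d_1=j$ versus $d_1=l$) is unnecessary, and as you describe it the ``consistency'' comparison $\lambda_2^{(i,k)}=\lambda_2^{(i,k)}$ is tautological and does not by itself force $a^{(i)}_{k,j}=0$.

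Once you use the remark correctly, the argument collapses to four lines, as in the paper. Assume $a^{(i)}_k=0$. Then $j\neq k$ (since $a^{(i)}_j\neq 0$), $l\neq k$ (since $a^{(i)}_l\neq 0$), and $a^{(i)}_{j,k}=a^{(i)}_{l,k}=0$. Testing on $t'=\tddeux{$i$}{$j$}$ gives
\[
\lambda_2^{(i,k)}a^{(i)}_j=a_{\tdtroisdeux{$i$}{$j$}{$k$}}+a_{\tdtroisun{$i$}{$k$}{$j$}}=a^{(i)}_ja^{(j)}_k+0,
\]
so $\lambda_2^{(i,k)}=a^{(j)}_k$. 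Testing on $t'=\tddeux{$i$}{$l$}$ gives $\lambda_2^{(i,k)}=a^{(l)}_k$ by the same computation. Hence $a^{(l)}_k=a^{(j)}_k\neq 0$. No contradiction framing is needed: you simply conclude $a^{(l)}_k\neq 0$ directly.
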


\begin{proof} Let us assume that $a^{(i)}_k=0$. As $a^{(i)}_j \neq 0$, $j\neq k$. As $a^{(i)}_k=0$, $a_{\tdtroisun{$i$}{$k$}{$j$}}=a^{(i)}_{j,k}=0$. 
Then, from proposition \ref{16}, $a^{(i)}_j \lambda_2^{(i,k)}=\lambda_2^{(i,k)}a_{\tddeux{$i$}{$j$}}=a_{\tdtroisdeux{$i$}{$j$}{$k$}}+a_{\tdtroisun{$i$}{$k$}{$j$}}
= a^{(i)}_j a^{(j)}_k+0$; hence, $\lambda_2^{(i,k)}=a^{(j)}_k$. Moreover, As $a^{(i)}_l \neq 0$, $l\neq k$. 
Then, by proposition \ref{16}, $a^{(i)}_l \lambda_2^{(i,k)}=\lambda_2^{(i,k)}a_{\tddeux{$i$}{$l$}}=a_{\tdtroisdeux{$i$}{$l$}{$k$}}
+a_{\tdtroisun{$i$}{$k$}{$l$}}=a^{(i)}_l a^{(l)}_k+0$, so $\lambda_2^{(i,k)}=a^{(l)}_k$. Hence, $a^{(l)}_k=a^{(j)}_k \neq 0$. \end{proof} \\

{\bf Remark.} In other words, if $(S)$ is Hopf, then, in $\gs$:
$$\xymatrix{i\ar[r] \ar[d]&j\ar[d]\\l&k}\hspace{.5cm}\Longrightarrow \hspace{.5cm}
\xymatrix{i\ar[r] \ar[d]&j\ar[d]\\l\ar[r]&k}\hspace{.5cm}  \mbox{or} \hspace{.5cm} \xymatrix{i\ar[r] \ar[d] \ar[dr]&j\ar[d]\\l&k}.$$
A special case is given by $i=k$:
$$\xymatrix{i\ar@{<->}[r] \ar[d]&j\\l&}\hspace{.5cm}  \Longrightarrow \hspace{.5cm}\xymatrix{i\ar@{<->}[r] \ar@{<->}[d]&j\\l&}.$$

\subsection{Symmetric Hopf SDSE}

\begin{prop} \label{37}
Let $(S)$ be a Hopf SDSE, such that $\gs$ is a $N$-multicycle with $N \geq 3$. Then $(S)$ is a multicyclic SDSE.
\end{prop}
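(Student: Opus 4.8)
The plan is to show that, up to a change of variables, $(S)$ is the dilatation of the $N$-cycle of Theorem \ref{30} along the partition $I=\bigcup_{\overline{k}\in\mathbb{Z}/N\mathbb{Z}}I_{\overline{k}}$ coming from the multicycle structure of $\gs$; this amounts to proving that each $F_i$ is affine with, after rescaling, all coefficients equal to $1$. First I would note that no vertex of an $N$-multicycle is self-dependent, since the direct descendants of a vertex of $I_{\overline{k}}$ lie in $I_{\overline{k+1}}\neq I_{\overline{k}}$. Moreover every vertex $i$ is a descendant of itself, and each oriented path from $i$ to $i$ runs cyclically through $I_{\overline{k}},I_{\overline{k+1}},\ldots$, hence has length a positive multiple of $N$, so length $\geq N\geq 3$. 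Therefore Lemma \ref{28} applies with $j=i$ and gives
$$F_i=1+\sum_{i\longrightarrow l}a^{(i)}_lh_l$$
for every $i\in I$, all coefficients $a^{(i)}_l$ being non-zero by the remarks following Lemma \ref{17}.

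Since all the $F_i$ are now affine, Proposition \ref{18}-3 shows that $F_{i'}=F_{i''}$ whenever $i'$ and $i''$ are direct descendants of a common vertex. In the multicycle every vertex of a block $I_{\overline{k}}$ is a direct descendant of any chosen $i_0\in I_{\overline{k-1}}$ (all blocks being non-empty, as every vertex has a direct descendant), so all $F_i$ with $i\in I_{\overline{k}}$ coincide; in particular the coefficient $a^{(i)}_l$ above depends only on $l$, say $a^{(i)}_l=c_l\neq 0$. Now I would perform the change of variables $h_l\mapsto h_l/c_l$ (with all multipliers $\mu_i=1$), which preserves the Hopf property; each $F_i$ becomes $1+\sum_{l\in I_{\overline{k+1}}}h_l$ for $i\in I_{\overline{k}}$, which is precisely the dilatation of the $N$-cycle $F_{\overline{k}}=1+h_{\overline{k+1}}$ along the partition $I=\bigcup_{\overline{k}}I_{\overline{k}}$. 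Hence $(S)$ is multicyclic.

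The only genuinely delicate point is the verification of the hypotheses of Lemma \ref{28} in the first step, namely that every oriented path from $i$ to $i$ has length $\geq 3$; this is exactly where the assumption $N\geq 3$ enters, and it cannot be dropped, since for $N=2$ such a path may have length $2$ and indeed $2$-multicycle graphs (complete bipartite graphs) also carry non-affine Hopf SDSE, such as the quasi-complete systems of Theorem \ref{34}.
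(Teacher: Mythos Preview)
Your proof is correct and follows essentially the same route as the paper's: apply Lemma~\ref{28} with $i=j$ (using that every closed path in an $N$-multicycle has length a multiple of $N\geq 3$) to force each $F_i$ to be affine, then use Proposition~\ref{18}-3 to see that all $F_i$ within a block coincide, and finally perform the obvious change of variables to normalise the coefficients. Your write-up is in fact a bit more explicit than the paper's about why the hypotheses of Lemma~\ref{28} hold and where exactly $N\geq 3$ is needed.
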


\begin{proof} Let $I=I_{\overline{1}}\cup \cdots \cup I_{\overline{N}}$ be the partition of the set of vertices of the multicycle $\gs$.
As $N \geq 3$, for all $i\in I$, by lemma \ref{28} with $i=j$:
$$F_i=1+\sum_{i \longrightarrow j} a^{(i)}_j h_j.$$
Let $j,j' \in I_{\overline{m}}$. Then any $i \in I_{\overline{m-1}}$ is a direct ascendant of $j$ and $j'$. By proposition \ref{18}-3, $F_j=F_{j'}$.
In particular, for $k \in I_{\overline{m+1}}$, $a^{(j)}_k=a^{(j')}_k$. We apply the change of variables sending $h_k$ to
$\frac{1}{a^{(j)}_k}h_k$ if $k\in I_{\overline{m+1}}$, where $j$ is any element of $I_{\overline{m}}$. Then, for any $j \in I_{\overline{m}}$:
$$F_j=1+\sum_{k\in I_{\overline{m+1}}}h_k.$$
So $(S)$ is multicyclic. \end{proof}

\begin{prop} \label{38}
Let $(S)$ be a Hopf SDSE, such that $\gs$ is $M$-quasi-complete graph ($M\geq 2$). Then $(S)$ is a $2$-multicyclic or a quasi-complete SDSE. 
\end{prop}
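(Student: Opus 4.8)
The plan is to control the vertices through their levels and then apply the structure results of Section~4. Since $\gs$ is $M$-quasi-complete, any edge $x\to y$ joins two vertices lying in distinct parts, hence comes with the reverse edge $y\to x$; thus every direct descendant of a vertex is also a direct ascendant. Applying Proposition~\ref{23} to a vertex $i$ and a direct descendant $j$, and also to $j$ and $i$, one gets simultaneously ``$i$ of level $\le 1$ $\Leftrightarrow$ $j$ of level $0$'' and ``$j$ of level $\le 1$ $\Leftrightarrow$ $i$ of level $0$'', which forbids a vertex of level exactly $1$; part (2) of that proposition then forbids any finite level $\ge 2$ as well (a vertex of level $M\ge 2$ would force a direct descendant of level $M-1$, hence itself of level $M-2$ if $M\ge 3$, or a direct descendant of level $1$ if $M=2$, both excluded). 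So every vertex has level $0$ or infinite level, and by connectedness (corollary to Proposition~\ref{23}) the same alternative holds for all vertices at once. I treat the two cases separately.

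First, suppose every vertex has level $0$; let $b_j$ denote the common slope of the sequences $\lambda_n^{(\cdot,j)}$. Fix $j$: as $j$ is not self-dependent, $F_j$ does not involve $h_j$, so $a^{(j)}_{\varepsilon_l+\varepsilon_j}=0$ for every $l$; feeding this into the recursion of Proposition~\ref{19}(1) gives $\lambda_2^{(j,j)}=a^{(l)}_j$ for every direct ascendant $l$ of $j$, and comparison with $\lambda_n^{(j,j)}=b_j(n-1)$ yields the key identity $a^{(l)}_j=b_j$ for every vertex $l$ outside the part of $j$ (in particular $b_j\neq 0$). By Lemma~\ref{26} we may write $F_l=\prod_p f_{\beta_p}\bigl(\sum_{y\in A_p}a^{(l)}_y h_y\bigr)$ with $\{A_p\}$ a partition of the direct descendants of $l$; restricting to one variable $h_j$ and comparing the two expressions for $a^{(l)}_{2\varepsilon_j}/a^{(l)}_{\varepsilon_j}$ forces the exponent of the block through $j$ to be $b_j/a^{(l)}_j=1$, while the coefficients $\mu_j^{(j')}=b_j-a^{(j')}_j$ of Lemma~\ref{26} identify the $A_p$ with the parts of $\gs$ distinct from that of $l$ (vertices in the same part of $\gs$ have $a^{(j')}_j=0$, hence $\mu_j^{(j')}\neq 0$; vertices in distinct parts have $a^{(j')}_j=b_j$, hence $\mu_j^{(j')}=0$). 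Thus $F_l=\prod_q\bigl(1-\sum_{y\in I_q}b_y h_y\bigr)^{-1}$, the product being over the parts $I_q$ not containing $l$, and the change of variables $h_y\mapsto h_y/b_y$ exhibits $(S)$ as a quasi-complete SDSE.

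Now suppose every vertex has infinite level. The claim is that then every $F_i$ is affine and $M=2$, so that $(S)$ is a $2$-multicycle. Granting affinity, all the $X_i(n)$ are linear combinations of ladders; if $M\ge 3$, take $i\in I_1$, $k\in I_3$ and compute $(f_{\tdun{$k$}}\otimes Id)\circ\Delta\bigl(X_i(3)\bigr)$ from the ladder structure: it is supported on the ladders $B^+_i(\tdun{$l$})$ with $l\notin I_1\cup I_3$ and nonzero there (e.g. for $l$ in the nonempty part $I_2$), whereas by Proposition~\ref{16} it must equal $\lambda_2^{(i,k)}X_i(2)=\lambda_2^{(i,k)}\sum_{l\notin I_1}a^{(i)}_l B^+_i(\tdun{$l$})$ — impossible. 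Hence $M=2$, $\gs$ is complete bipartite, Proposition~\ref{18}(3) forces $F_i$ to depend only on the part of $i$, and after a change of variables $F_x=1+\sum_{y\in I_{\overline{k+1}}}h_y$ for $x\in I_{\overline k}$, i.e. $(S)$ is $2$-multicyclic. It remains to establish that the $F_i$ are affine: the point is that a nonzero quadratic coefficient $a^{(i)}_{j,k}$ would, via the recursion of Proposition~\ref{19} together with the reciprocity of the edges (which puts $i$ on $2$-cycles), propagate into enough linear constraints on the sequences $\lambda_n^{(i,\cdot)}$ to make them eventually affine, contradicting the infinite level of $i$. This last step — tracking those constraints around the short oriented cycles of $\gs$ — is the main obstacle; by contrast, once the identity $a^{(l)}_j=b_j$ is in hand the level-$0$ case is immediate.
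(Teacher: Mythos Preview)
Your level-based approach differs from the paper's, and the level-$0$ half is correct and arguably cleaner than the paper's route to the same conclusion. The identity $a^{(l)}_j=\lambda_2^{(j,j)}=b_j$ for every $l$ outside the part of $j$, fed into Lemma~\ref{26} with $\lambda_j=a^{(i)}_j$ and $\mu_j^{(l')}=b_j-a^{(l')}_j$, indeed forces each $\beta_p=1$ and yields the quasi-complete form after rescaling by the $b_j$'s. The paper proceeds instead by explicit computation on three small subgraphs --- the edge $1\longleftrightarrow 2$, the triangle, and the path $1\longleftrightarrow 2\longleftrightarrow 3$ --- and then assembles the general $M=2$ and $M\geq 3$ cases by restriction to these.

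The infinite-level half, however, has a genuine gap, and your own phrase ``the main obstacle'' is accurate. The proposed mechanism --- that a nonzero $a^{(i)}_{j,k}$ ``propagates into enough linear constraints to make the $\lambda_n^{(i,\cdot)}$ eventually affine'' --- does not hold without further input. Normalize $a^{(i)}_j=a^{(j)}_i=1$ on an edge $i\longleftrightarrow j$ and set $\alpha=a^{(i)}_{j,j}$; Lemma~\ref{17}(1) along the path $(i,j,i,j,\ldots)$ gives $\lambda_n^{(i,j)}=n\alpha$ for $n$ even and $1+(n-1)\alpha$ for $n$ odd, which is eventually affine only when $\alpha=1$. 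So nothing in Proposition~\ref{19} alone forces affinity of $\lambda_n^{(i,j)}$ from $\alpha\neq 0$; one must first prove that the Hopf condition excludes $\alpha\notin\{0,1\}$, and that is exactly the content of the paper's first preliminary case. Even granting $\alpha\in\{0,1\}$, you still need the triangle computation to exclude $\alpha=0$ when $M\geq 3$, and the three-vertex path computation to pass from a single pair $(i,j)$ to affinity of all $\lambda_n^{(i,\cdot)}$. Completing your infinite-level branch therefore requires the same small-graph computations that constitute the core of the paper's proof.
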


\begin{proof}  First, let us choose two vertices $x\rightarrow y$ in $\gs$. Then $y\rightarrow x$ in $\gs$, and by proposition \ref{16},
$\lambda_2^{(y,y)}a_{\tddeux{$y$}{$x$}}=a_{\tdtroisdeux{$y$}{$x$}{$y$}}+a_{\tdtroisun{$y$}{$x$}{$y$}}$, so $\lambda_2^{(y,y)}a^{(y)}_x=a^{(y)}_x a^{(x)}_y+0$,
and $a_y^{(x)}=\lambda_2^{(y,y)}$ depends only on $y$. So, up to a change of variables, we can suppose that all the $a_y^{(x)}$'s are equal to $0$ or $1$.
We first study three preliminary cases.\\

{\it First preliminary case}. Let us assume that $\gs=1\longleftrightarrow 2$. We put:
$$F_1(h_2)=\sum_{i=0}^\infty a_i h_2^i,\hspace{1cm} F_2(h_1)=\sum_{i=0}^\infty b_i h_1^i,$$
with $a_1=b_1=1$. Then $\lambda_3^{(1,1)}=\lambda_3^{(1,1)}a_{\tdtroisdeux{$1$}{$2$}{$1$}}=2a_{\tdquatrequatre{$1$}{$2$}{$1$}{$1$}}=2b_2$. 
On the other hand, $\lambda_3^{(1,1)}a_{\tdtroisun{$1$}{$2$}{$2$}}=2a_{\tdquatretrois{$1$}{$2$}{$1$}{$2$}}$, so $2a_2b_2=2a_2$: $a_2=0$ or $b_2=1$.
Similarly, $b_2=0$ or $a_2=1$. So $a_2=b_2=0$ or $1$. In the first case, $F_1(h_2)=1+h_2$ and $F_2(h_1)=1+h_1$.
In the second case, let us apply lemma \ref{17}-1 with $(i_1,\cdots,i_n)=(1,2,1,2,\cdots)$. If $n=2k$ is even, we obtain $\lambda_n^{(1,2)}=2+2(k-1)=2k=n$. 
If $n=2k+1$ is odd, $\lambda_n^{(1,2)}=1+2k=n$. So $\lambda_n^{(1,2)}=n$ for all $n \geq 1$. By proposition \ref{19}-1, for all $n \geq 1$,
$a_{n+1}=a_n$. So for all $n \geq 0$, $a_n=1$ and $F_1(h_2)=(1-h_2)^{-1}$. Similarly, $F_2(h_1)=(1-h_1)^{-1}$.\\

{\it Second preliminary case.} Let us suppose that $\gs$ is the following graph (which is $3$-quasi-complete):
$$\xymatrix{1\ar@{<->}[rr]\ar@{<->}[rd]&&2\ar@{<->}[ld]\\ &3&}$$
We put:
$$\left\{ \begin{array}{rcl}
F_1(h_2,h_3)&=&1+h_2+h_3+a_2h_2^2+a_3h_3^2+a'h_2h_3+\mathcal{O}(h^3),\\
F_2(h_1,h_3)&=&1+h_1+h_3+b_1h_1^2+b_3h_3^2+b'h_1h_3+\mathcal{O}(h^3),\\
F_3(h_1,h_2)&=&1+h_1+h_2+c_1h_1^2+c_2h_2^2+c'h_1h_2+\mathcal{O}(h^3).
\end{array}\right.$$
By restriction, using the first preliminary case, restricting to $\{1,2\}$, $\{1,3\}$ and $\{2,3\}$,$a_2=b_1$, $a_3=c_1$ and $b_3=c_2$ and 
all these elements are in $\{0,1\}$. Moreover, by proposition \ref{16}, $\lambda_2^{(1,2)}a_{\tddeux{$1$}{$2$}}=2a_{\tdtroisun{$1$}{$2$}{$2$}}$, 
so $\lambda_2^{(1,2)}=2a_2$. On the other hand, $\lambda_2^{(1,2)}a_{\tddeux{$1$}{$3$}}=a_{\tdtroisdeux{$1$}{$3$}{$2$}}+a_{\tdtroisun{$1$}{$3$}{$2$}}$,
so $\lambda_2^{(1,2)}=1+a'$. Hence, $1+a'=2a_2$. By symmetry, we obtain $1+a'=2a_3$, so $a_2=a_3$.
Similarly, $b_1=b_3$ and $c_1=c_2$, so $a_2=a_3=b_1=b_3=c_1=c_2=0$ or $1$.

If they are all equal to $0$, then $a'=-1$. Then $\lambda_3^{(3,1)}a_{\tdtroisdeux{$3$}{$1$}{$2$}}=a_{\tdquatrecinq{$3$}{$1$}{$2$}{$1$}}$,
so $\lambda_3^{(3,1)}=1$. Moreover, $\lambda_3^{(3,1)}a_{\tdtroisdeux{$3$}{$2$}{$1$}}=a_{\tdquatretrois{$3$}{$2$}{$1$}{$1$}}$,
so $\lambda_3^{(3,1)}=-1$: this is a contradiction, so $a_2=a_3=b_1=b_3=c_1=c_2=1$, and $a'=1$. Similarly, $b'=1$ and $c'=1$.
As in the first preliminary case, using lemma \ref{17}-1, we prove that $\lambda_n^{(i,j)}=n$ if $i\neq j$ for all $n \geq 1$, and then that
$F_1(h_2,h_3)=(1-h_2)^{-1}(1-h_3)^{-1}$. Similarly, $F_2(h_1,h_3)=(1-h_1)^{-1}(1-h_3)^{-1}$ and $F_3(h_1,h_2)=(1-h_1)^{-1}(1-h_2)^{-1}$.\\

{\it Third preliminary case.} We now consider the $2$-quasi-complete graph with three vertices $1\longleftrightarrow 2 \longleftrightarrow 3$.
Then $I_1=\{1,3\}$ and $I_2=\{2\}$. We put:
$$F_2(h_1,h_3)=1+h_1+h_3+a_{(2,0)}h_1^2+a_{(0,2)}h_3^2+a_{(1,1)}h_1h_3+\mathcal{O}(h^3).$$
Restricting to $\{1,2\}$, by the first preliminary case, we obtain $F_1(h_2)=1+h_2$ or $F_1(h_2)=(1-h_2)^{-1}$.
\begin{enumerate}
\item Let us assume that $F_1(h_2)=1+h_2$. Then by the first case, $F_2(h_1,0)=1+h_1$, so $a_{(2,0)}=0$. Moreover, 
$\lambda_2^{(2,1)} a_{\tddeux{$2$}{$1$}}=0$, so $\lambda_2^{(2,1)}a_{\tddeux{$2$}{$3$}}=a_{\tdtroisun{$2$}{$3$}{$1$}}$: $a_{(1,1)}=0$.
Then $\lambda_2^{(2,3)} a_{\tddeux{$2$}{$1$}}=a_{\tdtroisun{$2$}{$3$}{$1$}}$, so $\lambda_2^{(2,3)}=a_{(1,1)}=0$, and 
$\lambda_2^{(2,3)}a_{\tddeux{$2$}{$3$}}=2a_{\tdtroisun{$2$}{$3$}{$3$}}$: $a_{(0,2)}=0$. As a consequence, $F_2(h_1,h_3)=1+h_2+h_3$. 
Restricting to $2\longleftrightarrow 3$, by the first point, $F_3(h_2)=1+h_2$. 
\item Let us assume that $F_1(h_2)=(1-h_2)^{-1}$. Then $F_2(h_1,0)=(1-h_2)^{-1}$ by the first point, so $a_{(0,2)}=1$. By the first preliminary case, 
this implies that $F_2(0,h_3)=(1-h_3)^{-1}$ and $F_3(h_2)=(1-h_2)^{-1}$. Similarly with the first case, we prove that
$\lambda_n^{(2,i)}=n$ if $i=1$ or $3$ for all $n \geq 1$. By proposition \ref{19}-1:
$$a_{(m+1,n)}=\frac{m+n+1}{m+1}a_{(m,n)},\hspace{1cm} a_{(m,n+1)}=\frac{m+n+1}{n+1}a_{(m,n)}.$$
An easy induction proves that $a_{(m,n)}=\binom{m+n}{m}$ for all $m,n$, so $F_2(h_1,h_3)=(1-h_1-h_3)^{-1}$.
\end{enumerate}

We separate the proof of the general case into two subcases.\\

{\it General case, first subcase}.  $M=2$. We put $I_1=\{x_1,\cdots,x_r\}$ and $I_2=\{y_1,\cdots,y_s\}$. For $x_i \in I_1$, we put:
$$F_{x_p}=\sum_{(q_1,\cdots,q_s)} a^{(x_p)}_{(q_1,\cdots,q_s)} h_{y_1}^{q_1}\cdots h_{y_s}^{q_s}.$$
 Restricting to the vertices $x_p$ and $y_q$, by the first preliminary case,  two cases are possible.
\begin{enumerate}
\item $a^{(x_p)}_{y_q,y_q}=0$. Then, by the third preliminary case, restricting to $x_p$, $y_q$ and $y_{q'}$, for all $y_q$, $y_{q'}$,
$a^{(x_p)}_{y_q,y_{q'}}=0$. So:
$$F_{x_p}=1+\sum_q h_{y_q}.$$
\item $\lambda_n^{(x_p,y_q)}=n$ for all $n \geq 1$. Using proposition \ref{19}-1, we obtain:
$$a^{(x_p)}_{(q_1,\cdots,q_m+1,\cdots,q_s)}=\frac{1+q_1+\cdots+q_s}{q_m+1}a^{(x_p)}_{(q_1,\cdots,q_s)}.$$
An easy induction proves:
$$a^{(x_p)}_{(q_1,\cdots,q_s)}=\frac{(q_1+\cdots+q_s)!}{q_1!\cdots q_s!}.$$
So:
$$F_{x_p}=\left(1-\sum_{q} h_{y_q}\right)^{-1}.$$
\end{enumerate}
A similar result holds for the $y_q$'s. So, we prove that for any vertex $i$ of $\gs$, one of the following holds:
\begin{enumerate}
\item $\displaystyle F_i=1+\sum_{i \longrightarrow j} h_j$.
\item $\displaystyle F_i=\left(1-\sum_{i \longrightarrow j} h_j\right)^{-1}$.
\end{enumerate}
Moreover, by the first preliminary case, if $i$ and $j$ are related, they satisfy both (a) or both (b). As the graph is connected, every vertex satisfies (a) 
or every vertex satisfies (b). \\

{\it General case, second subcase}.  $M\geq 3$. Let us fix $i \in G$ and let us denote $y_1,\cdots,y_q$ its direct descendants.
Restricting to the vertices $i$ and $y_j$, two cases are possible.
\begin{enumerate}
\item $a^{(i)}_{y_j,y_j}=0$. As $M \geq 3$, with a good choice of $y_{j'}$, we can restrict to the second preliminary case, 
and we obtain $a^{(i)}_{y_j,y_j}=1$: contradiction. So this case is impossible.
\item $\lambda_n^{(x,y_j)}=n$ for all $n \geq 1$.
Using proposition \ref{19}-1, we obtain, similarly with the case $M=2$, if $i \in I_p$:
$$F_i=\prod_{q\neq p} \left(1-\sum_{l\in h_q} h_l \right)^{-1}.$$
\end{enumerate} 
So $(S)$ is quasi-complete. \end{proof}

\begin{defi}\textnormal{\begin{enumerate}
\item Let $G$ be a graph. We shall say that $G$ is {\it symmetric} if it has only non-self-dependent vertices and if, for $i\neq j$,
there is an edge from $i$ to $j$ if, and only if, there is an edge from $j$ to $i$.
\item Let $(S)$ be an SDSE. We shall say that $(S)$ is {\it symmetric} if $\gs$ is symmetric.
\end{enumerate}}\end{defi}

\begin{theo}
Let $(S)$ be a connected symmetric Hopf SDSE. Then $(S)$ is $2$-multicyclic or quasi-complete.
\end{theo}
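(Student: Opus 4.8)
The plan is to reduce everything to the graph-theoretic picture already developed for multicycles and quasi-complete graphs, and then invoke Propositions \ref{37} and \ref{38}. So the first task is purely combinatorial: show that a connected symmetric graph $\gs$ which is the graph of a \emph{Hopf} SDSE is either a $2$-multicycle (i.e.\ a complete bipartite graph) or an $M$-quasi-complete graph. Here the key structural input is Lemma \ref{36}, whose symmetric special case says: if $i \longleftrightarrow j$, $i \longrightarrow l$ and $a^{(j)}_k \neq 0$ with $k=i$, then actually $l \longleftrightarrow i$ as well. More precisely, the displayed consequence of Lemma \ref{36} with $i=k$ states that whenever $i \longleftrightarrow j$ and $i \longrightarrow l$, then $l \longleftrightarrow i$. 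In a symmetric graph every edge is already an $\longleftrightarrow$, so what this really gives is a transitivity-type statement on the relation "$i$ and $j$ are adjacent", restricted to vertices sharing a common neighbour.

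Concretely, I would define on the vertex set of $\gs$ the relation $i \sim j$ iff $i = j$ or $i$ and $j$ are \emph{not} adjacent, and aim to show $\sim$ is an equivalence relation. Reflexivity is built in; symmetry holds because $\gs$ is symmetric. For transitivity, suppose $i \sim j$ and $j \sim k$ with $i,j,k$ distinct, and suppose for contradiction that $i$ is adjacent to $k$. Since $\gs$ is connected and symmetric with $\geq 2$ vertices, and since no vertex is self-dependent, each of $i,k$ has at least one neighbour; I would locate a common neighbour or chase a short path and apply the $i=k$ case of Lemma \ref{36} (in the form $i \longleftrightarrow (\text{neighbour}) $, $i \longrightarrow k$ forces $k \longleftrightarrow i$ — trivially true — but also the companion statement forcing the would-be non-edge $i$–$j$ or $j$–$k$ to be an edge) to contradict $i\sim j$ or $j \sim k$. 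This is the step I expect to be the main obstacle: getting the case analysis on which vertices play the roles of $i,j,k,l$ in Lemma \ref{36} exactly right, since the lemma is asymmetric in its hypotheses and one must feed it the correct common ascendant/descendant. Once $\sim$ is an equivalence relation with classes $I_1,\dots,I_M$, by construction there is an edge from $x$ to $y$ iff $x,y$ lie in different classes, so $\gs$ is $M$-quasi-complete; and the degenerate structure forces $M \geq 2$ because $\gs$ has an edge (no $F_i$ is constant, so every vertex has a direct descendant).

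With the graph identified as $M$-quasi-complete, the result now follows directly: if $M = 2$ this includes the complete bipartite case, and Proposition \ref{38} says any Hopf SDSE whose graph is $M$-quasi-complete (for $M \geq 2$) is either $2$-multicyclic or quasi-complete. Hence $(S)$ is $2$-multicyclic or quasi-complete, which is exactly the claim. I would write the proof in essentially three short paragraphs: (1) reduce to showing $\gs$ is quasi-complete via the $\sim$-relation argument, citing Lemma \ref{36}; (2) observe $M\geq 2$; (3) apply Proposition \ref{38} to conclude. The one place to be careful in the exposition is that "$2$-multicyclic" and "$2$-quasi-complete" both have complete bipartite graphs — so I should note that the two cases in Proposition \ref{38}'s conclusion are precisely the two cases in the theorem, with no overlap issue left to resolve since Proposition \ref{38} already distinguishes them at the level of the formal series $F_i$.
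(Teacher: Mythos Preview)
Your reduction to Proposition~\ref{38} matches the paper exactly: both proofs first show that $\gs$ is $M$-quasi-complete with $M\geq 2$, then invoke that proposition. Where you differ is in how you establish quasi-completeness. The paper grows a maximal quasi-complete subgraph $G'\subseteq \gs$ and argues, via Lemma~\ref{36}, that any outside vertex adjacent to $G'$ can be absorbed into it (either into an existing block or as a new singleton block), contradicting maximality unless $G'=\gs$. Your equivalence-relation route is a legitimate alternative and yields the complete-multipartite structure in one stroke once $\sim$ is known to be transitive.

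The transitivity step, however, is where your sketch has a real gap. The $i=k$ special case of Lemma~\ref{36} you single out is vacuous in a symmetric graph, as you yourself observe; what is actually needed is the full lemma, applied twice along a shortest path. Concretely: suppose $i,j,k$ are distinct with $i$ adjacent to $k$ but $j$ adjacent to neither, and take a shortest path $j=v_0,v_1,\ldots,v_n$ in $\gs$ from $j$ to $\{i,k\}$, say $v_n=i$; then $n\geq 2$ and $v_{n-2}$ is adjacent to neither $i$ nor $k$. Lemma~\ref{36} applied with $v_{n-1}$ as the central vertex (neighbours $i$ and $v_{n-2}$, with $i$ adjacent to $k$) forces $v_{n-1}$ adjacent to $k$, since the alternative $v_{n-2}$ adjacent to $k$ violates minimality. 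A second application with $i$ as the central vertex (neighbours $v_{n-1}$ and $k$, with $v_{n-1}$ adjacent to $v_{n-2}$) then forces $i$ or $k$ adjacent to $v_{n-2}$ --- contradiction. This is the ``chase a short path'' you allude to, but it requires two invocations of the general lemma rather than one appeal to the displayed special case; your write-up should carry this out explicitly rather than flag it as an obstacle.
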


\begin{proof} By proposition \ref{38}, it is enough to prove that $\gs$ is a $M$-quasi-complete graph, with $M \geq 2$.
Let us consider a maximal quasi-complete subgraph $G'$ of $\gs$. This exists, as $\gs$ contains quasi-complete subgraphs
(for example, two related vertices). Let us assume that $G'\neq \gs$. As $\gs$ is connected, there exists a vertex $i \in \gs$,
related to a vertex of $G'$. Let us put $I'=I'_1\cup \cdots I'_M$ be the partition of the set of vertices of $G'$.

First, if $i$ is related to a vertex $j$ of $I'_p$, it is related to any vertex of $I'_p$. Indeed, let $j'$ be another vertex of $I'_p$ and let $k \in I'_q$, $q \neq p$. 
By lemma \ref{36}, $j'$ is related to $i$. As $\gs$ is symmetric, $i$ is related to $j'$.

Let us assume that $i$ is not related to at least two $I_p$'s. Let us take $k$, $l$ in $G'$, in two different $I_p$'s,
not related to $i$. By the first step, $j$, $k$ and $l$ are in different $I_p$'s, so are related. By lemma \ref{36}, $k$ or $l$ is related to $i$. 
As $\gs$ is symmetric, then $i$ is related to $k$ or $l$: contradiction. So $i$ is not related to at most one $I_p$'s. 

As a conclusion:
\begin{enumerate}
\item If $i$ is related to every $I_p$'s, by the first step $i$ is related to every vertices of $G'$, so $G'\cup\{i\}$ is an $M+1$-quasi-complete graph, 
with partition $I_1\cup \cdots \cup I_M \cup\{x\}$: this contradicts the maximality of $G'$.
\item If $i$ is related to every $I_p$'s but one, we can suppose up to a reindexation that $i$ is not related to $I_M$. 
Then, by the first step, $i$ is related to every vertices of $I_1\cup \cdots \cup I_{M-1}$. So $G'\cup\{x\}$ is an $M$-quasi-complete graph, 
with partition $I_1\cup \cdots \cup (I_M \cup\{x\})$: this contradicts the maximality of $G'$.
\end{enumerate}
In both cases, this is a contradiction, so $\gs=G'$ is quasi-complete.
\end{proof}

\subsection{Formal series of a self-dependent vertex}

Let $(S)$ be a Hopf SDSE, and let us assume that $i$ is a self-dependent vertex of $\gs$. 
Up to a change of variables, we can suppose that $a_j^{(i)}=0$ or $1$ for all $j$. In particular, we assume that $a_i^{(i)}=1$. 

\begin{lemma}\label{41}
Under these hypotheses, $i$ is of level $0$ and for all $j\in I$, $b_j=(1+\delta_{i,j})a_{i,j}^{(i)}$.
\end{lemma}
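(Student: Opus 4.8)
The plan is to apply Lemma~\ref{17}(1) to the simplest oriented path starting at $i$, namely the constant one. Since $i$ is self-dependent we have $a^{(i)}_i\neq 0$, and after the normalisation $a^{(i)}_i=1$ the sequence $i=i_1\longrightarrow i_2\longrightarrow\cdots\longrightarrow i_n$ with $i_1=i_2=\cdots=i_n=i$ is, for every $n\geq 1$, an admissible choice in Lemma~\ref{17}(1): each coefficient $a^{(i_p)}_{i_{p+1}}=a^{(i)}_i=1$ occurring there is non-zero.

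Substituting this path into the formula of Lemma~\ref{17}(1), for a fixed vertex $j$ one has $i_n=i$, $i_{p+1}=i$ and $a^{(i_p)}_{i_{p+1}}=1$ for all $p$, so that
$$\lambda_n^{(i,j)}=a^{(i)}_j+\sum_{p=1}^{n-1}(1+\delta_{j,i})\,\frac{a^{(i)}_{j,i}}{a^{(i)}_i}=a^{(i)}_j+(n-1)(1+\delta_{i,j})\,a^{(i)}_{i,j},$$
where the last equality uses that the two-index coefficient is symmetric, $a^{(i)}_{j,i}=a^{(i)}_{i,j}$ (both equal $a^{(i)}_{\varepsilon_i+\varepsilon_j}$). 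This identity is valid for every $n\geq 1$.

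Setting $b_j=(1+\delta_{i,j})\,a^{(i)}_{i,j}$ and $\tilde a^{(i)}_j=a^{(i)}_j$ (which is indeed $\lambda_1^{(i,j)}$), the display reads $\lambda_n^{(i,j)}=b_j(n-1)+\tilde a^{(i)}_j$ for all $n>0$ and all $j\in I$. By the definition of the level this means precisely that $i$ is of level $\leq 0$, hence of level $0$, and that its slopes $b^{(i)}_j$ are the claimed $(1+\delta_{i,j})\,a^{(i)}_{i,j}$. There is essentially no obstacle: the only two points requiring a word are that the constant loop at $i$ is a legitimate path of $\gs$ (this is exactly self-dependence of $i$ together with the normalisation $a^{(i)}_i=1$) and the symmetry $a^{(i)}_{i,j}=a^{(i)}_{j,i}$; the rest is the direct substitution above.
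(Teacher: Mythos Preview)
Your proof is correct and follows essentially the same approach as the paper: both apply Lemma~\ref{17}(1) to the constant path $i_1=\cdots=i_n=i$, which is legitimate precisely because $a^{(i)}_i=1$, and read off the affine form $\lambda_n^{(i,j)}=a^{(i)}_j+(n-1)(1+\delta_{i,j})a^{(i)}_{i,j}$. You are simply a bit more explicit about the symmetry $a^{(i)}_{i,j}=a^{(i)}_{j,i}$ and the identification of $b_j$ and $\tilde a^{(i)}_j$, but the argument is the same.
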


\begin{proof} We apply lemma \ref{17}-1, with $i_k=i$ for all $i$. We obtain, for all $n \geq 1$:
$$\lambda_n^{(i,j)}=a_j^{(i)}+(1+\delta_{i,j})(n-1)\frac{a_{i,j}^{(i)}}{a_i^{(i)}}.$$
So this proves the assertion. \end{proof}\\

{\bf Remark.} So all the descendants of $i$ are also of level $0$.
 
 \begin{lemma} \label{42}
 Under the former hypotheses, there exists a partition $I=I_1\cup \cdots \cup I_M \cup J$ ($J$ eventually empty), with $i\in I_1$, such that
 the coefficients $a_j^{(k)}$ are given in the following array:
 $$\begin{array}{c|c|c|c|c|c|c}
j\setminus k&I_1&I_2&I_3&\cdots&I_M&J\\
\hline I_1&1&\beta_1+1&\cdots&\cdots&\beta_1+1&*\\
\hline I_2&\vdots&1-\beta_2&1&\cdots&1&\vdots\\
\hline I_3&\vdots&1&1-\beta_3&\ddots&\vdots&\vdots\\
\hline \vdots&\vdots&\vdots&\ddots&\ddots&1&\vdots\\
\hline I_M&1&1&\cdots&1&1-\beta_M&\vdots\\
\hline J&0&\cdots&\cdots&\cdots&0&*
\end{array}$$
Moreover, for all $j\in I_1$:
$$F_j=\prod_{p=1}^M f_{\beta_p}\left(\sum_{l\in I_p}h_l \right).$$
Finally, the coefficients $\lambda_n^{(j,k)}$ are given by $\lambda_n^{(j,k)}=b_k(n-1)+a^{(j)}_k$ for all $n\geq 1$ with:
 $$\begin{array}{c|c|c|c|c|c}
k&I_1&I_2&\cdots&I_M&J\\
\hline b_k&\beta_1+1&1&\cdots&1&0
\end{array}$$
 \end{lemma}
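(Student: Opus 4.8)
The plan is to read everything off the explicit shape of $F_i$ provided by the level-$0$ machinery. Lemma \ref{41} gives that $i$ is of level $0$, so the coefficients of $F_i$ satisfy the level-$0$ induction recalled before lemma \ref{26}, and lemma \ref{26} applies verbatim: it produces a partition $I=I_1\cup\cdots\cup I_M\cup J$ with $J=\{l\ :\ a^{(i)}_l=0\}$, scalars $\beta_1,\ldots,\beta_M$, and $F_i=\prod_{p=1}^M f_{\beta_p}\bigl(\sum_{l\in I_p}\lambda_l h_l\bigr)$ with $\lambda_l=a^{(i)}_l$. Since we have normalised so that $a^{(i)}_l\in\{0,1\}$ and $a^{(i)}_i=1$, we get $i\notin J$ (reindex so that $i\in I_1$) and $\lambda_l=1$ for all $l\notin J$, hence $F_i=\prod_{p=1}^M f_{\beta_p}\bigl(\sum_{l\in I_p}h_l\bigr)$. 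Note also that the direct descendants of $i$ are exactly the elements of $I\setminus J=I_1\cup\cdots\cup I_M$.

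Next, the $b_k$'s are obtained from lemma \ref{41}, which gives $b_k=(1+\delta_{i,k})a^{(i)}_{i,k}$, by reading the two-variable coefficients off the product above: the coefficient of $h_i^2$ in $f_{\beta_1}$ is $(1+\beta_1)/2$, the coefficient of $h_ih_k$ for $k\in I_1\setminus\{i\}$ is $1+\beta_1$, the coefficient of $h_ih_k$ for $k$ in another class $I_p$ is $1$, and $h_k$ does not occur when $k\in J$. This yields $b_k=1+\beta_1$ for $k\in I_1$, $b_k=1$ for $k\in I_2\cup\cdots\cup I_M$ and $b_k=0$ for $k\in J$, which is the last array. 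For the $\lambda^{(j,k)}_n$'s, each direct descendant $j$ of $i$ is of level $0$ with $b^{(j)}_k=b^{(i)}_k=b_k$ by proposition \ref{23}; combining the identity $\lambda^{(i,l)}_{n+1}=\lambda^{(i,l)}_2+\lambda^{(j,l)}_n-a^{(j)}_l$ from the proof of proposition \ref{23} with $\lambda^{(i,l)}_n=b_l(n-1)+a^{(i)}_l$ from lemma \ref{41} gives $\lambda^{(j,k)}_n=b_k(n-1)+a^{(j)}_k$ for all $n\ge1$ (the vertices of $J$, once shown below to have all their descendants inside $J$, are handled by proposition \ref{18}-1).

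Then I get the array of linear coefficients. For any direct descendant $k$ of $i$, apply lemma \ref{17}-1 to the length-$2$ path $i\longrightarrow k$; using $a^{(i)}_k=1$ and $\lambda^{(i,m)}_2=b_m+a^{(i)}_m$ this gives, for every $m\in I$,
$$a^{(k)}_m=b_m+a^{(i)}_m-(1+\delta_{m,k})\,a^{(i)}_{m,k}.$$
Substituting $a^{(i)}_m$ ($=1$ off $J$, $=0$ on $J$) and the two-variable coefficients $a^{(i)}_{m,k}$ of $F_i$ ($=1+\beta_p$ for $m,k$ in the same class $I_p$ with $m\neq k$, $=(1+\beta_p)/2$ on the $h_m^2$ diagonal, $=1$ for $m,k$ in different classes, $=0$ when $m\in J$, the factor $1+\delta_{m,k}$ absorbing the extra $2$ on the diagonal) reproduces exactly the tabulated values: $1$ on the $I_1\times I_1$ block and for arrows $I_p\to I_1$, $1+\beta_1$ for arrows $I_1\to I_p$, $1$ for arrows $I_p\to I_q$ with $p\neq q$ and $p,q\ge2$, $1-\beta_p$ on each $I_p\times I_p$ block with $p\ge2$, and $0$ on the $J\times(I\setminus J)$ block; the last entry in particular shows that a $J$-vertex has no direct descendant outside $J$, hence (inductively) no descendant outside $J$, and the columns over $J$ remain undetermined (the $\ast$'s).

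Finally, for $j\in I_1$ the two previous paragraphs give $\lambda^{(j,k)}_n=\lambda^{(i,k)}_n$ for all $k\in I$ and all $n\ge1$ (both equal $b_k(n-1)+1$ when $k\notin J$, and both vanish when $k\in J$, since neither $i$ nor $j$ is an ascendant of such a $k$); since proposition \ref{19}-1 determines every coefficient $a^{(j)}_{(p_1,\ldots,p_N)}$ recursively from $a^{(j)}_{(0,\ldots,0)}=1$ in terms of the $\lambda^{(j,\cdot)}_n$ and of the linear coefficients of the whole system, this forces $F_j=F_i=\prod_{p=1}^M f_{\beta_p}\bigl(\sum_{l\in I_p}h_l\bigr)$. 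The main obstacle here is not conceptual but bookkeeping: correctly extracting all one- and two-variable coefficients of $\prod_p f_{\beta_p}(\sum_{l\in I_p}h_l)$ and matching indices to fill the array, and keeping straight the role of $J$. The step I would be most careful about is the identity $F_j=F_i$ for $j\in I_1$: it is tempting to re-run lemma \ref{26} for $F_j$, but the clean argument is that a series occurring in a Hopf SDSE is determined by its structure coefficients, i.e.\ proposition \ref{19}-1.
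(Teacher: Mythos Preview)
Your proof is correct and follows essentially the same strategy as the paper: apply lemma~\ref{26} to the level-$0$ recursion for $F_i$ (with $\lambda_j=a^{(i)}_j$ and $\mu_j^{(l)}=(1+\delta_{i,j})a^{(i)}_{i,j}-a^{(l)}_j$), read off $F_i$ and hence the $b_k$'s via lemma~\ref{41}, then recover the $a^{(k)}_m$ array and conclude $F_j=F_i$ for $j\in I_1$ by the determinism in proposition~\ref{19}.

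The only tactical difference worth noting is how the $a^{(k)}_m$ table is filled. The paper exploits that lemma~\ref{26} already outputs the full $\mu_j^{(k)}$ array (the block-diagonal $\beta_p$'s) and then solves $a^{(k)}_j=(1+\delta_{i,j})a^{(i)}_{i,j}-\mu_j^{(k)}$, with a separate proposition~\ref{19}-1 argument for the $J$-row. You instead recompute via lemma~\ref{17}-1 on the length-two path $i\to k$, which gives $a^{(k)}_m=b_m+a^{(i)}_m-(1+\delta_{m,k})a^{(i)}_{m,k}$ and handles the $J$-row uniformly. Both routes land on the same table; your variant is slightly more explicit, the paper's slightly more structural. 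Similarly, for $\lambda_n^{(j,k)}$ the paper simply observes at the end that $a^{(j)}_k=a^{(i)}_k$ for $j\in I_1$ forces $\lambda_n^{(j,k)}=\lambda_n^{(i,k)}$, whereas you derive the level-$0$ formula for every direct descendant of $i$ from the recursion $\lambda^{(i,l)}_{n+1}=\lambda^{(i,l)}_2+\lambda^{(j,l)}_n-a^{(j)}_l$; these are equivalent once the array is in hand.
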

 
 \begin{proof} We can apply lemma \ref{26} with $\lambda_j=a^{(i)}_j$  and $\mu_j^{(l)}=-a_j^{(l)}+\left(1+\delta_{i,j}\right)a^{(i)}_{i,j}$. 
Then $I=I_1\cup \cdots I_M \cup J$, such that $-a_j^{(k)}+\left(1+\delta_{i,j}\right)a^{(i)}_{i,j}$ is given for all $j,k$ by the array:
$$\begin{array}{c|c|c|c|c|c}
j\setminus k&I_1&I_2&\cdots&I_M&J\\
\hline I_1&\beta_1&0&\cdots&0&*\\
\hline I_2&0&\beta_2&\ddots&\vdots&\vdots\\
\hline \vdots&\vdots&\ddots&\ddots&0&\vdots\\
\hline I_M&0&\cdots&0&\beta_M&\vdots\\
\hline J&0&\cdots&\cdots&0&*
\end{array}$$
We assume that $i\in I_1$, without loss of generality. For the row $j\in J$, the result comes from the following observation: 
let $j,k\in I$ such that $a^{(i)}_j=0$ and $a^{(i)}_k\neq 0$, then, by proposition \ref{19}-1:
$$a^{(i)}_{j,k}=\left(a^{(i)}_j-a_j^{(k)}+a^{(i)}_{i,j}\right)a^{(i)}_k=0.$$
As $a^{(i)}_j=0$, then $a^{(i)}_{i,j}=0$, so $a_j^{(k)}=0$. 

Lemma \ref{26} also gives:
$$F_i=\prod_{p=1}^k f_{\beta_p}\left(\sum_{l\in I_p}h_l \right).$$
So $(1+\delta_{i,j})a^{(i)}_{i,j}=\beta_1+1$ if $j\in I_1$, $1$ if $j\in I_2\cup \dots \cup I_M$, and $0$ if $j\in J$. 
So $a_j^{(k)}$ is given by  for all $j,k$ by the indicated array. We obtain in lemma \ref{41} that:
$$b_k=\left\{ \begin{array}{l}
\beta_1+1\mbox{ if }k\in I_1,\\
1\mbox{ if }k\in I_2\cup \cdots \cup I_M,\\
0\mbox{ if }k\in J.
\end{array}\right.$$

As a conclusion, if $j\in I_1$, then for all $1\leq k\leq N$, $a^{(j)}_k=a^{(i)}_k$ and $\lambda_n^{(j,k)}=\lambda_n^{(i,k)}$ for all $n \geq 1$.
By proposition \ref{19}, $F_i=F_j$. \end{proof}

\subsection{Hopf SDSE generated by self-dependent vertices}

\begin{prop}
Let $(S')$ be a Hopf SDSE, and let $i$ be a self-dependent vertex of $G_{(S')}$. Let $(S)$ be the restriction of $(S')$ to $i$ and all its descendants.
Then $(S)$ is fundamental, with $K_0=I_1=J_1=\emptyset$.
\end{prop}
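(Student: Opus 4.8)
The plan is to show that $(S)$ is, up to a change of variables, the dilatation of a system of Theorem~\ref{32} in which only the parts $I_0$ and $J_0$ occur. First I would record that $(S)$ is Hopf (by the restriction proposition), that $\gs$ is connected, and that every vertex of $(S)$ is a descendant of $i$. After a change of variables we may assume $a^{(i)}_j\in\{0,1\}$ and $a^{(i)}_i=1$, so that Lemmas~\ref{41} and~\ref{42} apply. By Lemma~\ref{41}, $i$ has level $0$; iterating Proposition~\ref{23} along the directed paths out of $i$ (a direct descendant of a vertex of level $\leq1$ has level $0$), every vertex of $(S)$ has level $0$, with slopes $b_j$ depending only on $j$, so that $\lambda^{(v,j)}_n=b_j(n-1)+a^{(v)}_j$ for all vertices $v$, all $j$ and all $n\geq1$.

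Next, Lemma~\ref{42} provides a partition $I=I_1\cup\cdots\cup I_M\cup J$ with $i\in I_1$, scalars $\beta_1,\dots,\beta_M$, explicit arrays for the $a^{(v)}_j$ and the $b_j$, and $F_v=\prod_{p=1}^M f_{\beta_p}\!\big(\sum_{l\in I_p}h_l\big)$ for every $v\in I_1$. I would first show $J=\emptyset$: the array forces $a^{(v)}_l=0$ whenever $l\in J$ and $v\in I_1\cup\cdots\cup I_M$, so no vertex outside $J$ has a direct descendant in $J$; since $i\notin J$ and every element of $J$ is a descendant of $i$, a shortest directed path from $i$ to a vertex of $J$ would contain an edge leaving $I_1\cup\cdots\cup I_M$ for $J$, a contradiction. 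Hence $I=I_1\cup\cdots\cup I_M$, all blocks nonempty.

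For $v$ in a block $I_p$ with $p\geq2$, I would then plug $\lambda^{(v,j)}_n=b_j(n-1)+a^{(v)}_j$ into Proposition~\ref{19}(1): the coefficients of $F_v$ satisfy the level-$0$ recursion preceding Lemma~\ref{26} with $\lambda_j=a^{(v)}_j$ and $\mu^{(l)}_j=b_j-a^{(l)}_j$. Reading off the arrays, $\mu^{(l)}_j=\beta_q$ if $j,l$ lie in a common block $I_q$ and $\mu^{(l)}_j=0$ otherwise (independent of $v$), while $a^{(v)}_j$ depends only on the pair of blocks of $v$ and of $j$; write $c^{[p]}_q$ for its common value as $j$ ranges over $I_q$ (so $c^{[p]}_p=1-\beta_p$ for $p\geq2$, $c^{[1]}_q=1$ for all $q$, and $c^{[p]}_q\neq0$ except in the degenerate cases $q=p\geq2$ with $\beta_p=1$, or $q=1$ with $\beta_1=-1$). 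Lemma~\ref{26} then gives $F_v=\prod_{q:\,c^{[p]}_q\neq0}f_{\beta_q/c^{[p]}_q}\!\big(c^{[p]}_q\sum_{l\in I_q}h_l\big)$, a formula which also holds for $v\in I_1$. Since $F_v$, $a^{(v)}_j$ and $\lambda^{(v,j)}_n$ depend only on the blocks of $v$ and $j$, the system $(S)$ is the dilatation, with block partition $\{I_1,\dots,I_M\}$, of the $M$-vertex system $(\tilde S)$ with $\tilde F_p(u)=\prod_{q:\,c^{[p]}_q\neq0}f_{\beta_q/c^{[p]}_q}(c^{[p]}_q u_q)$. To finish, I would realise $(\tilde S)$, after a change of variables, as a system of Theorem~\ref{32}: put the blocks with self-dependent vertices ($I_1$, and the $I_p$ with $p\geq2$ and $\beta_p\neq1$) into $I_0$ and the others into $J_0$, set $\beta'_1=\beta_1$ and $\beta'_p=\beta_p/(1-\beta_p)$ for $p\in I_0\setminus\{1\}$, rescale $u_p\mapsto u_p/(1-\beta_p)$ for those $p$, and check via $f_{\beta/\lambda}(\lambda h)=\sum_k\frac{\lambda(\lambda+\beta)\cdots(\lambda+(k-1)\beta)}{k!}h^k$ (equal to $1$ for $\lambda=0$) that each $\tilde F_p$ becomes exactly the series of case 1 (if $p\in I_0$) or case 2 (if $p\in J_0$); then $K_0=I_1=J_1=\emptyset$ by construction, which proves $(S)$ fundamental with these parts empty.

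The step I expect to be the real obstacle is this last identification: one must choose the change of variables so that, simultaneously for every vertex, each per-block factor $f_{\beta_q/c}(c\,\cdot)$ produced by Lemma~\ref{26} turns into precisely the normalised form prescribed by Theorem~\ref{32} — a factor $f_{\beta'_p}$ with linear coefficient $1$ when read from inside an $I_0$-block, a factor $f_{\beta'_p/(1+\beta'_p)}((1+\beta'_p)\,\cdot)$ with linear coefficient $1+\beta'_p$ when read from outside it, and $f_1$ (respectively no factor at all) for a $J_0$-block read from outside (respectively inside). This reduces to the elementary identities $1+\beta'_p=1/(1-\beta_p)$ and $\beta'_p/(1+\beta'_p)=\beta_p$, but it must be carried out with care for the degenerate values $\beta_p\in\{-1,1\}$, where some factors collapse to $1$ or disappear, and with a check that the $I_0$/$J_0$ assignment is forced by the self-dependence pattern recorded in Lemma~\ref{42}.
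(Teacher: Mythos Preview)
Your proposal is correct and follows essentially the same route as the paper: apply Lemma~\ref{42} to obtain the block partition and the arrays, eliminate $J$, pass to the $M$-vertex system by dilatation, compute each $F_p$ as a product of $f_{\beta}$-factors, and then rescale with $\beta'_p=\beta_p/(1-\beta_p)$ to match Theorem~\ref{32} with $I_0\cup J_0$ only. The only cosmetic difference is that you invoke Lemma~\ref{26} to produce $F_v$ for $v\in I_p$ with $p\geq 2$, whereas the paper computes the three one-step recursions from Proposition~\ref{19}(1) directly; your explicit argument that $J=\emptyset$ is a point the paper leaves implicit. The final identification you flag as the obstacle is exactly the paper's last paragraph and goes through in a few lines once you write $\gamma_p=\beta_p/(1-\beta_p)$, so you should not expect difficulty there beyond the degenerate checks you already list.
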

 
\begin{proof} We use the notations of lemma \ref{42}. Note that if $i,j$ are in the same $I_k$, then $\lambda_n^{(i,k)}=\lambda_n^{(j,k)}$ for all $n \geq 1$, 
for all $k\in I$. So, by proposition \ref{18}-2 the Hopf SDSE formed by $i$ and its descendant is the dilatation of a system with the following 
coefficients $\lambda_n^{(j,k)}$:
$$\begin{array}{c|c|c|c|c|c}
j\setminus k&1&2&3&\cdots&M\\
\hline 1&(\beta_1+1)(n-1)+1&n&\cdots&\cdots&n\\
\hline 2&(\beta_1+1)n&n-\beta_2&n&\cdots&n\\
\hline 3&\vdots&n&n-\beta_3&\ddots&\vdots\\
\hline \vdots&\vdots&\vdots&\ddots&\ddots&n\\
\hline M&(\beta_1+1)n&n&\cdots&n&n-\beta_M
\end{array}$$
with $i=1$. We already proved in lemma \ref{42} that:
$$F_1=\prod_{j=1}^Mf_{\beta_j}(h_j).$$
If $j\neq 1$, for all $(k_1,\cdots,k_M)$:
\begin{eqnarray*}
a^{(j)}_{(k_1+1,\cdots,k_M)}&=&\left((\beta_1+1)\sum_{l=1}^M k_l+\beta_1+1-(\beta_1+1)\sum_{l=1}^M k_l-k_1\right)\frac{a^{(j)}_{(k_1,\cdots,k_M)}}{k_1+1}\\
&=&(\beta_1+1+\beta_1k_1)\frac{a^{(j)}_{(k_1,\cdots,k_M)}}{k_1+1},\\ \\
a^{(j)}_{(k_1,\cdots,k_j+1,\cdots,k_M)}&=&\left(\sum_{l=1}^M k_l+1-\beta_j-\sum_{l=1}^M k_l+\beta_j k_j\right)
\frac{a^{(j)}_{(k_1,\cdots,k_M)}}{k_j+1}\\
&=&(1-\beta_j+\beta_j k_j)\frac{a^{(j)}_{(k_1,\cdots,k_M)}}{k_j+1}.
\end{eqnarray*}
If $l\neq 1$ and $l \neq j$:
$$a^{(j)}_{(k_1,\cdots,k_l+1,\cdots,k_M)}=\left(\sum_{l=1}^M k_l -\sum_{l=1}^M k_l+\beta_l k_l\right)\frac{a^{(j)}_{(k_1,\cdots,k_M)}}{k_l+1}
=(1+\beta_l k_l)\frac{a^{(j)}_{(k_1,\cdots,k_M)}}{k_l+1}.$$
So, if $j\neq 1$:
$$F_j=f_{\frac{\beta_1}{1+\beta_1}}((1+\beta_1)h_1)f_{\frac{\beta_j}{1-\beta_j}}((1-\beta_j)h_j) \prod_{k\neq 1,j} f_{\beta_k}(h_k).$$
Let us put $I'_0=\{j\geq 2\:/\:\beta_j\neq 1\}$ and $J'_0=\{j\geq 2\:/\: \beta_j=1\}$. Then, after the change of variables 
$h_j\longrightarrow \frac{1}{1-\beta_j}h_j$ for all $j \in I'_0$:
$$ \left\{ \begin{array}{rcl}
F_1&=&\displaystyle f_{\beta_1}(h_1) \prod_{j\in I'_0} f_{\beta_j}\left(\frac{1}{1-\beta_j}h_j\right) \prod_{j\in J'_0} f_1(h_j),\\
F_j&=&\displaystyle f_{\frac{\beta_1}{1+\beta_1}}((1+\beta_1)h_1)f_{\frac{\beta_j}{1-\beta_j}}(h_j)
 \prod_{j\in I'_0-\{j\}} f_{\beta_j}\left(\frac{1}{1-\beta_j}h_j\right) \prod_{j\in J'_0} f_1(h_j) \mbox{ if }j\in I'_0,\\
 F_j&=&\displaystyle f_{\frac{\beta_1}{1+\beta_1}}((1+\beta_1)h_1)
\prod_{j\in I'_0} f_{\beta_j}\left(\frac{1}{1-\beta_j}h_j\right) \prod_{j\in J'_0-\{j\}} f_1(h_j) \mbox{ if }j\in J'_0.
\end{array}\right.$$
Putting $\gamma_j=\frac{\beta_j}{1-\beta_j}$ for all $j\in I_0$, then, as $\beta_j=\frac{\gamma_j}{1+\gamma_j}$ and $1-\beta_j=\frac{1}{1+\gamma_j}$:
$$ \left\{ \begin{array}{rcl}
F_1&=&\displaystyle f_{\beta_1}(h_1) \prod_{j\in I'_0} f_{\frac{\gamma_j}{1+\gamma_j}}\left((1+\gamma_j)h_j\right) \prod_{j\in J'_0} f_1(h_j),\\
F_j&=&\displaystyle f_{\frac{\beta_1}{1+\beta_1}}((1+\beta_1)h_1)f_{\gamma_j}(h_j)
 \prod_{j\in I'_0-\{j\}} f_{\frac{\gamma_j}{1+\gamma_j}}\left((1+\gamma_j)h_j\right) \prod_{j\in J'_0} f_1(h_j) \mbox{ if }j\in I'_0,\\
 F_j&=&\displaystyle f_{\frac{\beta_1}{1+\beta_1}}((1+\beta_1)h_1)
\prod_{j\in I'_0} f_{\frac{\gamma_j}{1+\gamma_j}}\left((1+\gamma_j)h_j\right) \prod_{j\in J'_0-\{j\}} f_1(h_j) \mbox{ if }j\in J'_0.
\end{array}\right.$$
So this a fundamental system, with $I_0=\{1\} \cup I'_0$ and $J_0=J'_0$. \end{proof}
 
 \begin{cor} 
Let $(S)$ be a connected Hopf SDSE such that any vertex of $\gs$ is the descendant of a self-dependent vertex. Then $(S)$ is fundamental,
with $K_0=I_1=J_1=\emptyset$.
\end{cor}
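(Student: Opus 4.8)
The plan is to localise around the self-dependent vertices, invoke the previous proposition, and then glue the resulting fundamental structures. First I would observe that every vertex of $\gs$ has level $0$: a self-dependent vertex has level $0$ by Lemma~\ref{41}, any direct descendant of a level-$0$ vertex again has level $0$ (the remark after Lemma~\ref{41}, or Proposition~\ref{23}), and by hypothesis every vertex is a descendant of a self-dependent one. In particular $\lambda_n^{(i,j)}=a^{(i)}_j+(n-1)b_j$ for all $i,j$ and all $n\geq 1$, with the $b_j$ globally defined (the corollary after Proposition~\ref{23}). For each self-dependent vertex $i$ write $D_i$ for the set consisting of $i$ together with all its descendants; by the preceding proposition the restriction $(S)_{\mid D_i}$ is a fundamental Hopf SDSE with $K_0=I_1=J_1=\emptyset$, and by hypothesis $I=\bigcup_i D_i$, the union being over all self-dependent $i$.

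The gluing hinges on one elementary fact: each $D_i$ is closed under passing to descendants, so for $m\in D_i$ every direct descendant of $m$ lies in $D_i$; hence $F_m$ involves only the variables indexed by $D_i$, and the fundamental description of $(S)_{\mid D_i}$ given by the proposition records the genuine series $F_m$, not a truncation of it. Consequently, whenever $m\in D_i\cap D_{i'}$ the two fundamental descriptions of $(S)_{\mid D_i}$ and $(S)_{\mid D_{i'}}$ must assign the same $F_m$; comparing the product formulas of Theorem~\ref{32} one reads off that they place $m$ in matching dilatation classes and, when $m$ is self-dependent, attach the same parameter $\beta$ to that class. Moreover the self-dependent vertices of $\gs$ are exactly those lying in the blocks $J_x$, $x\in I_0$, of the $D_i$'s containing them, and two self-dependent vertices are mutually reachable if and only if their sets $D_i$ coincide.

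One then assembles the global data. Let $I_0$ collect the dilatation classes occurring inside the self-dependent vertices (each carrying the $\beta$ it inherits from any $D_i$ it lies in, well defined by the previous step) and $J_0$ the dilatation classes inside the complementary vertices; then $I=\bigl(\bigcup_{x\in I_0}J_x\bigr)\cup\bigl(\bigcup_{y\in J_0}J_y\bigr)$, with no $K_0$, $I_1$ or $J_1$ classes since none occurs in any $D_i$. Finally one checks that each $F_m$ matches the Theorem~\ref{32} formula for this $(I_0,J_0)$: by Proposition~\ref{19}-1 it suffices to pin down the scalars $\lambda_n^{(m,j)}$, and for $m$ self-dependent this is exactly Lemma~\ref{42} (applied at $m$), together with $\lambda_n^{(m,j)}=0$ for $j$ not a descendant of $m$ (Proposition~\ref{18}-1); for $m$ not self-dependent one picks a direct descendant $j$ of $m$ inside some $D_i\ni m$ and uses $\lambda_{n+1}^{(m,l)}=\lambda_2^{(m,l)}+\lambda_n^{(j,l)}-a^{(j)}_l$ (as in the proof of Proposition~\ref{23}) to extend the already-known values of $\lambda^{(m,\cdot)}$ on $D_i$ to all of $\gs$. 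This exhibits $(S)$, up to a change of variables, as a dilatation of a system of Theorem~\ref{32} with $K_0=I_1=J_1=\emptyset$, i.e. as a fundamental SDSE.

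The main obstacle is precisely this globalisation of the parameters: one must verify that the $\beta$ attached to an $I_0$-class does not depend on which $D_i$ it is read off from, and — the delicate point — that when a vertex $m$ is not adjacent to a given $I_0$-class $J_x$ the corresponding factor in the Theorem~\ref{32} expression of $F_m$ is automatically trivial, which forces and is forced by the degenerate value $\beta_x=-1$, for which $f_{\beta_x/(1+\beta_x)}\bigl((1+\beta_x)\,\cdot\,\bigr)=1$. All of this is forced, but only after combining the downward-closedness of the sets $D_i$ with Lemma~\ref{17} (to transport the information along directed paths) and with the connectedness of $\gs$.
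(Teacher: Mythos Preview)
Your approach is essentially the paper's: localise to the descendant sets $D_i$ of self-dependent vertices, invoke the previous proposition on each, and glue via connectedness. Your observation that each $D_i$ is downward-closed, so that the fundamental description of $(S)_{\mid D_i}$ records the genuine $F_m$ rather than a truncation, is exactly the hinge the paper uses implicitly.

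Where you and the paper part ways is in the execution of the gluing. You argue abstractly (compare product formulas on overlaps, assemble a global $(I_0,J_0)$, and appeal to Lemma~\ref{17} and connectedness), and you correctly flag the delicate point: when a vertex is not adjacent to a given class $J_x$, the corresponding Theorem~\ref{32} factor must be trivial, forcing $\beta_x=-1$. But you stop at naming this obstacle rather than resolving it. The paper instead carries out a short concrete trichotomy. It first splits $I_0^{(x)}$ into $I_{0,1}^{(x)}=\{\beta\neq -1\}$ and $I_{0,2}^{(x)}=\{\beta=-1\}$, records the direct-descendant structure in each case, and then argues: (i) if some $J_0^{(x)}\neq\emptyset$, connectedness forces $J_0^{(y)}=J_0^{(x)}$ and $I_{0,1}^{(y)}=I_{0,1}^{(x)}$ for every self-dependent $y$; (ii) if all $J_0^{(x)}=\emptyset$ but some $I_{0,2}^{(x)}\neq\emptyset$, then $I_{0,2}^{(y)}$ is constant and each $I_{0,1}^{(y)}$ is a singleton; (iii) otherwise $I=I_{0,1}^{(x)}$ for any $x$. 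Each branch immediately yields the global fundamental structure. This is exactly the case analysis your last paragraph gestures at; carrying it out would complete your argument.
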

 
\begin{proof} Let $x$ be a self-dependent vertex of $(S)$. Then the system formed by $x$ and its descendants  is fundamental. We then put 
$I_0^{(x)}$ and $J_0^{(x)}$  the partition of the set formed by $x$ and its descendants. We separate $I_0^{(x)}$ into two parts:
$$I_{0,1}=\left\{y\in I_0^{(x)}\:/\beta_y \neq -1\right\},\:I_{0,2}=\left\{y\in I_0^{(x)}\:/\beta_y =-1\right\}.$$
Then, after elimination of an eventual dilatation by restriction, the direct descendants of $x \in I_{0,2}^{(x)}$  are $x$, the elements of $I^{(x)}_{0,1}$ 
and $J_0^{(x)}$; the direct descendants of $x \in I_{0,1}^{(x)}$  are the elements of $I^{(x)}_{0,1}$ and $J_0^{(x)}$; the direct descendants of $x \in J_0^{(x)}$
are the elements of $I^{(x)}_{0,1}$ and the elements of $J_0^{(x)}$ except $x$. Let us consider the following cases:
\begin{enumerate}
\item If there exists a vertex $x$, such that $J_0^{(x)}\neq \emptyset$, then, as $\gs$ is connected, for any self-dependent vertex $y$, $J_0^{(y)}=J_0^{(x)}$.
As a consequence, for any self-dependent vertex $y$, $I_{0,1}^{(x)}=I_{0,1}^{(y)}$. We then deduce that $(S)$ is fundamental, with $J_0=J_0^{(x)}$ 
for any self-dependent vertex $x$.
\item If for any self-dependent vertex $x$, $J_0^{(x)}=\emptyset$, and if there is a self-dependent vertex $x$ such that $I^{(x)}_{0,2}\neq \emptyset$, 
then by connectivity of $\gs$, for any self-dependent vertex $y$, $I^{(y)}_{0,2}=I^{(x)}_{0,2}$ and $I_{0,1}^{(y)}=\{y\}$, or $I^{(y)}_{0,2} $ is empty 
if $y \in I^{(x)}_{0,2}$. Then $(S)$ is a fundamental, with $J_0=\emptyset$.
\item If for any self-dependent vertex $x$, $J_0^{(x)}=\emptyset=I_{0,2}^{(x)}$. Then by connectivity, $I=I_{0,1}^{(x)}$ for any self-dependent vertex. 
So $(S)$ is fundamental, with $J_0=\emptyset$.
\end{enumerate} 
In all cases, $(S)$ is fundamental. \end{proof}

\section{The structure theorem of Hopf SDSE}

\subsection{Connecting vertices}

\begin{defi}\textnormal{Let $(S)$ be an SDSE and let $i \in \gs$.
\begin{enumerate}
\item We denote by $\gs^{(i)}$ is the subgraph of $\gs$ formed by $i$ and all its descendants.
\item The vertex $i$ is a {\it connecting vertex} of $\gs$ if $\gs^{(i)}-\{i\}$ is not connected.
\end{enumerate}} \end{defi}

\begin{lemma}\label{46}
Let $(S)$ be a Hopf SDSE and let $i \in \gs$ be a connecting vertex. Then ($i$ is the descendant of a self-dependent vertex) or 
($i$ belongs to a symmetric subgraph of $\gs$) or ($i$ is not self-dependent and relates several components of a non-connected fundamental SDSE).
\end{lemma}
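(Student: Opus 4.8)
The plan is to analyze the local structure of $\gs$ around a connecting vertex $i$ by looking at two descendants $j$ and $k$ of $i$ that lie in different connected components of $\gs^{(i)}-\{i\}$. Since $i$ is a connecting vertex, such $j,k$ exist; shortening the paths we may assume $j$ and $k$ are direct descendants of $i$. The key dichotomy will come from the level of $i$: either $i$ has finite level, in which case by the corollary following Proposition~\ref{23} every vertex of the connected graph $\gs^{(i)}$ has finite level and we are in the ``fundamental'' regime, or $i$ has infinite level, which by the same corollary forces \emph{every} vertex of $\gs^{(i)}$ to have infinite level, and then no vertex of $\gs^{(i)}$ can be self-dependent (a self-dependent vertex is of level $0$ by Lemma~\ref{41}, and by Proposition~\ref{23} all its descendants would then be of finite level, contradicting connectedness of $\gs^{(i)}$).

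\textbf{The infinite-level case.} Here I would first rule out $i$ being self-dependent (already done above). Then I want to show $i$ lies in a symmetric subgraph, i.e. all the relevant edges are two-way. The tool is Lemma~\ref{36} (and its special case $i=k$): whenever $i \longrightarrow j$, $j\longrightarrow k$ and $i\longrightarrow l$ one gets $a^{(i)}_k\neq 0$ or $a^{(l)}_k\neq 0$, and the special case $i\longleftrightarrow j$, $i\longrightarrow l$ gives $i\longleftrightarrow l$. Combined with Lemma~\ref{28} (which, since $i$ is not self-dependent, forces $F_i$ to be affine whenever there is no short loop back to $i$) this should propagate two-wayness through the component: if $i$ is not the descendant of a self-dependent vertex and the graph is not of the non-connected-fundamental type, one shows that for any $j$ with $i\longrightarrow j$ there must be a path back from some descendant of $j$ to $i$, hence $i$ itself lies on a cycle, and then the edges along and around that cycle are two-way. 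This is where I expect the main work: carefully combining Lemma~\ref{36}, Lemma~\ref{28} and Proposition~\ref{18}-1,3 to conclude that the whole relevant neighbourhood of the connecting vertex is symmetric, i.e. that $i$ belongs to a symmetric subgraph of $\gs$.

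\textbf{The finite-level case.} Now every descendant of $i$ has finite level. If $i$ (or one of its ascendants) is self-dependent we are immediately in the first alternative of the statement. Otherwise, since $i$ is not self-dependent, I would consider whether $i$ has a self-dependent descendant: if yes, apply the proposition/corollary of the previous section to the subsystem $\gs^{(x)}$ of that self-dependent descendant $x$, and use connectedness plus the ``extension''/graph bookkeeping to locate $i$; a non-self-dependent connecting vertex in a fundamental SDSE with $J_0=\emptyset$ and all $\beta_y=-1$ is precisely a vertex relating several components of a non-connected fundamental SDSE (the third alternative). If there is no self-dependent descendant at all, then by Lemma~\ref{28} (applied with $i=j$, legitimate since $i$ is not self-dependent) $F_i$ is affine; combined with finiteness of level one then shows inductively, using Proposition~\ref{23} and the level-$0$/level-$1$ Lemmas~\ref{26},~\ref{27}, that the component is a multicycle or quasi-complete, and such graphs are symmetric, so $i$ again belongs to a symmetric subgraph.

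\textbf{Main obstacle.} The delicate point is the infinite-level (non-fundamental) case: turning the purely local three-arrow implications of Lemma~\ref{36} into the \emph{global} statement that $i$ sits inside a symmetric subgraph, while simultaneously excluding overlap with the ``descendant of a self-dependent vertex'' and ``non-connected fundamental'' cases. The argument has to track how being a connecting vertex (so two descendants $j,k$ in different components of $\gs^{(i)}-\{i\}$) interacts with the forced existence of return paths; I expect to need a minimal-counterexample or induction-on-$|\gs^{(i)}|$ argument, repeatedly invoking Lemma~\ref{36}, Lemma~\ref{28} and Proposition~\ref{18} to close up all the two-way edges.
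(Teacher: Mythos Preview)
Your organizing principle (splitting on whether $i$ has finite or infinite level) is not the one the paper uses, and it does not line up cleanly with the three alternatives, which is why your infinite-level case stays vague. The paper's argument hinges on a single structural fact you never isolate: because $\gs^{(i)}-\{i\}$ has at least two connected components, Lemma~\ref{36} forces \emph{every} descendant of $i$ to be a \emph{direct} descendant of $i$. Indeed, if $i\rightarrow x_p\rightarrow x'_p$ with $x_p,x'_p$ in one component $G_p$, and $i\rightarrow x_q$ with $x_q$ in a different component $G_q$, then Lemma~\ref{36} gives $i\rightarrow x'_p$ or $x_q\rightarrow x'_p$; the latter is impossible since $G_p$ and $G_q$ are disconnected in $\gs^{(i)}-\{i\}$. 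Iterating, all of $\gs^{(i)}-\{i\}$ consists of direct descendants of $i$.

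With this in hand the proof is short and needs no level dichotomy. Either some $j\in\gs^{(i)}-\{i\}$ has $j\rightarrow i$, and then $i\longleftrightarrow j$ is a symmetric subgraph (second alternative). Or no such back-edge exists; then a direct ladder computation (not Lemma~\ref{28}) of $\lambda_n^{(i,j)}a_{l(i,x_2,\ldots,x_n)}$, taking the $x_l$'s in a component different from that of $j$, shows $\lambda_n^{(i,j)}$ is independent of $n\geq 2$, so $i$ has level $\leq 1$ with every $b_j=0$. Now Lemma~\ref{27} applies with $\mu_j^{(l)}=-a_j^{(l)}$, and the resulting tables for $a_j^{(k)}$ force all $\beta_p=-1$ (equivalently $b_j=0$), i.e.\ $\gs^{(i)}-\{i\}$ is a non-connected fundamental SDSE with $I=I_0$, giving the third alternative. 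Your finite-level discussion about ``self-dependent descendants'' and invoking multicycles/quasi-complete graphs is not needed here and does not lead to the stated conclusion; the missing idea is precisely the ``all descendants are direct'' step.
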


\begin{proof} {\it First step.} If $i$ is self-dependent, it is a descendant of itself and the conclusion holds. Let us assume that $i$ is not self-dependent. 
Let $G_1,\cdots,G_M$ be the connected components of $\gs^{(i)}-\{i\}$ ($M\geq 2$). Let $x_p\in G_p$ be a direct descendant of $i$ for all $p$.
Let $x'_p$ be a direct descendant of $x_p$. Then $x'_p\in G_p$. Choosing $q\neq j$ and applying lemma \ref{36}, there is an edge from $i$ to $x'_p$. 
Iterating this process, we deduce that any vertex of $\gs^{(i)}-\{i\}$ is a direct descendant of $i$. If $i$ is the direct descendant of a vertex $j\in \gs^{(i)}-\{i\}$, 
then $i$ is included in the symmetric subgraph $i \longleftrightarrow j$ of $\gs^{(i)}$, so the conclusion holds. \\

{\it Second step.} Let us now assume that $i$ is not the direct descendant of any $j \in \gs^{(i)}-\{i\}$. Let $n \geq 2$, $j\in G_p$, 
and let $i\rightarrow x_2\rightarrow \cdots \rightarrow x_n$ in $\gs^{(i)}$, where $x_2,\cdots,x_n \in G_q$, $p\neq q$. Then, as $i$ is not related to any $x_l$,
$\lambda_n^{(i,j)}a_{l(i,x_2,\cdots,x_n)}^{(i)}=a_{B^+_i(\tdun{$j$}l(x_2,\cdots,x_n))}$, so $\lambda_n^{(i,j)}=\frac{a^{(i)}_{j,x_2}}{a^{(i)}_{x_2}}$, 
and $\lambda_n^{(i,j)}$ does not depend on $n$: we put $\lambda_n^{(i,j)}=\lambda_j$ for all $j \in G-\{i\}$, $n \geq 2$.
In other words, $i$ has level $\leq 1$, and $b_j=0$ for all $j$.\\

{\it Third step.} In order to simplify the writing of the proof, up to a reindexation, we shall suppose that $i=0$ and the vertices of $\gs^{(0)}-\{0\}$ are the
elements of $\{1,\cdots,N\}$. By a change of variables, we can suppose that $a^{(0)}_j=1$ for all $1\leq j\leq N$. By the second step, we can use lemma 
\ref{27}, with $\mu_j^{(l)}=-a_j^{(l)}$ for all $1\leq j,l\leq N$ and $\lambda_j=a^{(0)}_{j,k}$ for all $j,k$ in two different connected components of $\gs^{(0)}-\{0\}$.
\begin{enumerate}
\item In the first case, we obtain the following values for $a_j^{(k)}$ and $\lambda_j$:
$$\begin{array}{c|c|c|c|c|c}
j\setminus k&I_1&I_2&\cdots&I_M&J\\
\hline I_1&-\nu\beta_1&0&\cdots&0&-\nu\\
\hline I_2&0&-\nu \beta_2&\ddots&\vdots&\vdots\\
\hline \vdots&\vdots&\ddots&\ddots&0&\vdots \\
\hline I_M&0&\cdots&0&-\nu\beta_M&-\nu\\
\hline J&0&\cdots&\cdots&0&0
\end{array}$$
$$\begin{array}{c|c|c|c|c}
j&I_1&\cdots&I_M&J\\
\hline \lambda_j&\nu&\cdots&\nu&0
\end{array}$$
As there are no vertices with no descendants, necessarily $\nu \neq 0$ and $\beta_p\neq 0$ for all $p$.
For the same reason, $I_1\cup \cdots \cup I_M=\emptyset$ is impossible.
If $J \neq \emptyset$, then any vertex of $J$ is related to every vertex of $I_1\cup \cdots \cup I_M$,
so $\gs^{(0)}-\{0\}$ is connected: impossible, as $0$ is a connected vertex. So $J=\emptyset$,
and $0$ connects several totally self-dependent subgraphs.
\item In the second case, we obtain the following values for $a_j^{(k)}$ and $\lambda_j$:
$$\begin{array}{c|c|c|c|c|c}
j\setminus k&I_1&I_2&\cdots&I_M&J\\
\hline I_1&-\nu_1&0&\cdots&0&0\\
\hline I_2&0&-\nu_2&\ddots&\vdots&\vdots\\
\hline \vdots&\vdots&\ddots&\ddots&0&\vdots \\
\hline I_M&0&\cdots&0&-\nu_M&0\\
\hline J&0&\cdots&\cdots&0&0
\end{array}$$
$$\begin{array}{c|c|c|c|c}
j&I_1&\cdots&I_M&J\\
\hline \lambda_j&0&\cdots&0&0
\end{array}$$
As there are no vertices with no descendants, $J=\emptyset$ and $\nu_l \neq 0$ for all $l$.
\end{enumerate} 
Moreover, as $b_j=1+\beta_j=0$ for all $j\geq 1$, $0$ connects several components of a non-connected fundamental SDSE. \end{proof}

\subsection{Structure of connected Hopf SDSE}

\begin{lemma} \label{47}
Let $(S)$ be a Hopf SDSE containing a multicycle with set of vertices $I=I_{\overline{1}}\cup \cdots \cup I_{\overline{M}}$,
Then any non-self-dependent vertex of $\gs$ has direct descendants in at most one $I_{\overline{k}}$.
\end{lemma}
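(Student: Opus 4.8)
The plan is to argue by contradiction and to use the ``square completion'' Lemma \ref{36} to propagate edges around the cycle of classes of the multicycle. So suppose that some non-self-dependent vertex $v$ of $\gs$ has direct descendants $x\in I_{\overline a}$ and $y\in I_{\overline b}$ with $\overline a\neq\overline b$. By the multicyclic structure, every vertex of $I_{\overline k}$ is a direct ascendant of every vertex of $I_{\overline{k+1}}$; hence we may choose vertices $x=x_0,x_1,\dots,x_{M-1}$ with $x_t\in I_{\overline{a+t}}$ and $x_t\longrightarrow x_{t+1}$ for $0\le t\le M-2$, and moreover $x_{M-1}\longrightarrow x_0$, so that $x_0\longrightarrow x_1\longrightarrow\cdots\longrightarrow x_{M-1}\longrightarrow x_0$ is a closed walk meeting each class exactly once.

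Next I would run Lemma \ref{36} along this walk with $v$ (non-self-dependent) as the common source and $y$ as auxiliary vertex: at step $t$, applied to the configuration $v\longrightarrow x_t$, $v\longrightarrow y$, $x_t\longrightarrow x_{t+1}$ (so $i=v$, $j=x_t$, $l=y$, $k=x_{t+1}$), it yields $v\longrightarrow x_{t+1}$ or $y\longrightarrow x_{t+1}$. Two situations can occur. If the alternative $v\longrightarrow x_{t+1}$ holds at every step, then $v$ is a direct ascendant of one vertex in every class of the multicycle; a further round of Lemma \ref{36} (applied to $v\longrightarrow x_{M-1}$, $v\longrightarrow x_0$ and to the edges $x_{M-1}\longrightarrow u$ with $u$ ranging over $I_{\overline a}$, and similarly in each class) upgrades this to: $v$ is a direct ascendant of \emph{every} vertex of the multicycle. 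Combining $v\longrightarrow z$ for $z$ in the class preceding the class of some $w$ with $v\longrightarrow w$, together with the edge $z\longrightarrow w$ of the multicycle and the special case of Lemma \ref{36} (the remark: $i\longleftrightarrow j$, $i\longrightarrow l$ forces extra edges when $i$ is non-self-dependent), one reaches an edge inside a single class $I_{\overline k}$ or a vertex related to itself, both impossible since $M\ge 2$ and the multicycle has only non-self-dependent vertices. If instead the alternative $y\longrightarrow x_{t+1}$ occurs at a step $t$ with $\overline{a+t+1}\neq\overline{b+1}$, then the multicycle vertex $y$ itself has a direct descendant outside its ``expected'' class $I_{\overline{b+1}}$, i.e.\ $y$ now plays the role of $v$ but already lies inside the multicycle; one then repeats the argument with $y$, and with its multicyclic predecessors, descending on the distance from the chosen vertex to the multicycle until landing in the first situation.

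The main obstacle is precisely this second situation: one must set up the descent (on the distance from the offending vertex to the multicycle, say) so that the ``pushed'' counterexample is genuinely closer to the base case, and then dispose of a multicycle vertex with an illicit extra descendant by the internal-edge/self-relation argument above. A secondary point requiring care throughout is the self-dependence hypothesis of Lemma \ref{36}: one has to check that the vertices used as the ``$i$'' in each application are non-self-dependent — in particular that no multicycle vertex acquires a loop in $\gs$ during the propagation — and, since the cyclic combinatorics degenerate for $M=2$ (the complete bipartite case), the cases $M=2$ and $M\ge 3$ should be checked separately.
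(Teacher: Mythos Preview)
Your propagation step is essentially right, and in fact simpler than you make it: when you apply Lemma~\ref{36} with $i=v$, $j=x_t$, $l=y$, $k=x_{t+1}$, the alternative $y\to x_{t+1}$ can \emph{never} occur. Indeed $y\in I_{\overline b}$ and $x_{t+1}\in I_{\overline{a+t+1}}\subset I$, and since the restriction of $(S)$ to $I$ is multicyclic, the only direct descendants of $y$ inside $I$ lie in $I_{\overline{b+1}}$; as soon as $\overline{a+t+1}\neq\overline{b+1}$ this forces $v\to x_{t+1}$. So the whole ``second situation'' and the descent you sketch around it are unnecessary: you get directly that $v$ has a direct descendant in every class, exactly as the paper does.

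The genuine gap is your contradiction step. You try to conclude purely from Lemma~\ref{36} (and its special case) once $v$ is a direct ascendant of a vertex in each class. But the graph with vertices $\{v,1,\ldots,M\}$, edges $v\to k$ for all $k$, and the cycle $1\to 2\to\cdots\to M\to 1$, satisfies every instance of Lemma~\ref{36}: for $i=v$ the conclusion $v\to k$ always holds, and for $i$ a cycle vertex the unique direct descendant makes the lemma vacuous. There is no $\leftrightarrow$ edge involving $v$ anywhere, so the ``special case'' you invoke cannot be triggered; nothing forces an intra-class edge or a loop. In other words, Lemma~\ref{36} is a necessary condition on the graph of a Hopf SDSE but not a sufficient one, and this particular graph passes the test.

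The paper's proof closes exactly this gap by leaving graph theory and computing coefficients. After restricting to one representative per class (so $F_i=1+h_{i+1}$), it evaluates $\lambda_2^{(0,2)}$ and $\lambda_3^{(0,2)}$ via Proposition~\ref{16} on several explicit trees, obtaining for $M\geq 3$ the incompatible equalities $\frac{a_{1,2}}{a_1}=2\frac{a_{2,2}}{a_2}=1+\frac{a_{1,2}}{a_1}$, and a parallel numerical contradiction for $M=2$. Your argument is missing any such algebraic input; without it the proof cannot be completed.
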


\begin{proof} Let us assume that the vertex $0$ of $\gs$ have a direct descendant $x \in I_{\overline{k}}$ and  $y\in I_{\overline{l}}$
with $\overline{k}\neq \overline{l}$. Then lemma \ref{36} implies that any direct descendant of $x$ is a direct descendant of $0$,
so $0$ has also a direct descendant in $I_{\overline{k+1}}$. Similarly, $0$ has a direct descendant in $I_{\overline{l+1}}$. 
 Iterating this process, $0$ has direct descendants in all the $I_{\overline{i}}$'s.
Up to a restriction, the situation is the following:
$$\xymatrix{0\ar[d]\ar[rd]\ar[rrd] \ar[rrrrd]&&&&\\
1\ar[r]&2\ar[r]&3\ar[r]&\cdots\ar[r]&N\ar@/^2pc/[llll]\\
&&&&}$$
Moreover, for all $1\leq i \leq k$, $F_i(h_{i+1})=1+h_{i+1}$, with the convention $h_{N+1}=h_1$. 

We first assume $M\geq 3$.  In order to ease the notation, we do not write the index $^{(0)}$ in the sequel of the proof.
By proposition \ref{16}, $\lambda_2^{(0,2)}a_{\tddeux{$0$}{$1$}}=a_{\tdtroisdeux{$0$}{$1$}{$2$}}+a_{\tdtroisun{$0$}{$2$}{$1$}}$, so
$\lambda_2^{(0,2)}=1+\frac{a_{1,2}}{a_1}$. On the other hand, $\lambda_2^{(0,2)}a_{\tddeux{$0$}{$2$}}=2a_{\tdtroisun{$0$}{$2$}{$2$}}$, 
so $\lambda_2^{(0,2)}=2\frac{a_{2,2}}{a_2}$. Hence:
$$1+\frac{a_{1,2}}{a_1}=2\frac{a_{2,2}}{a_2}.$$
Moreover, $\lambda_3^{(0,2)}a_{\tdtroisdeux{$0$}{$2$}{$3$}}=a_{\tdquatretrois{$0$}{$2$}{$3$}{$2$}}$, so $\lambda_3^{(0,2)}=2\frac{a_{2,2}}{a_2}$.
On the other hand, $\lambda_3^{(0,2)}a_{\tdtroisdeux{$0$}{$1$}{$2$}}=a_{\tdquatretrois{$0$}{$1$}{$2$}{$2$}}$, so $\lambda_3^{(0,2)}=\frac{a_{1,2}}{a_1}$.
Hence:
$$\frac{a_{1,2}}{a_1}=2\frac{a_{2,2}}{a_2}=1+\frac{a_{1,2}}{a_1}.$$
This is a contradiction. \\

Let us now prove the result for $N=2$. We assume that there exists a Hopf SDSE with the graph:
$$\xymatrix{&0\ar[dr] \ar[ld]&\\
1\ar@{<->}[rr]&&2}$$
and such that $F_1=1+h_2$ and $F_2=1+h_1$. We write:
$$F_0=\sum_{i,j}a_{(i,j)} h_1^i h_2^j,$$
with $a_{(1,0)}$ and $a_{(0,1)}$ non-zero. Then $\lambda_2^{(0,1)} a_{\tddeux{$0$}{$1$}}=2a_{\tdtroisun{$0$}{$1$}{$1$}}$, 
so $\lambda_2^{(0,1)}=\frac{2a_{(2,0)}}{a_{(1,0)}}$. On the other hand, $\lambda_2^{(0,1)} a_{\tddeux{$0$}{$2$}}=
a_{\tdtroisun{$0$}{$2$}{$1$}}+a_{\tdtroisdeux{$0$}{$2$}{$1$}}$, so $\lambda_2^{(0,1)}=\frac{a_{(1,1)}}{a_{(0,1)}}+1$. We obtain:
$$\frac{2a_{(2,0)}}{a_{(1,0)}}=\frac{a_{(1,1)}}{a_{(0,1)}}+1.$$
Moreover, $\lambda_3^{(0,1)} a_{\tdtroisdeux{$0$}{$1$}{$2$}}=a_{\tdquatretrois{$0$}{$1$}{$2$}{$1$}}+a_{\tdquatrecinq{$0$}{$1$}{$2$}{$1$}}$,
so $\lambda_3^{(0,1)}=\frac{2a_{(2,0)}}{a_{(1,0)}}+1$. On the other hand, $\lambda_3^{(0,1)} a_{\tdtroisdeux{$0$}{$2$}{$1$}}
=2a_{\tdquatretrois{$0$}{$2$}{$1$}{$1$}}$, so $\lambda_3^{(0,1)}=\frac{a_{(1,1)}}{a_{(0,1)}}$. So:
$$\frac{a_{(1,1)}}{a_{(0,1)}}+1=\frac{2a_{(2,0)}}{a_{(1,0)}}=\frac{a_{(1,1)}}{a_{(0,1)}}-1.$$
This is a contradiction. \end{proof}

\begin{lemma} 
Let $(S)$ be a Hopf SDSE, such that any vertex of $\gs$ has a direct ascendant. Let $i$ be a vertex of $\gs$. 
Then $(i$ is a descendant of a self-dependent vertex) or ($i$ belongs to a multicycle of $\gs$) or ($i$ belongs to a symmetric subgraph of $\gs$).
\end{lemma}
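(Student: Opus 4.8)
The plan is to analyze the local structure of $\gs$ around the vertex $i$ by exploiting the dichotomy between self-dependent and non-self-dependent vertices, together with the hypothesis that every vertex has a direct ascendant. First I would dispose of the case where $i$ itself is self-dependent: then $i$ is a descendant of itself and we are done. So assume from now on that $i$ is not self-dependent, and likewise I may as well assume none of the vertices under consideration is a descendant of a self-dependent vertex, since in that case the conclusion already holds.

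Next I would use the hypothesis "every vertex has a direct ascendant" to climb upward from $i$: build an infinite backward path $\cdots \longrightarrow i_{-2} \longrightarrow i_{-1} \longrightarrow i_0 = i$. Since the vertex set $I$ is finite, this path must revisit a vertex, producing an oriented cycle $C$ through some ascendant of $i$; because level decreases along oriented edges (Proposition \ref{23}) and all vertices on a cycle must then have the same level, and because $i$ is not a descendant of a self-dependent vertex, no vertex of $C$ is self-dependent and no vertex of $C$ has finite level unless the cycle degenerates in a controlled way. The key tool here is Lemma \ref{36} (and its pictorial reformulation): whenever two arrows emanate from a non-self-dependent vertex and one of the targets has an arrow to a third vertex, that third vertex is reached directly. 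Applying this repeatedly along the cycle $C$ and to the edges feeding into $C$, I would argue that the strongly connected component containing $C$ is forced to be either a symmetric subgraph (if the cycle has length $2$, or more generally if the reverse arrows are all present) or a genuine multicycle (if the arrows are "rigidly" one-directional around a cycle of length $\geq 3$, with the vertices partitioned into the $I_{\overline{k}}$'s). This is essentially the same case split that appears in Lemma \ref{46}, applied now to the component rather than to a single connecting vertex.

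Then I would transfer the conclusion from the strongly connected component back to $i$ itself. If $i$ lies in that component, we are immediately in one of the listed cases. If $i$ lies strictly below it, I would use Lemma \ref{36} once more, together with the explicit computations of the $\lambda_n^{(i,j)}$'s via Lemma \ref{17}, to show that $i$ is in fact a direct descendant of the whole component and, combined with the fact that each vertex has an ascendant inside the component, that $i$ is actually forced into the component or into a symmetric extension of it --- i.e. the "$i$ has a direct ascendant" hypothesis prevents $i$ from hanging off the side without being absorbed. Lemma \ref{47} would be invoked to control how $i$ can attach to a multicycle: a non-self-dependent vertex with ascendants cannot have descendants spread across several $I_{\overline{k}}$, which pins down the possibilities.

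The main obstacle I anticipate is the bookkeeping in the middle step: showing that the strongly connected component through the backward cycle is \emph{exactly} a symmetric graph or a multicycle, with no extra stray edges, rather than some hybrid. This requires careful iterated application of Lemma \ref{36} to propagate edges around the cycle and to rule out "chords" that would break the rigid multicyclic partition, plus the level-monotonicity argument (Proposition \ref{23}) to exclude self-dependent vertices sneaking into an infinite-level component. A secondary subtlety is handling the boundary case of cycles of length $2$, where "multicycle" and "symmetric" overlap (a complete bipartite graph), so the statement's disjunction is genuinely non-exclusive there; I would simply note this rather than try to force a choice. Once the component is identified, attaching $i$ to it is comparatively routine given the earlier lemmas.
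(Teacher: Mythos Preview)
Your proposal assembles the right ingredients --- the backward-path argument to find a cycle above $i$, Lemma~\ref{36} to propagate edges, and Lemma~\ref{47} to control how a vertex attaches to a multicycle --- but it is missing the two organizing devices that make the paper's proof go through cleanly, and it misplaces the difficulty.

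First, the paper does not analyze the strongly connected component of the cycle. Instead it chooses a cycle $x_1\to\cdots\to x_s\to x_1$ above $i$ with \emph{minimal} length $s$. Minimality immediately rules out chords: there can be no edge $x_l\to x_m$ with $m\neq l+1$, or one would shorten the cycle. This one-line trick is exactly what handles the ``bookkeeping'' obstacle you flag; without it, your SCC could a priori be a complicated mix and the iterated-Lemma-\ref{36} argument you sketch does not obviously terminate in a clean dichotomy. Second, for $s\geq 3$ the paper does not try to show the SCC is a multicycle; it passes to a \emph{maximal} multicycle of length $s$ having $i$ as a descendant, and then argues by contradiction: if $i$ is not already in it, the first vertex $y_1$ on a path from the multicycle down to $i$ can be adjoined (Lemma~\ref{36} gives the needed incoming and outgoing edges, Lemma~\ref{47} forbids edges into the wrong $I_{\overline{k}}$), contradicting maximality. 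So the ``transfer back to $i$'' step, which you call comparatively routine, is in fact the heart of the argument in both the $s=2$ case (where Lemma~\ref{36} is applied step by step along the path $x_1\to y_1\to\cdots\to i$ to create the reverse edges) and the $s\geq 3$ case.

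Two smaller points: the level machinery (Proposition~\ref{23}) is not used in the paper's proof of this lemma at all --- the argument is purely graph-theoretic --- so you can drop that thread. And your claim that ``each vertex has an ascendant inside the component'' is not justified: the hypothesis gives $i$ an ascendant in $\gs$, not in any particular subgraph, so this cannot be used to absorb $i$ into the component without further work.
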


\begin{proof} Let us first prove that $i$ is the descendant of a vertex of a cycle of $\gs$. As any vertex has a direct ascendant, it is possible to
define inductively a sequence $(x_l)_{l\geq 0}$ of vertices of $\gs$, such that $x_0=i$ and $x_{l+1}$ is a direct ascendant of $x_l$ for all $l$. 
As $\gs$ is finite, there exists $0\leq l <m$, such that $x_l=x_m$. Then $x_l\leftarrow x_{l+1}\leftarrow \cdots \leftarrow x_{m-1} \leftarrow x_m=x_l$
is a cycle of $\gs$, and $i$ is a descendant of any vertex of this cycle.

Let $G'=x_1\rightarrow \cdots \rightarrow x_s \rightarrow x_1$ be a cycle such that $i$ is a descendant of a vertex of $G'$, chosen with a minimal $s$.
As $s$ is minimal, there are no edges from $x_l$ to $x_m$ in $\gs$ if $m \neq l+1$, with the convention $x_{s+1}=x_1$. The situation is the following:
$$\xymatrix{x_1\ar[r]\ar[d]&\cdots\ar[r]&x_s\ar@/_1pc/[ll]&\\
y_1\ar[r]&\cdots\ar[r]&y_{t-1}\ar[r]&i}$$
Three cases are possible:
\begin{enumerate}
\item If $s=1$, then $i$ is the descendant of a self-dependent vertex.
\item If $s=2$, the situation is the following:
$$\xymatrix{x_1\ar@{<->}[r]\ar[d]&x_2&&\\
y_1\ar[r]&\cdots\ar[r]&y_{t-1}\ar[r]&i}$$
By minimality of $s$, there are no self-dependent vertex in $\{x_1,x_2,y_1,\cdots,y_{t-1},i\}$. Applying repeatedly lemma \ref{36}, there is an edge 
from $y_1$ to $x_1$, then from $y_2$ to $y_1$, $\cdots$, then from $i$ to $y_{t-1}$. So $i$ belongs to a symmetric subgraph of $\gs$.
\item If $s \geq 3$, then the subgraph formed by $x_1,\cdots,x_s$ is a multicycle. Let $G'$ be a maximal multicycle of length $s$ of $G$, such that $i$ 
is a descendant of a vertex of $G'$. We denote by $I'$ the set of vertices of $G'$. Let us assume that $i \notin G'$. There exists 
$x_1\rightarrow y_1\rightarrow \cdots \rightarrow y_{t-1} \rightarrow y_t=i$ in $G$, with $t\leq 1$, and $x_1\in I'$. Up to a reindexation, we can assume that
$x_1 \in I'_{\overline{1}}$. By lemma  \ref{36}, $y_1$ is the direct descendant of any vertex of $I_{\overline{1}}$ and the direct ascendant
of any vertex of $I_{\overline{3}}$. By lemma \ref{47}, $y_1$ is not the direct ascendant of any vertex of $I'_{\overline{k}}$ if $\overline{k}\neq \overline{3}$.
So $I'\cup \{x\}=I'_{\overline{1}}\cup  \left(I'_{\overline{2}}\cup \{i\}\right)\cup \cdots \cup I'_{\overline{s}}$ gives a multicycle of length $s$,
such that $i$ is a descendant of a vertex of $I'\cup \{i\}$: this contradicts the maximality of $G'$. So $i \in I'$.
\end{enumerate} \end{proof}

By the preceding study of Hopf symmetric SDSE:

\begin{cor} \label{49}
Let $(S)$ be connected Hopf SDSE, such that any vertex of $\gs$ has a direct ascendant. Then (any vertex of $\gs$ is the descendant of a 
self-dependent vertex, so $(S)$ is fundamental) or ($(S)$ is quasi-complete, so $(S)$ is fundamental) or ($(S)$ is multicyclic).
\end{cor}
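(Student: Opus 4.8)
The plan is to deduce Corollary \ref{49} directly from the lemma immediately preceding it together with the already-established classification of symmetric Hopf SDSE. First I would invoke the preceding lemma, which tells us that each vertex $i$ of $\gs$ is either (a) the descendant of a self-dependent vertex, or (b) belongs to a multicycle of $\gs$, or (c) belongs to a symmetric subgraph of $\gs$. The hypothesis that every vertex has a direct ascendant is exactly what makes that lemma applicable. So the remaining work is purely combinatorial: I need to show that these three local alternatives, when glued together over the connected graph $\gs$, collapse into the three global alternatives in the statement.

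The key step is a propagation argument. I would argue that the property "$i$ is the descendant of a self-dependent vertex" propagates along direct descendants (trivially, since a descendant of a descendant is a descendant), and conversely use lemma \ref{36} and lemma \ref{28} to show that if some vertex lies on a multicycle (case (b)) or a symmetric subgraph (case (c)), then its neighbours must be in the same case and not case (a). More precisely: if a single vertex is the descendant of a self-dependent vertex, then because every vertex has an ascendant and $\gs$ is connected, one can trace back ascendant chains and show, using lemma \ref{36} to force edges, that \emph{every} vertex is the descendant of a self-dependent vertex; then the corollary of the "Hopf SDSE generated by self-dependent vertices" subsection gives that $(S)$ is fundamental. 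Otherwise no vertex is the descendant of a self-dependent vertex, hence there are no self-dependent vertices at all (a self-dependent vertex is the descendant of itself), so by the preceding lemma every vertex lies in a multicycle or a symmetric subgraph, and in either case $\gs$ itself is symmetric (a multicycle of length $\geq 3$ has only non-self-dependent vertices and, after we check it cannot be simultaneously broken up, is symmetric in the relevant sense; a $2$-multicycle is a complete bipartite graph, also symmetric). Then I would apply the Theorem on connected symmetric Hopf SDSE to conclude that $(S)$ is $2$-multicyclic or quasi-complete; quasi-complete SDSE are fundamental by definition \ref{33} and theorem \ref{34}, and multicyclic is the remaining case.

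Concretely the steps are, in order: (1) apply the preceding lemma to get the trichotomy at each vertex; (2) dichotomize on whether $\gs$ has a self-dependent vertex; (3) in the self-dependent case, use connectedness plus lemma \ref{36} to push "descendant of a self-dependent vertex" to all of $\gs$, then invoke the corollary of the previous section to get fundamentality; (4) in the non-self-dependent case, observe that cases (b) and (c) of the lemma both force $\gs$ to be symmetric, apply proposition \ref{38} (or the symmetric SDSE theorem directly) to get that $(S)$ is $2$-multicyclic or quasi-complete, and note that quasi-complete $\Rightarrow$ fundamental.

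The main obstacle I expect is step (3): showing that a \emph{single} self-dependent ancestor somewhere in $\gs$ forces \emph{every} vertex to descend from a self-dependent vertex. One has to be careful that an arbitrary vertex $i$, although it has an ascendant, might have its backward chain cycle through a non-self-dependent cycle rather than reaching the self-dependent vertex. The resolution is that the preceding lemma already classifies $i$ into one of the three cases, and cases (b)/(c) at $i$ together with the presence of a self-dependent vertex elsewhere must be reconciled across the connected graph --- this is where lemma \ref{36}, lemma \ref{47}, and the rigidity of symmetric/multicyclic subgraphs do the work, essentially showing the three cases cannot coexist in a connected Hopf SDSE. So the real content is that the three local types are mutually exclusive globally, which is precisely what the lemmas of this section were built to provide; the corollary is then a short assembly of those facts.
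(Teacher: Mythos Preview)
Your step (4) contains a genuine error: an $N$-multicycle with $N\geq 3$ is \emph{not} a symmetric graph. In such a graph the edges go from $I_{\overline{k}}$ to $I_{\overline{k+1}}$ only, so for $N\geq 3$ there is no edge back from $I_{\overline{k+1}}$ to $I_{\overline{k}}$. Consequently, in the ``no self-dependent vertex'' branch of your dichotomy, when some vertex falls in case~(b) of the preceding lemma, you cannot conclude that $\gs$ is symmetric, and the theorem on connected symmetric Hopf SDSE does not apply. Your argument would then only ever produce the conclusions ``$2$-multicyclic'' or ``quasi-complete'', and would fail on the simplest example of all: an $N$-cycle with $N\geq 3$ (Theorem~\ref{30}), which is a connected Hopf SDSE with every vertex having an ascendant, no self-dependent vertices, and which is neither symmetric nor quasi-complete.

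The paper's route is shorter and keeps the multicycle case separate. The preceding lemma already sorts each vertex into (a) descendant of a self-dependent vertex, (b) in a multicycle, or (c) in a symmetric subgraph. The phrase ``by the preceding study of Hopf symmetric SDSE'' refers to Propositions~\ref{37} and~\ref{38} together with the theorem that a connected symmetric Hopf SDSE is $2$-multicyclic or quasi-complete. Case~(b) is handled by Proposition~\ref{37} directly (the multicycle structure forces the multicyclic SDSE), and case~(c) is handled by the symmetric classification; one does not attempt to merge~(b) into the symmetric framework. The global exclusivity you worry about in step~(3) is dealt with through these structural results (a vertex in an $N$-multicycle with $N\geq 3$ cannot simultaneously have a two-way edge, and the maximal-multicycle argument in the lemma absorbs all descendants), not through a fresh propagation argument via Lemma~\ref{36}. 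So the fix is: treat case~(b) on its own via Proposition~\ref{37}, and reserve the symmetric theorem for case~(c).
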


\begin{cor} \label{50}
Let $(S)$ be a connected Hopf SDSE. Then there exists a sequence $(G_i)_{0\leq i \leq k}$ of subgraphs of $\gs$, such that:
\begin{itemize}
\item The system $(S_0)$ associated to the $F_i$'s, $i\in G_0$, is fundamental or is multicyclic.
\item $G_k=\gs$.
\item For all $0\leq i\leq k-1$, $G_{i+1}$ is obtained from $G_i$ by adding a non-self-dependent vertex without any ascendant in $G_i$.
\end{itemize}
If $G_0$ is fundamental, any vertex is of finite level. If $G_0$ is multicyclic, no vertex is of finite level.
\end{cor}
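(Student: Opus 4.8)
The plan is to argue by strong induction on the number $|I|$ of vertices of $\gs$. The decisive case — and the base of the induction — is when every vertex of $\gs$ has a direct ascendant: then Corollary~\ref{49} applies and tells us that $(S)$ is either fundamental (the quasi-complete case being a particular fundamental SDSE) or multicyclic, so we may take $k=0$ and $G_0=\gs$. In the complementary case, $\gs$ has at least one vertex with no direct ascendant; since $\gs$ is connected and $|I|\geq 2$, such a vertex $v$ has no ascendant at all, hence is non-self-dependent, and in particular has at least one direct descendant. The strategy is then to delete $v$, apply the induction hypothesis to the restriction $(S')$ of $(S)$ to $I\setminus\{v\}$ (which is Hopf, by restriction), and reattach $v$ on top. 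For this we need $\gs\setminus\{v\}$ to be connected.

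The crucial point is therefore the following claim: if $(S)$ is a connected Hopf SDSE which is neither fundamental nor multicyclic, then $\gs$ has a source $v$ (a vertex with no ascendant) which is not a connecting vertex, so that $\gs\setminus\{v\}$ is connected. This is where Lemma~\ref{46} comes in. If $v$ is a source and $\gs\setminus\{v\}$ is disconnected, then, since $v$ has no incoming edge, $\gs^{(v)}\setminus\{v\}$ is disconnected, so $v$ is a connecting vertex; and $v$ can be neither a descendant of a self-dependent vertex nor a vertex of a nontrivial symmetric subgraph (both would give $v$ an incoming edge), so by Lemma~\ref{46} the vertex $v$ must relate several components of a non-connected fundamental SDSE. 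Combining this with the explicit description of non-connected fundamental SDSE after Definition~\ref{33} and with Lemma~\ref{36} (which forces $v$ to be joined to every vertex of each component of $\gs^{(v)}\setminus\{v\}$ that it meets) pins down $\gs^{(v)}$; treating the vertices of $I\setminus\gs^{(v)}$ by the same peeling procedure then forces $(S)$ to be of the (extended) fundamental type, contradicting the hypothesis. I expect this to be the main obstacle: it requires the precise combinatorial bookkeeping of how a connecting source sits above a non-connected fundamental core, and of how the remaining vertices attach to it.

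Granting the claim, the induction closes: choose a source $v$ of $\gs$ that is not a connecting vertex, let $(S')$ be the restriction of $(S)$ to $I\setminus\{v\}$ — a connected Hopf SDSE by restriction — and apply the induction hypothesis to obtain $G_0\subset\cdots\subset G_{k-1}=\gs\setminus\{v\}$ with $(S_0)$ fundamental or multicyclic; setting $G_k=\gs$ completes the sequence, since $v$ has no ascendant in $\gs$, hence none in $G_{k-1}$, and is non-self-dependent. Finally, the level dichotomy is preserved: as $v$ is a source, for $i,j\neq v$ every oriented path issuing from $i$ avoids $v$, so by Lemma~\ref{17}-1 the coefficients $\lambda_n^{(i,j)}$ are the same for $(S)$ and $(S')$ and the levels of the vertices of $(S')$ are unchanged in $(S)$; and by Proposition~\ref{23} the level of $v$ is finite (resp.\ infinite) according as the level of a direct descendant of $v$ — a vertex of $(S')$ — is finite (resp.\ infinite). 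Hence if $G_0$ is fundamental all vertices of $(S)$ have finite level, and if $G_0$ is multicyclic none does. It then only remains to verify the base-case assertions: the remark after Theorem~\ref{32} gives that every vertex of a system of Theorem~\ref{32}, hence (by Proposition~\ref{18}-2) of any fundamental SDSE, is of finite level; while for a cycle, identifying $I$ with $\mathbb{Z}/N\mathbb{Z}$ and using Lemma~\ref{17}-1 along the unique oriented path issuing from $i$ gives $\lambda_n^{(i,j)}=a^{(i+n-1)}_j=\delta_{j,\,i+n}$, a nonconstant periodic $\{0,1\}$-sequence since $N\geq 2$, which is never eventually affine, so no vertex of a cycle, hence (again by Proposition~\ref{18}-2) of a multicycle, is of finite level.
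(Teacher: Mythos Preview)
Your induction differs from the paper's in a key respect. The paper first runs the induction \emph{without} preserving connectedness: it removes an arbitrary source, recurses on each connected component of the remainder, and obtains a chain whose $G_0$ is a priori only a disjoint union of fundamental-or-multicyclic pieces. Only afterwards does it invoke Lemma~\ref{46} once: if $G_0$ were disconnected with some component $H$ not of the abelian form $F_x=1+\sum_{y\in I_l}h_y$, the first vertex in the chain linking $H$ to the rest would be a connecting vertex outside $G_0$ --- hence neither a descendant of a self-dependent vertex nor in a symmetric subgraph (all such structures already lie in $G_0$) --- yet also not of the third type in Lemma~\ref{46}, a contradiction. So $G_0$ is connected, or is a non-connected fundamental SDSE; either way it is fundamental in the sense of Definition~\ref{33}.

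Your approach instead tries to keep the remainder connected at every step by always choosing a non-cut source, and the gap is in your ``crucial claim''. Your sketch argues that if every source were a cut vertex then $(S)$ would be of \emph{extended} fundamental type, and calls this a contradiction with ``$(S)$ is neither fundamental nor multicyclic''. But extended fundamental (Theorem~\ref{52}) is not the same as fundamental (Definition~\ref{33}), so no contradiction arises. What you would actually need is that in this bad case $(S)$ is itself fundamental, so that $k=0$ works directly. Lemma~\ref{46} does tell you that $\gs^{(v)}\setminus\{v\}$ is a non-connected fundamental SDSE and its proof shows $F_v$ has the shape of an $I_1$-vertex over it, but you have not handled the vertices of $\gs\setminus\gs^{(v)}$, nor the interaction of several cut sources, and ``the same peeling procedure'' does not fill this in. The paper's two-step argument sidesteps the whole issue.

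Your treatment of the level dichotomy, by contrast, is correct and more explicit than the paper's bare assertion: the computation $\lambda_n^{(i,j)}=\delta_{j,\,i+n}$ for an $N$-cycle via Lemma~\ref{17}-1, the observation that a nonconstant periodic $\{0,1\}$-sequence is never eventually affine, the transfer to multicycles via Proposition~\ref{18}-2, and the stability of the $\lambda_n^{(i,j)}$ (hence of levels) under removing a source are all fine.
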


\begin{proof} {\it First step.} Let us first prove the following (weaker) result: if $(S)$ is a Hopf SDSE, there exists a sequence $(G_i)_{0\leq i \leq k}$ 
of subgraphs of $\gs$, such that:
\begin{itemize}
\item $G_0$ is the disjoint union of several fundamental systems or is multicyclic.
\item $G_k=\gs$.
\item For all $0\leq i\leq k-1$, $G_{i+1}$ is obtained from $G_i$ by adding a non-self-dependent vertex without any ascendant in $G_i$.
\end{itemize}
Let us proceed by induction on $N$. If $N=1$, then $\gs=G_0$ is formed by a single vertex which is necessarily self-dependent, so $(S)$ is fundamental.
Let us assume the induction hypothesis at rank $\leq N-1$. If any vertex of $\gs$ has an ascendant, then by corollary \ref{49},
we can take $\gs=G_0$.  If it is not the case, let us take $i$ being a vertex with no ascendant. The induction hypothesis can be applied to 
the components of $\gs-\{i\}$. We complete the sequence $(G_0,\cdots,G_k)$ given in this way by $G_{k+1}=\gs$.\\

As a consequence, the set of descendants of any self-dependent vertex, every symmetric subgraph, every multicycle of $\gs$ is included in $G_0$.\\

{\it Second step.} Let us assume that $\gs$ is connected. If $G_0$ is connected, then it is fundamental or multicyclic. If it is not, let us assume that it is 
not a non-connected abelian fundamental SDSE. So one of the components $H$ of $G_0$ is not a fundamental abelian SDSE with $I=I_0$.
Then for a good choice of $i$, the vertex  added to $G_{i-1}$ to obtain $G_i$ is a connecting vertex, connecting a subgraph containing $H$
and other subgraphs. By the first step, as it does not belong to $G_0$, this vertex is not the descendant of a self-dependent vertex and does not belong 
to a symmetric subgraph. By construction, it does not connect several components of a non-connected fundamental SDSE:
 this is a contradiction with lemma \ref{46}. So $G_0$ is of the announced form. \end{proof}

\subsection{Connected Hopf SDSE with a multicycle}

Let us precise the structure of connected Hopf SDSE containing a multicycle.

\begin{theo} \label{51}
Let $(S)$ be a connected Hopf SDSE containing a $N$-multicyclic SDSE.
Then $I$ admits a partition $I=I_{\overline{1}}\cup\cdots \cup I_{\overline{N}}$, with the following conditions:
\begin{enumerate}
\item If $x \in I_{\overline{k}}$, its direct descendants are all in $I_{\overline{k+1}}$.
\item If $x$ and $x'$ have a common direct ascendant, then they have the same direct descendants.
\end{enumerate}
Moreover, for all $x\in I$:
$$F_x=1+\sum_{x\longrightarrow y} a^{(x)}_y h_y.$$
If $x$ and $x'$ have a common direct ascendant, then $F_x=F_{x'}$. Such an SDSE will be called an extended multicyclic SDSE.
\end{theo}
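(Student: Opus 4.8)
The plan is to combine Corollary \ref{50} with the lemmas on non-self-dependent vertices and multicycles (Lemmas \ref{36}, \ref{47}, \ref{48}) to show that the multicycle "spreads" to cover all of $\gs$. By Corollary \ref{50}, there is a sequence $G_0 \subseteq G_1 \subseteq \cdots \subseteq G_k = \gs$ where $G_0$ is multicyclic (since $(S)$ contains an $N$-multicycle, no vertex is of finite level, so $G_0$ cannot be fundamental) and each $G_{i+1}$ is obtained from $G_i$ by adding a non-self-dependent vertex $x$ with no ascendant in $G_i$. So I would argue by induction on $i$ that each $G_i$ is an extended multicyclic graph in the sense of the statement: it admits a partition into $N$ blocks indexed by $\mathbb{Z}/N\mathbb{Z}$ with direct descendants of a vertex in $I_{\overline k}$ lying only in $I_{\overline{k+1}}$, and siblings have the same direct descendants and equal formal series.

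The base case is Proposition \ref{37}: $G_0$ multicyclic of length $N\ge 2$ already satisfies all conditions (for $N\ge 3$ this is exactly Proposition \ref{37}; for $N=2$ it is the quasi-complete/bipartite case handled in Section 5, where one checks $F_x = 1 + \sum h_y$ and siblings agree). For the inductive step, suppose $G_i$ is extended multicyclic and $x$ is a new vertex with no ascendant in $G_i$, so all neighbours of $x$ in $G_{i+1}$ are direct descendants of $x$. Since $x$ is not self-dependent and has direct descendants in $G_i$, Lemma \ref{47} applies: $x$ has direct descendants in at most one block $I_{\overline k}$ of the multicycle partition of $G_i$. Now I would use Lemma \ref{36} repeatedly, exactly as in the proof of Lemma \ref{48}, to show that every direct descendant of $x$ in block $I_{\overline k}$ forces $x$ to be a direct ascendant of every direct descendant of that vertex, i.e. of everything in $I_{\overline{k+1}}$; iterating, $x$ is adjacent to entire blocks, and by Lemma \ref{47} these blocks must be consecutive. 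The only consistent possibility (since there is no self-loop and no ascendant) is that the direct descendants of $x$ are exactly one block $I_{\overline{k+1}}$ (after possibly re-indexing), so $x$ can be placed in $I_{\overline k}$; this extends the partition to $G_{i+1}$.

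It remains to check the shape of $F_x$ and the sibling condition. Since $x$ is not self-dependent and (for $N\ge 3$) all oriented paths from $x$ back to any descendant have length $\ge 3$ — more precisely, $x$ is not a descendant of itself and one invokes Lemma \ref{28} applied with $i=j=x$ (the remark after that lemma) — we get $F_x = 1 + \sum_{x\to y} a^{(x)}_y h_y$. The case $N=2$ needs the preliminary-case analysis of Proposition \ref{38} (restriction to $x$ and two of its descendants) to rule out the geometric series and get the affine form. Finally, if $x$ and $x'$ have a common direct ascendant $z$, then since $F_z$ is affine, Proposition \ref{18}-3 gives $\lambda^{(x)}_n = \lambda^{(x')}_n$ for all $n$, hence by Lemma \ref{17}-2, $F_x = F_{x'}$; in particular $x$ and $x'$ have the same direct descendants. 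The main obstacle I anticipate is the inductive step's case analysis showing that a newly added vertex must attach to exactly one block and consecutively — getting the bookkeeping with Lemmas \ref{36} and \ref{47} right, and separately handling $N=2$ where the multicycle is a complete bipartite graph and the arguments degenerate — rather than any deep new idea.
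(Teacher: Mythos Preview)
Your overall strategy---induction along the sequence of Corollary \ref{50}, using Lemmas \ref{36} and \ref{47} to place the new vertex in a single block, Lemma \ref{28} for the affine shape of $F_x$, and Proposition \ref{18}-3 for the sibling condition---matches the paper's approach. But two details do not go through as written.

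The invocation of Lemma \ref{28} with $i=j=x$ fails: that lemma requires $j$ to be a \emph{descendant} of $i$, and since the newly added vertex $x$ has no ascendant in $G_{i+1}$, it is not a descendant of itself. The remark after Lemma \ref{28} applies only to non-self-dependent vertices lying on a cycle, which $x$ does not. The paper instead chooses $j$ to be a vertex reached from $x$ by a path of length exactly $N$ through the extended multicycle (so $j$ lands back in $x$'s own block $I'_{\overline{m-1}}$), and observes there is no shorter such path; this is what makes Lemma \ref{28} fire when $N\geq 3$. For $N=2$, your appeal to the preliminary cases of Proposition \ref{38} is too loose: restricting to $x$ and two of its descendants does not give a quasi-complete graph, since $x$ has no ascendant. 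The paper handles $N=2$ by a direct coefficient computation: take a direct descendant $1$ of $x$ and a direct descendant $0$ of $1$, use the induction hypothesis ($F_1$ affine, so $a^{(1)}_{0,0}=0$) to get $\lambda_3^{(x,0)}=0$, and from this deduce $a^{(x)}_{i,j}=0$ for all direct descendants $i,j$ of $x$. One further subtlety: Lemma \ref{47} only constrains descendants lying in the \emph{original} multicycle $G_0$, not in the extended partition of $G_i$; the paper bridges this by assuming $x$ has direct descendants $y,z$ in distinct blocks of $G_i$ and iterating Lemma \ref{36} to replace $y,z$ by their descendants until both lie in $G_0$, at which point Lemma \ref{47} gives the contradiction.
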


\begin{proof} We use the notations of corollary \ref{50}. We proceed by induction on $k$. If $k=0$, $(S)$ is a multicycle and the result is immediate.
Let us assume the result at rank $k-1$ and let $(S')$ be the restriction of $(S)$ to all the vertices except the last one, denoted by $x$. 
By the induction hypothesis, the set of its vertices admits a partition $I'=I'_{\overline{1}}\cup\cdots \cup I'_{\overline{N}}$, with the required conditions.
Let us first prove that all the direct descendants of $x$ are in the same $I'_{\overline{m}}$. Let $y \in I_{\overline{k}}$ and $z\in I_{\overline{l}}$ be two 
direct descendants of $x$, with $\overline{k}\neq \overline{l}$. Let $y' \in I_{\overline{k+1}}$ be a direct descendant of $y$ and $z' \in I_{\overline{l+1}}$ 
be a direct descendant of $z$. Lemma \ref{36} implies that $x$ is a direct ascendant of $z'$ and $y'$, as $y$ can't be a direct ascendant of $z'$ and
$z$ can't be a direct ascendant of $y'$ because $\overline{k}\neq \overline{l}$. So we can replace $y$ by $y'$ and $z$ by $z'$. Iterating the process, 
we can assume that $y$ and $z$ are in the multicycle: this contradicts lemma \ref{47}. So the direct descendants of $x$ are all in $I_{\overline{m}}$ 
for a good $m$. We then take $I_{\overline{l}}=I_{\overline{l}}'$ if $\overline{l}\neq \overline{m-1}$ and $I_{\overline{m-1}}=I_{\overline{m-1}}'\cup \{x\}$
and this proves the first assertion on $\gs$.

We now prove the assertion on $F_x$. We separate the proof into two subcases. Let us first assume $M\geq 3$. There is an oriented path 
$x\rightarrow x_{\overline{m}} \rightarrow \cdots \rightarrow x_{\overline{m+M-1}}$, with $x_{\overline{i}}\in I'_{\overline{i}}$ for all $i$. Moreover, there is 
no shorter oriented path from $x$ to $x_{\overline{m+M-1}}$. As $M\geq 3$, from lemma \ref{28}:
$$F_x=1+\sum_{x\longrightarrow y} a^{(x)}_y h_y.$$
Let us secondly assume that $M=2$.  Let $1,\ldots,p$ be the direct descendants of $x$ and let $0$ be a direct descendant of $1$. Then as $1,\ldots,p$ 
are in the same part of the partition of $I'$, they are not direct descendants of $1$. Let us first restrict to $\{x,1,0\}$. By proposition \ref{16},
$\lambda_3^{(x,0)}a_{\tdtroisdeux{$x$}{$1$}{$0$}}=0$ as $a^{(1)}_{0,0}=0$ by the induction hypothesis, $\lambda_3^{(x,0)}=0$. Moreover,
$0=\lambda_3^{(x,0)}a_{\tdtroisun{$x$}{$1$}{$1$}}=a_{\tdquatretrois{$x$}{$1$}{$0$}{$1$}}$, so $a^{(x)}_{1,1}=0$. Similarly,
$a^{(x)}_{2,2}=\cdots=a^{(x)}_{p,p}=0$. Let us now take $1\leq i<j\leq p$. Then $\lambda_2^{(x,i)}a_{\tddeux{$x$}{$i$}}=0$, so $\lambda_2^{(x,i)}=0$ 
and $0=\lambda_2^{(x,i)}a_{\tddeux{$x$}{$j$}}=a_{\tdtroisdeux{$x$}{$j$}{$i$}}$, so $a^{(x)}_{i,j}=0$. As a conclusion, $F_x$ is of the required form.

Proposition \ref{18}-3 implies that $F_x=F_{x'}$ if $x$ and $x'$ have a common ascendant, and this implies the second assertion on $\gs$. \end{proof}\\

{\bf Remark.} In particular, the vertex added to $G_i$ in order to obtain $G_{i+1}$ is an extension vertex. By proposition \ref{11}, any such SDSE is Hopf.

\subsection{Connected Hopf SDSE with finite levels}

We now prove the following theorem:

\begin{theo} \label{52}
Let $(S)$ be a connected Hopf SDSE, such that any vertex of $(S)$ has a finite level. Then $(S)$ is obtained from a fundamental system
by a finite number (possibly $0$) of extensions. Such an SDSE will be called an extended fundamental SDSE.
\end{theo}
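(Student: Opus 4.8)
The plan is to induct on the number $k$ of "extension steps'' produced by corollary \ref{50}, since that corollary already guarantees that a connected Hopf SDSE $(S)$ with all vertices of finite level comes with a chain of subgraphs $G_0 \subseteq G_1 \subseteq \cdots \subseteq G_k = \gs$ in which $G_0$ is fundamental (not multicyclic, precisely because the multicyclic case has no vertex of finite level) and each $G_{i+1}$ is obtained from $G_i$ by adjoining a single non-self-dependent vertex $x_{i+1}$ with no ascendant in $G_i$. The base case $k=0$ is exactly the statement that $(S)$ is fundamental. For the inductive step, I would let $(S')$ denote the restriction of $(S)$ to $G_{k-1}$; by the induction hypothesis $(S')$ is an extended fundamental SDSE, obtained from a fundamental system by finitely many extensions. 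It then remains to show that adjoining the last vertex $x = x_k$ is itself an \emph{extension} in the technical sense of Proposition \ref{11}, i.e.\ that $F_x = 1 + \sum_{i\in I} a_x^{(i)} h_i$ is affine, and that the compatibility condition "$F_i = F_j$ for all $i,j$ with $a_x^{(i)}, a_x^{(j)} \neq 0$'' holds.

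The affineness of $F_x$ is where the level hypothesis does the work. First, $x$ has no ascendant in $G_{k-1}$ and is not self-dependent, so $x$ is not a descendant of itself; hence one can apply Lemma \ref{28} with $i=j=x$ provided every oriented path from $x$ back to $x$ has length $\geq 3$ — but there are no such paths at all, so the length condition is vacuous and Lemma \ref{28} gives $F_x = 1 + \sum_{x\to l} a_x^{(l)} h_l$ directly, \emph{unless} the hypothesis of Lemma \ref{28} fails for a subtler reason. The cleaner route is to use the level structure: by Proposition \ref{23}, the level of $x$ is one more than the level of any direct descendant (when that level is $\geq 1$), or $x$ has level $0$ or $1$ when its direct descendants have level $0$. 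If $x$ has level $\geq 2$, Proposition \ref{29} says directly that $x$ is an extension vertex, and since $x$ has no ascendant in $G_{k-1}$, the set of descendants of $x$ is all of $G_{k-1}$, so $(S)$ is literally an extension of $(S')$ and we are done. So the remaining cases are $x$ of level $0$ or $1$. Here I would invoke Lemmas \ref{26} and \ref{27}, which describe $F_x$ completely in terms of the coefficients $\lambda_n^{(x,j)}$: in every case $F_x$ has one of the explicit product/logarithm forms. One then has to rule out all the genuinely non-affine forms.

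To rule them out, observe that since $x$ has no ascendant and all vertices have finite level, Lemma \ref{25} forces $b_j = 0$ for every vertex $j$ that fails to be a descendant of some finite-level vertex — more usefully, combining Lemma \ref{25} with the fact that $x$ itself is not a descendant of anything: for any descendant $j$ of $x$, since $x$ is not a descendant of $j$ (no cycles through $x$), we get $b_j = 0$. Thus all the coefficients $b_j$ appearing in $\lambda_n^{(x,j)} = b_j(n-1) + \tilde a_j^{(x)}$ vanish, so the $\lambda_n^{(x,j)}$ are eventually constant. Feeding this into the descriptions of Lemmas \ref{26}, \ref{27} collapses the $f_\beta$ factors: $f_\beta$ with the relevant parameter becomes $f_0$-type or linear only when the corresponding $\beta$-parameter is forced to a degenerate value, but a more direct argument is that $b_j = 0$ for all $j$ together with $x$ of level $\leq 1$ forces, via Proposition \ref{19}-1 applied to $F_x$, the recursion $a^{(x)}_{(p_1,\ldots,p_j+1,\ldots,p_N)} = \frac{1}{p_j+1}(\tilde a_j^{(x)} - \sum_l p_l a_j^{(l)}) a^{(x)}_{(p_1,\ldots,p_N)}$ with all higher corrections controlled; one then checks that the only consistent solutions with $\hs$ Hopf and $x$ having no self-loop are the affine ones, exactly as in the second step of the proof of Proposition \ref{29} (where the same kind of bookkeeping with small trees $\tdtroisun{$i$}{$j$}{$j$}$, $\tdquatretrois{$i$}{$j$}{$k$}{$j$}$, etc.\ pins down the coefficients). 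So $F_x = 1 + \sum_i a_x^{(i)} h_i$.

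Finally, the compatibility condition $F_i = F_j$ for $i,j$ with $a_x^{(i)}, a_x^{(j)} \neq 0$ follows from Proposition \ref{18}-3: since $F_x$ is affine and $i, j$ are direct descendants of $x$, that proposition gives $\lambda_{n+1}^{(x,l)} = \lambda_n^{(i,l)}$ and $\lambda_{n+1}^{(x,l)} = \lambda_n^{(j,l)}$ for all $l$ and all $n$, whence $\lambda_n^{(i,l)} = \lambda_n^{(j,l)}$ for all $n \geq 1$, and Lemma \ref{17}-2 then yields $F_i = F_j$. Therefore, by Proposition \ref{11}, $(S)$ is an extension of $(S')$, and by the induction hypothesis $(S')$ is obtained from a fundamental system by finitely many extensions; hence so is $(S)$. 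I expect the main obstacle to be the second-to-last step: verifying carefully that in the level-$0$ and level-$1$ cases the explicit formulas of Lemmas \ref{26} and \ref{27} really do degenerate to affine series once $b_j = 0$ for all relevant $j$ and $x$ carries no self-loop — this requires either a careful reading of those lemmas' parameters (noting $f_0(h) = e^h$ is not affine, so one must show the $f_0$ factors also collapse, using $a_x^{(x)} = 0$) or a direct re-run of the small-tree computations from Proposition \ref{29}. One must also make sure the inductive chain from corollary \ref{50} is genuinely available in the finite-level case, which is exactly the parenthetical "If $G_0$ is fundamental, any vertex is of finite level'' clause of that corollary, read contrapositively.
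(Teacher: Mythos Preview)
Your inductive framework via Corollary \ref{50} and your treatment of the level $\geq 2$ case (through Proposition \ref{29}) are correct and match the paper. The gap is in the level-$0$ and level-$1$ cases: your claim that $F_x$ must be affine there is \emph{false}.

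For a concrete counterexample, take $(S')$ to be the one-vertex fundamental system $F_1=(1-h_1)^{-1}$ (so $I_0=\{1\}$, $\beta_1=1$), and enlarge it by a vertex $0$ with $F_0=f_{\beta_1/(1+\beta_1)}((1+\beta_1)h_1)=(1-h_1)^{-2}$. This is precisely the system of Theorem \ref{32} with $I_0=\{1\}$, $K_0=\{0\}$, hence Hopf; the vertex $0$ has level $0$ and no ascendant, yet $F_0$ is not affine. So $0$ is \emph{not} an extension vertex, and your argument breaks down. Your use of Lemma \ref{25} to force $b_j=0$ is based on a misreading (the statement that actually follows from Proposition \ref{18}-1 is: $b_j=0$ whenever there exists $i$ such that $j$ is not a descendant of $i$, the opposite direction). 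In the counterexample, $1$ is a descendant of both $0$ and $1$, so nothing forces $b_1=0$; indeed $b_1=1+\beta_1=2$.

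The paper's route in these two cases is genuinely different from yours. When the new vertex $0$ has level $0$ or level $1$ (and is not already an extension vertex), the paper does \emph{not} show that $(S)$ is an extension of $(S')$. Instead, using Proposition \ref{19}-1 together with the relations coming from computing $a^{(0)}_{i,j}$ in two ways, it pins down $F_0$ and shows that the enlarged system $(S)$ is again a fundamental system in the sense of Theorem \ref{32}, with $0$ landing in $K_0\cup I_1$ (level-$0$ case) or in $I_1\cup J_1$ (level-$1$ case). In other words, the fundamental core is allowed to \emph{grow} during the induction; only the vertices of level $\geq 2$ become genuine extension vertices. Your ``main obstacle'' is thus not a matter of checking that certain $f_\beta$ factors degenerate---they don't---but of replacing the affine claim entirely by this absorption-into-the-fundamental-system argument.
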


\begin{proof} Let $(S)$ be a connected Hopf SDSE, such that any vertex of $(S)$ is of finite level. We use notations of corollary \ref{50}. We shall proceed 
by induction on $k$. If $k=0$, then $S=S_0$ and the result is obvious. Let us now assume the result at rank $k-1$. By the induction hypothesis, 
the system $(S')$ associated to $G_{k-1}$ is a dilatation of a system of theorem \ref{32}. Moreover, $G$ is obtained from $G_{k-1}$ by adding a vertex 
with all its direct descendants in $G_{k-1}$. Let us denote by $0$ this vertex. We separate the proof into three cases.\\

{\it First case}. Let us assume that $0$ is of level $0$. Then all the direct descendants of $0$ are of level $0$, so are in $I_0 \cup J_0 \cup I_1$, 
and $\nu_x=1$ for all direct descendants of $x$ in $J_i$ with $i\in I_1$. Moreover, for all $x\in I$, $\lambda_n^{(0,x)}=b_x(n-1)+a^{(0)}_x$.

Let us take $x,y \in I$. Using proposition \ref{19}-1 into two different ways:
$$a^{(0)}_{x,y}=\left(b_y+a_y^{(0)}-a^{(x)}_y\right)a^{(0)}_x=\left(b_x+a_x^{(0)}-a^{(y)}_x\right)a^{(0)}_y.$$
So, for all $x,y \in I$:
\begin{equation} \label{E7}
\left(b_y-a^{(x)}_y\right)a_x^{(0)}=\left(b_x-a_x^{(y)}\right)a_y^{(0)}.
\end{equation}
If $x$ and $y$ are in the same $I_i$ with $i\in I_0 \cup J_0$, then $b_y-a^{(x)}_y=b_x-a^{(y)}_x\neq 0$, so $a_x^{(0)}=a_y^{(0)}$ and for all $n \geq 1$,
$\lambda_n^{(0,x)}=\lambda_n^{(0,y)}$. Hence, up to a restriction, we can assume that there is no dilatations on $(S')$.

Let $i\in I_1$. If $\nu_i\neq 1$, we already know that $a^{(0)}_i=0$. Let us assume $\nu_i=1$ and let us choose $j\in I_0\cup J_0 \cup K_0$, such that
$a^{(i)}_j \neq b_j$. Then $b_i=a^{(j)}_i=0$, so (\ref{E7}) gives $\left(b_j-a^{(i)}_j\right)a^{(0)}_i=0$. So $a^{(0)}_i=0$ for all $i\in I_1$. So the direct
descendants of $0$ are all in $I_0 \cup J_0 \cup K_0$. Using proposition \ref{19}-1 with $i \in I_0 \cup J_0 \cup K_0$:
\begin{eqnarray*}
a^{(0)}_{(p_1,\cdots,p_{i+1},\cdots,p_N)}&=&\left(a^{(0)}_i+b_i(p_1+\cdots+p_N)-\sum_{j\in I_0\cup J_0 \cup K_0-\{i\}}b_i p_j-a^{(i)}_i p_i\right)
\frac{a^{(0)}_{(p_1,\cdots,p_N)}}{p_i+1}\\
&=&\left(a^{(0)}_i+\left(b_i-a^{(i)}_i\right)p_i\right)\frac{a^{(0)}_{(p_1,\cdots,p_N)}}{p_i+1}.
\end{eqnarray*}
So:
$$F_0=\prod_{i\in I_0} f_{\frac{\beta_i}{a^{(0)}_i}}\left(a^{(0)}_i h_i\right) \prod_{i\in J_0} f_{\frac{1}{a^{(0)}_i}}\left(a^{(0)}_i h_i\right)
\prod_{i\in K_0}f_0\left(a^{(0)}_i h_i\right).$$
So $(S)$ is a system of theorem \ref{32}, with $0\in K_0 \cup I_1$.\\

{\it Second case}.  Let us assume that $0$ is of level $1$ and is not an extension vertex. Then all the direct descendants of $0$ are of level $0$, 
so are in $I_0 \cup J_0 \cup I_1$, and $\nu_x=1$ for all direct descendants of $x$ in $I_1$. Moreover, for all $i\in I$, $\lambda_1^{(0,i)}=a^{(0)}_i$ 
and $\lambda_n^{(0,i)}=b_i(n-1)+\tilde{a}^{(0)}_i$ if $n\geq 2$.

{\it First item}. Let us assume that $a^{(0)}_i=0$. Then by proposition \ref{19}-1:
\begin{eqnarray*}
a^{(0)}_{(p_1,\cdots,1,\cdots,p_N)}&=&\left(\tilde{a}_i^{(0)}+b_i(p_1+\cdots+p_N)-\sum_{j=1}^N a^{(j)}_i p_j\right)a^{(0)}_{(p_1,\cdots,0,\cdots,p_N)}\\
0&=&\left(\tilde{a}_i^{(0)}-\sum_{j\in I_1} a^{(j)}_i p_j\right)a^{(0)}_{(p_1,\cdots,0,\cdots,p_N)}.
\end{eqnarray*}
If there is a $j\in I_0 \cup J_0 \cup K_0$, such that $a^{(0)}_j\neq 0$, then for $(p_1,\cdots,p_N)=\varepsilon_j$, we obtain $\tilde{a}_i^{(0)}=0$. 
If it is not the case, as $0$ is not an extension vertex, there exists $j,k \in I_1$, $a^{(0)}_{j,k}\neq 0$ (so $a^{(0)}_j \neq 0$ and $a^{(0)}_k \neq 0$). 
Then, for $(p_1,\cdots,p_N)=\varepsilon_j$, $(p_1,\cdots,p_N)=\varepsilon_k$, and $(p_1,\cdots,p_N)=\varepsilon_j+\varepsilon_k$, we obtain:
$$\tilde{a}^{(0)}_i+a^{(j)}_i=\tilde{a}^{(0)}_i+a^{(k)}_i=\tilde{a}^{(0)}_i+a^{(j)}_i+a^{(k)}_i=0.$$
So $\tilde{a}^{(0)}_i=0$. So in all cases, $\tilde{a}^{(0)}_i=0$. Moreover, for $(p_1,\cdots,p_N)=\varepsilon_j$ for any $j \in I_1$,
we obtain $a^{(j)}_i a^{(0)}_j=0$. As a conclusion, we proved:
\begin{enumerate}
\item For all $i\in I$, $\left(a^{(0)}_i=0\right) \Longrightarrow \left(\tilde{a}^{(0)}_i=0\right)$.
\item Let us put $I_1^{(0)}=\left\{i\in I_1\:/\:a^{(0)}_i \neq 0\right\}$. Then for $i\in I$, such that $a^{(0)}_i=0$, for all $j \in I_1^{(0)}$, $a^{(j)}_i=0$.
\end{enumerate}

{\it Second item}. Let us take $i,j \in I$. Using proposition \ref{19}-1 into two different ways:
\begin{equation} \label{E8}
a^{(0)}_{i,j}=\left(b_j+\tilde{a}_j^{(0)}-a^{(i)}_j\right)a^{(0)}_i=\left(b_i+\tilde{a}_i^{(0)}-a^{(j)}_i\right)a^{(0)}_j.
\end{equation}
Let us take $i,j \in I_1$. Then $a^{(i)}_j=a^{(j)}_i=b_i=b_j=0$, so (\ref{E8}) gives:
$$\tilde{a}_j^{(0)}a_i^{(0)}=\tilde{a}_i^{(0)}a_j^{(0)}.$$
So $\left(\tilde{a}^{(0)}_i\right)_{i\in I_1}$ and $\left(a^{(0)}_i\right)_{i\in I_1}$ are colinear. By the first item, we deduce that there exists a scalar $\nu \in K$, 
such that for all $i\in I_1$, $\tilde{a}^{(0)}_i=\nu a^{(0)}_i$. Let us now take $i,j \in I_0\cup J_0 \cup K_0$, with $i\neq j$. Then $b_i=a^{(j)}_i$
and $b_j=a^{(i)}_j$, so (\ref{E8}) gives:
$$\tilde{a}_j^{(0)}a_i^{(0)}=\tilde{a}_i^{(0)}a_j^{(0)}.$$
So $\left(\tilde{a}^{(0)}_i\right)_{i\in I_0 \cup J_0 \cup K_0}$ and $\left(a^{(0)}_i\right)_{i\in  I_0 \cup J_0 \cup K_0}$ are colinear. 
By the first item, we deduce that there exists a scalar $\nu' \in K$, such that for all $i\in I_0 \cup J_0 \cup K_0$, $\tilde{a}^{(0)}_i=\nu' a^{(0)}_i$.
Let us now take $i\in I_0\cup J_0 \cup K_0$ and $j \in I_1$. Then $b_j=a^{(i)}_j=0$, so $\nu a^{(0)}_j a^{(0)}_i=\left(b_i+\nu'a_i^{(0)}-a^{(j)}_i\right)a^{(0)}_j$. 
In other words:
\begin{equation}\label{E9}
\forall i\in I_0\cup J_0 \cup K_0,\: \forall j\in I_1,\: (\nu-\nu') a^{(0)}_i a^{(0)}_j=(b_i-a^{(j)}_i)a_j^{(0)}.
\end{equation}

{\it Third item.} Let us assume that $I_1^{(0)}=\emptyset$. Then all the direct descendants of $0$ are in $I_0 \cup J_0 \cup K_0$. 
Moreover, if $i\in I_0 \cup J_0 \cup K_0$:
\begin{eqnarray*}
a^{(0)}_{(p_1,\cdots,p_{i+1},\cdots,p_N)}&=&\left(\nu a^{(0)}_i+b_i(p_1+\cdots+p_N)-\sum_{j\in I_0\cup J_0 \cup K_0-\{i\}}b_i p_j-a^{(i)}_i p_i\right)
\frac{a^{(0)}_{(p_1,\cdots,p_N)}}{p_i+1}\\
&=&\left(\nu a^{(0)}_i+\left(b_i-a^{(i)}_i\right)p_i\right)\frac{a^{(0)}_{(p_1,\cdots,p_N)}}{p_i+1}.
\end{eqnarray*}
It is then not difficult to show that $(S)$ is a system of theorem \ref{32}, with $0 \in I_1$.\\

{\it Fourth item}. Let us assume that $\nu=\nu'$. Let $j\in I_1$. If $\nu_j\neq 1$, then we already know that $a^{(0)}_j=0$. If $\nu_j=1$, then for a good choice
of $i$, $b_i-a^{(j)}_i \neq 0$ in (\ref{E9}), so $a^{(0)}_j=0$: then $I_1^{(0)}=\emptyset$, and the result is proved in the third item.\\

{\it Fifth item}. Let us assume that $I_1^{(0)} \neq \emptyset$. By the preceding item, $\nu \neq \nu'$. Let us take $j\in I_1^{(0)}$. By (\ref{E9}), 
for all $i\in I_0\cup J_0 \cup K_0$, $a^{(j)}_i=b_i-(\nu-\nu')a^{(0)}_i$ does not depend of $j$. As a consequence, $F_j=F_k$ for all $j,k\in I_1^{(0)}$.
We put $b_i^{(0)}=a^{(j)}_i$ for all $i\in I_0 \cup J_0 \cup K_0$, where $j$ is any element of $I_1^{(0)}$. Let us use proposition \ref{19}-1. 
For all $i\in I_0 \cup J_0 \cup K_0$, if $(p_1,\cdots,p_N)\neq (0,\cdots,0)$:
$$a^{(0)}_{(p_1,\cdots,p_i+1,\cdots,p_N)}=
\left( \nu' a_i^{(0)}+\left(b_i-a^{(i)}_i\right)p_i+(\nu-\nu')a_i^{(0)}\sum_{j\in I_1^{(0)}} p_j\right)\frac{a^{(0)}_{(p_1,\cdots,p_N)}}{p_i+1}.$$
For all $j\in I_1^{(0)}$, if $(p_1,\cdots,p_N)\neq (0,\cdots,0)$:
$$a^{(0)}_{(p_1,\cdots,p_i+1,\cdots,p_N)}=\nu a^{(0)}_i \frac{a^{(0)}_{(p_1,\cdots,p_N)}}{p_i+1}.$$
Let us fix $i\in I_0\cup J_0 \cup K_0$ and $j\in I_1^{(0)}$. Then:
\begin{eqnarray*}
a^{(0)}_{i,i}&=&\left(\nu'a^{(0)}_i+b_i-a^{(i)}_i\right)a_i^{(0)},\\
a^{(0)}_{i,i,j}&=&\nu a^{(0)}_i a^{(0)}_j \left(\nu'a^{(0)}_i+b_i-a^{(i)}_i\right),\\
a^{(0)}_{i,j}&=&\nu a^{(0)}_i a^{(0)}_j ,\\
a^{(0)}_{i,i,j}&=&\nu a^{(0)}_i a^{(0)}_j \left(\nu'a^{(0)}_i+b_i-a^{(i)}_i+(\nu-\nu') a^{(0)}_i\right).
\end{eqnarray*}
Identifying the two expressions of $a^{(0)}_{i,i,j}$, as $\nu \neq \nu'$ and $a^{(0)}_j \neq 0$, we obtain $\nu\left(a^{(0)}_i\right)^2=0$. 
If for all $i\in I_0 \cup J_0 \cup K_0$, $a^{(0)}_i=0$, then by the second item, for all $j\in I_1^{(0)}$, $a^{(j)}_i=0$, then $F_j=1$; this is impossible. 
So there is an $i\in I_0 \cup J_0 \cup K_0$, such that $a^{(0)}_i\neq 0$. As a consequence, $\nu=0$. So $\nu' \neq 0$, and we then easily obtain that:
\begin{eqnarray*}
F_0&=&\frac{1}{\nu'} \prod_{i\in I_0} f_{\frac{\beta_i}{b_i^{(0)}-1-\beta_i}}\left(\left(b_i^{(0)}-1-\beta_i\right)h_i\right)
\prod_{i\in J_0} f_{\frac{1}{b_i^{(0)}-1}}\left(\left(b_i^{(0)}-1\right)h_i\right)\prod_{i\in I_0} f_0\left(b_i^{(0)}h_i\right)\\
&&+\sum_{i\in I_1^{(0)}} a^{(0)}_i h_i+1-\frac{1}{\nu'}.
\end{eqnarray*}
So $(S)$ is a system of theorem \ref{32}, with $0 \in J_1$.\\

{\it Third case.} $0$ is a vertex of level $\geq 2$. By proposition \ref{29}, it is an extension vertex.
\end{proof}

\section{Lie algebra and group associated to $\hs$, associative case}

Let us consider a connected Hopf SDSE $(S)$. We now study the pre-Lie algebra $\lies$ of proposition \ref{21}. We separate this study into three cases:
\begin{itemize}
\item {\it Associative case}: the pre-Lie algebra $\lies$ is associative. This holds in particular if $(S)$ is an extended multicyclic SDSE.
\item {\it Abelian case}: $(S)$ is an extended fundamental, abelian SDSE (see definition \ref{33}).
\item {\it Non-abelian case}: $(S)$ is an extended fundamental, non-abelian SDSE.
\end{itemize}

We first treat the associative case.

\subsection{Characterization of the associative case}

\begin{prop} \label{53}
Let $(S)$ be a Hopf SDSE. Then the pre-Lie algebra $\lies$ is associative if, and only if, for all $i\in I$:
$$F_i=1+\sum_{i\longrightarrow j} a^{(i)}_j h_j.$$
\end{prop}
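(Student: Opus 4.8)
The plan is to exploit the explicit formula for the pre-Lie product from Proposition \ref{21}, namely $f_j(l) \star f_i(k) = \lambda_k^{(i,j)} f_i(k+l)$, and translate associativity of $\star$ into a condition on the coefficients $\lambda_n^{(i,j)}$. First I would compute both sides of the associativity relation $(f_m(r) \star f_j(l)) \star f_i(k) = f_m(r) \star (f_j(l) \star f_i(k))$ on basis elements. The left side is $\lambda_l^{(j,m)} f_j(l+r) \star f_i(k) = \lambda_l^{(j,m)} \lambda_k^{(i,j)} f_i(k+l+r)$, while the right side is $f_m(r) \star \lambda_k^{(i,j)} f_i(k+l) = \lambda_k^{(i,j)} \lambda_{k+l}^{(i,m)} f_i(k+l+r)$. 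So associativity (using that $\star$ is automatically pre-Lie, hence only the left-symmetry already holds and associativity is the extra constraint) is equivalent to $\lambda_l^{(j,m)} \lambda_k^{(i,j)} = \lambda_k^{(i,j)} \lambda_{k+l}^{(i,m)}$ for all $i,j,m \in I$, $k,l \geq 1$.

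The key reduction is then: whenever $\lambda_k^{(i,j)} \neq 0$ we must have $\lambda_{k+l}^{(i,m)} = \lambda_l^{(j,m)}$ for all $l \geq 1$ and all $m$. Taking $j$ to be a direct descendant of $i$ so that $\lambda_1^{(i,j)} = a_j^{(i)} \neq 0$, this forces $\lambda_{1+l}^{(i,m)} = \lambda_l^{(j,m)}$, which is exactly the relation appearing in Proposition \ref{18}-3 and in the proof of Proposition \ref{23}. More importantly, applying the associativity condition with $m$ ranging over descendants and iterating along oriented paths, I would show that $\lambda_n^{(i,j)}$ cannot depend on $n$ in a nontrivial way: concretely, if $i \longrightarrow j$ then $\lambda_2^{(i,j)}$ must equal something forced, and pushing this through one shows $a^{(i)}_{j,k} = 0$ for all $j,k$, i.e. $F_i$ has no quadratic (or higher) terms. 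Combined with Lemma \ref{17}-2, vanishing of all $a^{(i)}_{j,k}$ propagates to vanishing of all $a^{(i)}_{(p_1,\dots,p_N)}$ with $|p| \geq 2$, giving $F_i = 1 + \sum_{i \to j} a^{(i)}_j h_j$.

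For the converse, I would assume $F_i = 1 + \sum_{i \to j} a_j^{(i)} h_j$ for all $i$ and verify associativity directly. In this affine case Lemma \ref{17}-1 gives a clean formula: for a path $i = i_1 \to \cdots \to i_n$ one gets $\lambda_n^{(i,j)} = a^{(i_n)}_j$ (all the correction terms $a^{(i_p)}_{j,i_{p+1}}$ vanish since the $F$'s are affine). So $\lambda_n^{(i,j)}$ equals $a_j^{(i')}$ where $i'$ is any vertex reachable from $i$ by a path of length $n-1$; in particular $\lambda_n^{(i,j)}$ depends only on the endpoint of such paths, and one checks $\lambda_l^{(j,m)}\lambda_k^{(i,j)} = \lambda_k^{(i,j)}\lambda_{k+l}^{(i,m)}$ holds because when $\lambda_k^{(i,j)} = a_j^{(i'')} \neq 0$ (with $i''$ at distance $k-1$ from $i$), the vertex $j$ is a direct descendant of $i''$, and then both $\lambda_{k+l}^{(i,m)}$ and $\lambda_l^{(j,m)}$ equal $a_m^{(i''')}$ for $i'''$ at distance $l-1$ from $j$. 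The main obstacle I anticipate is the forward direction: carefully extracting from the single scalar identity $\lambda_l^{(j,m)} \lambda_k^{(i,j)} = \lambda_k^{(i,j)} \lambda_{k+l}^{(i,m)}$ enough information to kill all quadratic terms of every $F_i$, in particular handling self-dependent vertices and vertices whose only descendants are reached by longer paths — here invoking Lemma \ref{17}-2 to bootstrap from $a^{(i)}_{j,k}=0$ to full affineness is the clean way to close the argument, but pinning down $a^{(i)}_{j,k}=0$ in all configurations (using Proposition \ref{16} and small trees, as in the proofs of Lemmas \ref{36} and \ref{28}) is the delicate part.
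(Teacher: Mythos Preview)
Your setup is correct and your associativity identity $\lambda_l^{(j,m)}\lambda_k^{(i,j)}=\lambda_k^{(i,j)}\lambda_{k+l}^{(i,m)}$ is exactly the right starting point. For the forward direction you are on the same track as the paper, but what you flag as ``the delicate part'' is in fact a two-line computation: take $k=l=r=1$ with $a^{(i)}_j\neq 0$ to get $\lambda_2^{(i,k)}=a^{(j)}_k$, and then apply Proposition~\ref{16} to the tree $t'=\tddeux{$i$}{$j$}$, which yields
\[
a^{(i)}_j a^{(j)}_k=\lambda_2^{(i,k)} a_{\tddeux{$i$}{$j$}}=a_{\tdtroisdeux{$i$}{$j$}{$k$}}+(1+\delta_{j,k})a_{\tdtroisun{$i$}{$k$}{$j$}}
=a^{(i)}_j a^{(j)}_k+(1+\delta_{j,k})a^{(i)}_{j,k},
\]
so $a^{(i)}_{j,k}=0$. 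There is no iteration along paths needed, and the remark after Lemma~\ref{17} handles the case where $a^{(i)}_j$ or $a^{(i)}_k$ vanishes.

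Your converse is genuinely different from the paper's. You verify the scalar identity on the $\lambda$'s directly via Lemma~\ref{17}-1 (with the $a^{(i_p)}_{j,i_{p+1}}$ correction terms vanishing in the affine case), which works but requires the observation that $\lambda_n^{(i,j)}=a^{(i_n)}_j$ is well-defined independently of the path. The paper instead argues structurally: when each $F_i$ is affine, every $X_i(n)$ is a linear span of ladders, so for $x\in\mathrm{Vect}(X_i(n))$ the reduced coproduct already lands in $V\otimes V$ and the projection $\pi$ in the definition of $\star$ is superfluous; associativity of $\star$ then follows immediately from coassociativity of $\Delta$. Your route is more computational; the paper's is shorter and avoids any $\lambda$-bookkeeping.
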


\begin{proof} $\Longrightarrow$. Let us assume that $\star$ is associative. Let $i,j,k \in I$, let us show that $a^{(i)}_{j,k}=0$. If $a^{(i)}_j=0$ or $a^{(i)}_k=0$,
then $a^{(i)}_{j,k}=0$. Let us suppose that $a^{(i)}_j\neq 0$ and $a^{(i)}_k \neq 0$. Then:
\begin{eqnarray*}
0&=&(f_k(1)\star f_j(1))\star f_i(1)-f_k(1)\star (f_j(1)\star f_i(1))\\
&=&\left(\lambda^{(j,k)}_1 \lambda^{(i,j)}_1-\lambda^{(i,j)}_1 \lambda^{(i,k)}_2\right)f_i(3)\\
&=&\lambda^{(i,j)}_1\left(\lambda^{(j,k)}_1- \lambda^{(i,k)}_2\right)f_i(3)\\
&=&a^{(i)}_j \left(a^{(j)}_k- \lambda^{(i,k)}_2\right)f_i(3).
\end{eqnarray*}
So $\lambda_2^{(i,k)}=a^{(j)}_k$. Moreover, by proposition \ref{16}: 
$$a^{(i)}_j a^{(j)}_k=\lambda_2^{(i,k)} a_{\tddeux{$i$}{$j$}}=a_{\tdtroisdeux{$i$}{$j$}{$k$}}+(1+\delta_{j,k})a_{\tdtroisun{$i$}{$k$}{$j$}}
=a^{(i)}_j a^{(j)}_k+(1+\delta_{j,k})a^{(i)}_{j,k}.$$
So $a^{(i)}_{j,k}=0$. As a consequence:
$$F_i=1+\sum_{i\longrightarrow j} a^{(i)}_j h_j.$$

$\Longleftarrow$. Then $X_i(n)$ is a linear span of ladders of weight $n$ for all $n\geq 1$, for all $i\in I$. As a consequence, 
if $x \in Vect(X_i(n)\:/\:i\in I,n\geq 1)$, for all $f,g \in \lies$:
$$(f\star g)(x)=(f\otimes g) \circ (\pi \otimes \pi)\circ \Delta(x)=(f\otimes g)\circ\Delta(x)=f(x')g(x'').$$
So if $f,g,h\in \gs$, for all $x \in Vect(X_i(n)\:/\:i\in I,n\geq 1)$:
$$((f\star g) \star h)(x)=f(x')g(x'')h(x''')=(f\star (g\star h))(x).$$
So $(f\star g)\star h=f \star (g\star h)$: $\lies$ is an associative algebra. \end{proof}

\begin{cor}
Let $(S)$ be a connected Hopf SDSE. Then $\lies$ is associative if, and only if one of the following assertions holds:
\begin{enumerate}
\item $(S)$ is an extended multicyclic SDSE.
\item $(S)$ is an extended fundamental SDSE, with:
\begin{itemize}
\item For all $i\in I_0$, $\beta_i=-1$.
\item $J_0$, $K_0$, $I_1$ and $J_1$ are empty.
\end{itemize}\end{enumerate}\end{cor}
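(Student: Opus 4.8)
The plan is to combine Proposition \ref{53} with the structural classification already established. By Proposition \ref{53}, $\lies$ is associative if and only if every $F_i$ is affine, i.e. $F_i = 1 + \sum_{i\to j} a^{(i)}_j h_j$; so the task reduces to determining exactly which connected Hopf SDSE have all formal series affine. By the main structural dichotomy (corollary \ref{50}, together with theorems \ref{51} and \ref{52}), a connected Hopf SDSE is either extended multicyclic or extended fundamental, so I would treat these two cases separately.

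First, for the extended multicyclic case: theorem \ref{51} already asserts that for such a system, $F_x = 1 + \sum_{x\to y} a^{(x)}_y h_y$ for all $x$, which is exactly the affine condition. Hence every extended multicyclic SDSE automatically has $\lies$ associative, and no extra constraints appear. So I would simply invoke theorem \ref{51} and Proposition \ref{53} to get the "if" direction in case 1, and conversely any connected Hopf SDSE with all affine series that contains a multicycle is extended multicyclic by theorem \ref{51}.

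Second, for the extended fundamental case (theorem \ref{52} / definition \ref{33}): here I need to go through the shape of the $F_i$'s prescribed in theorem \ref{32} (and its dilatations and extensions) and identify precisely when each becomes affine. For $i\in I_0$ with series $f_{\beta_i}(h_i)\prod\cdots$, affineness forces $\beta_i = -1$ (so that $f_{\beta_i}(h) = 1+h$ is affine) and also forces all the other factors $f_{\frac{\beta_j}{1+\beta_j}}((1+\beta_j)h_j)$ and $f_1(h_j)$ to be trivial; since $f_1$ is never affine, this forces $J_0=\emptyset$, and the $I_0$-factors are trivial precisely when all $\beta_j=-1$. For $i\in J_0$ the series is a product of $f_1$'s and nontrivial $f_{\frac{\beta_j}{1+\beta_j}}$ factors, which can never be affine unless the index sets are empty — consistent with $J_0=\emptyset$. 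For $i\in K_0$: with $J_0=\emptyset$ and all $\beta_j=-1$, the prescribed $F_i = \prod_{j\in I_0}f_0(0\cdot h_j) = 1$, which is the excluded constant case, so $K_0=\emptyset$ (this is exactly the remark following definition \ref{33}). For $i\in I_1$: one checks from the $\nu_i\neq 0$ and $\nu_i=0$ formulas that affineness again forces $K_0=\emptyset$ and, when $I_0$-factors are already trivial, that the remaining exponential/logarithmic pieces are affine only in degenerate cases that collapse to $I_1=\emptyset$ (using that the defining inequality $(\nu_i\neq 1)$ or $(\exists j,\ a^{(i)}_j\neq\cdots)$ must still hold). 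Similarly $J_1=\emptyset$. So in the fundamental case the affine condition is equivalent to: $\beta_i=-1$ for all $i\in I_0$ and $J_0=K_0=I_1=J_1=\emptyset$, which is precisely assertion 2.

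The main obstacle will be the bookkeeping in the fundamental case: verifying carefully, series-type by series-type, that the only way the prescribed non-affine formal series of theorem \ref{32} (and their allowed dilatations and extension vertices) can degenerate to affine ones is the stated one, and in particular ruling out sporadic affine specializations of the $f_\beta$, logarithmic, and $\frac1{\nu}f_\beta + 1 - \frac1\nu$ expressions — here one uses that $f_\beta(h)$ is affine iff $\beta=-1$, that $\ln(1-h)$ is never affine, and that $f_0(ch) = e^{ch}$ is affine iff $c=0$. Once these elementary facts are in place, the equivalence follows by matching against the definition of extended fundamental SDSE. I would also note that a connected Hopf SDSE with all affine series and no multicycle must fall under theorem \ref{52} (since the only remaining possibility in corollary \ref{50} is fundamental), closing the argument.
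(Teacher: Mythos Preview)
Your approach is correct and is exactly the intended one: the paper gives no explicit proof of this corollary, treating it as immediate from Proposition~\ref{53} combined with the structure theorem (Theorems~\ref{51} and~\ref{52}), and your plan unpacks precisely that deduction. One point to tighten in the fundamental case: when $J_0=K_0=\emptyset$ and all $\beta_j=-1$, there \emph{do} exist parameter choices making $F_i$ affine for $i\in I_1$ or $i\in J_1$ (e.g.\ $\nu_i a^{(i)}_j=1$ with a single nonzero $a^{(i)}_j$), so the claim is not that such affine specializations are impossible but rather that any such vertex then has a single class of direct descendants sharing the same $F$, hence is an extension vertex and can be reclassified into $I_2$; this is what ``collapse to $I_1=\emptyset$'' means and is worth stating explicitly.
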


If the second assertion holds, then $(S)$ is also an extended fundamental abelian SDSE, and another interpretation of $\lies$ can be given;
see theorem \ref{70}.

\subsection{An algebra associated to an oriented graph}

{\bf Notations.} Let $G$ an oriented graph, $i,j \in G$, and $n\geq 1$. We shall denote $i \fleche{n} j$ if there is an oriented path from 
$i$ to $j$ of length $n$ in $G$.

\begin{defi}\textnormal{
Let $G$ be an oriented graph, with set of vertices denoted by $I$. 
The associative, non-unitary algebra $A_G$ is generated by $P_i(1)$, $i\in I$, and the following relations:
\begin{itemize}
\item If $j$ is not a direct descendant of $i$ in $G$, $P_j(1)P_i(1)=0$.
\item If $i_1\rightarrow i_2\rightarrow \cdots \rightarrow i_n$ and $i_1\rightarrow i'_2\rightarrow \cdots \rightarrow i'_n$ in $G$, then:
$$P_{i_n}(1)\cdots P_{i_2}(1)P_{i_1}(1)=P_{i'_n}(1)\cdots P_{i'_2}(1)P_{i_1}(1).$$
\end{itemize}}\end{defi}

Let $G$ be an oriented graph, and let $i\in I$ and $n \geq 1$. For any oriented path $i\rightarrow i_2\rightarrow \cdots \rightarrow i_n$ in $G$,
we denote $P_i(n)=P_{i_n}(1)\cdots P_{i_2}(1)P_i(1)$. If there is no such an oriented path, we put $P_i(n)=0$.
By definition of $A_G$ (second family of relations), this does not depend of the choice of the path.

\begin{lemma}
Let $G$ be an oriented graph. Then the $P_i(n)$'s, $i\in I$, $n\geq 1$, linearly generate $A_G$. Moreover, if $P_i(m)$ and $P_j(n)$ are non-zero, then:
$$P_j(n)P_i(m)=\left\{ \begin{array}{l}
P_i(m+n) \mbox{ if }i\fleche{m} j,\\
0\mbox{ if not.}
\end{array}\right.$$
\end{lemma}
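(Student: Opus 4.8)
The plan is to prove the lemma directly from the defining relations of $A_G$, by unwinding the definition of $P_i(n)$ as a product of generators along an oriented path. First I would establish that the $P_i(n)$'s span $A_G$: since $A_G$ is generated as an associative non-unitary algebra by the generators $P_i(1)$, every element is a linear combination of words $P_{j_r}(1)\cdots P_{j_1}(1)$ in the generators. Given such a word, I would argue by induction on its length $r$ that it is either zero or equals some $P_i(n)$ with $n\le r$. Indeed, by the first family of relations, $P_{j_s}(1)P_{j_{s-1}}(1)=0$ unless $j_s$ is a direct descendant of $j_{s-1}$; so a nonzero word must have $j_1\to j_2\to\cdots\to j_r$ an oriented path in $G$, and then by definition $P_{j_r}(1)\cdots P_{j_1}(1)=P_{j_1}(r)$. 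Hence the nonzero words among the generating words are exactly the $P_i(n)$, and these span $A_G$.

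Next I would prove the multiplication formula. Suppose $P_i(m)\neq 0$ and $P_j(n)\neq 0$. By definition there is an oriented path $i=i_1\to i_2\to\cdots\to i_m$ realizing $P_i(m)=P_{i_m}(1)\cdots P_{i_1}(1)$, and an oriented path $j=j_1\to j_2\to\cdots\to j_n$ realizing $P_j(n)=P_{j_n}(1)\cdots P_{j_1}(1)$. Then, using associativity of the product in $A_G$,
$$P_j(n)P_i(m)=P_{j_n}(1)\cdots P_{j_1}(1)P_{i_m}(1)\cdots P_{i_1}(1).$$
The crucial junction is the factor $P_{j_1}(1)P_{i_m}(1)=P_j(1)P_{i_m}(1)$. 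If $i\fleche{m}j$, I would note that $i_m$ has $j$ as a direct descendant: more precisely, since $i\fleche{m}j$ there is a path of length $m$ from $i$ to $j$, and by the second family of relations $P_i(m)$ does not depend on the chosen path, so we may as well have chosen the path $i=i_1\to\cdots\to i_m$ so that $i_m\to j$; but a cleaner route is to observe that the hypothesis $i\fleche{m}j$ together with path-independence lets us take $i_m = $ a direct ascendant of $j$. Then $P_j(1)P_{i_m}(1)$ is a legitimate length-2 subword along an oriented path, and the whole product $P_{j_n}(1)\cdots P_{j_1}(1)P_{i_m}(1)\cdots P_{i_1}(1)$ is a word along the concatenated oriented path $i=i_1\to\cdots\to i_m\to j=j_1\to\cdots\to j_n$ of length $m+n$, which by definition equals $P_i(m+n)$. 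If $i$ is \emph{not} connected to $j$ by an oriented path of length $m$, then in particular $j$ is not a direct descendant of $i_m$ (for any choice of terminal vertex $i_m$ of a length-$m$ path from $i$), so $P_j(1)P_{i_m}(1)=0$ by the first relation, whence the whole product vanishes.

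The main subtlety — and where I would be most careful — is the role of path-independence (the second family of relations) in making the terminal vertex $i_m$ of the chosen path from $i$ adjustable: one must check that "$P_i(m)\neq 0$ and $i\fleche m j$" really allows us to pick a representing path whose last vertex is a direct ascendant of $j$, and symmetrically that $P_i(m+n)$ computed along the concatenated path agrees with $P_i(m+n)$ computed along any other length-$(m+n)$ path from $i$ (which is exactly the content of the second relation, applied with $i_1=i$). A secondary point is the degenerate case $i\fleche m j$ but $P_i(m+n)=0$, i.e. when no length-$(m+n)$ path from $i$ exists even though length-$m$ and length-$n$ paths do from $i$ and $j$ respectively; but if $i\fleche m j$ via a path ending at a direct ascendant of $j$, and $j\fleche n (\cdot)$ via the path realizing $P_j(n)$, the concatenation is a genuine length-$(m+n)$ path from $i$, so $P_i(m+n)\neq 0$ automatically and there is no such degenerate case. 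I would write this out, flagging that this is precisely the consistency guaranteed by the definition of $A_G$, and conclude.
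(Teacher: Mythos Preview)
Your proof is correct and follows essentially the same approach as the paper: both show spanning by observing that any nonzero word in the generators must follow an oriented path (hence equals some $P_i(n)$), and both handle the product formula by using the second relation to choose a representing path for $P_i(m)$ whose terminal vertex is a direct ascendant of $j$ when $i\fleche{m}j$, and by noting that otherwise $P_j(1)P_{i_m}(1)=0$. Your extra care about path-independence and the nonexistence of a degenerate case is exactly right and only makes explicit what the paper leaves implicit.
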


\begin{proof} By the first relation, $P_i(n)=P_{i_n}(1)\cdots P_{i_2}(1)P_i(1)=0$ if $(i,i_1,\ldots,i_n)$ is not an oriented path in $G$.
So the $P_i(n)$'s, $i\in I$, $n\geq 1$, linearly generate $A_G$. 

let us fix $P_i(m)=P_{i_m}(1)\cdots P_{i_2}(1)P_i(1)$ and $P_j(n)=P_{j_n}(1)\cdots P_{j_2}(1)P_j(1)$ both non-zero.
If $i\fleche{m} j$ we can choose $i_2,\ldots,i_m$ such that $i\rightarrow i_2\rightarrow \cdots \rightarrow i_m \rightarrow j$. Then:
$$P_j(n)P_i(m)=P_{j_n}(1)\cdots P_{j_2}(1)P_j(1)P_{i_m}(1)\cdots P_{i_2}(1)P_i(1)=P_i(m+n).$$
If this is not the case, then $j$ is not a direct descendant of $i_m$, so $P_j(1)P_{i_m}(1)=0$ and $P_j(n)P_i(m)=0$. \end{proof}

\begin{prop}\label{57}
Let $G$ be an oriented graph. 
\begin{enumerate}
\item The following conditions are equivalent:
\begin{enumerate} 
\item The family $(P_i(n))_{i\in I,n\geq 1}$ is a basis of $A_G$.
\item All the $P_i(n)$ are non-zero.
\item The graph $G$ satisfies the following conditions:
\begin{itemize}
\item Any vertex of $G$ has a direct descendant.
\item If two vertices of $G$ have a common direct ascendant, then they have the same direct descendants.
\end{itemize}
\item The SDSE associated to the following formal series is Hopf:
$$\forall i\in I, \:F_i=1+\sum_{i\rightarrow j} h_j.$$
\end{enumerate}
\item If this holds, then $A_G$ is generated by $P_i(1)$, $i\in I$, and the following relations:
\begin{itemize}
\item If $j$ is not a direct descendant of $i$ in $G$, $P_j(1)P_i(1)=0$.
\item If $i\rightarrow j$ and $i\rightarrow k$ in $G$, then $P_j(1)P_i(1)=P_k(1)P_i(1)$.
\end{itemize} 
The product of $A_G$ is given by:
$$P_j(n)P_i(m)=\left\{ \begin{array}{l}
P_i(m+n) \mbox{ if }i\fleche{m} j,\\
0\mbox{ if not.}
\end{array}\right.$$
Moreover, if $(S)$ is the system of condition $(d)$, $\lies$ is associative and isomorphic to $A_G$.
\end{enumerate} \end{prop}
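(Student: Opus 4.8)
The plan is to prove the cyclic chain of implications $(a)\Rightarrow(b)\Rightarrow(c)\Rightarrow(d)\Rightarrow(a)$ for the first part, and to read off the second part from the work done for $(d)\Rightarrow(a)$. The implication $(a)\Rightarrow(b)$ is trivial, since a basis consists of non-zero vectors. For $(b)\Rightarrow(c)$: if a vertex $i$ had no direct descendant, then $P_i(2)=0$ by the very definition of $P_i(2)$, contradicting $(b)$; so every vertex has a direct descendant. For the second requirement of $(c)$, suppose for contradiction that $a\to b$, $a\to b'$, $b\to c$ but $b'\not\to c$. Since $a\to b$ and $a\to b'$, the second family of defining relations of $A_G$ gives $P_a(2)=P_b(1)P_a(1)=P_{b'}(1)P_a(1)$. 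Multiplying on the left by $P_c(1)$ and using associativity, on one hand $P_c(1)P_a(2)=P_c(1)P_b(1)P_a(1)=P_a(3)$ (valid since $a\to b\to c$ is an oriented path), and on the other hand $P_c(1)P_a(2)=P_c(1)P_{b'}(1)P_a(1)=0$ because $c$ is not a direct descendant of $b'$, so $P_c(1)P_{b'}(1)=0$ by the first family of relations. Hence $P_a(3)=0$, contradicting $(b)$.

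For $(c)\Rightarrow(d)$ I will apply Proposition \ref{19} to the SDSE $F_i=1+\sum_{i\to j}h_j$ with the choice $\lambda_n^{(i,j)}=1$ if $i\fleche{n}j$ and $\lambda_n^{(i,j)}=0$ otherwise. The engine is the following fact, proved by induction on $m$ using $(c)$: if $i\fleche{m}j$ and $i\fleche{m}j'$, then $j$ and $j'$ have the same direct descendants (the inductive step identifies the direct descendants of the two vertices at distance $m-1$ and then uses that a common ascendant forces equal direct-descendant sets), hence the same set of descendants at every further distance. In particular $\lambda_n^{(i,j)}$ is a well-defined $0/1$ quantity, and when $i\to k$ one has $\lambda_2^{(i,j)}=a^{(k)}_j$ and $\lambda_{n+1}^{(i,j)}=\lambda_n^{(k,j)}$. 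Since all coefficients $a^{(i)}_{(p)}$ of this SDSE with $|p|\ge 2$ vanish, condition 1 of Proposition \ref{19} reduces to $\lambda_1^{(i,j)}=a^{(i)}_j$ together with $\lambda_2^{(i,j)}=a^{(k)}_j$ for $i\to k$; and condition 2 is satisfied vacuously unless $p=1$, in which case it reduces to $\lambda_{n+1}^{(i,j)}=\lambda_2^{(i,j)}+\lambda_n^{(k,j)}-a^{(k)}_j$ for $i\to k$. All of these are immediate consequences of the fact just recalled, so the SDSE is Hopf.

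For $(d)\Rightarrow(a)$ and the second part, assume the SDSE of $(d)$ is Hopf. Its series are affine, so by Proposition \ref{53} the pre-Lie algebra $\lies$ is associative, and by Proposition \ref{21} it has basis $(f_i(n))_{i\in I,n\ge 1}$ with $f_j(l)\star f_i(k)=\lambda_k^{(i,j)}f_i(k+l)$; arguing exactly as in $(c)\Rightarrow(d)$ one gets $\lambda_k^{(i,j)}=1$ if $i\fleche{k}j$ and $0$ otherwise. Now $P_i(1)\mapsto f_i(1)$ respects the first family of relations of $A_G$ (because $\lambda_1^{(i,j)}=a^{(i)}_j=0$ when $i\not\to j$), and it sends a path-product $P_{i_n}(1)\cdots P_{i_1}(1)$ to $f_{i_1}(n)$ by iterating the product formula along the path and using $\lambda_m^{(i_1,i_{m+1})}=1$ there; since this value depends only on $i_1$ and $n$, the second family of relations is respected, so we obtain an algebra morphism $\phi\colon A_G\to\lies$ with $\phi(P_i(n))=f_i(n)$. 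It is onto because the $f_i(n)$ span $\lies$; and since the $P_i(n)$ span $A_G$ (by the preceding lemma) while their images form a basis of $\lies$, the $P_i(n)$ are linearly independent, hence a basis of $A_G$ (which is $(a)$), and $\phi$ is an isomorphism, giving the last assertion of the second part. The product formula there is then the preceding lemma combined with $(b)$, and the shorter presentation is obtained by checking that, under $(c)$, the general relation $P_{i_n}(1)\cdots P_{i_1}(1)=P_{i'_n}(1)\cdots P_{i'_1}(1)$ follows from the length-one case by rewriting $i_2,i_3,\dots$ one at a time, each rewriting being legitimate because a common direct ascendant forces equal direct-descendant sets, so every intermediate word is still a valid path-product.

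The step I expect to be the main obstacle is the bookkeeping in $(c)\Rightarrow(d)$: one must fix precisely the correspondence between the index $n$ of $\lambda_n^{(i,j)}$ and the number of edges in an oriented path, and then check that the chosen $0/1$ scalars verify both conditions of Proposition \ref{19} in each of the few non-degenerate cases. The propagation statement ``$i\fleche{m}j$ and $i\fleche{m}j'$ imply equal descendant sets at every distance'' is the real lever, used both in that verification and in the iteration of the product formula in $(d)\Rightarrow(a)$, so it should be isolated and proved first.
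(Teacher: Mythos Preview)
Your proof is correct and follows the same overall scaffold as the paper (the cycle $(a)\Rightarrow(b)\Rightarrow(c)\Rightarrow(d)\Rightarrow(a)$, then the shorter presentation). The implications $(a)\Rightarrow(b)$, $(b)\Rightarrow(c)$, and $(d)\Rightarrow(a)$ are essentially identical to the paper's.

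The one genuine difference is your treatment of $(c)\Rightarrow(d)$. The paper argues directly: it writes $X_i(n)$ as the sum of all ladders $l(i,i_2,\ldots,i_n)$ over oriented paths starting at $i$, then computes $\Delta(X_i(n))$ by cutting each ladder and uses condition $(c)$ to regroup the pieces as $\sum_{k}\sum_{i\fleche{k}j}X_j(n-k)\otimes X_i(k)$, which lies in $\hs\otimes\hs$. You instead invoke Proposition~\ref{19} with the ansatz $\lambda_n^{(i,j)}=[\,i\fleche{n}j\,]$ and verify its two conditions. Both routes hinge on the same propagation fact (vertices equidistant from a common source have the same out-neighbourhoods), so neither is deeper; the paper's direct computation is more self-contained, while yours recycles the general criterion and makes the structure constants explicit from the outset, which pays off immediately in $(d)\Rightarrow(a)$.

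One wording issue: in $(d)\Rightarrow(a)$ you write ``arguing exactly as in $(c)\Rightarrow(d)$'' to obtain $\lambda_k^{(i,j)}=[\,i\fleche{k}j\,]$, but at that point $(c)$ is not available. What you actually need is Lemma~\ref{17}-1: since all degree-two coefficients of the $F_i$ vanish, $\lambda_n^{(i,j)}=a^{(i_n)}_j$ for any path $i=i_1\to\cdots\to i_n$, and the Hopf hypothesis guarantees this is well-defined. This already gives $\lambda_m^{(i_1,i_{m+1})}=1$ along any given path, which is all your iteration uses. For Part~2, your rewriting argument using $(c)$ directly is a clean variant of the paper's, which instead deduces each needed edge from the non-vanishing of $P_i(k{+}1)$ in $A_G$.
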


\begin{proof} 1. $(a) \Longrightarrow (b)$ is obvious. \\

$(b) \Longrightarrow (c)$. Let us assume $(b)$. Then for all $i\in I$, $P_i(2)\neq 0$, so there exists a $j$ such that $i\rightarrow j$ in $G$: any vertex of $G$
has a direct descendant. Let us assume $i\rightarrow j$ and $i\rightarrow j'$ in $G$. Let $k$ be a direct descendant of $j$. Then
$P_i(2)=P_j(1)P_i(i)=P_{j'}(1)P_i(1)$ and $P_i(3)=P_k(1)P_j(1)P_i(1)=P_k(1)P_i(2)\neq 0$, so $P_k(1)P_i(2)=P_k(1)P_{j'}(1)P_i(1)\neq 0$. 
As a consequence, $P_k(1)P_{j'}(1)\neq 0$ and $k$ is a direct descendant of $j'$. By symmetry, the direct descendants of $j'$ are also 
direct descendants of $j$: two direct descendants of a same vertex have the same direct descendants.\\

$(c)\Longrightarrow (d)$. Then for all $i\in I$, for all $n \geq 1$:
$$X_i(n)=\sum l(i,i_2,\cdots,i_n),$$
where the sum runs on all oriented paths $i\rightarrow i_2\rightarrow \cdots \longrightarrow i_n$ in $\gs$. So:
$$\Delta(X_i(n))=\sum \sum_{k=0}^n l(i_{k+1},\ldots,i_n)\otimes l(i,i_2,\cdots,i_k).$$
If $i\rightarrow i_2 \cdots \rightarrow i_k\rightarrow i_{k+1}$ and $i\rightarrow i'_2 \cdots \rightarrow i'_k\rightarrow i'_{k+1} $,
the second condition on $G$ implies that $i_3$ and $i'_3$ are direct descendants of $i_2$ and $i'_2$,$\ldots$, $i_{k+1}$ and $i'_{k+1}$ are direct
descendants of $i_k$ and $i'_k$. So:
$$\Delta(X_i(n))=\sum_{k=0}^n \sum_{\substack{i\rightarrow \cdots \rightarrow i_k,\\i\fleche{k} i_{k+1},\\ i_{k+1}\rightarrow \cdots \rightarrow i_n}}
l(i_{k+1},\ldots,i_n)\otimes l(i,i_2,\cdots,i_k)=\sum_{k=0}^n \sum_{i\fleche{k} j} X_j(n-k) \otimes X_i(k).$$
So $(S)$ is Hopf.\\

$(d) \Longrightarrow (a)$. Then, for all $i\in I$, for all $n \geq 1$:
$$X_i(n)=\sum l(i,i_2,\cdots,i_n),$$
where the sum runs on all oriented paths $i\rightarrow i_2\rightarrow \cdots \longrightarrow i_n$ in $\gs$. By proposition \ref{53}, $\lies$ is associative.
Moreover, it is quite immediate to prove that in $\lies$:
\begin{itemize}
\item If $j$ is not a direct descendant of $i$ in $G$, $f_j(1)f_i(1)=0$.
\item If $i_1\rightarrow i_2\rightarrow \cdots \rightarrow i_n$ and $i_1\rightarrow i'_2\rightarrow \cdots \rightarrow i'_n$ in $G$, then:
$$f_{i_n}(1)\cdots f_{i_2}(1)f_{i_1}(1)=f_{i'_n}(1)\cdots f_{i'_2}(1)f_{i_1}(1)=f_{i_1}(n).$$
\end{itemize}
So there is a morphism of algebras from $A_G$ to $\lies$, sending $P_i(1)$ to $f_i(1)$. This morphism sends $P_i(n)$ to $f_i(n)$. 
As the $f_i(n)$'s are linearly independent, so are the $P_i(n)$'s.\\

$2$. Let $A'_G$ be the associative, non-unitary algebra generated by the relations of proposition \ref{57}-2. As these relation are immediatly satisfied 
in $A_G$, there is a unique morphism of algebras:
$$\Phi: \left\{ \begin{array}{rcl}
A'_G&\longrightarrow & A_G\\
P_i(1)&\longrightarrow &P_i(1).
\end{array}\right.$$
Let $i_1\rightarrow i_2\rightarrow \cdots \rightarrow i_n$ and $i_1\rightarrow i'_2\rightarrow \cdots \rightarrow i'_n$ in $G$. Let us prove that
$P_{i_k}(1)\cdots P_{i_2}(1)P_{i_1}(1)=P_{i'_k}(1)\cdots P_{i'_2}(1)P_{i_1}(1)$ in $A'_G$ by induction on $k$. For $k=2$, this is implied by the second
family of relations defining $A'_G$. Let us assume the result at rank $k$. Then, both in $A_G$ and $A'_G$:
$$P_{i_{k+1}}(1)P_{i_k}(1)\cdots P_{i_2}(1)P_{i_1}(1)=P_{i_{k+1}}(1)P_{i'_k}(1)\cdots P_{i'_2}(1)P_{i_1}(1).$$
This is equal to $P_i(k+1)$ in $A_G$, so is non-zero. As a consequence, $P_{i_{k+1}}(1)P_{i'_k}(1)\neq 0$ in $A_G$, so $i'_k\rightarrow i_{k+1}$ in $G$.
By definition of $A'_G$, $P_{i_{k+1}}(1)P_{i'_k}(1)=P_{i'_{k+1}}(1)P_{i'_k}(1)$ in $A'_G$, so:
$$P_{i_{k+1}}(1)P_{i_k}(1)\cdots P_{i_2}(1)P_{i_1}(1)=P_{i'_{k+1}}(1)P_{i'_k}(1)\cdots P_{i'_2}(1)P_{i_1}(1).$$
So the relations defining $A_G$ are also satisfied in $A'_G$, so there is a morphism of algebras:
$$\Psi : \left\{ \begin{array}{rcl}
A_G&\longrightarrow & A'_G\\
P_i(1)&\longrightarrow &P_i(1).
\end{array}\right.$$
It is clear that $\Phi$ and $\Psi$ are inverse isomorphisms of algebras. \end{proof}

\begin{cor}
Let $(S)$ a Hopf SDSE. If $\lies$ is associative, then the graph $\gs$ satisfies condition $(c)$ of proposition \ref{57} and $\lies$ is isomorphic to $A_{\gs}$.
\end{cor}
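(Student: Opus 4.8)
The plan is to combine Proposition \ref{53}, Proposition \ref{18} and Proposition \ref{57}. Since $\lies$ is associative, Proposition \ref{53} first gives $F_i=1+\sum_{i\longrightarrow j}a^{(i)}_j h_j$ for every $i\in I$. As no $F_i$ is constant, each vertex has at least one direct descendant, which is the first half of condition $(c)$ of Proposition \ref{57}. For the second half, let $i$ be a vertex and $i',i''$ two of its direct descendants: by Proposition \ref{18}-3 applied to $i$ (whose series is affine) one has $F_{i'}=F_{i''}$, and since the edges of $\gs$ are read off the coefficients of the $F$'s (remark after Lemma \ref{17}), $i'$ and $i''$ have the same direct descendants. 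Hence $\gs$ satisfies condition $(c)$.

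It then remains to produce an algebra isomorphism $\lies\cong A_\gs$. By Proposition \ref{57} (equivalence of $(c)$ and $(d)$, together with part $2$), the SDSE $(S')$ on the same graph with all $F'_i=1+\sum_{i\longrightarrow j}h_j$ is Hopf and $\mathfrak{g}_{(S')}\cong A_\gs$, so I would show $\lies\cong\mathfrak{g}_{(S')}$. By Proposition \ref{21}, $\lies$ has basis $(f_i(k))$ with $f_j(l)\star f_i(k)=\lambda_k^{(i,j)}f_i(k+l)$, and by Lemma \ref{17}-1 together with the affineness of the $F_i$'s one gets $\lambda_k^{(i,j)}=a^{(m)}_j$ for $m$ the endpoint of any path of length $k-1$ from $i$; using condition $(c)$ this is nonzero exactly when $i$ reaches $j$ by a path of length $k$, matching the structure constants of $\mathfrak{g}_{(S')}$, which are $0$ or $1$. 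The isomorphism is then a diagonal rescaling $f_i(k)\mapsto c_i(k)f'_i(k)$ with $c_i(k)\in K^\times$; equivalently, since $\lies$ and $A_\gs$ are both generated in degree $1$, it suffices to set $f_i(1)\mapsto\gamma_i P_i(1)$ and check that the defining relations of $\lies$ are sent to relations valid in $A_\gs$. The degree-$2$ relations $f_j(1)\star f_i(1)=\frac{a^{(i)}_j}{a^{(i)}_{j'}}f_{j'}(1)\star f_i(1)$ (for $i\to j,j'$) and $f_j(1)\star f_i(1)=0$ (for $i\not\to j$) force the conditions $\gamma_j\,a^{(i)}_{j'}=\gamma_{j'}\,a^{(i)}_j$, and a short computation shows that the relations in degree $\geq 3$ are then automatically preserved, precisely because two direct descendants of a vertex lie in the same class for condition $(c)$.

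The main obstacle is the existence of a compatible family $(\gamma_i)_{i\in I}$ of nonzero scalars: the conditions $\gamma_j\,a^{(i)}_{j'}=\gamma_{j'}\,a^{(i)}_j$ say that $\gamma_j/a^{(i)}_j$ depends only on the class of $i$ for condition $(c)$, and one must verify that these local normalizations glue along $\gs$. I would dispose of this either (i) by treating connected components separately and, in the connected case, invoking the classification of associative Hopf SDSE obtained just after Proposition \ref{53}: each such system becomes, after a change of variables, exactly the system $(S')$ above, so that $\hs\cong\h_{(S')}$ as Hopf algebras and hence $\lies\cong\mathfrak{g}_{(S')}\cong A_\gs$; or (ii) directly, by choosing the $\gamma_i$ along a spanning structure of $\gs$ and checking that the multiplicative cocycle consistency around loops follows from Proposition \ref{16} applied to trees of weight $3$ (the same identities that already made the degree-$3$ relations automatic). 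In either case one obtains $\lies\cong A_\gs$, and associativity of $\lies$ was exactly the standing hypothesis.
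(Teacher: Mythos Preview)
Your verification of condition $(c)$ is correct and in fact more direct than the paper's (which recovers $(c)$ only at the very end, as a by-product of the attempted epimorphism $A_{\gs}\to\lies$). The trouble lies entirely in the second half, and the obstacle you isolate --- the ``cocycle consistency'' for the scalars $\gamma_i$ --- is genuine and cannot be removed: the conclusion $\lies\cong A_{\gs}$ is false in general. Take $I=\{0,1,2,3,4\}$ with
\[
F_0=1+h_3+h_4,\qquad F_1=F_2=1+h_3+2h_4,\qquad F_3=F_4=1+h_1+h_2.
\]
This is an extended $2$-multicyclic Hopf SDSE (Theorem~\ref{14}, case~1, with base multicycle on $\{1,2\}\sqcup\{3,4\}$ and extension vertex~$0$), so $\lies$ is associative by Proposition~\ref{53} and $\gs$ satisfies~$(c)$. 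Your route~(i) fails: the classification only places such an $(S)$ among the \emph{extended} multicyclic systems, and no change of variables $h_j\mapsto\lambda_jh_j$ normalises all edge-weights to~$1$, since that would require $\lambda_4=1/a^{(0)}_4=1$ and simultaneously $\lambda_4=1/a^{(1)}_4=1/2$. Your route~(ii) fails for the same reason: the system $\gamma_j\,a^{(i)}_{j'}=\gamma_{j'}\,a^{(i)}_j$ forces $\gamma_3/\gamma_4=a^{(0)}_4/a^{(0)}_3=1$ (from $i=0$) and $\gamma_3/\gamma_4=a^{(1)}_4/a^{(1)}_3=2$ (from $i=1$). In fact $\lies\not\cong A_{\gs}$ here: writing $L(A)=\{x\in A:xA=0\}$, the isomorphism invariant $\dim\bigl(L(A)/(L(A)\cap A{\cdot}A)\bigr)$ equals $3$ for $A_{\gs}$ (classes of $P_0(1)$, $P_1(1)-P_2(1)$, $P_3(1)-P_4(1)$) but only $2$ for $\lies$ (the constraints $c_3+c_4=0$ from $v=0$ and $c_3+2c_4=0$ from $v=1$ force $c_3=c_4=0$, leaving only $f_0(1)$ and $f_1(1)-f_2(1)$).

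It may be some consolation that the paper's own proof has exactly the same gap. Its Step~2 asserts that for any path $i=i_1\to\cdots\to i_n$ the product $a^{(i_1)}_{i_2}\cdots a^{(i_{n-1})}_{i_n}$ depends only on $i$ and $n$, and uses this putative ``$a^{(i)}_n$'' to rescale. But Step~1 only yields $F_{i_p}=F_{i'_p}$, i.e.\ $a^{(i_p)}_j=a^{(i'_p)}_j$ for each \emph{fixed} target~$j$; this does not give $a^{(i_p)}_{i_{p+1}}=a^{(i'_p)}_{i'_{p+1}}$ when the targets $i_{p+1}$ and $i'_{p+1}$ themselves differ. In the example above the two length-$1$ paths $1\to 3$ and $1\to 4$ already have products $1$ and~$2$.
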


\begin{proof} {\it First step.} Let $i,j,k$ be vertices of $\gs$ and $n\geq 1$ such that $i\fleche{n} j$ and $i \fleche{n} k$. 
Let us prove that $F_j=F_k$ by induction on $n$. If $n=1$, by proposition \ref{18}-3, $F_j=F_k$. If $n \geq 2$, then there exists vertices of $\gs$ such that:
$$i\rightarrow j_1 \rightarrow \ldots \rightarrow j_{n-1} \rightarrow j,\hspace{1cm} i\rightarrow k_1 \rightarrow \ldots \rightarrow k_{n-1} \rightarrow k.$$
The case $n=1$ implies that $F_{j_1}=F_{k_1}$, so $j_1 \fleche{n-1} j$ and $j_1 \fleche{n-1} k$. By the induction hypothesis, $F_j=F_k$.
In other words, if $i\fleche{n} j$ and $i \fleche{n} k$, then $a^{(j)}_l=a^{(k)}_l$ for all $l\in I$. \\

{\it Second step.} Then, for all $i\in I$, for all $n \geq 1$:
$$X_i(n)=\sum a^{(i)}_{i_1} \cdots a^{(i_{n-1})}_{i_n} l(i,i_2,\cdots,i_n),$$
where the sum runs on all oriented paths $i\rightarrow i_2\rightarrow \cdots \longrightarrow i_n$ in $\gs$.
The first step implies that $a^{(i)}_{i_1}\ldots a^{(i_{n-1})}_{i_n}$ depends only of $i$ and $n$: we denote it by $a^{(i)}_n$. Then:
\begin{eqnarray*}
X_i(n)&=&\sum a^{(i)}_n l(i,i_2,\cdots,i_n),\\
\Delta(X_i(n))&=&\sum_{k+l=n} \sum_{i\fleche{l} j} \frac{a^{(i)}_n}{a^{(i)}_la^{(j)}_k} X_j(k) \otimes X_i(l).
\end{eqnarray*}
Dually, putting $p_i(n)=a^{(i)}_n f_i(n)$ for all $1\leq i\leq N$, $n \geq 1$, the pre-Lie product of $\lies$ is given by:
\begin{eqnarray*}
f_j(n)\star f_i(m)&=&\left\{\begin{array}{l}
\displaystyle \frac{a^{(i)}_{m+n}}{a^{(i)}_m a^{(j)}_n}f_i(m+n) \mbox{ if } i\fleche{m} j,\\[2mm]
0\mbox{ otherwise};
\end{array}\right.\\
p_j(n)\star p_i(m)&=&\left\{\begin{array}{l}
p_i(m+n) \mbox{ if } i\fleche{m} j,\\[2mm]
0\mbox{ otherwise}.
\end{array}\right. \end{eqnarray*}

{\it Last step.} It is then clear that the associative algebra $\lies$ is generated by the $p_i(1)$, $i\in I$, and that these elements satisfy the relations
defining $A_{\gs}$. So there is an epimorphism of algebras:
$$\Theta: \left\{ \begin{array}{rcl}
A_{\gs} &\longrightarrow &\lies\\
P_i(1)&\longrightarrow &p_i(1).
\end{array}\right.$$
This morphism sends $P_i(n)$ to $p_i(n)$ for all $n\geq 1$. As the $p_i(n)$'s are a basis of $A_{\gs}$, the $P_i(n)$'s are linearly independent in $A_{\gs}$,
so the graph $\gs$ satisfies condition $(c)$ of proposition \ref{57}. Moreover, $\Theta$ is an isomorphism. \end{proof}

\subsection{Group of characters}

The non-unitary, associative algebra $\lies$ is graded, with $p_i(k)$ homogeneous of degree $k$ for all $k\geq 1$. Moreover, $\lies(0)=(0)$. The completion
$\widehat{\lies}$ is then an associative non-unitary algebra. We add it a unit and obtain an associative unitary algebra $K\oplus \widehat{\lies}$. 
It is then not difficult to show that the following set is a subgroup of the units of $K\oplus \widehat{\lies}$:
$$G=\left\{1+\sum_{k\geq 1}x_k\:|\:\forall k\geq 1,\: x_k \in \lies(k)\right\}.$$

\begin{prop}
The group of characters $Ch\left(\hs \right)$ is isomorphic to $G$.
\end{prop}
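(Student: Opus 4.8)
The statement to prove is that the character group $Ch(\hs)$ is isomorphic to $G$, where $\lies$ is associative (so $\hs^* = \mathcal{U}(\lies)$). The plan is to exploit the Milnor–Moore description $\hs^* = \mathcal{U}(\lies)$ together with the graded completion. First I would recall that, since $\hs$ is a connected graded Hopf algebra, its characters (i.e. algebra morphisms $\hs \to K$) are exactly the group-like elements of the graded dual completion $\widehat{\hs^*}$, with group law given by the convolution product dual to the coproduct of $\hs$. Under $\hs^* = \mathcal{U}(\lies)$ and taking completions, $\widehat{\hs^*} = \widehat{\mathcal{U}(\lies)}$, and the group-like elements of $\widehat{\mathcal{U}(\lies)}$ form (by the standard exp/log correspondence in characteristic zero) a group isomorphic to the pro-nilpotent Lie group of the completed Lie algebra $\widehat{\lies}$.

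Second, I would use that here $\lies$ carries an \emph{associative} product $\star$ (by hypothesis, e.g. coming from proposition \ref{53}) whose commutator is the Lie bracket. In the associative situation the exponential map can be taken literally inside $K \oplus \widehat{\lies}$: for $x \in \widehat{\lies}$, the element $\exp(x) = 1 + x + \frac{1}{2}x\star x + \cdots$ lies in $1 + \widehat{\lies}$, and conversely $\log(1+y) = y - \frac{1}{2} y \star y + \cdots$ is defined for $y \in \widehat{\lies}$; these are mutually inverse bijections between $\widehat{\lies}$ and $G = \{1 + \sum_{k\ge 1} x_k \mid x_k \in \lies(k)\}$. Since $G$ is a subgroup of the units of the unital algebra $K \oplus \widehat{\lies}$ (as stated just before the proposition), this exhibits $G$ as exactly the set of "exponentials," i.e. as the group of group-like elements of the completed enveloping algebra $\widehat{\mathcal{U}(\lies)}$ — the point being that in the associative case the free pre-Lie/enveloping structure collapses and $\widehat{\mathcal{U}(\lies)}$ is, as a filtered algebra with group-likes, described directly by $K \oplus \widehat{\lies}$. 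Concretely, I would show that the map sending a character $\chi$ to the element $\sum_{i,k} \chi(X_i(k)) \, p_i(k)^{*} $-dual expansion, i.e. the element of $\widehat{\hs^*}$ representing $\chi$, is a group isomorphism onto $G$; grading-by-grading this is just the identification of the degree-$k$ homogeneous component, and compatibility with the group laws is the statement that the convolution product on characters corresponds to the product of $K \oplus \widehat{\lies}$ restricted to $G$.

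Third, I would make the identification of group laws explicit: for characters $\chi, \psi$, the product $\chi * \psi = (\chi \otimes \psi)\circ \Delta$, evaluated on $X_i(n)$, unfolds via the formula for $\Delta(X_i(n))$ obtained in the associative case (the coproduct is "deconcatenation of ladders," cf. the computation in the corollary after proposition \ref{57}). One checks this matches the coefficient of $p_i(n)$ in the product of the two corresponding elements $1 + \sum x_k$ and $1 + \sum y_k$ of $G$, using $p_j(l)\star p_i(k) = p_i(k+l)$ when $i \xrightarrow{k} j$ and $0$ otherwise. Thus the bijection $Ch(\hs) \to G$ is a group homomorphism, hence an isomorphism. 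The main obstacle I expect is bookkeeping the duality carefully: precisely matching the convolution/deconcatenation combinatorics on the $\hs$ side with the associative product of $\widehat{\lies}$ on the dual side, including the role of the scalars $a_n^{(i)}$ relating $f_i(n)$ and $p_i(n)$; everything else (existence of $\exp$, $\log$, that $G$ is a group, that group-likes of $\widehat{\mathcal{U}(\lies)}$ are characters) is standard connected-graded Hopf algebra theory.
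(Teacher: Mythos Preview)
Your third step is essentially the paper's proof, but the paper executes it more directly. It sets $V = \mathrm{Vect}(X_i(k) : i\in I,\ k\ge 1)$, defines the map $\Omega: Ch(\hs) \to G$ by $f \mapsto 1 + \widehat{f_{\mid V}}$, and checks that $\Omega$ is a group homomorphism using only one fact: since in the associative case each $X_i(n)$ is a linear span of \emph{ladders}, the reduced coproduct $\Delta(x) - x\otimes 1 - 1\otimes x$ of any $x\in V$ already lies in $V\otimes V$. Hence $(f_1 * f_2)(x) = f_1(x) + f_2(x) + f_1(x')f_2(x'')$ with $x'\otimes x'' \in V\otimes V$, and the last term is exactly $(\widehat{f_1|_V}\star\widehat{f_2|_V})(x)$ by the very definition of the pre-Lie product on $\lies$. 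No explicit deconcatenation formula, no basis $p_i(n)$, and no appeal to Milnor--Moore is needed.

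Your first two steps are an unnecessary detour and contain a real imprecision. You assert that ``$\widehat{\mathcal{U}(\lies)}$ is, as a filtered algebra with group-likes, described directly by $K \oplus \widehat{\lies}$,'' but these are \emph{not} the same algebra: even when $\star$ is associative, $\mathcal{U}(\lies)$ is strictly larger than $K\oplus\lies$. What associativity of $\star$ gives you is an algebra morphism $\mathcal{U}(\lies)\to K\oplus\lies$ extending the identity on $\lies$; one can then argue that its completion carries group-likes bijectively onto $G$, but justifying that this is a bijection of \emph{groups} is precisely the direct computation of your step~3. The paper cuts straight to that computation and skips the exp/log/enveloping-algebra layer entirely.
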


\begin{proof} We put $V=Vect(X_i(k)|i\in I,k\geq 1)$. Let $g \in V^*$. Then $g$ can be uniquely extended in a map $\widehat{g}$ from $\hs$ to $K$
by $g((1)+Ker(\varepsilon)^2)=(0)$, where $\varepsilon$ is the counit of $\hs$. Moreover, $\widehat{g} \in \widehat{\lies}$. This construction implies a bijection:
$$\Omega: \left\{ \begin{array}{rcl}
Ch\left(\hs \right)&\longrightarrow & G\\
f&\longrightarrow&\displaystyle 1+ \widehat{f_{\mid V}}.
\end{array}\right.$$
Let $f_1,f_2 \in Ch\left(\hs \right)$. For all $x \in V$, we put $\Delta(x)=x\otimes 1+1\otimes x+x'\otimes x''$.
As $x$ is a linear span of ladders, $x'\otimes x'' \in V \otimes V$. So:
\begin{eqnarray*}
(f_1.f_2)(x)&=&(f_1\otimes f_2)\circ \Delta(x)\\
&=&f_1(x)+f_2(x)+f_1(x')f_1(x'')\\
&=&{f_1}_{\mid V}(x)+{f_2}_{\mid V}(x)+{f_1}_{\mid V}(x'){f_2}_{\mid V}(x'')\\
&=&\widehat{{f_1}_{\mid V}}(x)+\widehat{{f_2}_{\mid V}}(x)+\widehat{{f_1}_{\mid V}}(x')\widehat{{f_2}_{\mid V}}(x'')\\
&=&\widehat{{f_1}_{\mid V}}(x)+\widehat{{f_2}_{\mid V}}(x)+\left(\widehat{{f_1}_{\mid V}}\star\widehat{{f_2}_{\mid V}}\right)(x).
\end{eqnarray*}
So $\widehat{(f_1.f_2)_{\mid V}}=\widehat{{f_1}_{\mid V}}+\widehat{{f_2}_{\mid V}}+\widehat{{f_1}_{\mid V}}\star\widehat{{f_2}_{\mid V}}$.
This implies that $\Omega$ is a group isomorphism. \end{proof}

\section{Lie algebra and group associated to $\hs$, non-abelian case}

In non-abelian or abelian cases, then any vertex of $\gs$ is of finite level. By proposition \ref{21}, the constant structures of the pre-Lie product satisfy:
$$\lambda_n^{(i,j)}=\left\{\begin{array}{l}
a^{(i)}_j\mbox{ if } n=1,\\
b_j(n-1)+\tilde{a}^{(i)}_j \mbox{ if } n\geq level(i)+1,
\end{array}\right.$$
where the $a^{(i)}_j$'s, $\tilde{a}^{(j)}_i$'s and $b_j$'s are scalars.

\subsection{Modules over the Faà di Bruno Lie algebra}

Let $\gfdb$ be the Faà di Bruno Lie algebra. Recall that it has a basis $(e(k))_{k\geq 1}$, with bracket given by:
$$[e(k),e(l)]=(l-k)e(k+l).$$
The $\gfdb$-module $V_0$ has a basis $(f(k))_{k\geq 1}$, and the action of $\gfdb$ is given by:
$$e(k).f(l)=lf(k+l).$$

We can then construct a semi-direct product $V_0^M \triangleleft \gfdb$, described in the following proposition:

\begin{prop} 
Let $M \in \mathbb{N}^*$. The Lie algebra $V_0^M \triangleleft \gfdb$ has a basis:
$$\left(f^{(i)}(k)\right)_{1\leq i \leq M,\: k\geq 1}\cup (e(k))_{k\geq 1},$$
and its Lie bracket given by:
$$\left\{ \begin{array}{rcl}
[e(k),e(l)]&=&(l-k)e(k+l),\\[0mm]
[e(k),f^{(i)}(l)]&=&lf^{(i)}(k+l),\\[0mm]
[f^{(i)}(k),f^{(j)}(l)]&=&0.
\end{array}\right.$$
\end{prop}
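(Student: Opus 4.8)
The plan is to verify directly that the bracket formulas given in the statement define a Lie algebra, and that this Lie algebra is precisely the semi-direct product $V_0^M \triangleleft \gfdb$, with the notation of the proposition being just a name for the basis elements of the two factors.

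First I would recall the construction of a semi-direct product of Lie algebras: given a Lie algebra $\g$ and a $\g$-module structure on a Lie algebra $\mathfrak{a}$ by derivations (here $\mathfrak{a}$ is abelian, so any linear action works), the vector space $\mathfrak{a}\oplus\g$ carries the bracket
$$[(a,x),(b,y)]=([a,b]_{\mathfrak{a}}+x\cdot b-y\cdot a,\ [x,y]_{\g}).$$
In our situation $\g=\gfdb$, with basis $(e(k))_{k\geq 1}$ and $[e(k),e(l)]=(l-k)e(k+l)$, and $\mathfrak{a}=V_0^M=V_0\oplus\cdots\oplus V_0$ ($M$ copies), which is abelian with basis $(f^{(i)}(k))_{1\leq i\leq M,\,k\geq 1}$. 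The action of $\gfdb$ on $V_0^M$ is taken componentwise from the given $\gfdb$-module $V_0$: that is, $e(k)\cdot f^{(i)}(l)=l f^{(i)}(k+l)$ and $e(k)$ acts as zero off the $i$-th component in the obvious sense. Substituting these into the semi-direct product formula yields exactly the three bracket relations displayed in the statement, so the content of the proposition is really: these formulas are consistent, i.e.\ they define a Lie bracket, and that bracket is the semi-direct one.

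The key steps are then: (1) note that $(e(k))_k\cup(f^{(i)}(k))_{i,k}$ is a basis of the underlying vector space $V_0^M\oplus\gfdb$, which is immediate since it is a direct sum. (2) Check antisymmetry of the proposed bracket — each of the three formulas is visibly antisymmetric (for $[e(k),e(l)]$ because $(l-k)=-(k-l)$; for $[f,f]=0$; for $[e,f]$ because the reverse bracket $[f^{(i)}(l),e(k)]$ is defined to be $-l f^{(i)}(k+l)$). (3) Verify the Jacobi identity on triples of basis elements. Here there are, up to symmetry, four cases: $(e,e,e)$, $(e,e,f)$, $(e,f,f)$, $(f,f,f)$. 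The case $(e,e,e)$ is the known Jacobi identity of $\gfdb$; the case $(f,f,f)$ is trivial since $V_0^M$ is abelian; the case $(e,e,f)$ reduces to the module axiom $e(k)\cdot(e(l)\cdot f^{(i)}(m))-e(l)\cdot(e(k)\cdot f^{(i)}(m))=[e(k),e(l)]\cdot f^{(i)}(m)$, which is a short computation with the scalar factors $m$, $(k+m)$, $(l+m)$, $(l-k)$; the case $(e,f,f)$ is trivial because all brackets of two $f$'s vanish and $e$ acts by derivations of the (zero) bracket on $V_0^M$. I expect step (3), and specifically the $(e,e,f)$ subcase, to be the only place any arithmetic happens, and it is routine: one checks $(l+m)m+(-(k+m))m$ versus $(l-k)m$ on $f^{(i)}(k+l+m)$, wait more carefully $e(k)\cdot(m f^{(i)}(l+m)) - e(l)\cdot(m f^{(i)}(k+m)) = m(l+m)f^{(i)}(k+l+m) - m(k+m)f^{(i)}(k+l+m) = m(l-k)f^{(i)}(k+l+m)$, which equals $(l-k)e(k+l)\cdot f^{(i)}(m)=(l-k)m f^{(i)}(k+l+m)$.

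Finally I would package the conclusion: having checked antisymmetry and Jacobi, the formulas define a Lie algebra structure on the $(M+1)$-fold direct sum; comparing with the semi-direct product recipe above shows it coincides with $V_0^M\triangleleft\gfdb$ as a Lie algebra, with the $f^{(i)}(k)$ spanning the ideal $V_0^M$ and the $e(k)$ spanning a complementary subalgebra isomorphic to $\gfdb$. The main obstacle is essentially notational — making the componentwise $\gfdb$-action on $V_0^M$ precise and tracking indices in the $(e,e,f)$ Jacobi check — rather than any genuine difficulty; there is no deep idea required beyond the standard semi-direct product construction.
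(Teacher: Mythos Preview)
Your proposal is correct. Note, however, that the paper does not actually prove this proposition: it is stated without proof, being just the standard description of a semi-direct product of an abelian module by a Lie algebra. Your verification of antisymmetry and the four Jacobi cases is exactly the routine check one would supply if asked to justify the statement, so there is nothing to compare against beyond observing that the paper treats it as evident.
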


We now take $\g=V_0^{\oplus M} \triangleleft \gfdb$. We define a family of $\g$-modules.  Let $c \in K$ and $\upsilon=(\upsilon_1,\ldots,\upsilon_M) \in K^M$.
The module $W_{c,\upsilon}$ has a basis $(g(k))_{k\geq 1}$, and the action of $\g$ is given by:
$$\left\{ \begin{array}{rcl}
e(k).g(l)&=&(l+c) g(k+l),\\
f^{(i)}(k).g(l)&=&\upsilon_i g(k+l).
\end{array}\right.$$
The semi-direct product is given in the following proposition:

\begin{prop}
Let $\g$ be the following Lie algebra:
$$\left(W_{c_1,\upsilon^{(1)}}\oplus \ldots \oplus W_{c_N,\upsilon^{(N)}} \right) \triangleleft \left(V_0^M \triangleleft \gfdb\right).$$
It has a basis:
$$\left(g^{(j)}(k)\right)_{1\leq j \leq N,\: k\geq 1} \cup \left(f^{(i)}(k)\right)_{1\leq i \leq M,\: k\geq 1} \cup (e(k))_{k\geq 1},$$
and its bracket is given by:
$$\left\{ \begin{array}{rcl}
[e(k),e(l)]&=&(l-k)e(k+l),\\[0mm]
[e(k),f^{(i)}(l)]&=&lf^{(i)}(k+l),\\[0mm]
[e(k),g^{(i)}(l)]&=&(l+c'_i)g^{(i)}(k+l),\\[0mm]
[f^{(i)}(k),f^{(j)}(l)]&=&0,\\[0mm]
[f^{(i)}(k),g^{(j)}(l)]&=&\upsilon^{(j)}_i g^{(j)}(k+l),\\[0mm]
[g^{(i)}(k),g^{(j)}(l)]&=&0.
\end{array}\right.$$
\end{prop}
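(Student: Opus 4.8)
The plan is to verify directly that the bracket written in the statement is a Lie bracket by exhibiting it as the iterated semi-direct product already described, namely $\bigl(W_{c_1,\upsilon^{(1)}}\oplus\cdots\oplus W_{c_N,\upsilon^{(N)}}\bigr)\triangleleft\bigl(V_0^M\triangleleft\gfdb\bigr)$. Since the two previous propositions already establish that $V_0^M\triangleleft\gfdb$ is a Lie algebra with the stated basis and brackets, and that each $W_{c,\upsilon}$ is a module over it, the only thing to check is that the direct sum $\bigoplus_{j=1}^N W_{c_j,\upsilon^{(j)}}$ is again a $\bigl(V_0^M\triangleleft\gfdb\bigr)$-module (which is immediate, a direct sum of modules) and that declaring the module part abelian, i.e. $[g^{(i)}(k),g^{(j)}(l)]=0$, together with the module action as the mixed bracket, yields a well-defined semi-direct product Lie algebra. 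This last point is the standard fact that for any Lie algebra $\mathfrak{h}$ and any $\mathfrak{h}$-module $W$, the vector space $W\oplus\mathfrak{h}$ with bracket $[(w,x),(w',x')]=(x\cdot w'-x'\cdot w,[x,x'])$ is a Lie algebra; antisymmetry is clear and the Jacobi identity reduces to the Jacobi identity in $\mathfrak{h}$ plus the module axiom $x\cdot(x'\cdot w)-x'\cdot(x\cdot w)=[x,x']\cdot w$.

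First I would write $\mathfrak{h}=V_0^M\triangleleft\gfdb$ (a Lie algebra by the preceding proposition) and $W=\bigoplus_{j=1}^N W_{c_j,\upsilon^{(j)}}$, and record that the action of the basis elements $e(k)$ and $f^{(i)}(k)$ of $\mathfrak{h}$ on the basis elements $g^{(j)}(l)$ of $W$ is exactly the one specified in the definitions of $W_{c_j,\upsilon^{(j)}}$, extended so that $W_{c_j,\upsilon^{(j)}}$ acts on $W_{c_{j'},\upsilon^{(j')}}$ trivially for $j\neq j'$ — but there are no such cross terms because the module structure on a direct sum is componentwise. Then I would check that the formulas in the statement are precisely the semi-direct product bracket: the $[e,e]$, $[e,f^{(i)}]$, $[f^{(i)},f^{(j)}]$ brackets are those of $\mathfrak{h}$; the brackets $[e(k),g^{(i)}(l)]=(l+c'_i)g^{(i)}(k+l)$ and $[f^{(i)}(k),g^{(j)}(l)]=\upsilon^{(j)}_i g^{(j)}(k+l)$ are the module action (so here $c'_i=c_i$, the apparent change of letter being cosmetic); and $[g^{(i)}(k),g^{(j)}(l)]=0$ records that $W$ is taken as an abelian ideal.

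With this identification the Lie algebra axioms are inherited: antisymmetry is visible on the basis, and for Jacobi one only needs the three cases already guaranteed — Jacobi inside $\mathfrak{h}$, the module identity $[x,[x',w]]-[x',[x,w]]=[[x,x'],w]$ for $x,x'\in\mathfrak{h}$, $w\in W$, and the trivial case with two or three arguments in the abelian part $W$. All three follow from the two preceding propositions, so no genuinely new computation is required. If one prefers a self-contained argument, I would instead verify Jacobi directly on triples of basis vectors; by homogeneity (everything is graded with $e(k),f^{(i)}(k),g^{(j)}(k)$ of degree $k$) this is a finite check of a handful of cases, each an elementary identity in the integers such as $(m-k)(l-k-m)+\text{cyc.}=0$ for the $e$'s and $l(m+l)-\text{(symmetrised)}=0$-type cancellations when one or two $g$'s are involved.

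The main obstacle, such as it is, is purely bookkeeping: keeping track of which module each $g^{(j)}$ lives in and making sure the parameters $(c_j,\upsilon^{(j)})$ in the definition of $W_{c_j,\upsilon^{(j)}}$ match the coefficients appearing in the bracket formulas (in particular that $c'_i$ in the displayed bracket is the same scalar as $c_i$ in the module $W_{c_i,\upsilon^{(i)}}$, and that the $i$-indexed family $\upsilon^{(j)}=(\upsilon^{(j)}_1,\dots,\upsilon^{(j)}_M)$ is exactly the tuple used to define the $f^{(i)}$-action on $W_{c_j,\upsilon^{(j)}}$). Once the dictionary is fixed, the proof is the one-line observation that a semi-direct product of a Lie algebra by a module is a Lie algebra, applied to $W\triangleleft\mathfrak{h}$.
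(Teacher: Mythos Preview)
Your proposal is correct and matches the paper's treatment: the paper states this proposition without proof, as it is simply the explicit description of the iterated semi-direct product $\bigl(\bigoplus_j W_{c_j,\upsilon^{(j)}}\bigr)\triangleleft\bigl(V_0^M\triangleleft\gfdb\bigr)$ using the previously defined modules. Your verification via the standard semi-direct product construction (module taken as abelian ideal, mixed bracket given by the action) is exactly the intended justification, and your observation that $c'_i$ is just $c_i$ is correct.
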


Let us take $\g$ as in this proposition. We define three families of modules over $\g$:

\begin{enumerate}
\item Let $\nu=(\nu_1,\ldots,\nu_M) \in K^M$. The module $W'_{\nu,0}$ has a basis $(h(k))_{k\geq 1}$, and the action of $\g$ is given by:
$$\left\{ \begin{array}{rcl}
e(k).g(l)&=&(l-1) h(k+l),\\
f^{(i)}(k).h(1)&=&\nu_i h(k+1),\\
f^{(i)}(k).h(l)&=&0 \mbox{ if }l \geq 2,\\
g^{(i)}(k).h(l)&=&0.
\end{array}\right.$$
\item Let $\nu=(\nu_1,\ldots,\nu_M) \in K^M$. The module $W'_{\nu,1}$ has a basis $(h(k))_{k\geq 1}$, and the action of $\g$ is given by:
$$\left\{ \begin{array}{rcl}
e(k).h(1)&=&h(k+1),\\
e(k).h(l)&=&(l-1) h(k+l) \mbox{ if }l \geq 2,\\
f^{(i)}(k).h(1)&=&\nu_i h(k+1),\\
f^{(i)}(k).h(l)&=&0 \mbox{ if }l \geq 2,\\
g^{(i)}(k).h(l)&=&0.
\end{array}\right.$$
\item Let $c \in K$, $\nu=(\nu_1,\ldots,\nu_M) \in K^M$, $\mu=(\mu_1,\ldots,\mu_N) \in K^N$. The module $W''_{c,\nu,\mu}$ has a basis $(h(k))_{k\geq 1}$,
and the action of $\g$ is given by:
$$\left\{ \begin{array}{rcl}
e(k).h(l)&=&(l+c) h(k+l),\\
f^{(i)}(k).h(l)&=&\nu_i h(k+l),\\
g^{(i)}(k).h(1)&=&\mu_i h(k+1),\\
g^{(i)}(k).h(l)&=&0 \mbox{ if }l\geq 2.
\end{array}\right.$$
\end{enumerate}

\subsection{Description of the Lie algebra}

\begin{theo} \label{62}
Let us consider a connected, fundamental non-abelian SDSE. Then $\lies$ has the following form:
$$\lies\approx W \triangleleft\left(\left(W_{c_1,\upsilon^{(1)}}\oplus \ldots \oplus W_{c_N,\upsilon^{(N)}} \right) \triangleleft \left(V_0^M 
\triangleleft \gfdb\right)\right),$$
where $W$ is a direct sum of $W'_{\nu,0}$, $W'_{\nu,1}$ and $W''_{c,\nu,\mu}$.
\end{theo}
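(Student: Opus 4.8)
The plan is to work from the structural description of connected fundamental non-abelian SDSE given in theorem \ref{52} together with the extended fundamental case of theorem \ref{14}, and to translate the partition $I=\left(\bigcup_{i\in I_0}J_i\right)\cup\left(\bigcup_{i\in J_0}J_i\right)\cup K_0\cup I_1\cup J_1\cup I_2$ into a filtration of $\lies$ by ideals. The basic tool is proposition \ref{21}: the pre-Lie product on the basis $(f_i(k))$ is $f_j(l)\star f_i(k)=\lambda_k^{(i,j)}f_i(k+l)$, hence the Lie bracket is $[f_j(l),f_i(k)]=\lambda_k^{(i,j)}f_i(k+l)-\lambda_l^{(j,i)}f_j(k+l)$. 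Since we are in the non-abelian case every vertex has finite level, and by the corollary after proposition \ref{23} and the explicit tables in the proof of theorem \ref{32}, for each vertex $j$ there is a scalar $b_j$ with $\lambda_n^{(i,j)}=b_j(n-1)+\tilde a^{(i)}_j$ for $n$ large, and in fact (after the normalisations used in theorem \ref{32}) for all $n\geq \mathrm{level}(i)+1$. The non-abelian hypothesis means $J_0\neq\emptyset$ or some $\beta_i\neq-1$, so there is at least one vertex with $b_j\neq 0$; after the change of variables of theorem \ref{32} one such vertex can be singled out and will produce the Fa\`a di Bruno copy $\gfdb$.

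First I would set up the correspondence between the blocks of the partition and the generators $e(k)$, $f^{(i)}(k)$, $g^{(j)}(k)$, $h(k)$ of the target Lie algebra. Concretely: pick one block $J_{i_0}$ with $i_0\in I_0$ and $\beta_{i_0}\neq -1$ (or, if $I_0$ consists only of $\beta=-1$ vertices, a block in $J_0$); after a rescaling of the $f_x(k)$ this block yields a subalgebra isomorphic to $\gfdb$ with $e(k)$ the suitably normalised $f_x(k)$. The remaining blocks $J_i$ with $i\in I_0$ (of $\beta$-type) give the modules $V_0$, producing the $M$ copies in $V_0^M\triangleleft\gfdb$; the blocks $J_i$ with $i\in J_0$ and the vertices of $K_0$ give the modules $W_{c,\upsilon}$ (with $c$ read off from the exponent $b_i-1$ or from $f_1$, i.e. $\beta=1$); and the vertices of $I_1$, $J_1$, $I_2$ — which are exactly the extension vertices and level-$1$ vertices — give the "outer" modules $W'_{\nu,0}$, $W'_{\nu,1}$, $W''_{c,\nu,\mu}$ depending on whether $\nu_i=0$, $\nu_i\neq 0$ with $i\in I_1$, or $i\in J_1/I_2$. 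The verification that these collections of basis vectors close under the bracket with the stated structure constants is a direct computation: plug the $\lambda_n^{(i,j)}$ from the tables in theorem \ref{32} into $[f_j(l),f_i(k)]=\lambda_k^{(i,j)}f_i(k+l)-\lambda_l^{(j,i)}f_j(k+l)$ and simplify, using $\lambda_k^{(i,j)}=b_j(k-1)+a^{(i)}_j$ in each range. The key identities to check are $[e(k),e(l)]=(l-k)e(k+l)$ (this is where $b=\beta+1$ and the normalisation $f_\beta$ enter, and is exactly the Fa\`a di Bruno relation), $[e(k),f^{(i)}(l)]=lf^{(i)}(k+l)$, $[e(k),g^{(i)}(l)]=(l+c_i)g^{(i)}(k+l)$, the vanishing brackets $[f^{(i)},f^{(j)}]=[g^{(i)},g^{(j)}]=0$ coming from $b=0$ type vertices, and the mixed bracket $[f^{(i)}(k),g^{(j)}(l)]=\upsilon^{(j)}_ig^{(j)}(k+l)$ coming from the coefficient $a^{(j)}_i$ linking the two blocks.

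Next I would organise this as a sequence of semidirect-product extractions matching the nested structure in the statement. The inner piece $V_0^M\triangleleft\gfdb$ is the subalgebra spanned by the $J_i$, $i\in I_0$; that it is a subalgebra and has the stated form follows from the level-$0$ computation (lemma \ref{26}) applied to each such block and the fact that blocks with $\beta$-type satisfy $\mu_i^{(j)}=0$ across distinct blocks. Adjoining the $J_0$-blocks and $K_0$ gives the second layer $\left(\bigoplus W_{c_i,\upsilon^{(i)}}\right)\triangleleft(V_0^M\triangleleft\gfdb)$: one checks this is an ideal extension because $J_0$ and $K_0$ vertices are never ascendants of $I_0$ vertices, so brackets $[f_{J_0},f_{I_0}]$ and $[f_{K_0},f_{I_0}]$ land back inside the $J_0\cup K_0$ span (times the appropriate scalar), i.e. the span of the $I_0$ blocks acts on the span of the $J_0\cup K_0$ blocks. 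Finally the outermost layer $W$ comes from $I_1\cup J_1\cup I_2$: by proposition \ref{29} and the description in theorem \ref{52}, these are extension vertices or level-$1$ vertices, none of them is an ascendant of anything in $I_0\cup J_0\cup K_0$, so their span is an abelian ideal (brackets among them vanish because each has $b=0$ and they are mutually non-descendant, by the conditions "$F_j=F_k$ for $j,k\in I_1^{(i)}$" and the $I_2$ layering), and it decomposes as a direct sum of the three module types according to the trichotomy on $\nu_i$. Throughout, the module-decomposition claims are just the statement that the pre-Lie action of a fixed block on another fixed block is one of the standard actions, which is read off from a single entry of the $\tilde a^{(j)}_i$ table.

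The main obstacle will be the bookkeeping in the outermost layer $W$ and in the dilatation: a genuine fundamental SDSE is a dilatation of a system of theorem \ref{32}, so each "vertex" of theorem \ref{32} is really a block $J_x$, and by proposition \ref{18}-2 all vertices in a block have identical $\lambda_n$'s, hence each block $J_x$ of size $d$ contributes $d$ mutually-commuting copies of the same module — I must make sure the multiplicities $M$, $N$ and the direct sums in $W$ absorb these dilatation copies correctly, and that the change of variables of theorem \ref{32} (which multiplies $h_x$ by scalars within a block) is exactly what normalises the structure constants to the clean form $[e(k),f^{(i)}(l)]=lf^{(i)}(k+l)$ etc. The other delicate point is the $J_1$ layer: its vertices depend on $I_1$ vertices with $\nu_j=1$ and the common value $b^{(i)}_t$ must be identified with the module parameters $c,\nu,\mu$ of $W''$; checking that the bracket $[g^{(i)}(k),h(l)]$ behaves as $\mu_i h(k+l)$ for $l=1$ and vanishes for $l\geq 2$ uses precisely that a $J_1$ vertex has level $1$ (so its $\lambda_n$ is affine only for $n\geq 2$, with the extra term $\sum a^{(i)}_jh_1$ at degree $1$). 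Once these matchings are pinned down, the proof is a finite verification; I would present it by first stating the dictionary between blocks and generators, then doing one representative bracket computation in each of the six cases, and then citing the nested-ideal structure to assemble the semidirect product, rather than grinding every entry.
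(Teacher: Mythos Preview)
Your overall strategy (build a filtration from the partition, verify brackets via the $\lambda_n^{(i,j)}$ tables, assemble the semidirect products) matches the paper's, but your dictionary between vertices and generators is wrong in a way that would make the computation fail.

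The paper does \emph{not} assign a whole block $J_{i_0}$ to $\gfdb$. It picks a single distinguished vertex $i_0\in J_{x_0}$ with $b_{x_0}\neq 0$ and sets $p_{i_0}(k)=\frac{1}{b_{x_0}}f_{i_0}(k)$ for $\gfdb$; then one representative $i_x$ in each remaining block $J_x$ gives a copy of $V_0$ via $p_{i_x}(k)=\frac{1}{b_x}f_{i_x}(k)-\frac{1}{b_{x_0}}f_{i_0}(k)$ (or just $f_{i_x}(k)$ if $b_x=0$); and every \emph{non-representative} vertex $i\in J_x\setminus\{i_x\}$ becomes a $W_{c,\upsilon}$ via the \emph{difference} $p_i(k)=\frac{1}{b_x}(f_i(k)-f_{i_x}(k))$. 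So $M=|I_0\cup J_0\cup K_0|-1$ counts blocks, not $|I_0|$-type blocks, and the $W_{c,\upsilon}$'s come from dilatation copies inside every block, not from $J_0\cup K_0$. Your claim that ``each block $J_x$ of size $d$ contributes $d$ mutually-commuting copies of the same module'' is false for the distinguished block: two vertices $x,x'\in J_{x_0}$ satisfy $[f_x(k),f_{x'}(l)]=b_{x_0}\bigl(l\,f_{x'}(k+l)-k\,f_x(k+l)\bigr)\neq 0$, and only after subtracting $f_{i_0}$ do they commute. ``Rescaling'' is not enough; the crucial step is this subtraction.

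Three further points. First, for $j\in I_1$ with $\nu_j\neq 0$ the paper gets a module $W_{c,\upsilon}$ (after the substitution $p_j(1)=\nu_j f_j(1)$), \emph{not} a $W'_{\nu,1}$; the $W'_{\nu,0}$ and $W'_{\nu,1}$ types arise only when $\nu_j=0$, distinguished by whether $a^{(j)}_{i_0}=0$ or not. Second, $I_2$ does not appear here: theorem \ref{62} treats fundamental (not extended) systems, so there is no $I_2$; the extension vertices are handled separately in theorem \ref{63}. Third, your ``ideal'' argument for the outer layer is backwards: it is the span of $I_1\cup J_1$ that is the ideal on which the inner algebra acts, and this follows because vertices of $I_0\cup J_0\cup K_0$ are never descendants of $I_1\cup J_1$ (so $\lambda^{(i,j)}_n=0$ in that direction), not the other way round.
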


\begin{proof} {\it First step.} We first consider a Hopf SDSE $(S)$, dilatation of a system of theorem \ref{32}, such that $I=I_0 \cup J_0 \cup K_0$.
The set $J$ of the vertices of $\gs$ admits a partition $J=(J_x)_{x \in I_0} \cup (J_x)_{x \in J_0} \cup (J_x)_{x \in K_0} $. We put:
$$A=\{j \in J\:/\: b_j \neq 0\}, \: B=\{j \in J\:/\: b_j=0\}.$$
In other terms, $i \in A$ if, and only if, ($i\in J_x$, with $x \in I_0$ such that $b_x \neq -1$) or ($i\in J_x$, with $x \in J_0$). As we are in the non-abelian case,
$A \neq \emptyset$. Let us choose $i_x \in J_x$ for all $x \in I$, and $i_{x_0} \in A$. In order to enlighten the notations, we put
$i_0=i_{x_0}$. We define, for all $k\geq 1$:
$$\left\{ \begin{array}{rcl}
p_{i_0}(k)&=&\displaystyle \frac{1}{b_{x_0}} f_{i_0}(k),\\
p_i(k)&=&\displaystyle \frac{1}{b_{x_0}}(f_i(k)-f_{i_0}(k))\mbox{ if } i\in J_{x_0}-\{i_0\},\\
p_{i_x}(k)&=&\displaystyle \frac{1}{b_{x}}f_i(k)-\frac{1}{b_{x_0}}f_{i_0}(k)\mbox{ if } x\neq x_0 \mbox{ and }x\in A,\\
p_{i_x}(k)&=&\displaystyle f_i(k)\mbox{ if } x\in B,\\
p_i(k)&=&\displaystyle \frac{1}{b_{x}}(f_i(k)-f_{i_x}(k))\mbox{ if } i\in J_x-\{i_x\}, \: x\neq x_0 \mbox{ and }x\in A,\\
p_i(k)&=&\displaystyle f_i(k)-f_{i_x}(k)\mbox{ if } i\in J_x-\{i_x\}, \: x\in B.
\end{array}\right.$$
Then direct computations show that the Lie bracket of $\lies$ is given in the following way: for all $k,l \geq 1$,
\begin{itemize}
\item $[p_{i_0}(k),p_{i_0}(l)]=(l-k) p_{i_0}(k+l)$.
\item For all $i\in I$,
$[p_{i_0}(k),p_i(l)]= \left\{ \begin{array}{l}
(l+d_{x_0})p_i(k+l) \mbox{ if }i\in J_{x_0}-\{i_0\},\\
lp_i(k+l)\mbox{ if }i\notin J_{x_0}.
\end{array}\right.$ 
\item For all $i\in J_{x_0}-\{i_0\}$, for all $x \neq x_0$,
$[p_{i_x}(k),p_i(l)]= \left\{ \begin{array}{l}
-d_{x_0}p_i(k+l) \mbox{ if }x \in A,\\
0\mbox{ if }x\in B.
\end{array}\right.$ 
\item For all $x,x'\in I-\{x_0\}$, $[p_{i_x}(k),p_{i_{x'}}(l)]=0$.
\item For all $x,x'\in I-\{x_0\}$, $i\in J_{x'}-\{i_{x'}\}$, 
$[p_{i_x}(k),p_i(l)]= \left\{ \begin{array}{l}
0 \mbox{ if }x \neq x',\\
d_x p_i(k+l)\mbox{ if } x=x'.
\end{array}\right.$ 
\item For all $x,x'\in I-\{x_0\}$, $i\in J_x-\{i_x\}$, $j\in J_{x'}-\{i_{x'}\}$, $[p_i(k),p_j(l)]=0$.
\end{itemize} 
We used the following notations:
$$d_x=\left\{ \begin{array}{l}
\displaystyle \frac{-\beta_x}{1+\beta_x}\mbox{ if }x \in I_0,\:\beta_x \neq -1,\\
1 \mbox{ if }x \in I_0,\: \beta_x=-1,\\
-1\mbox{ if }x \in J_0,\\
0 \mbox{ if }x\in K_0.
\end{array}\right.$$
So the Lie algebra $\lies$ is isomorphic to:
$$\left(W_{d_{x_0},(-d_{x_0},\cdots,-d_{x_0},0,\cdots,0)}^{|J_{x_0}|-1}\oplus \bigoplus_{x \in I-\{x_0\}} W_{0,(0,\cdots,0,d_x,0,\cdots,0)}^{|I_x|-1}\right)
\triangleleft\left( V_0^{|I|-1} \triangleleft \g_{FdB}\right).$$
A basis adapted to this decomposition is:
$$(p_i(k))_{i\in J_{x_0}-\{i_0\},k\geq 1} \cup \left(\bigcup_{x \in I-\{x_0\}} (p_i(k))_{i\in J_{x}-\{i_x\},k\geq 1} \right)
\cup \left(\bigcup_{x \in I-\{x_0\}} (p_{i_x}(k))_{k\geq 1} \right) \cup (p_{i_0}(k))_{k\geq 1}.$$

{\it Second step.} We now assume that $I_1 \neq \emptyset$. Then the descendants of $j \in I_1$ form a system of the first step, so:
$$\lies=W_{I_1} \triangleleft \g_{(S_0)},$$
where $W_{I_1}=Vect(f_j(k)\:/\: j\in I_1,k\geq 1\}$ and $(S_0)$ is a restriction of $(S)$ as in the first step.
Let us fix $j \in I_1$ and let us consider the $\g_{(S_0)}$-module $W_j=Vect(f_j(k)\:/\:k\geq 1)$. With the notations of the preceding step:
\begin{itemize}
\item $[p_{i_0}(k),f_j(l)]=\left(l-1+\frac{a_{i_0}^{(j)}}{b_{x_0}}\right) f_j(k+l)$ if $l=1$.
\item $[p_{i_0}(k),f_j(l)]=\left(l-1+\nu_j \frac{a_{i_0}^{(j)}}{b_{x_0}}\right) f_j(k+l)$ if $l\geq 2$.
\item $[p_{i_x}(k),f_j(l)]=\left(\frac{a^{(j)}_{i_x}}{b_x}-\frac{a_{i_0}^{(j)}}{b_{x_0}}\right) f_j(k+l)$ if $l=1$, $x\in A$.
\item $[p_{i_x}(k),f_j(l)]=\nu_j\left(\frac{a^{(j)}_{i_x}}{b_x}-\frac{a_{i_0}^{(j)}}{b_{x_0}}\right) f_j(k+l)$ if $l\geq 2$, $x\in A$.
\item $[p_{i_x}(k),f_j(l)]=a^{(j)}_{i_x} f_j(k+l)$ if $l=1$, $x\in B$.
\item $[p_{i_x}(k),f_j(l)]=\nu_j a^{(j)}_{i_x} f_j(k+l)$ if $l\geq 2$, $x\in B$.
\item $[p_i(x),f_j(l)]=0$ if $i$ is not a $i_x$.
\end{itemize}
If $\nu_j\neq 0$, we put $p_j(k)=f_j(k)$ if $k \geq 2$ and $p_j(1)=\nu_j f_j(1)$. Then, for all $l$:
\begin{itemize}
\item $[p_{i_0}(k),p_j(l)]=\left(l-1+\nu_j \frac{a_{i_0}^{(j)}}{b_{x_0}}\right) p_j(k+l)$.
\item $[p_{i_x}(k),p_j(l)]=\nu_j\left(\frac{a^{(j)}_{i_x}}{b_x}-\frac{a_{i_0}^{(j)}}{b_{x_0}}\right) p_j(k+l)$ if $x\in A$.
\item $[p_{i_x}(k),p_j(l)]=\nu_j a^{(j)}_{i_x} p_j(k+l)$ if $x\in B$.
\item $[p_i(x),p_j(l)]=0$ if $i$ is not a $i_x$.
\end{itemize}
So $W_j$ is a module $W_{c,\upsilon}$. If $\nu_j=0$ and $a^{(j)}_{i_0} \neq 0$, we put $p_j(k)=f_j(k)$ if $k \geq 2$ and 
$p_j(1)=\frac{b_{x_0}}{a^{(j)}_{i_0}} f_j(1)$. Then:
\begin{itemize}
\item $[p_{i_0}(k),p_j(l)]=p_j(k+l)$ if $l=1$.
\item $[p_{i_0}(k),p_j(l)]=(l-1)p_j(k+l)$ if $l\geq 2$.
\item $[p_{i_x}(k),f_j(l)]=\left(\frac{a^{(j)}_{i_x}}{b_x}-\frac{a_{i_0}^{(j)}}{b_{x_0}}\right) f_j(k+l)$ if $l=1$, $x\in A$.
\item $[p_{i_x}(k),f_j(l)]=0$ if $l\geq 2$, $x\in A$.
\item $[p_{i_x}(k),f_j(l)]=a^{(j)}_{i_x} f_j(k+l)$ if $l=1$, $x\in B$.
\item $[p_{i_x}(k),f_j(l)]=0$ if $l\geq 2$, $x\in B$.
\item $[p_i(x),p_j(l)]=0$ if $i$ is not a $i_x$.
\end{itemize}
So $W_j$ is a module $W'_{\nu,1}$. If $\nu_j=0$ and $a^{(j)}_{i_0}=0$, we put $p_j(k)=f_j(k)$ for all $k \geq 1$. Then:
\begin{itemize}
\item $[p_{i_0}(k),p_j(l)]=(l-1)p_j(k+l)$.
\item $[p_{i_x}(k),f_j(l)]=\left(\frac{a^{(j)}_{i_x}}{b_x}-\frac{a_{i_0}^{(j)}}{b_{x_0}}\right) f_j(k+l)$ if $l=1$, $x\in A$.
\item $[p_{i_x}(k),f_j(l)]=0$ if $l\geq 2$, $x\in A$.
\item $[p_{i_x}(k),f_j(l)]=a^{(j)}_{i_x} f_j(k+l)$ if $l=1$, $x\in B$.
\item $[p_{i_x}(k),f_j(l)]=0$ if $l\geq 2$, $x\in B$.
\item $[p_i(x),p_j(l)]=0$ if $i$ is not a $i_x$.
\end{itemize}
So $W_j$ is a module $W'_{\nu,0}$.\\

{\it Last step.} We now consider vertices in $J_1$. If $j\in J_1$, then its descendants are vertices of the first step and $i$ elements of $I_1$ such 
that $\nu_i=1$. As before:
$$\lies=W_{J_1} \triangleleft \g_{(S_1)},$$
where $W_{J_1}=Vect(f_j(k)\:/\: j\in J_1,k\geq 1\}$ and $(S_1)$ is a restriction of $(S)$ as in the second step. Let us fix $j \in J_1$ and let us consider 
the $\g_{(S_1)}$-module $W_j=Vect(f_j(k)\:/\:k\geq 1)$. As $\nu_j \neq 0$, putting $p_j(k)=f_j(k)$ if $k \geq 2$ and $p_j(1)=\nu_j f_j(1)$:
\begin{itemize}
\item $[p_{i_0}(k),p_j(l)]=\left(l-1+\nu_j \frac{a_{i_0}^{(j)}}{b_{x_0}}\right) p_j(k+l)$.
\item $[p_{i_x}(k),p_j(l)]=\nu_j\left(\frac{a^{(j)}_{i_x}}{b_x}-\frac{a_{i_0}^{(j)}}{b_{x_0}}\right) p_j(k+l)$ if $x\in A$.
\item $[p_{i_x}(k),p_j(l)]=\nu_j a^{(j)}_{i_x} p_j(k+l)$ if $x\in B$.
\item $[p_i(k),p_j(l)]=\nu_j a^{(j)}_i p_j(k+l)$ if $l=1$, $i\in I_1$, with $\nu_i=1$.
\item $[p_i(k),p_j(l)]=0$ if $l\geq 2$, $i\in I_1$.
\item $[p_i(x),p_j(l)]=0$ if $i\notin I_1$ and is not a $i_x$.
\end{itemize}
So $W_j$ is a module $W''_{c,\nu,\mu}$. \end{proof}

\begin{theo} \label{63}
Let $(S)$ be a connected, extended, fundamental, non-abelian SDSE. Then the Lie algebra $\lies$ is of the form:
$$\g_m\triangleleft(\g_{m-1}\triangleleft(\cdots \g_2 \triangleleft (\g_1 \triangleleft \g_0)\cdots),$$
where $\g_0$ is the Lie algebra associated to the restriction of $(S)$ to the vertices which are not extension vertices (so $\g_0$ is described 
in theorem \ref{62}) and, for $j\geq 1$, $\g_j$ is an abelian  $(\g_{j-1}\triangleleft(\cdots \g_2 \triangleleft (\g_1 \triangleleft \g_0)\cdots)$-module having a
basis $(h^{(j)}(k))_{k\geq 1}$. 
\end{theo}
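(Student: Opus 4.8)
The plan is to proceed by induction on the number $k$ of extension steps in the decomposition $(G_i)_{0\le i\le k}$ of $\gs$ provided by corollary \ref{50} (in the finite-level case), exactly as in the proof of theorem \ref{52}. The base case $k=0$ is theorem \ref{62}: then $(S)$ has no extension vertices and $\lies=\g_0$ has the stated semi-direct form. For the inductive step, I would let $0$ be the last vertex added (an extension vertex, by definition: its restriction to $\{0\}\cup J$, where $J$ is the set of descendants of $0$, is an extension of the restriction to $J$). Write $(S')$ for the restriction to $\gs-\{0\}$, so that by the induction hypothesis $\g_{(S')}=\g_{m-1}\triangleleft(\cdots\triangleleft\g_0)$ has the announced iterated form. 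The key point is then to identify the one-dimensional-per-degree module $\g_m=W_0:=\mathrm{Vect}(f_0(k)\mid k\ge 1)$ and show it is an abelian $\g_{(S')}$-module adjoined by a semi-direct product, i.e. $\lies=W_0\triangleleft \g_{(S')}$.

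First I would record, from proposition \ref{21}, that $[f_0(k),f_0(l)]=\big(\lambda_k^{(0,0)}-\lambda_l^{(0,0)}\big)f_0(k+l)$ and that, since $0$ is an extension vertex, $F_0=1+\sum_{0\to j}a^{(0)}_jh_j$ is affine; by lemma \ref{17}-1 (with the trivial path, $0$ not self-dependent) this forces $\lambda_n^{(0,0)}=0$ for all $n$, hence $W_0$ is abelian. Next, $\lies$ as a vector space is $W_0\oplus\g_{(S')}$, and $\g_{(S')}$ is a subalgebra because $\pi\otimes\pi$ composed with $\Delta(X_i(n))$ for $i\ne 0$ never produces a tree rooted at $0$ (a grafting creating a root decorated by $0$ cannot arise from cutting a tree rooted at $i\ne 0$). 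So it remains to check $[\g_{(S')},W_0]\subseteq W_0$, which is immediate from $f_j(l)\star f_0(k)=\lambda_k^{(0,j)}f_0(k+l)\in W_0$ and $f_0(k)\star f_i(l)=\lambda_l^{(i,0)}f_i(l+k)$: but by proposition \ref{18}-1, since $0$ has no ascendant in $\gs$ (an extension vertex is added with no ascendant), $0$ is not a descendant of any $i\ne 0$, so $\lambda_l^{(i,0)}=0$, and indeed $W_0$ is a two-sided ideal complemented by the subalgebra $\g_{(S')}$. This gives $\lies=W_0\triangleleft\g_{(S')}$, completing the induction with $\g_m=W_0$ and $h^{(m)}(k)=f_0(k)$.

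The only genuinely delicate point is to make sure the bracket $[p_i(k),f_0(l)]$ for the basis elements $p_i(k)$ of $\g_{(S')}$ coming from the decomposition of theorem \ref{62} (the $p_{i_0}(k)$, $p_{i_x}(k)$, and the various $p_i(k)$ in the modules $W_{c,\upsilon}$, $W'_{\nu,\ast}$, $W''_{c,\nu,\mu}$, as well as the $h^{(j)}(k)$ for intermediate extension steps) is again of the form (scalar)$\cdot f_0(k+l)$, so that $W_0$ really is a module over the whole iterated semi-direct product and not merely over some quotient; this is a bookkeeping exercise using $\lambda_n^{(0,j)}=b_j(n-1)+\tilde a^{(0)}_j$ for $n\ge 2$ together with proposition \ref{23} (level drops along edges, so all descendants of $0$ have finite level and the $b_j$ are the global ones), and I would organise it by the same case split on $j\in I_0\cup J_0\cup K_0$, $j\in I_1$, $j\in J_1$ that appears in theorem \ref{62}'s proof. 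I do not expect any new obstacle beyond carefully tracking these scalars; the structural claim — ideal plus complementary subalgebra — follows formally from proposition \ref{18}-1 and the affineness of $F_0$.
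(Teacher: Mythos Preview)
Your approach is correct and matches the paper's (extremely terse) proof, which simply identifies $\g_j$ with $\mathrm{Vect}(f_{x_j}(k)\mid k\ge 1)$ for the extension vertices $I_2=\{x_1,\ldots,x_m\}$ of theorem \ref{14} and leaves the semi-direct decomposition implicit; you have spelled out exactly the verification the paper omits.

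Two small clean-ups. First, the right induction parameter is $m=|I_2|$ (the number of extensions, as given by theorem \ref{52}), not the $k$ of corollary \ref{50}: the vertices added in corollary \ref{50} need not be extension vertices (those of level $0$ or $1$ get absorbed into $K_0\cup I_1\cup J_1$ in the proof of theorem \ref{52}), so your parenthetical ``an extension vertex, by definition'' is not justified by corollary \ref{50} alone. With the correct parameter, the last extension vertex $x_m$ indeed has no ascendant in $\gs$ (by the nesting $I^{(x_k)}\subseteq\cdots\cup\{x_1,\ldots,x_{k-1}\}$ in theorem \ref{14}) and your ideal argument goes through verbatim. Second, your final paragraph is unnecessary caution: once you know $W_0$ is an ideal complemented by the subalgebra $\g_{(S')}$, the $\g_{(S')}$-module structure on $W_0$ is automatic, and since the $p_i(k)$ of theorem \ref{62} are linear combinations of the $f_i(k)$, the bracket $[p_i(k),f_0(l)]$ lands in $W_0$ without any further bookkeeping.
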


\begin{proof} The Lie algebra $\g_j$ is the Lie algebra $Vect(f_{x_j}(k)\:/\:k\geq 1)$, 
where $J_2=\{x_1,\ldots,x_m\}$, with the notations of theorem \ref{14}. \end{proof}

\subsection{Associated group}

Let us now consider the character group $Ch\left(\hs \right)$ of $\hs$. In the preceding cases, $\lies$ contains a sub-Lie algebra isomorphic to
the Faà di Bruno Lie algebra, so $Ch\left(\hs \right)$ contains a subgroup isomorphic to the Faà di Bruno subgroup:
$$G_{FdB}=\{x+a_1x^2+a_2x^3+\cdots \:|\: \forall i, \;a_i\in K\},$$
with the product defined by $A(x).B(x)=B\circ A(x)$. Moreover, each modules earlier defined on $\g_{FdB}$ corresponds to a module over $G_{FdB}$
by exponentiation:

\begin{defi}\textnormal{\begin{enumerate}
\item The module $\modulev_0$ is isomorphic to $yK[[y]]$ as a vector space, and the action of $G_{FdB}$ is given by:
$$A(x).P(y)=P\circ A(y).$$
\item Let $G=\left(\modulev_0^{\oplus M}\right)\rtimes G_{FdB}$. Let $c \in K$, and $\upsilon=(\upsilon_1,\cdots,\upsilon_M) \in K^M$. 
Then $\modulew_{c,\upsilon}$ is $zK[[z]]$ as a vector space, and the action of $G$ is given by:
$$(P_1(y),\cdots,P_M(y),A(x)).Q(z)=exp\left(\sum_{i=1}^M \upsilon_i P_i(z)\right)\left(\frac{A(z)}{z}\right)^c Q\circ A(z).$$
\item Let us consider the following semi-direct product:
$$G=\left(\modulew_{c_1,\varepsilon^{(1)}}\oplus \cdots \oplus \modulew_{c_N,\varepsilon^{(N)}}\right)\triangleleft
\left(\modulev_0^{\oplus M}\triangleleft G_{FdB}\right).$$ 
\begin{enumerate}
\item Let $\nu=(\nu_1,\cdots,\nu_M) \in K^M$. Then $\modulew'_{\nu,0}$ is $tK[[t]]$ as a vector space, and  for all
$X= (Q_1(z),\cdots,Q_N(z),P_1(y),\cdots,P_M(y),A(x))\in G$:
\begin{eqnarray*}
X.t&=&\left(1+\sum_{i=1}^M  \nu_i P_i(t)\right)t,\\
X.R(t)&=&\left(\frac{t}{A(t)}\right)R\circ A(t),
\end{eqnarray*}
for all $R(t)\in t^2K[[t]]$.
\item Let $\nu=(\nu_1,\cdots,\nu_M) \in K^M$. Then $\modulew'_{\nu,1}$ is $tK[[t]]$ as a vector space, and  for all 
$X= (Q_1(z),\cdots,Q_N(z),P_1(y),\cdots,P_M(y),A(x))\in G$:
\begin{eqnarray*}
X.t&=&\left(1+\sum_{i=1}^M  \nu_i P_i(t)\right)
\left(t+t\ln\left(\frac{A(t)}{t}\right)\right),\\
X.R(t)&=&\left(\frac{t}{A(t)}\right)R\circ A(t),
\end{eqnarray*}
for all $R(t)\in t^2K[[t]]$.
\item Let $c \in K$, $\nu=(\nu_1,\cdots,\nu_M) \in K^M$, $\mu=(\mu_1,\ldots,\mu_N) \in K^N$. Then $\modulew''_{c,\nu,\mu}$ is $tK[[t]]$ as a vector space,
and  for all $X= (Q_1(z),\cdots,Q_N(z),P_1(y),\cdots,P_M(y),A(x))\in G$:
\begin{eqnarray*}
X.t&=&\left(\frac{A(t)}{t}\right)^c exp\left(\sum_{i=1}^M  \mu_i P_i(t)\right)\left(1+\sum_{i=1}^M  \mu_i Q_i(t)\right)A(t),\\
X.R(t)&=&\left(\frac{t}{A(t)}\right)^{c}exp\left(\sum_{i=1}^M  \mu_i P_i(t)\right)R\circ A(t),
\end{eqnarray*}
for all $R(t)\in t^2K[[t]]$.
\end{enumerate}\end{enumerate}}\end{defi}

Direct computations prove that they are indeed modules.

\begin{theo} \label{65}
Let $(S)$ be a connected Hopf SDSE in the non-abelian, fundamental case. Then the group $Ch\left(\hs \right)$ is of the form:
$$G_m\rtimes(G_{m-1}\rtimes(\cdots G_2 \rtimes (G_1 \rtimes G_0)\cdots),$$
where $G_0$ is a semi-direct product of the form:
$$G_0=\modulew'\rtimes (\modulew \rtimes (\modulev \rtimes G_{FdB})),$$
where $\modulev$ is a direct sum of modules $\modulev_0$, $\modulew$ a direct sum of modules $\modulew_{c,\upsilon}$, and $\modulew'$ a direct 
sum of modules $\modulew'_{\nu,0}$,  $\modulew'_{\nu,1}$ and $\modulew''_{c,\nu,\mu}$. Moreover, for all $m\geq 1$, $G_m=(tK[[t]],+)$ as a group.
\end{theo}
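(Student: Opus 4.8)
The plan is to mirror, on the group side, the structural decomposition of $\lies$ obtained in Theorems \ref{62} and \ref{63}, using the fact that for a graded Lie algebra concentrated in positive degrees the character group of the dual Hopf algebra is the exponential of the completion of that Lie algebra, and that this exponential functor carries semi-direct products of Lie algebras to semi-direct products of groups. First I would recall, following the argument already used in the associative case (Proposition in \S7.3), that $Ch(\hs)$ is canonically identified with the pro-unipotent group $\exp(\widehat{\lies})$: a character $f$ is determined by its restriction to $V=\mathrm{Vect}(X_i(k))$, which lies in $\widehat{\lies}$, and the Hopf-algebra product of characters corresponds via Baker--Campbell--Hausdorff to the group law on $\exp(\widehat{\lies})$. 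Since the Lie bracket is homogeneous and everything is graded in positive degree, all BCH series are finite in each degree, so no convergence issue arises. Applying $\exp$ to the iterated semi-direct product of Theorem \ref{63},
$$\lies \approx \g_m \triangleleft(\g_{m-1}\triangleleft(\cdots\g_1\triangleleft\g_0)\cdots),$$
and to the further decomposition of $\g_0$ from Theorem \ref{62}, immediately yields the claimed shape $G_m\rtimes(\cdots\rtimes(G_1\rtimes G_0)\cdots)$ with $G_0=\modulew'\rtimes(\modulew\rtimes(\modulev\rtimes G_{FdB}))$, provided one checks that $\exp$ applied to each elementary building block produces exactly the group modules written down in the definition preceding the theorem.

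The bulk of the work is therefore the block-by-block verification. For the Fa\`a di Bruno Lie algebra $\gfdb$ with bracket $[e(k),e(l)]=(l-k)e(k+l)$, it is classical that $\exp(\widehat{\gfdb})\cong G_{FdB}=\{x+a_1x^2+\cdots\}$ under composition; I would cite this or reprove it by the standard computation identifying $e(k)$ with the vector field $x^{k+1}\partial_x$. For an abelian ideal $V$ on which $\gfdb$ (or a larger Lie algebra $\g$) acts by the formulas $e(k).f(l)=lf(k+l)$, exponentiating the semi-direct product gives the action $A(x).P(y)=P\circ A(y)$ on $yK[[y]]$: here one checks that the adjoint action of $\exp(e)$ on the module is the time-one flow of the linear vector field, and that the Leibniz/derivation identity $e(k).(P\circ A)$ matches $d/dt$ of $P\circ(A\circ(\text{flow}))$. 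This is the prototype computation; the modules $\modulew_{c,\upsilon}$, $\modulew'_{\nu,0}$, $\modulew'_{\nu,1}$, $\modulew''_{c,\nu,\mu}$ are handled the same way, the factors $(A(z)/z)^c$, $\exp(\sum\upsilon_iP_i)$, $t\ln(A(t)/t)$ etc.\ being exactly the integrated forms of the infinitesimal actions $e(k).g(l)=(l+c)g(k+l)$, $f^{(i)}(k).g(l)=\upsilon_i g(k+l)$, and (in the $W'_{\nu,1}$ case) the non-diagonalizable action $e(k).h(1)=h(k+1)$, $e(k).h(l)=(l-1)h(k+l)$ which integrates to the logarithmic term. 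Finally the top layers $\g_j$ ($j\ge1$) from Theorem \ref{63} are one-dimensional-per-degree abelian modules, so $\exp(\widehat{\g_j})=(tK[[t]],+)$, giving $G_m,\dots,G_1=(tK[[t]],+)$.

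I would organize the proof as: (1) the general identification $Ch(\hs)=\exp(\widehat{\lies})$ and the functoriality $\exp(\mathfrak h\triangleleft\mathfrak g)=\exp(\mathfrak h)\rtimes\exp(\mathfrak g)$ for graded positively-graded Lie algebras and their modules; (2) $\exp(\widehat{\gfdb})=G_{FdB}$; (3) the exponential of each module type, done by verifying that the stated group action is a module action whose differential at the identity recovers the Lie action — most of this is ``direct computation'' already flagged in the text after the module definitions; (4) assembling via Theorems \ref{62}, \ref{63}. The main obstacle I anticipate is bookkeeping rather than conceptual: one must be careful that the \emph{semi-direct product} structure, not just the underlying vector-space grading, is preserved under $\exp$ — i.e.\ that the group commutators between a $G_{FdB}$-type factor and an abelian module factor are precisely the module action and not some higher BCH correction. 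Because each $\g_j$ and each module summand is abelian and the action lowers nothing in degree, the BCH formula for $[\exp X,\exp Y]$ with $Y$ in an abelian ideal truncates to $\exp(X)\exp(Y)\exp(-X)=\exp(\mathrm{Ad}_{\exp X}Y)=\exp(e^{\mathrm{ad}_X}Y)$, so the group action is literally $e^{\mathrm{ad}_X}$ on the ideal; checking that this matches the closed-form expressions (composition with $A$, multiplication by $(A/t)^c$, the exponential and logarithmic prefactors) is the only genuinely computational part, and it is the same computation repeated for each block. Once that is in hand the theorem follows by induction on the number of extension layers $m$, exactly paralleling the inductive structure of Theorem \ref{63}.
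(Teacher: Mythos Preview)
Your proposal is correct and follows essentially the same route as the paper: both arguments reduce the group structure to the Lie algebra decomposition of Theorems \ref{62} and \ref{63}, then verify block by block that exponentiating each piece yields the stated group modules. The paper phrases the passage from $\lies$ to $Ch(\hs)$ via Hopf-algebra duality (the coordinate Hopf algebra of $G_0$ has graded dual $\mathcal{U}(\g_0)$) rather than via $\exp$/BCH, but for graded connected Lie algebras in positive degrees these two viewpoints are equivalent, and the ``direct computations'' the paper invokes are exactly the module-by-module checks you outline.
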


\begin{proof} The group $Ch\left(\hs \right)$ is isomorphic to the group of characters of ${\cal U}(\g)^*$, where $\g$ is described in theorem \ref{63}. 
This implies that this group has a structure of semi-direct product as described in theorem \ref{65}. Let us consider the Hopf algebra $\h$ of coordinates 
of $G_0$. It is a graded Hopf algebra, and direct computations prove that its graded dual is the enveloping algebra of $\g_0$ of theorem \ref{63}. 
So $\h$ is isomorphic to $\h_{(S_0)}$. \end{proof}

\section{Lie algebra and group associated to $\hs$, abelian case}

We now treat the abelian case. Recall that in this case, $J_0=K_0=\emptyset$ and, for all $i\in I_0$, $\beta_i=-1$.

\subsection{Modules over an abelian Lie algebra}

Let $\g_{ab}$ be an abelian Lie algebra, with basis $\left(e^{(i)}(k)\right)_{1\leq i \leq M,k\geq 1}$. We define a family of modules over this Lie algebra:

\begin{defi}\textnormal{Let $\upsilon=(\upsilon_1,\cdots,\upsilon_M)\in K^M$.
Then $V_\upsilon$ has a basis $(f(k))_{k\geq 1}$, and the action of $\g_{ab}$ is given by:
$$e^{(i)}(k).f(l)=\upsilon_i f(k+l).$$
}\end{defi}

We can then describe the semi-direct product:

\begin{prop}\label{67}
Let us consider the following Lie algebra:
$$\g=\left(\bigoplus_{i=1}^N V_{\upsilon^{(i)}}\right)\triangleleft \g_{ab}.$$
It has a basis:
$$(e^{(i)}(k))_{1\leq i \leq M,k\geq 1} \cup (f^{(i)}(k))_{1\leq i \leq N,k\geq 1} ,$$
and the Lie bracket is given by:
$$\left\{ \begin{array}{rcl}
[e^{(i)}(k),e^{(j)}(l)]&=&0,\\[0mm]
[e^{(i)}(k),f^{(j)}(l)]&=&\upsilon^{(j)}_i f^{(j)}(k+l),\\[0mm]
[f^{(i)}(k),f^{(j)}(l)]&=&0.
\end{array}\right.$$
\end{prop}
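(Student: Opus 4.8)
The statement of Proposition \ref{67} is essentially a bookkeeping assertion: it describes a semi-direct product of an abelian Lie algebra $\g_{ab}$ with a direct sum of modules $V_{\upsilon^{(i)}}$, and asks us to read off the bracket on the resulting vector space. The plan is to verify directly that the prescribed bracket defines a Lie algebra structure and that it coincides with the one coming from the semi-direct product construction $\left(\bigoplus_{i=1}^N V_{\upsilon^{(i)}}\right)\triangleleft \g_{ab}$. This is routine, so I would keep it short.

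First I would recall the general fact: if $\h$ is a Lie algebra and $M$ is an $\h$-module viewed as an abelian Lie algebra, then $M\triangleleft \h$ has underlying vector space $M\oplus \h$ with bracket $[(m,x),(m',x')]=(x\cdot m'-x'\cdot m,[x,x']_\h)$, and this is a Lie algebra whenever the action is by derivations (automatic, since $M$ is abelian) — the Jacobi identity reduces to the module axiom $[x,x']\cdot m = x\cdot(x'\cdot m)-x'\cdot(x\cdot m)$. Applying this with $\h=\g_{ab}$ and $M=\bigoplus_{i=1}^N V_{\upsilon^{(i)}}$, the basis of $\g$ is visibly the union of the $e^{(i)}(k)$ (basis of $\g_{ab}$) and the $f^{(j)}(l)$ (bases of the summands $V_{\upsilon^{(j)}}$), which is the claimed basis.

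Then I would compute the three families of brackets. Since $\g_{ab}$ is abelian, $[e^{(i)}(k),e^{(j)}(l)]=0$. Since the $f$'s lie in the module part, which is abelian, $[f^{(i)}(k),f^{(j)}(l)]=0$. Finally, by definition of the semi-direct product bracket, $[e^{(i)}(k),f^{(j)}(l)]$ equals the action of $e^{(i)}(k)$ on $f^{(j)}(l)$, which by the definition of $V_{\upsilon^{(j)}}$ is $\upsilon^{(j)}_i f^{(j)}(k+l)$. This gives exactly the asserted bracket. One should also note that $\g$ is graded (putting $e^{(i)}(k)$ and $f^{(j)}(k)$ in degree $k$), so the bracket is well defined on the relevant completions if needed later, but this is not required for the statement itself.

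There is essentially no obstacle here; the only thing to be slightly careful about is checking that the prescribed formulas do satisfy the Jacobi identity without invoking the abstract construction — but that reduces, as above, to the single module axiom for $V_{\upsilon^{(j)}}$, namely that $[e^{(i)}(k),e^{(i')}(k')]\cdot f^{(j)}(l)=0$ matches $e^{(i)}(k)\cdot(e^{(i')}(k')\cdot f^{(j)}(l))-e^{(i')}(k')\cdot(e^{(i)}(k)\cdot f^{(j)}(l))=\upsilon^{(j)}_i\upsilon^{(j)}_{i'}f^{(j)}(k+k'+l)-\upsilon^{(j)}_{i'}\upsilon^{(j)}_i f^{(j)}(k+k'+l)=0$, which holds. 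Hence the formulas are consistent and Proposition \ref{67} follows.
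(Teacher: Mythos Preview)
Your proof is correct. The paper in fact gives no proof for this proposition at all, treating it as an immediate description of the semi-direct product; your verification of the Jacobi identity via the module axiom is exactly the routine check one would write if spelling it out.
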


We now define two families of modules over such a Lie algebra.

\begin{defi}\textnormal{Let $\g$ be a Lie algebra of proposition \ref{67}.
\begin{enumerate}
\item Let $\nu=(\nu_1,\ldots,\nu_M)\in K^M$. The module $W_\nu$ has a basis $(g(k))_{k\geq 1}$, and the action of $\g$ is given by:
$$\left\{ \begin{array}{rcl}
e^{(i)}(k).g(1)&=&\nu_i g(k+1),\\
e^{(i)}(k).g(l)&=&0\mbox{ if }l\geq 2,\\
f^{(i)}(k).g(l)&=&0.
\end{array}\right.$$
\item Let $\nu=(\nu_1,\ldots,\nu_M)\in K^M$ and $\mu=(\mu_1,\ldots,\mu_N)\in K^N$, such that for all $1\leq i\leq M$, for all $1\leq j \leq N$,
$\mu_j\left(\nu_i-\upsilon^{(j)}_i\right)=0$. The module $W'_{\nu,\mu}$ has a basis $(g(k))_{k\geq 1}$, and the action of $\g$ is given by:
$$\left\{ \begin{array}{rcl}
e^{(i)}(k).g(l)&=&\nu_i g(k+l),\\
f^{(j)}(k).g(1)&=&\mu_j g(k+1),\\
f^{(j)}(k).g(l)&=&0\mbox{ if }l\geq 2.
\end{array}\right.$$
\end{enumerate}}\end{defi}

{\bf Remark.} The condition $\mu_j\left(\nu_i-\upsilon^{(j)}_i\right)=0$ is necessary for $W'_{\nu,\mu}$ to be a $\g$-module. Indeed:
\begin{eqnarray*}
[e^{(i)}(k),f^{(j)}(l)].g(1)&=&\upsilon^{(j)}_i \mu_j g(k+l+1),\\
e^{(i)}(k).\left(f^{(j)}(l).g(1)\right)-f^{(j)}(l).\left(e^{(i)}(k).g(1)\right)&=&\mu_j \nu_i g(k+l+1).
\end{eqnarray*}

\subsection{Description of the Lie algebra}

We here consider a connected Hopf SDSE $(S)$ in the abelian case.

\begin{theo} \label{69}
Let us consider a Hopf SDSE of abelian fundamental type, with no extension vertices. Then $\lies$ has the following form:
$$\lies\approx W \triangleleft\left(\left(V_{\upsilon^{(1)}}\oplus \ldots \oplus V_{\upsilon^{(N)}} \right) \triangleleft \g_{ab}\right),$$
where $W$ is a direct sum of $W_\nu$ and $W'_{\nu,\mu}$.
\end{theo}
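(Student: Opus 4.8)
\textbf{Plan of proof for Theorem \ref{69}.} The strategy mirrors the proof of Theorem \ref{62}: build an explicit change of basis of $\lies$ adapted to the partition coming from the abelian fundamental structure, and read off the bracket relations directly from the formula $[f_j(l),f_i(k)]=f_j(l)\star f_i(k)-f_i(k)\star f_j(l)=\lambda_k^{(i,j)}f_i(k+l)-\lambda_l^{(j,i)}f_j(k+l)$ of Proposition \ref{21}. Since we are in the abelian case, $J_0=K_0=\emptyset$, every $i\in I_0$ has $\beta_i=-1$, hence $b_i=0$ for all $i$, so by the corollary after Proposition \ref{23} the coefficients $\lambda_n^{(i,j)}$ are \emph{constant} in $n$ up to finitely many initial terms: $\lambda_n^{(i,j)}=\tilde a_j^{(i)}$ for $n>\mathrm{level}(i)$ and $\lambda_1^{(i,j)}=a_j^{(i)}$. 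This flatness is exactly why the resulting Lie algebra is an iterated extension of an \emph{abelian} Lie algebra rather than of $\gfdb$.

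\textbf{Step 1: the core case $I=I_0$.} First I would treat the restriction of $(S)$ to the vertices which are not extension vertices and lie in the "bottom" layer, i.e. the dilatation of a Theorem \ref{32} system with $I=I_0$ and all $\beta_x=-1$. Here the partition is $J=\bigcup_{x\in I_0}J_x$. Choosing representatives $i_x\in J_x$, I would set $e^{(x)}(k)=f_{i_x}(k)$ and $f_i(k)=f_i(k)-f_{i_x}(k)$ for $i\in J_x-\{i_x\}$ (exactly parallel to the $p_i(k)$ of Theorem \ref{62}'s first step, but now without the $1/b_{x_0}$ normalizations, which are unavailable because all $b$'s vanish). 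A direct computation using Lemma \ref{17}-1 — which gives $\lambda_n^{(i_x,i_{x'})}$ as a constant depending only on the entries of the coefficient arrays in Theorem \ref{32} — shows $[e^{(x)}(k),e^{(x')}(l)]=0$ for all $x,x'$ (the $\gfdb$ self-bracket term $(l-k)e(k+l)$ disappears since $b=0$), that $e^{(x)}(k)$ acts on the module $\mathrm{Vect}(f_i(l):i\in J_{x'}-\{i_{x'}\})$ by a scalar $\upsilon^{(\cdot)}_x$ independent of $l$, and that the modules commute among themselves. This identifies the restricted $\lies$ with $\bigl(\bigoplus V_{\upsilon^{(i)}}\bigr)\triangleleft\g_{ab}$ as in Proposition \ref{67}.

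\textbf{Step 2: adjoining the vertices of $I_1$ and $J_1$.} Next, for $j\in I_1$ I would consider the $\g_{(S_0)}$-module $W_j=\mathrm{Vect}(f_j(k):k\geq1)$. Since $b_j=0$, the bracket $[e^{(x)}(k),f_j(l)]$ equals $\lambda_l^{(j,x)}f_j(k+l)$ up to the $f_j$-to-$f_{i_x}$ corrections, and $\lambda_l^{(j,x)}$ takes the value $a_x^{(j)}$ when $l=1$ and the constant value $\nu_j a_x^{(j)}$ (in the abelian reading of the Theorem \ref{32} arrays) when $l\geq2$; after rescaling $p_j(1)=\nu_j f_j(1)$ when $\nu_j\neq0$, or $p_j(k)=f_j(k)$ when $\nu_j=0$, one gets precisely the action defining $W_\nu$ (the "special at level $1$" behaviour). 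The vertices of $J_1$ are handled the same way but now the module also receives a nonzero action from the $f_i(1)$, $i\in I_1^{(j)}$, and the compatibility condition $\mu_j(\nu_i-\upsilon^{(j)}_i)=0$ in the definition of $W'_{\nu,\mu}$ is forced by the constraint in Theorem \ref{32}-5 that $F_i=F_k$ for $i,k\in I_1^{(j)}$ with all such $\nu_i=1$; this yields the modules $W'_{\nu,\mu}$. Assembling Steps 1 and 2 gives the claimed form $W\triangleleft\bigl((V_{\upsilon^{(1)}}\oplus\cdots\oplus V_{\upsilon^{(N)}})\triangleleft\g_{ab}\bigr)$.

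\textbf{Expected main obstacle.} The conceptual content is light — the hard work is entirely in the bookkeeping of Step 1, namely verifying that the $e^{(x)}(k)$ really do bracket to zero and that the scalars by which they act on the various modules depend only on $x$ and the target block, not on the internal indices or on $k,l$. This is exactly the kind of computation carried out in the first step of the proof of Theorem \ref{62}, so I would organize it by reusing the notation $d_x$ there (with $d_x=1$ for the $\beta_x=-1$ vertices of $I_0$) and simply tracking which terms are killed by $b_x=0$. A secondary subtlety is making sure the change of basis is invertible (the $f_i(k)-f_{i_x}(k)$ together with the $f_{i_x}(k)$ clearly span, being a triangular change from the $f_i(k)$), and checking that no further normalization is needed in the $\nu_j=0$ sub-case of Step 2, where one keeps $p_j(k)=f_j(k)$ unchanged. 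Once these are in place the identification with the Lie algebras of Proposition \ref{67} and the subsequent definitions is immediate.
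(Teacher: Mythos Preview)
Your overall plan is correct and matches the paper's proof: the same change of basis $p_{i_x}(k)=f_{i_x}(k)$, $p_i(k)=f_i(k)-f_{i_x}(k)$ in Step~1, then the module analysis for $I_1$ and $J_1$. However, there is a genuine error in your Step~2 for vertices $j\in I_1$ with $\nu_j\neq 0$.

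You claim that after the rescaling $p_j(1)=\nu_j f_j(1)$ one obtains the action defining $W_\nu$. This is not so: carrying out your own rescaling, one finds $[p_{i_x}(k),p_j(1)]=\nu_j a^{(j)}_{i_x}\,p_j(k+1)$ and $[p_{i_x}(k),p_j(l)]=\nu_j a^{(j)}_{i_x}\,p_j(k+l)$ for $l\geq 2$, so the action is \emph{uniform} in $l$, not ``special at level~1''. This is the defining property of $V_\upsilon$, not of $W_\nu$ (which by definition has $e^{(i)}(k).g(l)=0$ for $l\geq 2$). Consequently, for $\nu_j\neq 0$ the module $W_j$ is a $V_\upsilon$ and must be placed in the \emph{middle} layer $V_{\upsilon^{(1)}}\oplus\cdots\oplus V_{\upsilon^{(N)}}$, not in the outer $W$; only the $\nu_j=0$ vertices of $I_1$ contribute $W_\nu$'s to $W$. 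This matters structurally, because the $W'_{\nu,\mu}$ modules coming from $J_1$ are modules over the Lie algebra of Proposition~\ref{67}, and the $f^{(j)}$'s there that act on them via the $\mu$-parameters are precisely these $I_1$-with-$\nu_j\neq 0$ pieces sitting in the middle layer --- if you put them in $W$ instead, the $W'_{\nu,\mu}$ description no longer matches. Once you correct this dichotomy (paper's second step: $\nu_j\neq 0\Rightarrow V_\upsilon$, $\nu_j=0\Rightarrow W_\nu$), the rest of your argument goes through as written.
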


\begin{proof}
{\it First step.} We first consider a Hopf SDSE such that:
$$I=\bigcup_{x\in I_0} J_x.$$
For all $x\in I_0$, let us fix $i_x \in J_x$. We put $p_{i_x}(k)=f_{i_x}(k)$ and $p_i(k)=f_i(k)-f_{i_x}(k)$ if $i\in J_x-\{i_x\}$. Then direct computations show that:
\begin{itemize}
\item $[p_{i_x}(k),p_{i_{x'}(l)}]=0$.
\item $[p_{i_x}(k), p_j(l)]=\delta_{x,x'} p_j(k+l)$ if $j\in J_{x'}-\{i_{x'}\}$.
\item $[p_i(k),p_j(l)]=0$ if $i,j$ are not $i_x$'s.
\end{itemize}
So:
$$\lies\approx \left(\bigoplus_{x\in I_0} V_{(0,\ldots,0,1,0,\ldots,0)}^{\oplus|J_x|-1}\right)\triangleleft \g_{ab},$$
where $\g_{ab}=Vect(p_{i_x}(k)\:/\:x \in I_0,\:k\geq 1)$.\\

{\it Second step.} We now assume that $I_1 \neq \emptyset$. Then the descendants of $j \in I_1$ form a system as in the first step, so:
$$\lies=W_{I_1} \triangleleft \g_{(S_0)},$$
where $W_{I_1}=Vect(f_j(k)\:/\: j\in I_1,k\geq 1\}$ and $(S_0)$ is the restriction of $(S)$ to the regular vertices.
Let us fix $j \in I_1$ and let us consider the $\g_{(S_0)}$-module $W_j=Vect(f_j(k)\:/\:k\geq 1)$. With the notations of the preceding step:
\begin{itemize}
\item $[p_{i_x}(k),f_j(l)]=a^{(j)}_{i_x} f_j(k+l)$ if $l=1$.
\item $[p_{i_x}(k),f_j(l)]=\nu_j a^{(j)}_{i_x} f_j(k+l)$ if $l\geq 2$.
\item $[p_i(x),f_j(l)]=0$ if $i$ is not a $i_x$.
\end{itemize}
If $\nu_j\neq 0$, we put $p_j(k)=f_j(k)$ if $k \geq 2$ and $p_j(1)=\nu_j f_j(1)$. Then, for all $l$:
\begin{itemize}
\item $[p_{i_x}(k),f_j(l)]=\nu_j a^{(j)}_{i_x} f_j(k+l)$.
\item $[p_i(x),f_j(l)]=0$ if $i$ is not a $i_x$.
\end{itemize}
So $W_j$ is a module $V_\upsilon$. If $\nu_j=0$, we put $p_j(k)=f_j(k)$ for all $k\geq 1$. Then:
\begin{itemize}
\item $[p_{i_x}(k),f_j(l)]=a^{(j)}_{i_x} f_j(k+l)$ if $l=1$.
\item $[p_{i_x}(k),f_j(l)]=0$ if $l\geq 2$.
\item $[p_i(x),f_j(l)]=0$ if $i$ is not a $i_x$.
\end{itemize}
So $W_j$ is a module $W_\nu$.\\

{\it Last step.} We now consider vertices in $J_1$. If $j\in J_1$, then its descendants are vertices of the first step and vertices in $I_1$ such that $\nu_i=1$.
As before:
$$\lies=W_{J_1} \triangleleft \g_{(S_1)},$$
where $W_{J_1}=Vect(f_j(k)\:/\: j\in J_1,k\geq 1\}$ and $(S_1)$ is the restriction of $(S)$ to the regular vertices and the vertices of $I_1$.
Let us fix $j \in J_1$ and let us consider the $\g_{(S_1)}$-module $W_j=Vect(f_j(k)\:/\:k\geq 1)$.
As $\nu_j \neq 0$, putting $p_j(k)=f_j(k)$ if $k \geq 2$ and $p_j(1)=\nu_j f_j(1)$:
\begin{itemize}
\item $[p_{i_x}(k),p_j(l)]=\nu_j a^{(j)}_{i_x} p_j(k+l)$.
\item $[p_i(k),p_j(l)]=0$ if $i\in J_x-\{i_x\}$.
\item $[p_i(k),p_j(l)]=\nu_j a^{(j)}_i p_j(k+l)$ if $l=1$ and $i\in I_1$.
\item $[p_i(k),p_j(l)]=0$ if $l\geq 2$ and $i\in I_1$.
\end{itemize}
So $W_j$ is a module $W'_{\nu,\mu}$. \end{proof}

\begin{theo} \label{70}
Let $(S)$ be a connected Hopf SDSE in the non-abelian, fundamental case. Then the Lie algebra $\lies$ is of the form:
$$\g_m\triangleleft(\g_{m-1}\triangleleft(\cdots \g_2 \triangleleft (\g_1 \triangleleft \g_0)\cdots),$$
where $\g_0$ is the Lie algebra associated to the restriction of $(S)$ to the non-extension vertices (so is described in theorem \ref{69}), and,
for $j\geq 1$, $\g_j$ is an abelian  $(\g_{j-1}\triangleleft(\cdots \g_2 \triangleleft (\g_1 \triangleleft \g_0)\cdots)$-module having a basis $(h^{(j)}(k))_{k\geq 1}$. 
\end{theo}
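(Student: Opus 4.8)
The plan is to follow the proof of theorem \ref{63} verbatim, simply feeding in the base Lie algebra of theorem \ref{69} where theorem \ref{62} was used before; the extension argument itself is insensitive to the abelian/non-abelian dichotomy, so only the base changes. Since every vertex has finite level in the fundamental case, theorem \ref{52} applies and presents $(S)$ as a fundamental system $(S_0)$, carried by the non-extension vertices, to which the extension vertices $I_2=\{x_1,\ldots,x_m\}$ (notation of theorem \ref{14}) are adjoined one at a time. I would fix the ordering $x_1,\ldots,x_m$ supplied by corollary \ref{50}, for which each $x_j$ is a non-self-dependent vertex having no ascendant among the vertices of $(S_0)\cup\{x_1,\ldots,x_{j-1}\}$. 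Setting $\g_0=\g_{(S_0)}$ (whose semi-direct structure is theorem \ref{69}) and $\g_j=Vect(f_{x_j}(k):k\ge1)$ for $1\le j\le m$, the basis $(f_i(k))_{i\in I,k\ge1}$ of $\lies$ (proposition \ref{21}) splits along the partition $I=(\text{non-extension vertices})\sqcup\{x_1,\ldots,x_m\}$, giving at once the vector-space decomposition $\lies=\g_m\oplus\cdots\oplus\g_0$.

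The substance of the argument is the inductive verification that each $\g_j$ is an abelian ideal of $\g_{\le j}=\g_j\oplus\cdots\oplus\g_0$ on which the subalgebra $\g_{<j}=\g_{j-1}\triangleleft(\cdots\triangleleft\g_0)$ acts, so that $\g_{\le j}=\g_j\triangleleft\g_{<j}$. For abelianness I would note that $x_j$ being non-self-dependent forces $a^{(x_j)}_{x_j}=0$, so $x_j$ is not a descendant of itself and proposition \ref{18}-1 gives $\lambda_n^{(x_j,x_j)}=0$ for all $n$; the bracket formula $f_j(l)\star f_i(k)=\lambda_k^{(i,j)}f_i(k+l)$ of proposition \ref{21} then makes $[f_{x_j}(k),f_{x_j}(l)]$ vanish. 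For the module property I would take a basis vector $f_i(l)$ of $\g_{<j}$, so that $i$ lies in $(S_0)\cup\{x_1,\ldots,x_{j-1}\}$; since $x_j$ has no ascendant there, $x_j$ is not a descendant of $i$, whence $\lambda_l^{(i,x_j)}=0$ again by proposition \ref{18}-1, and proposition \ref{21} yields $[f_i(l),f_{x_j}(k)]=\lambda_k^{(x_j,i)}f_{x_j}(k+l)\in\g_j$. Combined with $[\g_j,\g_j]=0$ this shows $[\g_{\le j},\g_j]\subseteq\g_j$, so $\g_j$ is the required abelian ideal and $\g_{<j}$ remains a subalgebra by induction.

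Iterating over $j=1,\ldots,m$ then assembles the iterated semi-direct product $\g_m\triangleleft(\cdots\triangleleft(\g_1\triangleleft\g_0)\cdots)$, with each $\g_j$ for $j\ge1$ the abelian module $Vect(f_{x_j}(k):k\ge1)$, as claimed. The only delicate point I foresee is organisational rather than computational: I must confirm that the ordering of extension vertices extracted from corollary \ref{50} and theorem \ref{14} really gives $I^{(x_j)}\subseteq(S_0)\cup\{x_1,\ldots,x_{j-1}\}$, so that "no ascendant among the earlier vertices" genuinely holds, and that the non-extension-vertex restriction to which I apply theorem \ref{69} is exactly $(S_0)$. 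Once these identifications are in place, the whole statement collapses onto the two vanishing relations $\lambda_n^{(x_j,x_j)}=0$ and $\lambda_l^{(i,x_j)}=0$, both immediate from proposition \ref{18}-1.
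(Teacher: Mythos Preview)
Your proposal is correct and matches the paper's approach exactly: the paper's proof is the single line ``Similar with the proof of theorem~\ref{62}'' (one should read \ref{63}, since it is the extension-vertex iteration that is being reused), and you have simply unfolded that one line, identifying $\g_j=Vect(f_{x_j}(k):k\geq 1)$ for the extension vertices $x_j$ and invoking proposition~\ref{18}-1 to obtain the two vanishings that make each $\g_j$ an abelian ideal. The organisational worry you flag is harmless: theorem~\ref{14}(f) already records $I^{(x_k)}\subseteq (S_0)\cup\{x_1,\ldots,x_{k-1}\}$, which is exactly the ``no ascendant among earlier vertices'' condition you need.
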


\begin{proof} Similar with the proof of theorem \ref{62}. \end{proof}

\subsection{Associated group}

Let us now consider the character group $Ch\left(\hs \right)$ of $\hs$. In the preceding cases, $\lies$ contains an abelian sub-Lie algebra $\g_{ab}$,
so $Ch\left(\hs \right)$ contains a subgroup isomorphic to the group:
$$G_{ab}=\left\{\left(a^{(i)}_1x+a^{(i)}_2x^2+\cdots\right)_{1\leq i \leq M},\:|\: \forall 1\leq i \leq M,\forall k\geq 1, \;a^{(i)}_k\in K\right\},$$
with the product defined by $(A^{(i)}(x))_{i\in I}.(B^{(i)}(x))_{i\in I}=(A^{(i)}(x)+B^{(i)}(x)+A^{(i)}(x)B^{(i)}(x))_{i\in I}$. Note that $G_{ab}$ is isomorphic
to the following subgroup of the following group of the units of the ring $K[[x]]^M$:
$$G_1=\left\{\left(\substack{1+xf_1(x)\\ \vdots \\ 1+xf_M(x)}\right)\:\mid \: f_1(x),\ldots,f_M(x) \in K[[x]]\right\}.$$
The isomorphism is given by:
$$\left\{ \begin{array}{rcl}
G_{ab}&\longrightarrow&G_1\\
\left(a^{(i)}_1x+a^{(i)}_2x^2+\cdots\right)_{1\leq i \leq M}&\longrightarrow&
\left(\substack{1+a_1^{(1)}x+a_2^{(1)}x^2+\ldots\\ \vdots \\ 1+a_1^{(M)}x+a_2^{(M)}x^2+\ldots}\right).
\end{array}\right.$$

Moreover, each modules earlier defined on $\g_{ab}$ corresponds to a module over $G_{ab}$ by exponentiation, as explained in the following definition:

\begin{defi}\textnormal{\begin{enumerate}
\item Let $\upsilon=(\upsilon_1,\ldots,\upsilon_M)\in K^M$. The module $\modulev_\upsilon$ is isomorphic to $yK[[y]]$ as a vector space, 
and the action of $G_{ab}$ is given by:
$$(A^{(i)}(x))_{1\leq i\leq M}.P(y)=exp\left(\sum_{i=1}^M \upsilon_i A^{(i)}(y)\right)P(y).$$
\item Let us consider the following semi-direct product:
$$G=\left(\bigoplus_{i=1}^N \modulev_{\upsilon^{(i)}}\right)\triangleleft G_{ab}.$$
\begin{enumerate}
\item Let $\nu=(\nu_1,\ldots,\nu_M)\in K^M$. The module $\modulew_\nu$ is $zK[[z]]$ as a vector space, and the action of $G$ is given in the following way:
for all $X=(P_1(y),\ldots,P_N(y),A_1(x),\ldots,A_m(x))\in G$,
$$\left\{ \begin{array}{rcl}
X.z&=&\displaystyle \left(1+\sum_{i=1}^M \nu_i A_i(z)\right)z,\\
X.z^2R(z)&=&z^2R(z),
\end{array}\right.$$
for all $R(z)\in K[[z]]$.
\item Let $\nu=(\nu_1,\ldots,\nu_M)\in K^M$ and $\mu=(\mu_1,\ldots,\mu_N)\in K^N$, such that for all $1\leq i\leq M$, for all $1\leq j \leq N$,
$\mu_j\left(\nu_i-\upsilon^{(j)}_i\right)=0$. The module $\modulew'_{\nu,\mu}$ is $zK[[z]]$ as a vector space, and the action of $G$ is given 
in the following way: for all $X=(P_1(y),\ldots,P_N(y),A_1(x),\ldots,A_m(x))\in G$,
$$\left\{ \begin{array}{rcl}
X.z&=&\displaystyle exp\left(\sum_{i=1}^M \nu_i A_i(z) \right)\left(1+\sum_{i=1}^N \mu_i P_i(z)\right)z,\\
X.z^2R(z)&=&\displaystyle exp\left(\sum_{i=1}^M \nu_i A_i(z) \right)z^2R(z),
\end{array}\right.$$
for all $R(z) \in K[[z]]$.
\end{enumerate}\end{enumerate}}\end{defi}

Direct computations prove that they are indeed modules. The condition $\mu_j\left(\nu_i-\upsilon^{(j)}_i\right)=0$ is necessary for $\modulew'_{\nu,\mu}$ 
to be a module. Indeed:
\begin{eqnarray*}
A_i(x).(P_j(y).z)&=&\left(exp(\nu_i A_i(z))+\mu_j exp(\nu_i A_i(z)) P_j(z)\right)z,\\
(A_i(x)P_j(y)).z&=&\left(exp(\upsilon^{(j)}_i A_i(y))P_j(y)A_i(x)\right).z\\
&=&\left(1+exp(\upsilon^{(j)}_i A_i(z))P_j(z)\right)z+(exp(\nu_i A_i(z))-1)z\\
&=&\left(exp(\nu_i A_i(z))+\mu_j exp(\upsilon^{(j)}_i A_i(z)) P_j(z)\right)z.
\end{eqnarray*}

\begin{theo} \label{72}
Let $(S)$ be a connected Hopf SDSE in the abelian case. Then the group $Ch\left(\hs \right)$ is of the form:
$$G_N\rtimes(G_{N-1}\rtimes(\cdots G_2 \rtimes (G_1 \rtimes G_0)\cdots),$$
where $G_0$ is a semi-direct product of the form:
$$G_0=\modulew \rtimes (\modulev \rtimes G_{ab}),$$
where $\modulev$ is a direct sum of modules $\modulev_\upsilon$, and $\modulew$ a direct sum of modules $\modulew_\nu$ and $\modulew'_{\nu,\mu}$.
Moreover, for all $m\geq 1$, $G_m=(tK[[t]],+)$ as a group.
\end{theo}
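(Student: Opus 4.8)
The plan is to mirror exactly the structure of the proof of Theorem \ref{65}, replacing the Faà di Bruno ingredients by their abelian counterparts. First I would recall that, by the Cartier-Quillen-Milnor-Moore theorem, $Ch\left(\hs\right)$ is the group of characters of $\mathcal{U}(\lies)^*$, where $\lies$ is the Lie algebra described in Theorem \ref{70}; since $\lies$ has the iterated semi-direct form $\g_m\triangleleft(\g_{m-1}\triangleleft(\cdots\triangleleft(\g_1\triangleleft\g_0)\cdots))$, dualizing and exponentiating turns each semi-direct sum at the level of Lie algebras into a semi-direct product at the level of groups, which immediately yields a decomposition
$$Ch\left(\hs\right)=G_m\rtimes(G_{m-1}\rtimes(\cdots G_2\rtimes(G_1\rtimes G_0)\cdots)).$$
Here $G_j$ for $j\geq 1$ is the group obtained by exponentiating the abelian module $\g_j$; since $\g_j$ is abelian with basis $(h^{(j)}(k))_{k\geq 1}$, its exponential group is simply $(tK[[t]],+)$, giving the last assertion.

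Next I would identify $G_0$. By Theorem \ref{70}, $\g_0$ is the Lie algebra of Theorem \ref{69}, namely $W\triangleleft\bigl((V_{\upsilon^{(1)}}\oplus\cdots\oplus V_{\upsilon^{(N)}})\triangleleft\g_{ab}\bigr)$ with $W$ a direct sum of modules $W_\nu$ and $W'_{\nu,\mu}$. Exponentiating $\g_{ab}$ produces the group $G_{ab}$ (equivalently $G_1$ of the preceding definition); exponentiating each $V_{\upsilon^{(i)}}$-module produces a module $\modulev_{\upsilon^{(i)}}$; and exponentiating $W_\nu$, $W'_{\nu,\mu}$ produces $\modulew_\nu$, $\modulew'_{\nu,\mu}$ respectively. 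The semi-direct product structure is inherited, so
$$G_0=\modulew\rtimes(\modulev\rtimes G_{ab}),$$
with $\modulev$ a direct sum of modules $\modulev_\upsilon$ and $\modulew$ a direct sum of modules $\modulew_\nu$ and $\modulew'_{\nu,\mu}$, exactly as claimed. As in the proof of Theorem \ref{65}, the cleanest way to justify that these exponentiated actions are the correct group actions is to consider the graded Hopf algebra $\h$ of coordinate functions on $G_0$: it is graded connected, and a direct computation of its graded dual identifies it with $\mathcal{U}(\g_0)$, so that $\h\cong\h_{(S_0)}$ where $(S_0)$ is the restriction of $(S)$ to the non-extension vertices. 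This is precisely where the ``direct computations prove that they are indeed modules'' remark (already inserted after the module definitions) does the verification work.

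The main obstacle — as in the non-abelian case — is bookkeeping rather than conceptual: one must check that the exponential map genuinely intertwines the bracket relations of Theorem \ref{69} with the multiplication rules written down in the preceding definition of $\modulev_\upsilon$, $\modulew_\nu$, $\modulew'_{\nu,\mu}$, and in particular that the compatibility constraint $\mu_j(\nu_i-\upsilon^{(j)}_i)=0$ (which already appeared as a necessary condition at the Lie-algebra level in the remark after the definition of $W'_{\nu,\mu}$) is exactly what is needed for $\modulew'_{\nu,\mu}$ to be a well-defined $G$-module; the displayed computation after the definition of $\modulew'_{\nu,\mu}$ handles this. Once these routine exponentiation identities are in place, the iterated semi-direct structure of $\lies$ transfers verbatim to $Ch\left(\hs\right)$, and the theorem follows; I would therefore write the proof as a short paragraph citing Theorem \ref{70}, the CQMM duality, and the Hopf-algebra-of-coordinates argument of Theorem \ref{65}, saying ``similar to the proof of theorem \ref{65}'' for the parts that are truly identical.

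\begin{proof} The group $Ch\left(\hs\right)$ is isomorphic to the group of characters of $\mathcal{U}(\g)^*$, where $\g=\lies$ is described in theorem \ref{70}. This implies that this group has a structure of semi-direct product as described in the statement, with $G_m=(tK[[t]],+)$ for all $m\geq 1$ obtained by exponentiation of the abelian module $\g_m$. Let us consider the Hopf algebra $\h$ of coordinates of $G_0$. It is a graded Hopf algebra, and direct computations prove that its graded dual is the enveloping algebra of $\g_0$ of theorem \ref{70}. So $\h$ is isomorphic to $\h_{(S_0)}$, and the exponentiations of the modules $V_{\upsilon^{(i)}}$, $W_\nu$ and $W'_{\nu,\mu}$ give exactly the modules $\modulev_\upsilon$, $\modulew_\nu$ and $\modulew'_{\nu,\mu}$, which yields the announced form of $G_0$. \end{proof}
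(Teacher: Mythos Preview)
Your proposal is correct and follows exactly the paper's approach: the paper's entire proof of Theorem~\ref{72} is the single sentence ``Similar as the proof of theorem \ref{65}'', and you have faithfully spelled out what that means---invoking Theorem~\ref{70} in place of Theorem~\ref{63}, exponentiating the abelian Lie-algebra decomposition of Theorem~\ref{69}, and reusing the Hopf-algebra-of-coordinates argument from Theorem~\ref{65}. If anything, your write-up is more detailed than the paper's.
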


\begin{proof} Similar as the proof of theorem \ref{65}. \end{proof}

\section{Appendix: dilatation of a pre-Lie algebra}

Let $(S)$ be a Hopf SDSE with set of indices $I$. We choose a set $J$ and consider the disjoint union $I'$ of several copies $J_i$ of $J$ indexed by $I$.
The Lie algebra $\lies$ has a basis $(f_i(k))_{i \in I,\:k\geq 1}$ and the Lie bracket is given by:
$$[f_i(k),f_j(l)]=\lambda_l^{(j,i)} f_j(k+l)-\lambda_k^{(i,j)} f_i(k+l).$$
Let $(S')$ be the dilatation of $(S)$ with set of indices $I'$. Then the Lie algebra $\g_{(S')}$ has a basis $(f_i(k))_{i\in J,\:k\geq 1}$ 
and the Lie bracket is given in the following way: for all $x\in J_i$, $y\in J_j$,
$$[f_i(k),f_j(l)]=\lambda_l^{(j,i)} f_y(k+l)-\lambda_k^{(i,j)} f_x(k+l).$$
We shall say that $\g_{(S')}$ is a dilatation of $\g_{(S)}$. We prove in this section that this construction is equivalent to give a pre-Lie product of $\lies$.

\subsection{Dilatation of a pre-Lie algebra}

\begin{defi}\textnormal{\cite{Chapoton2}
A {\it permutative, associative} algebra is a couple $(A,\cdot)$ where $A$ is a vector space and $\cdot$ 
is a bilinear associative (non-unitary) product on $A$ such that for all $a,b,c\in A$:
$$abc=bac.$$
}\end{defi}

\begin{prop}
Let $(A,\cdot)$ be a vector space with a bilinear product. For any pre-Lie algebra $(\g,\star)$, we define a product on $\g \otimes A$ by:
$$(x\otimes a)\star (y \otimes b)=(x\star y) \otimes (ab).$$
Then $\g \otimes A$ is pre-Lie for any pre-Lie algebra $\g$ if, and only if, $A$ is permutative, associative.
\end{prop}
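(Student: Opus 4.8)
The statement to prove is an if-and-only-if: the product $(x\otimes a)\star(y\otimes b)=(x\star y)\otimes(ab)$ makes $\g\otimes A$ into a pre-Lie algebra \emph{for every} pre-Lie algebra $(\g,\star)$ if and only if $(A,\cdot)$ is permutative and associative. The natural approach is to write down the pre-Lie associator of $\g\otimes A$ explicitly in terms of the associator of $\g$ and the multiplication of $A$, and then to play the two sides against each other. For the ``if'' direction, I would simply substitute: given $x,y,z\in\g$ and $a,b,c\in A$, compute
$$\big((x\otimes a)\star(y\otimes b)\big)\star(z\otimes c)-(x\otimes a)\star\big((y\otimes b)\star(z\otimes c)\big)
=\big((x\star y)\star z\big)\otimes\big((ab)c\big)-\big(x\star(y\star z)\big)\otimes\big(a(bc)\big).$$
Using associativity of $A$, both $A$-factors equal $abc$, so this expression becomes $\big((x\star y)\star z-x\star(y\star z)\big)\otimes abc$. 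Swapping $x\otimes a$ and $y\otimes b$ gives $\big((y\star x)\star z-y\star(x\star z)\big)\otimes bac$, and permutativity turns $bac$ into $abc$; since $\g$ is pre-Lie the two $\g$-factors agree, so the two associators are equal and $\g\otimes A$ is pre-Lie.

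For the ``only if'' direction, the idea is to feed in a sufficiently free choice of $\g$ so that the pre-Lie identity on $\g\otimes A$ forces the two axioms on $A$. The cleanest choice is to let $\g$ be the free pre-Lie algebra on a few generators, or even more concretely an associative algebra regarded as a pre-Lie algebra (so that $(x\star y)\star z=x\star(y\star z)=xyz$ genuinely, with no further relations among triple products of distinct generators). Take $\g$ associative and free on generators $u,v,w$. Then the pre-Lie identity for $\g\otimes A$ applied to $u\otimes a$, $v\otimes b$, $w\otimes c$ reads
$$(uvw)\otimes((ab)c)-(uvw)\otimes(a(bc))=(vuw)\otimes((ba)c)-(vuw)\otimes(b(ac)).$$
Since $uvw$ and $vuw$ are linearly independent in the free associative algebra, matching the $uvw$-component forces $(ab)c=a(bc)$ for all $a,b,c\in A$, i.e. associativity of $A$. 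Then, knowing $A$ is associative, the remaining $vuw$-component gives $(ba)c=b(ac)$, which (using associativity again) is $bac=abc$, i.e.\ permutativity. Thus both axioms are extracted.

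The main obstacle — really the only delicate point — is choosing the test pre-Lie algebra $\g$ so that the relevant triple products are linearly independent and the coefficients can be read off without contamination from pre-Lie relations; using a free associative algebra on three generators (viewed as pre-Lie) handles this cleanly, and one should note explicitly that such $\g$ exists and is nonzero in the relevant degree, so the argument is not vacuous. A minor care point is that ``$A$ is a vector space with a bilinear product'' carries no a priori hypotheses, so one must be sure the computation in the ``if'' direction uses \emph{only} associativity and permutativity of $A$ (it does) and that the ``only if'' direction extracts exactly those two identities and nothing stronger. Once the free test algebra is in place, everything else is the short substitution displayed above.
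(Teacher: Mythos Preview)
Your ``if'' direction is correct and matches the paper. But there is a genuine error in your ``only if'' direction: the test algebra you chose is too tame to detect permutativity.

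When $\g$ is associative, its pre-Lie associator $(x\star y)\star z-x\star(y\star z)$ vanishes identically. So the pre-Lie identity on $\g\otimes A$ reduces to
\[
(uvw)\otimes\bigl((ab)c-a(bc)\bigr)=(vuw)\otimes\bigl((ba)c-b(ac)\bigr).
\]
Linear independence of $uvw$ and $vuw$ then forces $(ab)c=a(bc)$ and $(ba)c=b(ac)$. But the second identity is \emph{again} associativity, applied to $b,a,c$; it is \emph{not} $abc=bac$. Your sentence ``$(ba)c=b(ac)$, which (using associativity again) is $bac=abc$'' is simply false: both sides of $(ba)c=b(ac)$ equal $bac$, so nothing new is obtained. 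Indeed, if $\g$ and $A$ are both associative then $\g\otimes A$ is associative (hence pre-Lie) regardless of whether $A$ is permutative, so no choice of associative $\g$ can ever force permutativity of $A$.

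The fix is exactly what the paper does: take $\g$ to be the free pre-Lie algebra on three generators (realised as $\mathrm{Prim}(\h_\D^*)$). There the associator $(f_{\tdun{$i$}}\star f_{\tdun{$j$}})\star f_{\tdun{$k$}}-f_{\tdun{$i$}}\star(f_{\tdun{$j$}}\star f_{\tdun{$k$}})$ is the nonzero element $-f_{\tdtroisun{$k$}{$j$}{$i$}}$, and this extra term, symmetric under swapping $i$ and $j$, is precisely what isolates the permutative identity $a(bc)=b(ac)$ from the associative one. So you need a genuinely non-associative pre-Lie test algebra; once you replace the free associative algebra by the free pre-Lie algebra, your outline goes through.
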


\begin{proof} $\Longleftarrow$. Let $\g$ be a pre-Lie algebra, and let $x,y,z \in \g$, $a,b,c \in A$. Then:
\begin{eqnarray*}
&&((x \otimes a) \star (y\otimes b)) \star (z \otimes c)-(x \otimes a) \star ((y\otimes b) \star (z \otimes c))\\
&=&((x \star y)\star z-x \star(y\star z))\otimes abc\\
&=&((y \star x)\star z-y \star(x\star z))\otimes bac\\
&=&((y \otimes b) \star (x\otimes a)) \star (z \otimes c)-(y \otimes b) \star ((x\otimes a) \star (z \otimes c)).
\end{eqnarray*}
So $\g \otimes A$ is pre-Lie. 

$\Longrightarrow$. Let us assume that $\g\otimes A$ is pre-Lie for any pre-Lie algebra $\g$. Let us choose $\g$ as the pre-Lie algebra $Prim(\h_\D^*)$, 
with $\D$ containing three distinct elements $i,j,k$. Then, for any $a,b,c \in A$:
\begin{eqnarray*}
&&((f_{\tdun{$i$}}\otimes a)\star (f_{\tdun{$j$}}\otimes b)) \star (f_{\tdun{$k$}}\otimes c)-
(f_{\tdun{$i$}}\otimes a)\star ((f_{\tdun{$j$}}\otimes b) \star (f_{\tdun{$k$}}\otimes c))\\
&=&f_{\tdtroisdeux{$k$}{$j$}{$i$}}\otimes (ab)c-\left(f_{\tdtroisun{$k$}{$j$}{$i$}}+f_{\tdtroisdeux{$k$}{$j$}{$i$}}\right)\otimes a(bc)\\
&=&f_{\tdtroisdeux{$k$}{$j$}{$i$}}\otimes \left((ab)c-a(bc)\right)-f_{\tdtroisun{$k$}{$j$}{$i$}}\otimes a(bc)\\
&=&((f_{\tdun{$j$}}\otimes b)\star (f_{\tdun{$i$}}\otimes a)) \star (f_{\tdun{$k$}}\otimes c)-
(f_{\tdun{$j$}}\otimes b)\star ((f_{\tdun{$i$}}\otimes a) \star (f_{\tdun{$k$}}\otimes c))\\
&=&f_{\tdtroisdeux{$k$}{$i$}{$j$}}\otimes \left((ba)c-b(ac)\right)-f_{\tdtroisun{$k$}{$j$}{$i$}}\otimes b(ac).
\end{eqnarray*}
So:
$$f_{\tdtroisdeux{$k$}{$j$}{$i$}}\otimes \left((ab)c-a(bc)\right)-f_{\tdtroisun{$k$}{$j$}{$i$}}\otimes a(bc)
=f_{\tdtroisdeux{$k$}{$i$}{$j$}}\otimes \left((ba)c-b(ac)\right)-f_{\tdtroisun{$k$}{$j$}{$i$}}\otimes b(ac).$$
Applying $\tdtroisdeux{$k$}{$j$}{$i$}\otimes Id_A$ on the two sides of this equality, we obtain $(ab)c-a(bc)=0$.
So $A$ is associative. Applying $\tdtroisun{$k$}{$j$}{$i$}\otimes Id_A$ on the two sides of this equality,
we obtain $a(bc)=b(ac)$, so $A$ is permutative, associative. \end{proof}\\

{\bf Example.} Let $I$ a set, and let $A_I=Vect(e_i)_{i\in I}$. Then $A$ is given a permutative, associative product: for all $i,j \in I$,
$$e_i.e_j=e_j.$$
Let $(\g,\star)$ be a pre-Lie algebra. The pre-Lie product of $\g\otimes A$ is given by:
$$(x\otimes e_i) \star (y \otimes e_j)=x \star y \otimes e_j.$$

The following proposition is immediate:

\begin{prop}
Let $(S)$ be a Hopf SDSE with set of indices $I$ and $(S')$ be a dilatation of $(S)$, with set of indices $J$ being the disjoint union of finite sets $J_i$ 
indexed by $i\in I$. Let $J'$ be a set and for all $i\in I$, let $\phi_i:J_i \longrightarrow J'$ be a map.
The following morphism is a morphism of pre-Lie algebras:
$$\left\{ \begin{array}{rcl}
\g_{(S')}&\longrightarrow & \lies \otimes A_{J'}\\
f_x(k),\:x\in J_i&\longrightarrow & f_i(k) \otimes e_{\phi_i(x)}.
\end{array}\right.$$
It is injective (respectively surjective, bijective) if, and only if, $\phi_i$ is injective (respectively surjective, bijective) for all $i\in I$.
\end{prop}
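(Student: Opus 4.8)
The plan is to exhibit the stated map explicitly on basis elements, verify it respects the pre-Lie products on both sides using Proposition~\ref{21}, and then read off injectivity/surjectivity from the corresponding properties of the $\phi_i$'s. First I would recall the two pre-Lie structures in play. On the left, by Proposition~\ref{21} applied to $(S')$, the pre-Lie algebra $\g_{(S')}$ has basis $(f_x(k))_{x\in J,\,k\geq 1}$ with product $f_y(l)\star f_x(k)=\lambda_k^{(x,y)}f_x(k+l)$, and by Proposition~\ref{18}-2 (the dilatation criterion), for $x\in J_i$ and $y\in J_j$ we have $\lambda_k^{(x,y)}=\lambda_k^{(i,j)}$. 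On the right, $\lies$ has basis $(f_i(k))_{i\in I,\,k\geq 1}$ with $f_j(l)\star f_i(k)=\lambda_k^{(i,j)}f_i(k+l)$, and the pre-Lie product on $\lies\otimes A_{J'}$ is, by the example preceding the statement, $(u\otimes e_a)\star(v\otimes e_b)=(u\star v)\otimes e_b$.

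Next I would define $\psi\colon \g_{(S')}\to \lies\otimes A_{J'}$ by $\psi(f_x(k))=f_i(k)\otimes e_{\phi_i(x)}$ for $x\in J_i$, extended linearly; this is well defined since the $J_i$ partition $J$. The key step is checking $\psi(f_y(l)\star f_x(k))=\psi(f_y(l))\star\psi(f_x(k))$ for $x\in J_i$, $y\in J_j$. The left side is $\psi(\lambda_k^{(x,y)}f_x(k+l))=\lambda_k^{(i,j)}\,f_i(k+l)\otimes e_{\phi_i(x)}$, using the dilatation identity for the coefficient. The right side is $(f_j(l)\otimes e_{\phi_j(y)})\star(f_i(k)\otimes e_{\phi_i(x)})=(f_j(l)\star f_i(k))\otimes e_{\phi_i(x)}=\lambda_k^{(i,j)}f_i(k+l)\otimes e_{\phi_i(x)}$, again by Proposition~\ref{21}. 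The two agree, so $\psi$ is a pre-Lie morphism. Note the crucial point is that $A_{J'}$'s product $e_a e_b=e_b$ discards the first factor's label exactly as the twisted tensor product $x\star y\otimes e_b$ prescribes, so the $e_{\phi_j(y)}$ coming from $f_y(l)$ never intervenes — which matches that on the left the decoration of the result $f_x(k+l)$ depends only on $x$.

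Finally I would handle injectivity and surjectivity. Since $(f_i(k)\otimes e_{a})_{i\in I,\,k\geq1,\,a\in J'}$ is a basis of $\lies\otimes A_{J'}$ and $(f_x(k))_{x\in J,\,k\geq1}$ a basis of $\g_{(S')}$, the matrix of $\psi$ is block-diagonal over pairs $(i,k)$: within the block indexed by $(i,k)$, $\psi$ sends the basis $(f_x(k))_{x\in J_i}$ to $(e_{\phi_i(x)})_{x\in J_i}\subseteq A_{J'}$. Hence $\psi$ is injective (resp.\ surjective onto the span, resp.\ bijective) if and only if, for every $i$, the induced map $Vect(J_i)\to Vect(J')$, $x\mapsto e_{\phi_i(x)}$ is injective (resp.\ surjective, resp.\ bijective), which happens precisely when $\phi_i$ is injective (resp.\ surjective, resp.\ bijective). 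Assembling over all $i$ gives the claim. I do not anticipate a genuine obstacle here: the statement is labelled immediate, and the only thing to be careful about is invoking Proposition~\ref{18}-2 to identify $\lambda_k^{(x,y)}$ with $\lambda_k^{(i,j)}$ and correctly tracking which tensor factor carries the surviving $A_{J'}$-label.
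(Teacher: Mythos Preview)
Your proof is correct and is exactly the direct verification the paper has in mind when it declares the proposition immediate. You correctly invoke Proposition~\ref{18}-2 to identify $\lambda_k^{(x,y)}=\lambda_k^{(i,j)}$, match this against the pre-Lie product on $\lies\otimes A_{J'}$ coming from $e_ae_b=e_b$, and your block-diagonal argument for injectivity/surjectivity is the right way to read off these properties from the $\phi_i$'s.
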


\subsection{Dilatation of a Lie algebra}

Let $\Set$ be the category of sets, $\Vect$ be the category of Vector spaces, and $\Lie$ the category of Lie algebras.

\begin{defi}\textnormal{
Let $V$ be a vector space. We define a function $F_V$ from $\Set$ to $\Vect$ in the following way:
\begin{enumerate}
\item If $I$ is a set:
$$F_V(I)=\bigoplus_{i\in I} V.$$
The element $v\in V$ in the copy of $V$ corresponding to the index $i\in I$ will be denoted by $v_i$.
\item If $\sigma:I\longrightarrow J$ is a map:
$$ F_V(\sigma) : \left\{ \begin{array}{rcl}
F_V(I)&\longrightarrow & F_V(J)\\
v_i&\longrightarrow &v_{\sigma(i)}.
\end{array}\right.$$
\end{enumerate}}\end{defi}

\begin{defi}\textnormal{
Let $\g$ be a Lie algebra. A {\it dilatation} of $\g$ is functor $F:\Set \longrightarrow \Lie$ such that $F(\{1\})=\g$ and making the following diagram commuting:
$$\xymatrix{\Set\ar[rr]^{F}\ar[rd]_{F_\g}&&\Lie \ar[ld]\\
&\Vect&}$$
where the functor from $\Lie$ to $\Vect$ is the forgetful functor.
}\end{defi}

\begin{prop}
Let $\g$ be a Lie algebra. There is a bijection between the set of dilatations of $\g$ and the set of pre-Lie product inducing the Lie bracket of $\g$.
\end{prop}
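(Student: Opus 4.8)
The plan is to establish the bijection in both directions, using the previous proposition on dilatations of pre-Lie algebras as the main computational input. Recall that a dilatation of $\g$ in the sense of the last definition is a functor $F:\Set\longrightarrow\Lie$ lifting $F_\g$ along the forgetful functor, with $F(\{1\})=\g$. First I would fix a Lie algebra $\g$ with bracket $[\cdot,\cdot]$ and explain how a pre-Lie product $\star$ inducing $[\cdot,\cdot]$ produces a dilatation: set $F(I)=F_\g(I)\otimes$-structure, i.e. on the vector space $\bigoplus_{i\in I}\g$ one defines, for $x_i,y_j$ (meaning $x$ in the copy indexed by $i$ and $y$ in the copy indexed by $j$), the bracket
$$[x_i,y_j]=(x\star y)_j-(y\star x)_i.$$
One checks this is antisymmetric by inspection, and that the Jacobi identity holds: this is exactly the content of the preceding proposition applied to the permutative associative algebra $A_I=\mathrm{Vect}(e_i)_{i\in I}$ with $e_ie_j=e_j$, since $\bigoplus_{i\in I}\g\cong\g\otimes A_I$ as a pre-Lie algebra via $x_i\mapsto x\otimes e_i$, and the associated Lie bracket of a pre-Lie algebra always satisfies Jacobi. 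For a map $\sigma:I\longrightarrow J$ one must verify that $F_\g(\sigma)$ is a Lie algebra morphism for the brackets just defined; this is the straightforward identity $[x_{\sigma(i)},y_{\sigma(j)}]=(x\star y)_{\sigma(j)}-(y\star x)_{\sigma(i)}$, and functoriality in $\sigma$ is inherited from $F_\g$. Finally $F(\{1\})$ recovers $\g$ with bracket $[x,y]=x\star y-y\star x=[x,y]$, as required.

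Next I would go the other way: given a dilatation $F$, I would recover a pre-Lie product on $\g=F(\{1\})$. The key observation is to look at $F(\{1,2\})=\g\oplus\g$ (with components written $x_1,x_2$) and the two projections $p_1,p_2:\{1,2\}\longrightarrow\{1\}$ together with the inclusion $\iota:\{1\}\hookrightarrow\{1,2\}$, $1\mapsto 1$. Naturality forces the bracket on $F(\{1,2\})$ to be compatible with all maps $\{1,2\}\longrightarrow\{1,2\}$ and $\{1,2\}\longrightarrow\{1\}$, and this rigidity should pin down $[x_1,y_2]$ to be of the form $(x\star y)_2-(y\bullet x)_1$ for two bilinear operations $\star,\bullet$ on $\g$; applying the flip $\tau\in\mathfrak{S}_2$ and antisymmetry of the bracket gives $\bullet=\star$, and applying $p_1$ (or $p_2$) recovers $[x,y]=x\star y-y\star x$, so $\star$ induces the bracket of $\g$. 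Then the Jacobi identity for the bracket on $F(\{1,2,3\})$, evaluated on $x_1,y_2,z_3$ and on $x_2,y_1,z_3$ and compared via the transposition swapping indices $1$ and $2$, yields precisely the left-symmetry relation $(x\star y)\star z-x\star(y\star z)=(y\star x)\star z-y\star(x\star z)$ — this is the same computation as in the proof of the preceding proposition, read backwards. I would then check that these two constructions are mutually inverse: starting from $\star$, building $F$, and extracting $\star'$ gives back $\star$ by construction on $F(\{1,2\})$; starting from $F$, extracting $\star$, and rebuilding $F'$, one checks $F=F'$ on every finite set $I$ because the bracket on $F(I)$ is determined by naturality from the brackets on $F(\{1,2\})$ and $F(\{1,2,3\})$ (every pair of distinct indices factors through $\{1,2\}$, every triple through $\{1,2,3\}$, and the diagonal cases through $\{1\}$).

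**The main obstacle** I anticipate is the rigidity argument: showing that a functor $F$ lifting $F_\g$ has \emph{no freedom} beyond the choice of $\star$, i.e. that the bracket on $F(I)$ for arbitrary $I$ is forced by its restriction to two- and three-element index sets. The delicate point is that $F(\sigma)$ for $\sigma$ \emph{not} injective (the projections $\{1,\dots,n\}\longrightarrow\{1\}$ and more generally collapsing maps) must be Lie morphisms, and one has to extract from this enough linear constraints on the a priori unknown bilinear maps $I\times I$-indexing the bracket. Concretely, for $i\neq j$ in $I$ write $[x_i,y_j]=\sum_k B^{i,j}_k(x,y)_k$ with finitely many bilinear $B^{i,j}_k:\g\times\g\longrightarrow\g$; naturality with respect to maps $I\longrightarrow\{1,2\}$ separating $\{i\}$ from $\{j\}$ forces $B^{i,j}_k=0$ unless $k\in\{i,j\}$ and identifies $B^{i,j}_j,B^{i,j}_i$ with the operations coming from $F(\{1,2\})$, independently of $I$; the diagonal case $[x_i,y_i]$ is handled by the map $I\longrightarrow\{1\}$ restricted to $\{i\}$. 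Once this is in place the rest is bookkeeping, and I would present it compactly rather than exhaustively, since it is a routine (if somewhat tedious) naturality chase.
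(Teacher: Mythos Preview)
Your proposal is correct and follows essentially the same route as the paper: construct $F$ from $\star$ via the permutative algebra $A_I$, and recover $\star$ from $F$ by examining $F(\{1,2\})$ and then $F(\{1,2,3\})$.

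Two simplifications are worth noting. First, the ``main obstacle'' you flag --- showing that the bracket on $F(I)$ has no components outside copies $i$ and $j$ --- disappears if you use naturality \emph{covariantly} rather than contravariantly: instead of maps $I\to\{1,2\}$, take $\tau:\{1,2\}\to I$ with $1\mapsto i$, $2\mapsto j$; since $F_\g(\tau)$ is a Lie morphism, $[v_i,w_j]=F_\g(\tau)([v_1,w_2])$ immediately lands in copies $i$ and $j$ with the already-computed coefficients. No $B^{i,j}_k$ bookkeeping is needed. Second, for the pre-Lie identity you do not need to compare two Jacobi evaluations via a transposition: a single Jacobi on $x_1,y_2,z_3$ in $F(\{1,2,3\})$, projected onto the third copy, yields the left-symmetry relation directly. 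Your version works too, but the transposition step is redundant (it just relabels an identity that already vanishes). Also, your mention of ``the two projections $p_1,p_2:\{1,2\}\to\{1\}$'' is a slip --- there is only one such map; the paper uses the linear projection $\pi_2$ on $F_\g(\{1,2\})$ to define $\star$ and the swap in $\mathfrak{S}_2$ to relate the two components.
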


\begin{proof} {\it First step.} Let $\star$ be a pre-Lie product inducing the Lie bracket of $\g$. Let $I$ be a set.
We identify $v\otimes e_i\in \g \otimes A_I$ and $v_i \in F_\g(I)$. So $F_\g(I)$ is given a structure of pre-Lie algebra by:
$$v_i \star w_j=(v \star w)_j.$$
The induced Lie bracket is given by:
$$[v_i,w_j]=(v \star w)_j - (w \star v)_i.$$
It is then easy to prove that this structure of pre-Lie algebra on $F_\g(I)$ for all $I$ gives a dilatation of $\g$.\\

{\it Second step.} Let $F$ be a dilatation of $\g$. So for any set $I$, $F_\g(I)$ is now a Lie algebra. Moreover, if $\sigma:I \longrightarrow J$ is any map,
then $F_\g(\sigma):F_\g(I) \longrightarrow F_\g(J)$ is a Lie algebra morphism. We first consider $F_\g(\{1,2\})$.  Let $\pi_2$ be the projection 
on $F_\g(\{2\})$ which vanishes on $F_\g(\{1\})$ in $F_\g(\{1,2\})$. We define $\star$ on $\g$ in the following way: if $v,w \in V$,
$$(v \star w)_2=\pi_2([v_1,w_2]).$$
Let $\sigma:\{1,2\}\longrightarrow \{1,2\}$, permuting $1$ and $2$. Then $F_\g(\sigma)$ permutes the two copies of $\g$ in $\F_\g(\{1,2\})$,
so $F_\g(\sigma)\circ \pi_1=\pi_2 \circ F_\g(\sigma)$. Moreover, $F_\g(\sigma)$ is a morphism of Lie algebras, so for all $v,w \in V$:
\begin{eqnarray*}
F_\g(\sigma)\circ \pi_2([w_1,v_2])&=&\pi_1 \circ F_\g(\sigma)([w_1,v_2]),\\
F_\g(\sigma)((w\star v)_2)&=&\pi_1([w_2,v_1])\\
(w \star v)_1&=&-\pi_1([v_1,w_2]).
\end{eqnarray*}
So, in $F_\g(\{1,2\})$:
$$[v_1,w_2]=\pi_1([v_1,w_2])+\pi_2([v_1,w_2])=(v \star w)_2 -(w\star v)_1.$$

Let us now consider any set $I$ and $i,j \in I$, not necessarily distinct. Considering $\tau: \{1,2\} \longrightarrow \{i,j\}$ sending $1$ to $i$ 
and $2$ to $j$, as $F_\g(\tau)$ is a morphism of Lie algebras, for all $v,w \in \g$, in $F_\g(I)$:
\begin{eqnarray*}
[v_i,w_j]&=&[F_\g(\tau)(v_1),F_\g(\tau)(w_2)]\\
&=&F_\g(\tau)([v_1,w_2])\\
&=&F_\g(\tau)((v \star w)_2-(w \star v)_1)\\
&=&(v \star w)_j -(w \star v)_i.
\end{eqnarray*}
In particular, if $i=j=1$, in $F_\g(\{1\})=\g$, $[v,w]=v \star w-w \star v$: the product $\star$ induces the Lie bracket of $\lies$.

Let $x,y,z \in \g$. In $F_\g(\{1,2,3\})$:
\begin{eqnarray*}
0&=&[x_1,[y_2,z_3]]+[y_2,[z_3,x_1]]+[z_3,[x_1,y_2]]\\
&=&(x \star (y \star z))_3- (x \star (z \star y))_2 - ((y\star z) \star x)_1+((z\star y) \star x)_1\\
&&+(y \star (z \star x))_1- (y \star (x \star z))_3 - ((z\star x) \star y)_2+((x\star z) \star y)_2\\
&&+(z \star (x \star y))_2- (z \star (y \star x))_1 - ((x\star y) \star z)_3+((y\star x) \star z)_3.
\end{eqnarray*}
Considering the terms in the third copy of $\g$:
$$(x \star (y \star z))_3- (y \star (x \star z))_3- ((x\star y) \star z)_3+((y\star x) \star z)_3=0.$$
So $\star$ is pre-Lie.\\

{\it Last step.} We define in the first step a correspondance sending a pre-Lie product on $\g$ to a dilatation of $\g$, and in the second step 
a correspondance sending a dilatation of $\g$ to a pre-Lie product on $\g$. It is clear that they are inverse one from the other. \end{proof}

\bibliographystyle{amsplain}
\bibliography{biblio}

\end{document}